\newcommand{\vertiii}[1]{{\left\vert\kern-0.25ex\left\vert\kern-0.25ex\left\vert #1 
		\right\vert\kern-0.25ex\right\vert\kern-0.25ex\right\vert}}
\theoremstyle{plain}
\newtheorem{thm}{Theorem}[section]
\newtheorem{cor}[thm]{Corollary}
\newtheorem{lemma}[thm]{Lemma}
\newtheorem{prop}[thm]{Proposition}
\theoremstyle{definition}
\newtheorem{defi}[thm]{Definition}
\newtheorem{rem}[thm]{Remark}
\numberwithin{equation}{section}
\DeclareMathOperator{\NN}{\mathbb{N}}
\DeclareMathOperator*{\esssup}{ess\,sup}
\DeclareMathOperator{\TT}{\mathbb{T}^2}
\DeclareMathOperator{\RR}{\mathbb{R}}
\DeclareMathOperator{\ZZ}{\mathbb{Z}}
\DeclareMathOperator{\Sol}{Sol}
\DeclareMathOperator{\divv}{div}
\DeclareMathOperator{\esslim}{esslim}
\title{Martingale solutions to the stochastic thin-film equation in two dimensions}
\author{Max Sauerbrey\textsuperscript{1}}
\thanks{\textsuperscript{1}Delft Institute of Applied Mathematics, TU Delft, The Netherlands. \textit{Email:} \href{mailto:M.Sauerbrey@tudelft.nl}{M.Sauerbrey@tudelft.nl}}
\keywords{Thin-film equation, Noise, $\alpha$-Entropy estimates, Stochastic compactness method}
\subjclass[2010]{35R60,  	76A20}
\begin{document}
	\maketitle
	
	\noindent
\textbf{Abstract.}
	We construct solutions to the stochastic thin-film equation with quadratic mobility and Stratonovich gradient noise in the physically relevant dimension $d=2$ and allow in particular for solutions with non-full support. The construction relies on a Trotter-Kato time-splitting scheme, which was recently  employed  in $d=1$. The additional analytical challenges due to the higher spatial dimension are overcome using $\alpha$-entropy estimates and corresponding tightness arguments.

		\vspace{1cm}
		
	\noindent
	\textbf{R\'esum\'e.}
	Nous construisons des solutions de l'équation aux dérivées partielles stochastique des couches minces avec une mobilité quadratique et un forçage stochastique gradient de  type Stratonovich en dimension $d=2$, physiquement pertinente.  Les solutions à support non plein sont autorisées. La construction repose sur
	une méthode de Trotter-Kato en fractionnant l'intervalle de temps, récemment été utilisée dans le cas $d=1$. Les difficultés supplémentaires, dues à la dimension spatiale supérieure, sont surmontées à l'aide d'estimations de l'$\alpha$-entropie  et d'arguments de tension correspondants.

\vspace{1cm}
\section{Introduction}
The general stochastic thin-film equation is of the form 
\begin{equation}\label{Eq40}
\partial_t u \,=\, - \divv (m(u) \nabla (\Delta u-F'(u))) + \divv ( \sqrt{m(u)} \partial_t{W})
\end{equation}
and models the height of a thin liquid film $u(t,x)$ on a surface under the influence of thermal fluctuations. The function $m$ is the mobility function, $F$ is an interface potential  and  $\partial_t{W}$ is spatio-temporal noise. The stochastic thin-film equation was independently introduced in \cite{DMES2005} and \cite{GruenMeckeRauscher2006} as a lubrication approximation of the stochastic Navier-Stokes equation and explains, as shown in \cite{GruenMeckeRauscher2006}, discrepancies between  simulations of deterministic thin-films and experiments for films with small heights.

The mobility function has typically the form $m(u)=u^n$ for some real exponent $n$ depending on the boundary condition imposed at the boundary between the film and the substrate. In particular, imposing the no-slip condition for the Navier-Stokes equations, one obtains the mobility  $m(u)=u^3$ and using the Navier-slip condition with slip length $l$ one obtains $m(u)=lu^2+u^3$, which is up to scaling reasonably well approximated by $m(u)=u^2$ for small film heights $u\ll l$. The interface potential $F$ models forces between the molecules of the fluid film and the surface and takes for example the form $F(u)=u^{-8}-u^{-2}+1$ in case of the $6$-$12$ Lennard-Jones potential. If one neglects these forces and allows in particular for the qualitatively interesting situation of solutions without full support, one chooses $F(u)=0$ instead. 

The first existence result for  solutions to the stochastic thin-film equation was  proved in \cite{fischer_gruen_2018} for $m(u)=u^2$ with non-zero interface potential and the It\^{o} interpretation of the noise term  in dimension $d=1$ using a Galerkin approximation. The interface potential $F(u)$ is there assumed to become singular at $u=0$, which ensures that the constructed solutions stay strictly positive for all times. In \cite{GessGann2020}, it was pointed out that the Stratonovich interpretation of \eqref{Eq40} is more natural, since the use of It\^{o} noise would yield additional correction terms in the lubrication approximation. Moreover, the use of Stratonovich noise allowed the authors to employ a splitting of the deterministic and stochastic dynamics and  construct solutions to  \eqref{Eq40} with $m(u)=u^2$ and $F(u)=0$, again in $d=1$. In particular, the result allows for initial values without full support, which is from a  mathematical viewpoint more challenging, due to the loss of parabolicity of \eqref{Eq40} in this case. 
Based on a Galerkin approximation,  solutions to \eqref{Eq40} in  $d=1$ with $F(u)=0$ and  $m(u)= u^n$ with $n\in [\frac{8}{3}, 4)$ were recently constructed in \cite{dareiotis2021nonnegative}. Although no interface potential is assumed, the mathematical analysis is based on an entropy estimate which ensures that the constructed solution stays  positive almost everywhere.
We also mention that in \cite{cornalba2018priori}  a-priori estimates for \eqref{Eq40} were deduced at the cost of adding additional compensation terms on the right-hand side and it was shown that a local existence result would extend to a global one based on these estimates.  The first existence result in the physically relevant dimension  $d=2$ was recently proved in \cite{metzger2021existence} for $m(u)=u^2$ and a non-zero interface potential as an adaption  of \cite{fischer_gruen_2018} to the higher-dimensional setting. 
Finally, in  \cite{gruenklein2021} a result similar to the one in \cite{GessGann2020} was shown using a different regularization procedure and additionally so-called $\alpha$-entropy estimates were derived. Both, the higher dimensional setting as well as the $\alpha$-entropy estimates are central elements of the current article and we point out that it was  developed independently of \cite{metzger2021existence} and  \cite{gruenklein2021}.

In the current article, we generalize the approach form \cite{GessGann2020} to the case $d=2$, i.e. we show existence of martingale weak solutions to \eqref{Eq40} with $m(u)=u^2$, $F(u)=0$ and  Stratonovich noise on the two-dimensional torus.
 The higher spatial dimension leads to additional mathematical challenges due to the reduced gain of integrability after employing the Sobolev embedding theorem. Indeed, 
in 	\cite{GessGann2020}  the control of the surface energy
\[
\int_{\mathbb{T}} |u'|^2 \, dx
\] 
 suffices to show convergence of the nonlinear terms from the sequence of approximate solutions. As apparent from the  deterministic setting \cite{Passo98ona}, the additional control of the dissipation terms of the $\alpha$-entropy
\[
-\int_{\TT} u^{\alpha+1}\, dx, \;\; \alpha\in (-1,0)
\] 
is necessary to deduce convergence of the nonlinear terms in the two-dimensional case. Hence, to adapt the time splitting approach from \cite{GessGann2020}, we have to additionally control the $\alpha$-entropy along the splitting scheme and use the more delicate limiting procedure from  \cite{Passo98ona} compared to the one-dimensional case \cite{BERNIS1990}. Combining this with the stochastic compactness method is the key challenge of this article.  Moreover, compared to the independently proven result in \cite{metzger2021existence}, where  an existence result in the two-dimensional case based on the dissipation of the classical entropy 
\[-
\int_{\TT}\log (u)\, dx
\]
is given, we do allow for  solutions with a contact line between the fluid film and the solid, which is a qualitatively interesting object to study, see for example \cite{FischerNitti2019} and the references therein. 

\subsection{Main Result}We state the existence result which we will prove in the course of this article. Choosing $m(u)=u^2$, $F(u)=0$ and interpreting the noise in the Stratonovich sense, we obtain the SPDE
\begin{equation}\label{Eq_STFEq} 
du_t \,=\, - \divv (u_t^2 \nabla \Delta u_t)	{\,dt\,} +\, \divv ( u_t\circ dW_t) \;\; \text{ on }\TT,
\end{equation}where $W_t$ is specified as follows.
We let $(\psi_l)_{l\in\mathbb{N}}$ be the orthonormal basis in $H^2(\TT, \RR^2)$ consisting of the eigenfunctions to the periodic Laplacian in the first and second component respectively, i.e. every $\psi_l$ is of the form $(\xi_k, 0)$ or $(0, \xi_k)$ for some $k\in \ZZ^2$, where 
\begin{equation}\label{Eq41}
\xi_k(x,y)\,=\, \frac{\tilde{\xi}_{k_1}(x)\,\tilde{\xi}_{k_2}(y)}{\sqrt{1+(2\pi|k|)^2+(2\pi|k|)^4}}
\end{equation}
and 
\begin{equation}\label{Eq42}
\tilde{\xi}_{ j}(x)\,=\, \begin{cases}
	\sqrt{2}\cos(2\pi { j} x), &{ j}<0,\\
	1,& { j}=0,\\
	\sqrt{2}\sin(2\pi { j} x), & { j}>0.\\
\end{cases}
\end{equation}
Moreover, we let $\Lambda = (\lambda_l)_{l\in \NN} \in l^2(\NN)$ satisfy the symmetry relation
\begin{equation}\label{sym_noise}
	\lambda_l\,=\lambda_{\tilde{l}} \;\; \text{ whenever }\;\; \psi_l=(\xi_k,0)\, \wedge \psi_{\tilde{l}}=(0,\xi_k)\;\; \text{ for some $k\in \ZZ^2$.}
\end{equation}
Then \begin{equation}\label{Eq50}
W_\Lambda(t)\,=\, \sum_{l=1}^\infty \lambda_l \beta^{(l)}_t\psi_l\end{equation}
for  independent Brownian motions $(\beta^{(l)})_{l\in \NN}$
defines a centered Gaussian process on $H^2(\TT,\RR^2)$ with the covariance operator $Qf =\sum_{l=1}^\infty {\lambda_l^2} (f,\psi_l)_{H^2(\TT, \RR)} \psi_l$.  Inserting $W_\Lambda$ as the driving process in \eqref{Eq_STFEq}, writing the Stratonovich integral  in It\^{o}-form, and writing $J=u^2\nabla \Delta u$ in the weak form from \cite[Eq. (3.2)]{Passo98ona} yields the following notion of weak martingale solutions to \eqref{Eq_STFEq}.
\begin{defi}\label{Defi_main}
	Let $T\in (0,\infty)$ and $q\in (2, \infty)$. A weak martingale solution to \eqref{Eq_STFEq} with $q'$-regular non linearity on $[0,T]$ consists out of a filtered probability space satisfying the usual conditions, a family of independent Brownian motions $(\beta^{(l)})_{l\in \mathbb{N}}$, a continuous process $(u(t))_{t\in [0,T]}$ in $H^1_w(\TT)$ together with a random variable $J$ with values in $L^2(0,T; L^{q'}(\TT{, \RR^2}))$  such that
	\begin{enumerate}[label=(\roman*)]
		\item
		\label{item2000}
		 $u(t)$, $J|_{[0,t]}$ are $\mathfrak{F}_t$-measurable as random variables in $H^1(\TT)$ and $L^2(0,t; L^{q'}(\TT, {\RR^2}))$, respectively, for every $t\in [0,T]$,
		\item \label{Item1000} $|\nabla u| \in L^3(\{u>0\})$ and for all $\eta \in L^\infty(0,T; W^{2, \infty}(\TT))$ it holds
		almost surely 
		\begin{align}\label{Defi_J_is_nonlin_part}\begin{split}\int_0^T\int_{\TT} J\cdot \eta \, dx\,dt\,=\,&
				\int_0^T\int_{\{u({ t })>0\}} |\nabla u|^2 \nabla u \cdot \eta \, dx \,{ dt }\\&\,+\,
				\int_0^T\int_{\{u({ t })>0\}} u |\nabla u|^2 \divv \eta\, dx \,{ dt }\\&\,+\,
				2\int_0^T\int_{\{u({ t })>0\}} u\,\nabla^T u D\eta \nabla u\, dx \,{ dt }\\&\,+\,
				\int_0^T\int_{\TT} u^2\nabla u\cdot \nabla \divv\eta\, dx \,{ dt }
			\end{split}
		\end{align}  
		\item and for all $\varphi\in W^{1, q}(\TT)$ we have	\begin{align}\begin{split}
				\label{Defi_SPDE}
			\left<
			{u}(t), \varphi
			\right>\, - \,\left<
			u_0, \varphi
			\right>\,=\,&
			\int_0^t -\left<\divv({J}), \varphi\right> \, ds
			\, +\,{ \frac{1}{2} }\sum_{l=1}^\infty\int_0^t \lambda_l^2\left<\divv (\divv({u}(s)\psi_l)\psi_l), \varphi\right>\, ds\\&+
			\sum_{l=1}^\infty \lambda_l \int_{{ 0}}^t\left< \divv({u}(s)\psi_l), \varphi \right>\, d{\beta}^{(l)}_s
			\end{split}
		\end{align}almost surely for all $t\in [0,T]$.
	\end{enumerate}
\end{defi}
\begin{rem}\begin{enumerate}[label = (\roman*)]
	\item By the weak continuity in $H^1(\TT)$ any solution $u$ in the sense of Definition \ref{Defi_main} satisfies
		\begin{equation}\label{Eq37}
			\sup_{0\le  t\le T} \|u(t)\|_{H^1(\TT)}\,<\, \infty
		\end{equation}
		almost surely.
		\item The measurability assumption on $J$ in item \ref{item2000} ensures that all the terms on the right-hand side of \eqref{Defi_SPDE} are adapted. Interpreting $J$ as an element of the distribution space $\mathcal{D}'(\RR\times \TT)$, one can equivalently demand that $J$ is adapted to $\mathfrak{F}$ in the sense of distributions \cite[Definition 2.2.13]{BreitFeireislHofmanova2018}. This follows by density of $C_c^\infty ((0,t)\times \TT)$ in $L^2(0,t; L^{q}(\TT, {\RR^2}))$, separability of $L^2(0,t; L^{q'}(\TT, {\RR^2}))$,  and the equivalence of weak and Borel measurability in separable Banach spaces \cite[Proposition 1.1.1]{AnainBanachspaces1}.
	\end{enumerate}
\end{rem}
In course of this article, we will derive the following existence result.
\begin{thm}\label{Thm_main}
	Let $\mu$ be a probability distribution on  $H^1(\TT)$  supported on the non-negative functions,  $T\in (0,\infty)$, $q\in (2, \infty)$ and $\alpha\in (-1,0)$. Then there exists a weak martingale solution to \eqref{Eq_STFEq} on $[0,T]$ with $q'$-regular non linearity  satisfying 
	$u(0)\sim \mu$. Moreover,
	\begin{enumerate}[label=(\roman*)]
		\item $u(t)\ge 0$ almost surely for all  $t\in [0,T]$,
		\item we have for $p\in (0,\infty)$ the estimates 
		\begin{align} \label{Eq34}
				{E}\left[
			\sup_{0\le t\le T} \|{u}\|_{H^1(\TT)}^p
			\right]\, &\lesssim_{\Lambda, p, T} \,\int \|\cdot\|^p_{H^1(\TT)}\,d\mu,\\\label{Eq35}
			{E}\left[ \|J\|_{L^2(0,T; L^{q'}(\TT, { \RR^2}))}^\frac{p}{2} \right]\, &\lesssim_{\Lambda, p,q , T}\, \int \|\cdot\|^p_{H^1(\TT)}\,d\mu
		\end{align}
	\item and it holds the additional  spatio-temporal regularity 
	\begin{equation}\label{Eq13}
		u^{\frac{\alpha+3}{2}}\,\in \, L^2(0,T; H^2(\TT))\;\; \text{ and }\;\;u^{\frac{\alpha+3}{4}}\in L^4(0,T; W^{1,4}(\TT))
	\end{equation}
	almost surely.
	\end{enumerate}
\end{thm}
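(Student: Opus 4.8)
The strategy is a Trotter--Kato time-splitting combined with the stochastic compactness method, following \cite{GessGann2020} but upgraded with $\alpha$-entropy control as in the deterministic analysis of \cite{Passo98ona}. First I would fix a time step $\tau = T/N$ and alternately solve, on successive intervals of length $\tau$, the deterministic thin-film equation $\partial_t u = -\divv(u^2 \nabla \Delta u)$ and the stochastic transport-type problem $du = \divv(u\circ dW)$ (the latter equivalently in It\^o form with the correction $\tfrac12 \sum_l \lambda_l^2 \divv(\divv(u\psi_l)\psi_l)$). For the deterministic half-step one invokes the existence theory of \cite{Passo98ona}, which comes equipped with the surface-energy dissipation and the $\alpha$-entropy dissipation; the key point is that both the $H^1$-norm and the $\alpha$-entropy $-\int_{\TT} u^{\alpha+1}\,dx$ are non-increasing along this flow, and one records the integrated dissipation giving control of $u^{(\alpha+3)/2}$ in $L^2 H^2$ and $u^{(\alpha+3)/4}$ in $L^4 W^{1,4}$ on each deterministic subinterval. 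For the stochastic half-step, one shows that $H^1$-norm and $\alpha$-entropy grow in a controlled way in expectation: applying It\^o's formula to $\|u\|_{H^1}^2$ (or $\|u\|_{H^1}^p$) and to $\int u^{\alpha+1}\,dx$ and using the symmetry \eqref{sym_noise} of $\Lambda$ to see that the It\^o correction exactly balances the Stratonovich-to-It\^o drift up to lower-order terms, one gets a Gronwall-type estimate; nonnegativity is preserved because the stochastic step is (formally) a stochastic flow of diffeomorphisms acting on $u$.

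Concatenating the half-steps produces approximations $u^N$ together with piecewise-defined fluxes $J^N$; the a priori bounds are $N$-uniform versions of \eqref{Eq34}, \eqref{Eq35} and \eqref{Eq13}, obtained by summing the per-step estimates and using discrete Gronwall. The next step is tightness: on the path space I would use the $H^1$-bound plus time-regularity of $\langle u^N(t),\varphi\rangle$ coming from \eqref{Defi_SPDE} (the $J^N$ term controlled in $L^2 L^{q'}$, the noise term via a Kolmogorov/BDG estimate) to get tightness of $u^N$ in, say, $C([0,T];H^1_w(\TT))\cap L^2(0,T;H^1(\TT))$, tightness of $J^N$ in $L^2(0,T;L^{q'})$ with the weak topology, tightness of the driving Brownian motions, and — crucially for passing to the limit in the nonlinear flux — tightness of $u^{(\alpha+3)/2}$ in $L^2(0,T;H^1(\TT))$ together with a.e. convergence of $\nabla u^N$ on the positivity set, which is exactly where the $\alpha$-entropy dissipation enters. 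Then I apply the Skorokhod--Jakubowski representation theorem to get a.s.-convergent versions on a new probability space, pass to the limit in the linear SPDE \eqref{Defi_SPDE}, and identify the limit flux using the weak formulation \eqref{Defi_J_is_nonlin_part}: here one argues as in \cite{Passo98ona} that $|\nabla u|\in L^3(\{u>0\})$ and that each of the four integrands converges, using the a.e. convergence of gradients where the limit is positive and uniform integrability from the $\alpha$-entropy bounds; on $\{u=0\}$ the integrands vanish in the limit by the degeneracy $u^2\le C$ and $u|\nabla u|^2 \to 0$. Finally, lower semicontinuity transfers the uniform a priori bounds to the limit, yielding \eqref{Eq34}--\eqref{Eq13}, and nonnegativity passes to the limit pointwise in $t$ by the a.s. convergence in $C([0,T];H^1_w)$.

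The main obstacle is the identification of the nonlinear flux $J$ in the limit in the two-dimensional setting. In $d=1$ the surface-energy bound alone gives $u\in L^\infty_t C^{1/2}_x$, enough for strong convergence of $\nabla u$; in $d=2$ this fails, and one must instead extract a.e. convergence of $\nabla u^N$ on the open set $\{u>0\}$ from the $\alpha$-entropy dissipation bounds (which give control of $\nabla u^{(\alpha+3)/4}$ in $L^4$, hence, combined with the $H^1$-bound, an $L^{3}_{loc}$-type bound on $\nabla u$ on the positivity set) and then justify the delicate limit passage in the four boundary-type terms of \eqref{Defi_J_is_nonlin_part}, controlling the contributions near the contact line $\{u=0\}$. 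The secondary difficulty is the interplay of this deterministic limiting machinery with the stochastic compactness method: one must ensure that the $\alpha$-entropy estimate, which in \cite{Passo98ona} is obtained via a regularized entropy and a careful choice of test function, survives the time-splitting and the addition of the It\^o correction term, and that all the required bounds are uniform in $N$ in the relevant moments $E[\cdot^{p/2}]$ so that Skorokhod applies and the limiting estimates hold.
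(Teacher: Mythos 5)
Your proposal captures the right high-level architecture (Trotter--Kato splitting, $\alpha$-entropy control specific to $d=2$, Skorokhod--Jakubowski, identification of $J$ via the Passo--Garcke--Gr\"un weak formulation), but it glosses over several steps that are not formalities and that the paper resolves with dedicated machinery.

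The main gap is the well-posedness of the \emph{stochastic} half-step. You write the scheme as if $du = \divv(u\circ dW)$ can be solved directly as an $H^1$-valued process to which It\^o's formula applies; but this equation is a degenerate second-order SPDE that does not fit into the standard variational framework, since the (Stratonovich-to-It\^o corrected) drift and the diffusion together yield only a Lyapunov estimate $\le C_\Lambda\|w\|_{H^1}^2$, not strict coercivity in a Gelfand triple. The paper handles this by introducing the viscous regularization $du = \epsilon\Delta u\,dt + \divv(u\circ dW)$ (Section 3), proving coercivity in $H^2\subset H^1\subset L^2$ via the isotropy condition \eqref{sym_noise} (Lemma \ref{Lemma_coercive}), constructing the full $\epsilon$-regularized splitting scheme with bounds uniform in both $\epsilon$ and the step size (Section 4), and only then passing $\epsilon\searrow 0$ via a first Skorokhod--Jakubowski step \emph{before} taking the time-step limit. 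This two-stage limiting procedure (first $\epsilon\searrow 0$, then $N\to\infty$) is absent from your proposal and is not just cosmetic: the It\^o formula you want to use on $\|w\|_{H^1}^2$ and on the entropy is only directly justified on the regularized SPDE, and one then shows the resulting estimates survive the $\epsilon\searrow 0$ limit. The ``stochastic flow of diffeomorphisms'' heuristic you cite for nonnegativity does not substitute for an actual existence theorem with $L^2(\Omega\times[0,T];H^2)$ regularity.

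Secondary gaps. (i) You apply It\^o directly to $\int_{\TT} u^{\alpha+1}\,dx$, but for $\alpha\in(-1,0)$ this functional is singular at $u=0$ and not $C^2$ on $L^2(\TT)$; the paper regularizes via $G_{\alpha,\kappa}=G_\alpha\cdot\eta_\kappa$ with a smooth cutoff $\eta_\kappa$ and estimates each of the resulting correction terms using the structure of $\psi_l$ and the bound \eqref{boundedness_psil} (Lemma \ref{Bound_powers_2}), and the estimate must furthermore be uniform in the cutoff and in $N$, which is the point of that lemma. (ii) The deterministic thin-film equation has no known uniqueness, so ``invoke Passo98'' does not give a well-defined map $v_0\mapsto(v,J)$ one can insert into a stochastic scheme; the paper constructs a Borel-measurable selection of solutions via a measurable selection theorem on weak-$*$ compact balls (Corollary \ref{Cor_mble_sol_map}), and this is explicitly advertised as a novel device. (iii) The theorem allows initial laws $\mu$ with possibly infinite moments (the right-hand sides of \eqref{Eq34}, \eqref{Eq35} may be $+\infty$), and the low-moment estimates $p\in(0,2)$ are nontrivial; the paper handles both by decomposing $u_0$ into countably many bounded pieces $u_0^{(k)}=\mathbbm{1}_{R^{(k)}}u_0$, running the whole construction and compactness method per piece, and recombining at the end using that $u\equiv 0$ solves the equation. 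Your proposal implicitly assumes moment bounds on $\mu$ from the start and would therefore prove a weaker statement than Theorem \ref{Thm_main}.

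Where your proposal is on target: the diagnosis that the two-dimensional difficulty lies in the flux identification and that the $\alpha$-entropy dissipation ($L^2 H^2$ control of $u^{(\alpha+3)/2}$ and $L^4 W^{1,4}$ control of $u^{(\alpha+3)/4}$) is the missing ingredient relative to $d=1$ is exactly right, and the strategy of extracting a.e.\ convergence of $\nabla u^N$ on the positivity set and controlling contact-line contributions by the degeneracy is what \cite{Passo98ona} and the paper's Section 5 actually do. The adaptation of the compactness argument \cite[Lemma 2.5]{Passo98ona} to the piecewise-in-time approximations, however, requires a careful treatment of time-translates of the piecewise-defined $v_N^{(k)}$ (Lemma \ref{Lemma_add_Tness}, Step 2) that your summary does not anticipate.
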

\begin{rem} 
	\begin{enumerate}[label=(\roman*)]
		\item
		 We point out that we allow for the  right-hand sides of \eqref{Eq34} and \eqref{Eq35} to be infinite, in which case the corresponding estimate trivializes.
		\item
		 We note that
	the differential identity
	\[
	\nabla u\,=\, \frac{4}{\alpha+3}u^{\frac{1-\alpha}{4}}\nabla u^\frac{\alpha+3}{4},
	\]
	the Sobolev embedding theorem,  \eqref{Eq13} and \eqref{Eq37}  imply that $|\nabla u|\in L^{4-}([0,T]\times \TT)$ almost surely and in particular the integrability condition from Definition \ref{Defi_main} \ref{Item1000}.
\end{enumerate}
\end{rem}
\subsection{Discussion of the result}
Theorem \ref{Thm_main} generalizes \cite[Theorem 1.2]{GessGann2020} to the setting in two dimensions and is therefore together with the independently developed result  \cite[Theorem 3.5]{metzger2021existence} the first existence result for the stochastic thin-film equation in higher dimension. 
As in the one-dimensional case, the time splitting approach is not only suitable to construct solutions to the stochastic thin-film equation, but suggests a numerical approach for their simulation as well.
The assumption $\Lambda\in l^2(\NN)$ on the noise \eqref{Eq50}  is the same as in \cite{fischer_gruen_2018,GessGann2020,dareiotis2021nonnegative}, where we refer the reader for an interpretation of the expansion \eqref{Eq50} in terms of a spatial correlation function of the noise to the exposition in \cite{bloemker_2005}. The additionally imposed  symmetry condition \eqref{sym_noise} expresses that the coordinate-wise noise processes are distributed according to the same Gaussian law in $H^2(\TT)$. This is a physically reasonable assumption since the noise is induced by thermal fluctuations and its distribution depends consequently on its position but not on its direction.  The same symmetry condition appears in \cite[Eq. (2.19)]{metzger2021existence}, where the use of Stratonovich noise is discussed, which indicates that it is an important assumption to treat the stochastic thin-film equation in higher dimensions.  We point out that in \cite{metzger2021existence}, the expansion \eqref{Eq50} in terms of eigenfunctions of the periodic Laplacian is relaxed to a  slightly more general assumption.

In contrast to the existence results from the mentioned articles, there is no integrability assumption on the initial distribution required in Theorem \ref{Thm_main}. This is achieved by using a decomposition of the initial value in countably many parts which are each almost surely bounded in $H^1(\TT)$. Then one can  construct approximate solutions and apply tightness arguments for each of these parts separately and add them together afterwards. The only important feature of \eqref{Eq_STFEq} for this to work is that $u(t)=0$ is a solution to it. We remark that these kind of reductions  to bounded or  integrable initial values are well-known and can be achieved in the setting of probabilistically strong solutions via localization or changing the probability measure, see \cite[Proposition 4.13]{agresti2020nonlinear_PartII} or \cite[Theorem 6.9.2]{krylov2002introduction} for examples. We use the  decomposition of the initial value instead, since it is more compatible with the stochastic compactness method as well as the estimates  \eqref{Eq34} and  \eqref{Eq35}. The moment estimates \eqref{Eq34} and \eqref{Eq35} for $p<2$ are also new for the stochastic  thin-film equation and are obtained from the estimates for higher moments.

\subsection{Outline and discussion of the proof}
In section \ref{Sec_deterministic} we review the existence result for weak solutions to the deterministic thin-film equation in two dimensions from \cite{Passo98ona} and state properties of the obtained solutions which are immediate from their construction. Additionally, we show that there is a measurable solution operator using the measurable selection theorem, which is important to combine these results with the stochastic setting. This approach is to the author's knowledge new and might be of interest also for other situations, where  a measurable solution operator is required.

In section \ref{Sec_Strat_SPDE} we consider the regularized Stratonovich SPDE
\begin{equation}\label{Eq60}
du_t\, =\, \epsilon \Delta u_t \,{ dt}\,+\, \divv ( u_t\circ dW_t) \;\; \text{ on }\TT
\end{equation}
and establish well-posedness in $H^1(\TT)$ using the monotone operator approach to SPDEs. The  coercivity estimates \eqref{Eq_coerc_1}, \eqref{Eq_coerc_2} are obtained analogously to the one-dimensional case \cite[Eq. (A.9)]{GessGann2020} and require only some  adaptions to multivariable calculus, where the symmetry condition \eqref{sym_noise} is used. Their uniformity in $\epsilon$ is key to letting later on $\epsilon \searrow 0$ and eliminating the regularization term $\epsilon \Delta u_t$ from \eqref{Eq60}. We note that this  procedure is well-known and refer the reader to the article \cite{Gerencs_r_2014} and the references therein for more information on degenerate  parabolic SPDEs.
However, the general result \cite[Theorem 2.1]{Gerencs_r_2014} does not directly apply to 
\begin{equation}\label{Eq38}
du_t\, =\, \divv ( u_t\circ dW_t) \;\; \text{ on }\TT
\end{equation}
and the coercivity estimates are unique to our particular situation.

In section \ref{Sec_time_discr}, we start constructing approximate solutions to \eqref{Eq_STFEq} by splitting the stochastic and deterministic dynamics along a time stepping scheme with step length $\delta$. Using the properties of the solutions to the deterministic thin-film equation and the solutions to \eqref{Eq60}, we derive estimates on the approximate solutions which are uniform in $\epsilon$ and $\delta$. The procedure is analogous to the one-dimensional case, but we note that we take the slightly different approach to let $\epsilon\searrow 0$ afterwards to be able to apply It\^{o}'s formula to the whole time splitting scheme. After these estimates are obtained, it is straightforward to deduce tightness statements on the approximating sequence in $\epsilon$ and employ the Skohorod-Jakubowski theorem to obtain an almost surely convergent, equally distributed subsequence. Usually, the parabolic regularization procedure does not require to pass to another probability space, see again \cite{Gerencs_r_2014}, but it is in our case convenient to  ensure convergence of the solutions to the deterministic equation as well.

Finally, in section \ref{Sec_time_limit}, we derive additional estimates on the approximating sequence by controlling the entropy production along the stochastic dynamics by means of It\^{o}'s formula. Using the obtained estimates, we show additional tightness properties of powers of the solution by an adaption of the compactness argument in \cite[Lemma 2.5]{Passo98ona}, which is compatible with our splitting scheme. These arguments are unique to the higher-dimensional setting and distinguish our approach from the one-dimensional case. We employ the Skohorod-Jakubowski theorem once more to let $\delta\searrow 0$ and identify the limit as a solution to \eqref{Eq_STFEq} combining the methods from \cite[Theorem 3.2]{Passo98ona} and \cite[Section 5.2]{GessGann2020}. As a result of the construction the additional estimates \eqref{Eq34}, \eqref{Eq35}, and the regularity properties \eqref{Eq13} follow.

 The reason to use  the time-splitting approach instead of a linear parabolic regularization is that it  directly yields non negative solutions, because the deterministic result \cite{Passo98ona} provides  non negative solutions and the regularized stochastic part of the equation admits a maximum principle.  Since we are dealing with a fourth order equation, a linear parabolic regularization of the whole equation would yield possibly negative solutions, which lack a reasonable physical interpretation. However, a more delicate, nonlinear regularization is possible as demonstrated in the one-dimensional case \cite{dareiotis2021nonnegative} or \cite{gruenklein2021}, but would require a longer proof.

\subsection{Notation}We use the notation $\lesssim$ to indicate that an inequality holds up to a universal constant and write $\lesssim_{p_1,\dots}$ if the constant depends on nothing but the parameters $p_1, \dots$. Similarly, we write $C$ for a universal constant and $C_{p_1,\dots}$, if the constant depends on $p_1, \dots$.
We write 
\begin{equation*}
	G_\alpha(t)\,=\, \int_1^t\int_1^s\tau^{\alpha-1} d\tau\,ds, \;\; \alpha \in \RR
\end{equation*} for the  (mathematical) $\alpha$-entropy, and point out for later reference that
\begin{equation}\label{Eq_alpha_entropy_expl}
	G_\alpha(t) =  \frac{t^{\alpha+1}}{\alpha(\alpha+1)} +r_\alpha(t)
	,\;\; t\ge 0,
\end{equation}
if $\alpha\in (-1,0)$, where $r_\alpha$ is a first order polynomial.  We use classical notation for differential operators, i.e. write $\nabla f$, $\divv(f)$, $\Delta f$ for the gradient, divergence and Laplacian of a function or a vector field $f$, respectively. Moreover, we write $Hf$ for the Hessian matrix and use the notational  convention that a differential operator is only applied to the first function appearing afterwards such that e.g.
\[
\nabla fg \,=\, g (\nabla f) ,\;\; \text{ but }\;\; \nabla (fg)\,=\, f (\nabla g)\,+\, g(\nabla f).
\]
We denote our domain, the $2$-torus, by $\TT$. We write $L^p(\TT)$, $W^{k,p}(\TT)$ and $H^k(\TT)$ for the Lebesgue, Sobolev and Bessel potential spaces on $\TT$ with integrability and smoothness exponents $p,k$, where more information on periodic spaces can be found in \cite[Section 3]{schmeisser1987topics}. We note that if $k$ is an integer, we equip  $H^k(\TT)$ with the equivalent $W^{k, 2}(\TT)$-inner product.
We write $L^p(\TT, \RR^2)$, $W^{k,p}(\TT, \RR^2)$ and $H^k(\TT, \RR^2)$ for the corresponding spaces of vector fields and equip them with the direct sum norm and set for the special  case $p=2$
\[\|(f_1, f_2)\|_{L^2(\TT, \RR^2)}^2\,=\, 
\|f_1\|^2_{{L^2}(\TT)}\,+\, \|f_2\|^2_{L^2(\TT)}, \;\;
\|(f_1, f_2)\|^2_{H^k(\TT, \RR^2)}\,=\, 
\|f_1\|^2_{H^k(\TT)}\,+\, \|f_2\|^2_{H^k(\TT)}
\] 
to preserve the Hilbert space structure. We write $\left<f, g\right>$ for the dual pairing in $L^2(\TT)$ and in $L^2(\TT, \RR^2)$ depending on $f,g$ being functions or vector fields.
If $(S, \nu)$ is a measure space and $X$ is a Banach space, we write $L^p(S,\nu,X)$ for the Bochner space of strongly measurable, $p$-integrable, $X$-valued functions on $(S, \nu)$. For details  we refer to \cite[Section 1]{AnainBanachspaces1}.
If it is clear which measure is considered, we use also the notation $L^p(S,X)$ and if $S=[s,t]$ and $\nu$ the Lebesgue measure $L^p(s,t;X)$.
Moreover, we write $C(0,T; X)$, $H^1(0,T; X)$,
$C^\gamma(0,T; X)$ and $W^{\gamma, p}(0,T; X)$, for the space of continuous functions,  first-order Sobolev space, H\"older and Sobolev-Slobodetskii space on $[0,T]$ with values in $X$, where we will only consider fractional exponents $\gamma \in (0,1)$. The corresponding H\"older semi-norm is denoted by $
[\cdot]_{\gamma,X}$ and for precise definitions of these spaces we refer to \cite[Section 2]{amann_embedding}. If a Banach space $X$ is considered with its weak or weak-* topology, we express this by writing $X_w$ or $X_{w*}$ respectively. Lastly we mention that we write $L_2(H_1, H_2)$ for the space of Hilbert-Schmidt operators between two Hilbert spaces $H_1$ and $H_2$.

\section{The deterministic thin-film equation}
\label{Sec_deterministic}
In this section we summarize the existence result for weak solutions to the deterministic thin-film equation in the special case of quadratic mobility
\begin{equation}\label{Eq_det_eq}
\partial_t v \,=\, -\divv(v^2\nabla \Delta v)
\end{equation}
from \cite{Passo98ona}. Moreover, we show that the solutions can be chosen in a measurable way, which will be important later. We remark that in \cite{Passo98ona} solutions to \eqref{Eq_det_eq} are constructed on a domain with  Neumann boundary conditions, but the arguments translate verbatim to the periodic setting. First, we recall the definition of weak solutions to \eqref{Eq_det_eq} from \cite[Definition 3.1]{Passo98ona}.

\begin{defi}\label{Defi_sol_det}Let $q\in (2, \infty]$ and $T>0$. 
	A weak solution to the (deterministic) thin-film equation on $[0,T]$ with $q'$-regular non linearity is a tuple
	\[
	(v,J)\,\in \, L^\infty(0,T;H^1(\TT))\cap H^1(0,T; W^{-1,q'}(\TT))\,\times \,L^2(0,T; L^{q'}(\TT, \mathbb{R}^2)){,}
	\] such that $\partial_t v= -\divv J$ in $L^2(0,T; W^{-1,q'}(\TT))$, $\nabla v \in L^3(\{
	v>0\}, \mathbb{R}^2)$ and 
	\begin{align}\label{Eq_weak_form_J}\begin{split}\int_0^T\int_{\TT} J\cdot \eta \, dx\,dt\,=\,&
			\int_0^T\int_{\{v(t)>0\}} |\nabla v|^2 \nabla v \cdot \eta \, dx \,dt\\&\,+\,
			\int_0^T\int_{\{v(t)>0\}} v |\nabla v|^2 \divv \eta\, dx \,dt\\&\,+\,
			2\int_0^T\int_{\{v(t)>0\}} v\,\nabla^T v D\eta \nabla v\, dx \,dt\\&\,+\,
			\int_0^T\int_{\TT} v^2\nabla v\cdot \nabla \divv\eta\, dx \,dt
		\end{split}
	\end{align}
	for all $\eta\in L^\infty(0,T;W^{2,\infty}(\TT,\RR^2)))$.
\end{defi}
\begin{rem}\label{Rem_version_det_sol}By Rellich's theorem, see \cite[Theorem 6.3, p.168]{adams2003sobolev}, and the Aubin-Lions lemma \cite[Corollary 5]{Simon1987} there is a compact embedding 
	\[L^\infty(0,T;H^1(\TT))\cap H^1(0,T; W^{-1,q'}(\TT)) \,\hookrightarrow\,
	C(0,T;L^r(\TT))
	\]
	for any $r\in[1,\infty)$. In the following, we will always identify a weak solution to the thin-film equation with its $L^r(\TT)$-continuous version. By \cite[Lemma II.5.9]{boyer2012mathematical} this version is weakly continuous as a mapping with values in $H^1(\TT)$.
\end{rem}
 The identity \eqref{Eq_weak_form_J} is a weak formulation of $J= u^2\nabla \Delta u$.
The following existence statement is given in \cite[Theorem 3.2]{Passo98ona}, where we add some quantitative estimates which follow from the construction in \cite{Passo98ona} and are proved in detail in appendix \ref{AppendixA}.

\begin{thm}\label{Thm_ex_sol_det}
	Let $v_0\in H^1(\TT)$ be non negative, $q\in (2,\infty]$, $T>0$ and $\alpha\in (-1,0)$. Then there exists a weak solution $(v,J)$ to the thin-film equation on $[0,T]$ with $q'$-regular non linearity and $v(0)= v_0$, which satisfies the following properties for universal constants $0<{C_\alpha},C_{ q}<\infty$. 
	
	\begin{enumerate}[label=(\roman*)]
		\item \label{Item_det_cons_of_mass} We have 
		for all $t\in [0,T]$ that
		\begin{equation*}
			\int_{\TT} v(t, \cdot )\, dx\,=\, \int_{\TT} v_0 \, dx\;\; \text{ and }\;\; v(t,\cdot )\ge 0.
		\end{equation*}
		\item \label{Item_det_energy_est}It holds the energy estimate
		\begin{equation*}
			\sup_{0\le t\le T}\|\nabla v(t) \|_{L^2(\TT,\RR^2)}\,\le \, \|\nabla v_0\|_{L^2(\TT,\RR^2)}.
		\end{equation*}
		\item \label{Item_det_energy_est_J} It holds that \begin{align*}&
				\|J\|_{L^2(0,T;L^{q'}(\TT,\RR^2))}^2\,+ \, C_{ q}
				\|\nabla v(T)\|_{L^2(\TT,\RR^2)}^2\left(\|\nabla v(T)\|_{L^2(\TT,\RR^2)}^2+\left|
				\int_{\TT} v_0\, dx\right|^2
				\right)\\\le\,&
				C_{ q}\|\nabla v_0\|_{L^2(\TT,\RR^2)}^2\left(\|\nabla v_0\|_{L^2(\TT,\RR^2)}^2+\left|
				\int_{\TT} v_0\, dx\right|^2
				\right).\end{align*}
		\item \label{Item_det_entr_est}We have  the $\alpha$-entropy estimate
		\begin{align*}
			\int_{\TT} G_\alpha(v(T, \cdot))\, dx \,+\,\frac{1}{{C_\alpha}} \int_0^T\int_{\TT} |H v^{\frac{\alpha+3}{2}}|^2+|\nabla v^{\frac{\alpha+3}{4}}|^4 \, dx\, dt\,\le \, 
			\int_{\TT} G_\alpha( v_0)\,dx.
		\end{align*}
	\end{enumerate}
\end{thm}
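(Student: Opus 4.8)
The plan is to deduce the statement from the known existence theory for \eqref{Eq_det_eq} and then to read off the four bounds from the approximation scheme behind it. The existence of a weak solution $(v,J)$ to the thin-film equation on $[0,T]$ with $q'$-regular non linearity (Definition \ref{Defi_sol_det}) and $v(0)=v_0$ is exactly \cite[Theorem 3.2]{Passo98ona}: that theorem is stated on a bounded domain with Neumann boundary conditions, but its proof --- a parabolic regularization combined with the higher-dimensional Bernis--Friedman entropy method, together with the weak formulation \cite[Eq.~(3.2)]{Passo98ona} of $J=v^2\nabla\Delta v$, which is \eqref{Eq_weak_form_J} --- transfers word for word to the torus. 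Hence only \ref{Item_det_cons_of_mass}--\ref{Item_det_entr_est} remain, and since these are bookkeeping along the scheme I only indicate the structure, deferring the computation to Appendix \ref{AppendixA}.

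Recall that the approximations $v_\epsilon$ of \cite{Passo98ona} are smooth and strictly positive, solve a uniformly parabolic regularization of \eqref{Eq_det_eq} in which $s\mapsto s^2$ is replaced by a positive, globally Lipschitz $m_\epsilon$ with $0\le m_\epsilon(s)\le s^2$ and $m_\epsilon\to(\cdot)^2$ locally uniformly, with lifted and mollified data $v_0^\epsilon\to v_0$ in $H^1(\TT)$, and that $J_\epsilon:=m_\epsilon(v_\epsilon)\nabla\Delta v_\epsilon\rightharpoonup J$ in $L^2(0,T;L^{q'}(\TT,\RR^2))$ while $v_\epsilon\to v$ in $C(0,T;L^r(\TT))$ for every $r\in[1,\infty)$ (cf.\ Remark \ref{Rem_version_det_sol}). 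The $\epsilon$-level versions of \ref{Item_det_cons_of_mass}--\ref{Item_det_entr_est} hold with $\epsilon$-independent constants. Indeed: testing the equation with the constant $1$ gives $\int_\TT v_\epsilon(t)\,dx=\int_\TT v_0^\epsilon\,dx$; testing with $-\Delta v_\epsilon$ gives the energy identity $\tfrac{d}{dt}\tfrac12\|\nabla v_\epsilon\|_{L^2(\TT,\RR^2)}^2=-\int_\TT m_\epsilon(v_\epsilon)|\nabla\Delta v_\epsilon|^2\,dx\le0$, so in particular the energy is non-increasing and $\int_0^T\!\int_\TT m_\epsilon(v_\epsilon)|\nabla\Delta v_\epsilon|^2\,dx\,dt=\tfrac12\|\nabla v_0^\epsilon\|^2-\tfrac12\|\nabla v_\epsilon(T)\|^2$; and, for \ref{Item_det_energy_est_J}, with $r_q:=(1/q'-1/2)^{-1}\in(2,\infty)$ and using $m_\epsilon(v_\epsilon)^{1/2}\le v_\epsilon$ one has $\|J_\epsilon(t)\|_{L^{q'}}\le\|v_\epsilon(t)\|_{L^{r_q}}\|m_\epsilon(v_\epsilon(t))^{1/2}\nabla\Delta v_\epsilon(t)\|_{L^2}$; since $d=2$ the embedding $H^1(\TT)\hookrightarrow L^{r_q}(\TT)$, the Poincar\'e inequality on $\TT$ and monotonicity of the energy give $\sup_t\|v_\epsilon(t)\|_{L^{r_q}}^2\lesssim_q\|\nabla v_0^\epsilon\|^2+|\int_\TT v_0^\epsilon\,dx|^2$, so squaring, integrating in time and inserting the energy dissipation identity yield $\|J_\epsilon\|_{L^2(0,T;L^{q'})}^2\lesssim_q(\|\nabla v_0^\epsilon\|^2+|\int_\TT v_0^\epsilon\,dx|^2)(\|\nabla v_0^\epsilon\|^2-\|\nabla v_\epsilon(T)\|^2)$, which, using $\|\nabla v_\epsilon(T)\|\le\|\nabla v_0^\epsilon\|$, rearranges into the $\epsilon$-version of \ref{Item_det_energy_est_J}.

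For \ref{Item_det_entr_est}, testing the regularized equation with $G_\alpha'(v_\epsilon)$ and applying the higher-dimensional Bernis--Friedman algebraic identity --- valid because $\alpha\in(-1,0)$ and $d=2$ lie in its range of validity --- bounds the resulting entropy dissipation from below, up to an error vanishing as $\epsilon\searrow0$, by $\tfrac1{C_\alpha}\int_\TT(|H v_\epsilon^{\frac{\alpha+3}{2}}|^2+|\nabla v_\epsilon^{\frac{\alpha+3}{4}}|^4)\,dx$ with $C_\alpha$ purely algebraic, hence universal; integrating in time gives the $\epsilon$-version of \ref{Item_det_entr_est}. One then lets $\epsilon\searrow0$: \ref{Item_det_cons_of_mass} passes because $v_\epsilon\to v$ in $C(0,T;L^r(\TT))$ respects mass and sign (non-negativity of $v$ being part of the construction in \cite{Passo98ona}); the left-hand sides of \ref{Item_det_energy_est}, \ref{Item_det_energy_est_J} and \ref{Item_det_entr_est} are lower semicontinuous under the weak convergences available (weak-$\ast$ in $L^\infty(0,T;H^1(\TT))$, weak in $L^2(0,T;L^{q'}(\TT,\RR^2))$, weak convergence of $v_\epsilon(t)$ in $H^1(\TT)$ for each $t$, and weak convergence of the powers $v_\epsilon^{\frac{\alpha+3}{2}}$, $v_\epsilon^{\frac{\alpha+3}{4}}$ in $L^2(0,T;H^2(\TT))$ and $L^4(0,T;W^{1,4}(\TT))$, whose limits are identified through the compactness argument of \cite[Lemma 2.5]{Passo98ona}); and the right-hand sides converge since $v_0^\epsilon\to v_0$ in $H^1(\TT)$, using in \ref{Item_det_entr_est} also that $\int_\TT G_\alpha(v_0^\epsilon)\,dx\to\int_\TT G_\alpha(v_0)\,dx$, which follows from a.e.\ convergence and $0\le G_\alpha(s)\lesssim_\alpha1+s$ by generalized dominated convergence.

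The main obstacle is item \ref{Item_det_entr_est}: one must verify that the dissipation terms of the $\alpha$-entropy are lower semicontinuous along $\epsilon\searrow0$ --- which, as stressed in the introduction, requires the refined compactness of \cite[Lemma 2.5]{Passo98ona} in place of the elementary one-dimensional device of \cite{BERNIS1990} --- and one must ensure that the constant $C_\alpha$ produced by the Bernis--Friedman identity depends on nothing but $\alpha$. Recasting the output of the H\"older--Sobolev estimate into the precise algebraic shape of \ref{Item_det_energy_est_J} is routine but a little fiddly.
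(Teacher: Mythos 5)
Your proposal is correct and follows essentially the same route as the paper's Appendix \ref{AppendixA}: invoke \cite[Theorem 3.2]{Passo98ona}, read off \ref{Item_det_energy_est} and the dissipation identity from \cite[Lemma 2.1]{Passo98ona}, derive \ref{Item_det_energy_est_J} by the same H\"older--Sobolev--Poincar\'e chain using $m\le\tau^2$, obtain \ref{Item_det_entr_est} from \cite[Eq.~(2.26)]{Passo98ona}, and pass all four to the limit by the weak/weak-* convergences of \cite[Prop.~2.6]{Passo98ona}. The only cosmetic difference is that you compress the two-parameter $(\delta,\epsilon)$-regularization of \cite{Passo98ona} (which the paper tracks explicitly, together with its $\esssup_{[T-\rho,T]}$-then-$\rho\searrow0$ device for the terminal time) into a single parameter, which does not affect the substance of the argument.
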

The following result can be proved  along the lines of \cite[Lemma 2.5, Proposition 2.6, Corollary 2.7, Theorem 3.2]{Passo98ona}.

\begin{prop}\label{Prop_conv_sol_det}Let $q\in (2, \infty]$, $T>0$ and  $(v_n, J_n)_{n\in \NN}$ be a sequence of non negative weak solutions to the deterministic thin-film equation on $[0,T]$ with $q'$-regular non linearity. Assume that there is an $\alpha\in (-1,0)$ such that $v_n$, $J_n$, $v_n^{\frac{\alpha+3}{2}}$ and $ v_n^{\frac{\alpha+3}{4}}$ are uniformly bounded in 
	\begin{equation}\label{Eq3}
		L^\infty(0,T;H^1(\TT)), \;L^2(0,T; L^{q'}(\TT,\RR^2)), \; L^2(0,T; H^2(\TT)), \;L^4(0,T; W^{1,4}(\TT))
	\end{equation} respectively.
Then for a subsequence we have
	\begin{enumerate}
		[label=(\roman*)]
		\item $v_n \rightharpoonup^* v$ in $L^\infty(0,T; H^1(\TT))$,
		\item $J_n\rightharpoonup J$ in $L^2(0,T; L^{q'}(\TT,\RR^2))$,
		\item $v_n^{\frac{\alpha+3}{2}}\rightharpoonup  v^{\frac{\alpha+3}{2}}$ in $L^2(0,T; H^2(\TT))$,
		\item $v_n^{\frac{\alpha+3}{4}}\rightharpoonup v^{\frac{\alpha+3}{4}}$ in $L^4(0,T; W^{1,4}(\TT))$ 
	\end{enumerate} and the limit $(v,J)$ is  a non-negative weak solution to the  thin-film equation  with $q'$-regular non-linearity.
	
\end{prop}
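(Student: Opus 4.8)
The plan is to follow the compactness-and-identification scheme of \cite[Lemma 2.5, Proposition 2.6, Corollary 2.7, Theorem 3.2]{Passo98ona}, which transfers from the Neumann domain treated there to the periodic torus without essential change; I describe the steps and isolate the one carrying the analytic weight. First I would extract weak limits: each space in \eqref{Eq3} is reflexive or the dual of a separable space, so by Banach--Alaoglu and a diagonal argument there are a subsequence (not relabelled) and elements $v$, $J$, $w$, $z$ in the respective spaces along which (i)--(iv) hold with $v^{(\alpha+3)/2}$ and $v^{(\alpha+3)/4}$ temporarily replaced by $w$, $z$. Since each $v_n$ is a weak solution, $\partial_t v_n=-\divv J_n$ is uniformly bounded in $L^2(0,T;W^{-1,q'}(\TT))$, so $(v_n)$ is in addition bounded in $H^1(0,T;W^{-1,q'}(\TT))$.

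Next, the compact embedding of Remark \ref{Rem_version_det_sol} lets me pass to a further subsequence with $v_n\to v$ in $C(0,T;L^r(\TT))$ for every $r<\infty$ and $v_n\to v$ a.e.\ on $[0,T]\times\TT$; in particular $v\ge 0$ a.e. Because $s\mapsto s^{(\alpha+3)/2}$ and $s\mapsto s^{(\alpha+3)/4}$ are continuous on $[0,\infty)$ and the Sobolev embeddings $H^2(\TT),W^{1,4}(\TT)\hookrightarrow L^\infty(\TT)$ bound $v_n^{(\alpha+3)/2}$ in $L^2([0,T]\times\TT)$ and $v_n^{(\alpha+3)/4}$ in $L^4([0,T]\times\TT)$, Vitali's theorem upgrades the a.e.\ convergence to $v_n^{(\alpha+3)/2}\to v^{(\alpha+3)/2}$ and $v_n^{(\alpha+3)/4}\to v^{(\alpha+3)/4}$ in these spaces, so $w=v^{(\alpha+3)/2}$, $z=v^{(\alpha+3)/4}$, and (i)--(iv) hold as stated. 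I also record that $v_n$ is bounded in $L^\infty(0,T;L^p(\TT))$ for every $p<\infty$, hence so is every positive power of $v_n$.

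The heart of the matter is to promote the weak convergence of $\nabla v_n^{(\alpha+3)/4}$ to a.e.\ convergence of $\nabla v_n$; I expect this to be the main obstacle. Adapting \cite[Lemma 2.5]{Passo98ona}, one combines the second-order control $v_n^{(\alpha+3)/2}\in L^2(0,T;H^2(\TT))$ with the strong convergence of $v_n$ just obtained to deduce, along a further subsequence, $\nabla v_n^{(\alpha+3)/2}\to\nabla v^{(\alpha+3)/2}$ and $\nabla v_n^{(\alpha+3)/4}\to\nabla v^{(\alpha+3)/4}$ a.e.; then, via the pointwise identities
\[
\nabla v_n=\tfrac{4}{\alpha+3}\,v_n^{(1-\alpha)/4}\,\nabla v_n^{(\alpha+3)/4},\qquad v_n^2\nabla v_n=\tfrac{2}{\alpha+3}\,v_n^{(3-\alpha)/2}\,\nabla v_n^{(\alpha+3)/2},
\]
which hold a.e.\ with right-hand sides vanishing a.e.\ on $\{v_n=0\}$ since the exponents $(1-\alpha)/4$, $(3-\alpha)/2$ are positive, one gets $\nabla v_n\to\nabla v$ a.e.\ on $[0,T]\times\TT$. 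Using the bound of $v_n$ in $L^\infty(0,T;L^p(\TT))$ and of $\nabla v_n^{(\alpha+3)/4}$ in $L^4$, this gives $|\nabla v_n|$ bounded in $L^{4-}([0,T]\times\TT)$, hence $\nabla v\in L^3(\{v>0\})$, and by Vitali $\nabla v_n^{(\alpha+3)/4}\to\nabla v^{(\alpha+3)/4}$ strongly in $L^p([0,T]\times\TT)$ for every $p<4$. It is precisely here that the higher integrability in \eqref{Eq3} — available in $d=2$ only through the $\alpha$-entropy estimates and not needed in the one-dimensional arguments — is indispensable, since the cubic quantity $|\nabla v_n|^2\nabla v_n$ cannot be identified from weak $L^4$-convergence of $\nabla v_n^{(\alpha+3)/4}$ alone, and the Dal~Passo--Garcke--Grün compactness argument must be reproduced with care for the periodic setting and the precise bounds assumed here.

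Finally I would pass to the limit in the defining relations. The identity $\partial_t v=-\divv J$ in $L^2(0,T;W^{-1,q'}(\TT))$ follows by passing to the limit in $\partial_t v_n=-\divv J_n$ using (i)--(ii). For \eqref{Eq_weak_form_J}, fixing $\eta\in L^\infty(0,T;W^{2,\infty}(\TT,\RR^2))$, I rewrite each of the first three integrals over $\{v_n>0\}$ by the identities above, so that its integrand is a product of a positive power of $v_n$ with a quadratic or cubic expression in $\nabla v_n^{(\alpha+3)/4}$ and vanishes a.e.\ off $\{v_n>0\}$; the integral thus becomes one over $\TT$, and by the strong $L^p$-convergence of the powers of $v_n$ and the strong $L^p$-convergence ($p<4$) of $\nabla v_n^{(\alpha+3)/4}$ each integrand converges in $L^1([0,T]\times\TT)$, and reversing the rewriting identifies the limit with the corresponding integral over $\{v>0\}$. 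The last integral in \eqref{Eq_weak_form_J} equals $\tfrac{2}{\alpha+3}\int_0^T\!\!\int_\TT v_n^{(3-\alpha)/2}\nabla v_n^{(\alpha+3)/2}\cdot\nabla\divv\eta$, which is linear in $\nabla v_n^{(\alpha+3)/2}$ and passes to the limit by the weak convergence (iii) against the $L^2$-strongly convergent factor $v_n^{(3-\alpha)/2}\nabla\divv\eta$. Hence $(v,J)$ satisfies \eqref{Eq_weak_form_J}, and together with $v\in L^\infty(0,T;H^1(\TT))\cap H^1(0,T;W^{-1,q'}(\TT))$, $J\in L^2(0,T;L^{q'}(\TT,\RR^2))$, $v\ge0$ and $\nabla v\in L^3(\{v>0\})$ it is a non-negative weak solution to the thin-film equation with $q'$-regular non linearity.
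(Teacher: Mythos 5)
Your proposal follows the same approach as the paper, which proves this statement only by citing Dal Passo--Garcke--Gr\"un \cite[Lemma 2.5, Prop.~2.6, Cor.~2.7, Thm.~3.2]{Passo98ona}; your sketch correctly fills in the steps of that adaptation (weak compactness, Aubin--Lions strong convergence of $v_n$, Vitali identification of the power limits, and passage to the limit in the weak form \eqref{Eq_weak_form_J}), and you rightly identify the upgrade to a.e.\ and strong $L^p$-convergence of $\nabla v_n^{(\alpha+3)/4}$ as the analytic crux deferred to the cited lemma. One small imprecision worth flagging: the Vitali upgrade of the a.e.\ convergence of $v_n^{(\alpha+3)/2}$ and $v_n^{(\alpha+3)/4}$ to strong $L^2$- resp.\ $L^4$-convergence needs uniform integrability of $v_n^{\alpha+3}$ on $[0,T]\times\TT$, which does not follow from the $L^2(0,T;H^2)$- and $L^4(0,T;W^{1,4})$-bounds via Sobolev embedding as you first write, but does follow from the $L^\infty(0,T;L^p(\TT))$-bound on $v_n$ for every $p<\infty$ (coming from $v_n$ bounded in $L^\infty(0,T;H^1(\TT))$ and the two-dimensional Sobolev embedding) that you record just afterwards.
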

Finally, we give proof to the existence of a measurable solution operator.
To this end, we define the set $\mathcal{X}_{q,T}$ as the topological product of the spaces \eqref{Eq3}
equipped with the respective weak  and weak-* topologies. Moreover, we write $B_X(r)$ for the ball in $X$ centered at the origin with radius $r$,  if $X$ is a normed space.

\begin{cor}\label{Cor_mble_sol_map}Let $q\in (2, \infty]$, $T>0$ and $\alpha \in (-1,0)$. There is a  Borel-measurable mapping
	\begin{align}\label{Eq2}
		\mathcal{S}_{\alpha, q,T}\colon  \left\{v_0\in H^1(\TT)|v_0 \ge 0\right\} \to \mathcal{X}_{q,T}, \,v_0 \mapsto (v,J, v^{\frac{\alpha+3}{2}}, v^{\frac{\alpha+3}{4}}), 
	\end{align}
which assigns to every initial value a weak solution to the thin-film equation on $[0,T]$, which satisfies the properties (i)-(iv) of Theorem \ref{Thm_ex_sol_det}.
\end{cor}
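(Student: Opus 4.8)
The plan is to build $\mathcal{S}_{\alpha,q,T}$ by a measurable selection argument, carried out first over balls in $H^1(\TT)$ and then patched together.

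\emph{Reduction to a compact metrizable range.} Fix $r>0$ and set $D_r:=\{v_0\in H^1(\TT):v_0\ge 0,\ \|v_0\|_{H^1(\TT)}\le r\}$, a closed, hence Borel, subset of $H^1(\TT)$. Combining the a priori bounds of Theorem \ref{Thm_ex_sol_det}\ref{Item_det_cons_of_mass}--\ref{Item_det_entr_est} with the explicit form \eqref{Eq_alpha_entropy_expl} of $G_\alpha$ (whose leading power $t^{\alpha+1}$ carries a negative coefficient, so that $\int_\TT G_\alpha(v_0)\,dx$ is controlled linearly by $\|v_0\|_{L^1(\TT)}$) and the Sobolev embedding $H^1(\TT)\hookrightarrow L^p(\TT)$, $p<\infty$, to bound the lower-order parts of the norms of $v^{\frac{\alpha+3}{2}}$ and $v^{\frac{\alpha+3}{4}}$, one checks that every tuple $(v,J,v^{\frac{\alpha+3}{2}},v^{\frac{\alpha+3}{4}})$ produced by Theorem \ref{Thm_ex_sol_det} from an initial value in $D_r$ lies in a fixed bounded product set
\[
K_r:=\overline{B}_{L^\infty(0,T;H^1(\TT))}(R_0)\times\overline{B}_{L^2(0,T;L^{q'})}(R_1)\times\overline{B}_{L^2(0,T;H^2(\TT))}(R_2)\times\overline{B}_{L^4(0,T;W^{1,4}(\TT))}(R_3)
\]
with $R_i=R_i(r,\alpha,q,T)$. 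Since $L^\infty(0,T;H^1(\TT))$ is the dual of the separable space $L^1(0,T;H^{-1}(\TT))$ and the remaining three factors are separable and reflexive, Banach--Alaoglu together with separability of the (pre)duals shows that $K_r$, equipped with the topology it inherits from $\mathcal{X}_{q,T}$ (the product of the weak-$*$ and weak topologies of \eqref{Eq3}), is a compact metrizable space, in particular Polish.

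\emph{A set-valued map with closed graph, and selection.} Define $\Sigma_r\colon D_r\to 2^{K_r}$ by letting $\Sigma_r(v_0)$ consist of all $(v,J,v^{\frac{\alpha+3}{2}},v^{\frac{\alpha+3}{4}})\in K_r$ such that $(v,J)$ is a weak solution to the thin-film equation on $[0,T]$ with $q'$-regular non linearity, $v(0)=v_0$, and (i)--(iv) of Theorem \ref{Thm_ex_sol_det} hold; by that theorem $\Sigma_r(v_0)\neq\emptyset$. The graph of $\Sigma_r$ is closed in $D_r\times K_r$: if $v_0^n\to v_0$ in $H^1(\TT)$ and $(v_n,J_n,w_{1,n},w_{2,n})\to(v,J,w_1,w_2)$ in $K_r$ with all tuples in the graph, then from $\partial_t v_n=-\divv J_n$ and the bound on $J_n$ the sequence $v_n$ is bounded in $H^1(0,T;W^{-1,q'}(\TT))$, so by the Aubin--Lions embedding of Remark \ref{Rem_version_det_sol} a subsequence, and then by uniqueness of the limit the whole sequence, converges to $v$ in $C(0,T;L^\rho(\TT))$ for all $\rho<\infty$; hence $v(0)=\lim_n v_0^n=v_0$, and along a further subsequence $v_n\to v$ a.e. on $[0,T]\times\TT$, which forces $w_1=v^{\frac{\alpha+3}{2}}$, $w_2=v^{\frac{\alpha+3}{4}}$ by continuity of the relevant powers on $[0,\infty)$ and uniqueness of weak limits. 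By Proposition \ref{Prop_conv_sol_det} the pair $(v,J)$ is a weak solution, and properties (i)--(iv) pass to the limit (non-negativity and conservation of mass from the strong convergence of $v_n$, the energy estimate and the estimate for $J$ from weak lower semicontinuity of the norms, the $\alpha$-entropy estimate from Fatou's lemma, the non-negativity of $G_\alpha$, and weak lower semicontinuity of the dissipation terms). Since $K_r$ is compact, the closed graph implies that $\Sigma_r$ has closed values and that $\{v_0\in D_r:\Sigma_r(v_0)\cap F\neq\emptyset\}$ is closed for every closed $F\subseteq K_r$, whence $\{v_0:\Sigma_r(v_0)\cap U\neq\emptyset\}$ is Borel for every open $U$. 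The Kuratowski--Ryll-Nardzewski selection theorem then yields a Borel map $s_r\colon D_r\to K_r$ with $s_r(v_0)\in\Sigma_r(v_0)$ for all $v_0\in D_r$.

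\emph{Globalization and main obstacle.} Write $\{v_0\in H^1(\TT):v_0\ge0\}$ as the disjoint union of the Borel sets $A_n:=\{v_0\ge0:n-1<\|v_0\|_{H^1(\TT)}\le n\}$, $n\in\NN$ (reading the left inequality as vacuous for $n=1$), note $A_n\subseteq D_n$, and set $\mathcal{S}_{\alpha,q,T}(v_0):=s_n(v_0)$ for $v_0\in A_n$. Composing with the continuous inclusion $K_n\hookrightarrow\mathcal{X}_{q,T}$, the map $\mathcal{S}_{\alpha,q,T}$ is Borel on each $A_n$ and therefore Borel on the whole domain, and by construction $\mathcal{S}_{\alpha,q,T}(v_0)$ is of the form $(v,J,v^{\frac{\alpha+3}{2}},v^{\frac{\alpha+3}{4}})$ with $(v,J)$ a weak solution with $q'$-regular non linearity, $v(0)=v_0$, satisfying (i)--(iv) of Theorem \ref{Thm_ex_sol_det}; this is the asserted operator \eqref{Eq2}. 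I expect the crux of the argument to be the closedness of the graph of $\Sigma_r$: everything there hinges on upgrading the merely weak convergences built into the topology of $K_r$ to strong $C(0,T;L^\rho)$-convergence — via the a priori $H^1(0,T;W^{-1,q'})$-bound, Aubin--Lions compactness, and Proposition \ref{Prop_conv_sol_det} — which is exactly what makes the nonlinear constraint \eqref{Eq_weak_form_J}, the matching of the initial datum, and the estimates (i)--(iv) stable under limits; a secondary bookkeeping point is verifying in the first step that the solutions of Theorem \ref{Thm_ex_sol_det} genuinely remain in a single bounded set $K_r$.
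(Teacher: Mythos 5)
Your proposal is correct and follows essentially the same route as the paper: restrict to balls in $H^1(\TT)$, use the a priori bounds of Theorem \ref{Thm_ex_sol_det} to land in a compact metrizable product of weakly topologized balls, invoke Proposition \ref{Prop_conv_sol_det} together with the Aubin--Lions compactness from Remark \ref{Rem_version_det_sol} to establish closedness of the solution set-valued map, apply a measurable selection theorem, and patch over a Borel partition by norm. The only cosmetic difference is that you cite Kuratowski--Ryll-Nardzewski and derive weak measurability of $\Sigma_r$ via the closed-graph/$F_\sigma$ argument, whereas the paper invokes the Dellacherie selection theorem from \cite[Corollary 103]{ethier2005markov} with the sequential sub-convergence hypothesis; these are interchangeable here.
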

\begin{proof}We define for $v_0 $ in the domain of \eqref{Eq2} the set of all weak solutions to the stochastic thin-film equation with initial value $v_0$ and $q'$-regular non linearity  satisfying (i)-(iv) from Theorem~\ref{Thm_ex_sol_det} together with its corresponding powers by $\Sol (v_0)\subset \mathcal{X}_{q,T}$. 
	We write $X_i$ for the $i$-th space in \eqref{Eq3} and observe that if $\|v_0\|_{H^1(\TT)}\le n$ for some $n\in \NN$ the a-priori bounds of Theorem \ref{Thm_ex_sol_det} yield that
	\[
	\Sol (v_0)\,\subset  \, \mathcal{X}^{(n)}_{q,T} \, \coloneq \,
	\bigtimes_{i=1}^4B_{X_i}(r_{i,n}) 
	\]
	for suitably chosen $r_{i,n}$. We equip each $B_{X_i}(r_{i,n}) $ again with the weak  (weak-*) topology of the respective space $X_i$ and $\mathcal{X}^{(n)}_{q,T}$ with the resulting product topology. We note that each $B_{X_i}(r_{i,n}) $ is metrizable by the separability of the  (pre-) dual of $X_i$, see \cite[Proposition 1.2.29, Corollary 1.3.22]{AnainBanachspaces1} and consequently also the topological product $\mathcal{X}^{(n)}_{q,T}$.
	Moreover,  $\mathcal{X}^{(n)}_{q,T}$ is compact as a consequence of Tychonoff's and the Banach-Alaoglu theorem and therefore in particular a Polish space. Let $(v_{0,j})_{j\in \NN}$ be a sequence in
	\[
	\left\{v_0\in H^1(\TT)|v_0 \ge 0, \|v_0\|_{H^1(\TT)}\le n\right\}
	\]
	converging to $v_{0, *}$ in $H^1(\TT)$ and \begin{equation}\label{Eq4}(v_j,J_j, v_j^{\frac{\alpha+3}{2}}, v_j^{\frac{\alpha+3}{4}})\,\in\, \Sol(v_{0,j}). \end{equation}
	 Then the measurable selection theorem as in \cite[Corollary 103, p.506]{ethier2005markov} yields a Borel-measurable solution map
	\begin{equation}\label{Eq6}
	\mathcal{S}_{\alpha, q,T}^{(n)}\colon  \left\{v_0\in H^1(\TT)|v_0 \ge 0, \|v_0\|_{H^1(\TT)}\le n\right\} \to \mathcal{X}_{q,T}^{(n)}, \,v_0 \mapsto (v,J, v^{\frac{\alpha+3}{2}}, v^{\frac{\alpha+3}{4}})\, \in\, \Sol (v_0), 
	\end{equation}
	if we can verify that a subsequence of
	\[
	(v_j,J_j, v_j^{\frac{\alpha+3}{2}}, v_j^{\frac{\alpha+3}{4}})_{j\in \NN}
	\]
	converges to an element of $\Sol(v_{0,*})$. Since \eqref{Eq4} lies in $\mathcal{X}^{(n)}_{q,T}$,  its components are  uniformly bounded in \eqref{Eq3}. Therefore, we can apply Proposition \ref{Prop_conv_sol_det} and obtain that
		\[
	(v_j,J_j, v_j^{\frac{\alpha+3}{2}}, v_j^{\frac{\alpha+3}{4}})\,\to \, 
	(v,J, v^{\frac{\alpha+3}{2}}, v^{\frac{\alpha+3}{4}})
	\]
	for a subsequence in $\mathcal{X}_{q,T}^{(n)}$, where $(v,J)$ is a non negative weak solution to the thin-film equation with $q'$-regular non linearity. By \cite[Corollary 5]{Simon1987} we have 
	$v_j\to v$ in $C(0,T; L^2(\TT))$ and in particular $v_j(0)\to v(0)$ in $L^2(\TT)$. Consequently we must have $v(0)=v_{0,*}$. By lower semi-continuity of the norm with respect to weak and weak-* convergence we deduce that $(v,J)$ satisfies all the properties (i)-(iv) of Theorem \ref{Thm_ex_sol_det} and therefore \[
	(v,J, v^{\frac{\alpha+3}{2}}, v^{\frac{\alpha+3}{4}})\in \Sol(v_{0,*}).
	\]
	Hence, the measurable selection theorem indeed yields a Borel measurable map \eqref{Eq6}. Finally,  we define $\mathcal{S}_{\alpha, q,T}v_0 = \mathcal{S}_{\alpha, q,T}^{(n)} v_0$ if $n-1\le \|v_0\|<n$. Since balls in $H^1(\TT)$ are Borel sets, $\mathcal{S}_{\alpha, q,T}$ has the desired properties.
\end{proof}

\section{A regularized linear Stratonovich SPDE on $H^1(\TT)$}\label{Sec_Strat_SPDE}
In this section we show that the regularized version of the stochastic part in \eqref{Eq_STFEq}
\begin{equation}\label{Eq_reg_problem}
		dw_t \,=\, \epsilon \Delta w_t \, {dt} \, +\, \divv ( w_t\circ dW_t)
\end{equation} is well-posed using the variational approach to SPDEs \cite[Chapter 4]{liu2015stochastic}. A key ingredient to checking the sufficient conditions for well-posedness is the spatial isotropy condition on the noise \eqref{sym_noise}. Throughout this section, we fix a filtered probability space $(\Omega, \mathfrak{A}, P)$ satisfying the usual conditions with a sequence of independent real-valued Brownian motions $(\beta^{(l)})_{l\in \NN}$ and an $\epsilon\in (0,1)$. The main statement of this section reads as follows.
\begin{thm}\label{Thm_Sec_Reg_SPDE}Let $p\in [2, \infty)$, $T\in [0, \infty)$ and $w_0\in L^p(\Omega, H^1(\TT))$ be $\mathfrak{F}_0$-measurable. Then there exists a unique continuous, adapted $H^1(\TT)$-valued process $w$ such that $w\in L^2([0,T]\times \Omega,  H^2(\TT))$ and 
	\begin{equation}\label{Eq_reg_SPDE}
	w(t) \, = \, w_0\,+\, \int_0^t\epsilon \Delta w(s)\,+\, \frac{1}{2} \sum_{l=1}^{\infty} \lambda_l^2  \divv (\divv(w(s)\psi_l) \psi_l )\, ds\,+\, \sum_{l=1}^\infty\lambda_l \int_0^t \divv (w(s) \psi_l)\, d\beta^{(l)}_s
	\end{equation}
	for every $t\in [0,T]$. Moreover, $w$ satisfies
	\begin{equation}\label{Eq_est_reg_SPDE}
	E\left[
	\sup_{0\le t\le T} \|w(t)\|_{H^1(\TT)}^p 
	\right]\,\lesssim_{p,T} \,
	E\left[\|w_0\|_{H^1(\TT)}^p 
	\right],
\end{equation} 
 almost surely we have \begin{equation}\label{Eq_cons_mass_stoch_part}
	\int_{\TT} w(t)\, dx \,=\, \int_{\TT} w_0\, dx
\end{equation}and if $w_0\ge 0$ also $w(t)\ge 0$ for all $t\in [0,T]$.
\end{thm}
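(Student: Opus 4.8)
The plan is to apply the variational (monotone operator) framework for SPDEs, e.g. \cite[Chapter 4]{liu2015stochastic}, on the Gelfand triple $H^2(\TT) \hookrightarrow H^1(\TT) \hookrightarrow L^2(\TT)$ (or rather $(H^1(\TT))^*$ as the pivot-adjusted space), with $H^1(\TT)$ as the pivot Hilbert space. First I would recast \eqref{Eq_reg_SPDE} as $dw = A(w)\,dt + B(w)\,dW$ with $A(w) = \epsilon \Delta w + \frac12 \sum_l \lambda_l^2 \divv(\divv(w\psi_l)\psi_l)$ mapping $H^2(\TT) \to L^2(\TT) \subset (H^2)^*$, and $B(w) = (\lambda_l \divv(w\psi_l))_l$ as a Hilbert–Schmidt operator from the Cameron–Martin space of $W_\Lambda$ into $H^1(\TT)$. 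The bulk of the work is verifying the four structural hypotheses: hemicontinuity of $s \mapsto {}_{(H^2)^*}\langle A(w+sv), z\rangle_{H^2}$, weak monotonicity of $A - \tfrac12 \|B(\cdot)\|^2$, coercivity, and the growth bound on $A$ and $B$. Hemicontinuity and the growth/boundedness estimates are linear-algebra bookkeeping with the $\psi_l$; the genuinely substantive points are the coercivity estimates \eqref{Eq_coerc_1}, \eqref{Eq_coerc_2} referred to in the introduction.

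The heart of the matter is the coercivity/monotonicity estimate, and it is where the symmetry condition \eqref{sym_noise} enters. The point is that the It\^o correction term $\tfrac12 \sum_l \lambda_l^2 \divv(\divv(w\psi_l)\psi_l)$ is \emph{not} small, but when tested against $w$ in the $H^1(\TT)$ inner product it should combine with $\tfrac12 \|B(w)\|_{L_2}^2 = \tfrac12 \sum_l \lambda_l^2 \|\divv(w\psi_l)\|_{H^1(\TT)}^2$ so that the two nearly cancel, leaving only terms controlled by $\|w\|_{H^1(\TT)}^2$ (with no help needed from the $\epsilon\Delta w$ term, which only contributes a good negative term $-\epsilon\|w\|_{H^2}^2$-ish after integration by parts). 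Concretely, since each $\psi_l$ is either $(\xi_k,0)$ or $(0,\xi_k)$ and $\xi_k$ is an eigenfunction of the periodic Laplacian, the commutators $[\divv,\psi_l\cdot]$ and $[\nabla,\psi_l\cdot]$ are lower-order, and the symmetry relation \eqref{sym_noise} ensures that the sums over the two coordinate directions assemble into expressions involving $\sum_l \lambda_l^2 |\psi_l|^2$ and its derivatives, which are bounded functions (this is where $\Lambda \in \ell^2$ and the normalization \eqref{Eq41} with the $H^2$-weight matter). I would carry out the integration-by-parts in $L^2(\TT)$ and its gradient analogue carefully, grouping the second-order-in-$w$ contributions from the Stratonovich-to-It\^o correction against the Hilbert–Schmidt norm of the noise, and show the net is $\le C_\Lambda \|w\|_{H^1(\TT)}^2$, uniformly in $\epsilon \in (0,1)$; uniformity in $\epsilon$ is automatic since $\epsilon\Delta$ only helps. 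This follows the one-dimensional computation \cite[Eq.~(A.9)]{GessGann2020} with multivariable adaptations. The main obstacle is precisely managing these vector-calculus identities and confirming that the would-be top-order terms cancel rather than merely bound each other.

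Granting the four hypotheses, \cite[Chapter 4]{liu2015stochastic} delivers a unique continuous $H^1(\TT)$-valued adapted solution with $w \in L^2([0,T]\times\Omega, H^2(\TT))$, and the $p$-th moment bound \eqref{Eq_est_reg_SPDE} comes from the standard higher-moment estimate in that theory, using the coercivity bound together with the Burkholder–Davis–Gundy inequality and Gr\"onwall's lemma (the constant depending on $p$, $T$, and $\Lambda$ but not $\epsilon$). For conservation of mass \eqref{Eq_cons_mass_stoch_part}: testing \eqref{Eq_reg_SPDE} with the constant function $\varphi \equiv 1$, every term on the right is the integral over $\TT$ of a divergence (or a Laplacian), hence vanishes, so $\int_\TT w(t)\,dx$ is a.s.\ constant in $t$. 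For non-negativity when $w_0 \ge 0$: the equation is a linear second-order (after the regularization) parabolic SPDE with the noise in divergence form; I would invoke a comparison/maximum principle — e.g.\ by applying It\^o's formula to $\int_\TT (w(t))_-^2 \,dx$ (approximating $s\mapsto (s_-)^2$ by smooth convex functions), showing the drift contributions are $\le C\int_\TT (w)_-^2\,dx$ and the martingale part has zero expectation, then Gr\"onwall forces $E\int_\TT (w(t))_-^2\,dx = 0$ for all $t$, whence $w(t)\ge 0$ a.s.\ for each $t$, and for all $t$ simultaneously by continuity. Here too the Stratonovich structure (equivalently, the cancellation of top-order terms from \eqref{sym_noise}) is what makes the negative-part estimate close; if one tried the naive It\^o form the correction term would obstruct the maximum principle, as noted in \cite{metzger2021existence}.
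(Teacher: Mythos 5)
Your proposal mirrors the paper's proof closely in both structure and substance: the same variational (monotone operator) framework, the same Gelfand triple $H^2(\TT)\subset H^1(\TT)\subset L^2(\TT)$, the same operators $A^\epsilon$ and $B$, the same recognition that the Stratonovich correction and the Hilbert--Schmidt norm of $B$ almost cancel when paired in $H^1(\TT)$ --- this is precisely what the paper's Lemma~\ref{Lemma_coercive} (estimates \eqref{Eq_coerc_1}, \eqref{Eq_coerc_2}) establishes and where \eqref{sym_noise} is used --- and the same route to \eqref{Eq_est_reg_SPDE} (BDG plus Gr\"onwall, following \cite[Prop.~A.2]{GessGann2020}) and \eqref{Eq_cons_mass_stoch_part} (testing against $\mathbbm{1}_\TT$). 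One bookkeeping point you should be explicit about, which the paper flags and treats in an appendix: the triple $H^2\subset H^1\subset L^2$ is only a Gelfand triple in the sense required by \cite[Ch.~4]{liu2015stochastic} if $H^2(\TT)$ is equipped with the Bessel-potential norm (so that the $L^2$-norm is exactly the dual norm of $H^2$ under the $H^1$-pairing), not the usual $W^{2,2}$ norm. The only place where you take a genuinely different route is nonnegativity: the paper simply invokes Krylov's maximum principle for second-order parabolic SPDEs \cite[Thm.~4.3]{krylovIto}, while you propose a direct It\^o argument on $\int_\TT (w_-)^2\,dx$ via smooth convex approximations of $s\mapsto(s_-)^2$ followed by Gr\"onwall. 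That is a valid, self-contained alternative; it essentially reproves the special case of Krylov's result you need, and it leans on the same cancellation between the It\^o correction and the quadratic variation term that drives the coercivity estimate, so it fits naturally alongside the rest of your argument --- at the cost of carrying out the approximation and the integration-by-parts bookkeeping explicitly.
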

\begin{rem}\label{Rem_1} We convince ourselves that all the terms from \eqref{Eq_reg_SPDE} are well-defined.
	By  \eqref{Eq41} it holds \begin{equation}\label{boundedness_psil}
		\sup_{|\alpha|\le 2}\,
		\sup_{k\in \ZZ^2}\, \|\partial_\alpha \xi_k\|_{L^\infty(\TT)} \,<\, \infty.
	\end{equation}
and therefore we have 
\begin{equation}\label{Eq5}\|\divv (\divv(w\psi_l) \psi_l )\|_{L^2(\TT)}\, \lesssim \, \|w\|_{H^2(\TT)}\;\;\text{ and }\;\;
	\|\divv(w\psi_l)\|_{H^1(\TT)}\, \lesssim \, {\|w\|_{H^2(\TT)}}
\end{equation}
for every $w\in H^2(\TT)$. Using the first estimate we derive that 
\[
E\left(\int_0^T\left\|\epsilon \Delta w(t)\,+\, \frac{1}{2} \sum_{l=1}^{\infty} \lambda_l^2 \divv (\divv(w\psi_l) \psi_l )\right\|_{L^2(\TT)}^2dt\right)\, \lesssim_{\Lambda}\, \|w\|_{L^2([0,T]\times \Omega,  H^2(\TT))}^2,
\]
and consequently the deterministic integral in \eqref{Eq_reg_SPDE} exists almost surely as a Bochner integral in $L^2(\TT)$. Using the second estimate from \eqref{Eq5}, one derives by the martingale moment inequality and It\^o's isometry that
\begin{align*}&
	E\left(\sup_{0\le t\le T} \left\|\sum_{l=n}^m \lambda_l \int_0^t \divv(w(s)\psi_l)\, d\beta^{(l)}_s\right\|_{H^1(\TT)}^2\right)\, \lesssim\, E\left( \left\|\sum_{l=n}^m \lambda_l \int_0^T \divv(w(t)\psi_l)\, d\beta^{(l)}_t\right\|_{H^1(\TT)}^2\right)\\=\, &E\left[
	\sum_{l=n}^m \lambda_l^2 \int_0^T \left\|\divv(w(t)\psi_l)\right\|_{H^1(\TT)}^2
	 \, dt\right] \, {\lesssim} \, \left( \sum_{l=n}^m \lambda_l^2\right) \|w\|_{L^2([0,T]\times \Omega, H^2(\TT))}^2
\end{align*}
and the latter part converges to $0$ as $n,m\to \infty$. Therefore, the series of stochastic integrals in \eqref{Eq_reg_SPDE} converges to a continuous square-integrable martingale in $H^1(\TT)$.
\end{rem}
In order to treat the equation \eqref{Eq_reg_SPDE} within the variational setting \cite[Chapter 4]{liu2015stochastic}, we introduce the operators
\begin{align*}&
A^\epsilon\colon \,H^2 (\TT)\to L^{2}(\TT), \,w \,\mapsto \,
\epsilon \Delta w\,+\, \frac{1}{2}\sum_{l=1}^\infty \lambda_l^2  \divv (\divv(w\psi_l) \psi_l ),
\\&
B\colon\, H^2(\TT)\to L_2(H^2(\TT,\RR^2),H^1(\TT)), w \mapsto \left[v\mapsto \sum_{l=1}^\infty\lambda_l (v,\psi_l)_{H^2(\TT{ ,\RR^2 })} \divv (w \psi_l)\right].
\end{align*}
As in Remark \ref{Rem_1} we conclude that the operators $A^\epsilon$ and $B$ are well-defined, linear and bounded. In the following lemma we verify coercivity of $(A^\epsilon,B)$. Its proof is  similar to \cite[Lemma A.3]{GessGann2020}, but nevertheless contained to stress the necessity of assumption \eqref{sym_noise}.
\begin{lemma}\label{Lemma_coercive}
	There exists a constant $C_\Lambda<\infty$ such that
	\begin{align}\label{Eq_coerc_1}
		2\left<A^\epsilon w,w\right>\,+\, \sum_{l=1}^\infty\left\| B(w)[\psi_l] \right\|_{L^2(\TT)}^2 \, &\le \, C_\Lambda\|w\|_{L^2(\TT)}^2\,-\, 2\epsilon\| w\|_{H^1(\TT)}^2,\\\label{Eq_coerc_2}
		2\left<\nabla A^\epsilon w,\nabla w\right>\,+\, \sum_{l=1}^\infty\left\| \nabla B(w)[\psi_l] \right\|_{L^2(\TT, \RR^2)}^2 \, &\le \, C_\Lambda\|w\|_{H^1(\TT)}^2\,-\, 2\epsilon\|\nabla w\|_{H^1(\TT,\RR^2)}^2
	\end{align}
	for all $w\in H^2(\TT)$.
\end{lemma}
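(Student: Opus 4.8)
The plan is to compute the left-hand sides of \eqref{Eq_coerc_1} and \eqref{Eq_coerc_2} term by term, integrating by parts to expose cancellations between the Stratonovich-to-It\^o correction term $\frac12\sum_l\lambda_l^2\divv(\divv(w\psi_l)\psi_l)$ in $A^\epsilon w$ and the square $\sum_l\|B(w)[\psi_l]\|^2$ of the noise coefficients. The contribution $2\epsilon\langle\Delta w,w\rangle = -2\epsilon\|\nabla w\|_{L^2}^2$ (respectively $2\epsilon\langle\nabla\Delta w,\nabla w\rangle = -2\epsilon\|\nabla^2 w\|_{L^2}^2$ after integration by parts, using periodicity) is kept aside and, together with the contribution $-2\epsilon\|w\|_{L^2}^2$ (resp.\ $-2\epsilon\|\nabla w\|_{L^2}^2$) which I will add and subtract, furnishes the negative $-2\epsilon\|w\|_{H^1}^2$ (resp.\ $-2\epsilon\|\nabla w\|_{H^1}^2$) term on the right; so the real work is to show that the remaining (dissipation-free) part is bounded by $C_\Lambda\|w\|_{L^2}^2$ (resp.\ $C_\Lambda\|w\|_{H^1}^2$), i.e.\ that it involves no genuine second derivatives of $w$ even though each individual term naively does.

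For \eqref{Eq_coerc_1}: write $\langle\divv(\divv(w\psi_l)\psi_l),w\rangle = -\langle\divv(w\psi_l)\psi_l,\nabla w\rangle = -\int \divv(w\psi_l)\,(\psi_l\cdot\nabla w)\,dx$, and note $\divv(w\psi_l) = \psi_l\cdot\nabla w + w\,\divv\psi_l$. On the other hand $B(w)[\psi_l] = \lambda_l\divv(w\psi_l)$ since $(\psi_l,\psi_m)_{H^2} = \delta_{lm}$, so $\sum_l\|B(w)[\psi_l]\|_{L^2}^2 = \sum_l\lambda_l^2\|\divv(w\psi_l)\|_{L^2}^2 = \sum_l\lambda_l^2\int(\psi_l\cdot\nabla w + w\,\divv\psi_l)^2\,dx$. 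Expanding, the purely quadratic-in-$\nabla w$ terms are $\sum_l\lambda_l^2\int(\psi_l\cdot\nabla w)^2\,dx$ from the square, which is exactly cancelled by $-\sum_l\lambda_l^2\int(\psi_l\cdot\nabla w)^2\,dx$ coming from $2\cdot\frac12\sum_l\lambda_l^2\langle\divv(\divv(w\psi_l)\psi_l),w\rangle$ after the first-derivative split — here is where the structure $\psi_l=(\xi_k,0)$ or $(0,\xi_k)$ together with the symmetry \eqref{sym_noise} matters, because it guarantees $\sum_l\lambda_l^2\,(\psi_l\cdot a)^2 = (\sum_{k}\lambda_{(k)}^2\xi_k^2)\,|a|^2$ is isotropic in the vector $a=\nabla w$, so the two contributions really coincide pointwise and cancel rather than merely being comparable. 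What survives are terms at most linear in $\nabla w$, each carrying a factor $\divv\psi_l$ or $\psi_l\cdot\nabla(\divv\psi_l)$ that is bounded uniformly in $l$ by \eqref{boundedness_psil}; using $\ell^2$-summability of $\Lambda$ and Young's inequality ($|w||\nabla w|\le \eta|\nabla w|^2 + C_\eta|w|^2$ cannot be used without reintroducing $\nabla w$ — instead one integrates by parts once more to move the derivative off $\nabla w$, or groups $\int w(\psi_l\cdot\nabla w)\divv\psi_l = \frac12\int(\psi_l\cdot\nabla(w^2))\divv\psi_l = -\frac12\int w^2\divv(\divv\psi_l\,\psi_l)$) one bounds everything by $C_\Lambda\|w\|_{L^2}^2$.

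For \eqref{Eq_coerc_2} the same mechanism is applied after commuting $\nabla$ past the operators: one writes $\nabla\divv(w\psi_l) = \divv(\nabla w\otimes\psi_l) + \nabla(w\,\divv\psi_l)$ and observes that modulo lower-order commutators (all of which carry at least one derivative of $\psi_l$, hence are bounded by \eqref{boundedness_psil} and cost at most $\|w\|_{H^1}$), the leading part of $\nabla B(w)[\psi_l]$ behaves like $\lambda_l(\psi_l\cdot\nabla)\nabla w$, whose square $\sum_l\lambda_l^2\int|(\psi_l\cdot\nabla)\nabla w|^2\,dx$ is again isotropic by \eqref{sym_noise} and is cancelled by the corresponding leading part of $2\langle\nabla A^\epsilon w\,\,[\text{minus the }\epsilon\text{-term}],\nabla w\rangle$ after integration by parts. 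The remaining terms are controlled by $C_\Lambda\|w\|_{H^1}^2$ via the uniform bounds on derivatives of $\psi_l$ and $\Lambda\in\ell^2$. I expect the main obstacle to be precisely the bookkeeping of these commutator terms in \eqref{Eq_coerc_2}: one must verify that after all integrations by parts every term with a genuine Hessian of $w$ either cancels against its partner from the It\^o correction or else can be integrated by parts once more onto a derivative of $\psi_l$, and checking that the isotropy \eqref{sym_noise} is exactly what makes the Hessian--Hessian terms cancel (rather than leaving a sign-indefinite remainder) is the delicate point — this is the two-dimensional analogue of the one-dimensional computation in \cite[Lemma A.3]{GessGann2020}.
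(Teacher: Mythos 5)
Your sketch follows the paper's overall strategy (expand the two terms, look for exact cancellation of the highest-order contributions, integrate by parts the rest onto bounded derivatives of $\psi_l$), but it contains a conceptual misstep and leaves the genuinely hard part unproved. The misstep is in \eqref{Eq_coerc_1}: the quadratic-in-$\nabla w$ terms you identify, $\sum_l\lambda_l^2\int(\psi_l\cdot\nabla w)^2\,dx$ coming from $\sum_l\|B(w)[\psi_l]\|^2$ and $-\sum_l\lambda_l^2\int(\psi_l\cdot\nabla w)^2\,dx$ coming from $2\langle A^\epsilon w,w\rangle$, are \emph{literally the same sum with opposite signs}, so they cancel term by term for each fixed $l$. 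The symmetry condition \eqref{sym_noise} plays no role whatsoever in \eqref{Eq_coerc_1}, and invoking ``isotropy'' here is at best a red herring and at worst suggests you haven't actually carried out the expansion. The paper's proof of \eqref{Eq_coerc_1} (the identity $\tfrac12\nabla w^2=w\nabla w$ applied to both $\langle A^\epsilon w,w\rangle$ and $\|B(w)[\psi_l]\|^2$, then adding) makes this plain: the leftover is $\frac12\sum_l\lambda_l^2\langle w^2,\divv(\divv(\psi_l)\psi_l)-\psi_l\cdot\nabla\divv(\psi_l)\rangle$, and this is controlled by $C_\Lambda\|w\|_{L^2}^2$ using only \eqref{boundedness_psil} and $\Lambda\in\ell^2$, not \eqref{sym_noise}.

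For \eqref{Eq_coerc_2}, which is where \eqref{sym_noise} actually enters, your proposal stops precisely at the point where the work begins. You correctly observe that the leading ``Hessian--Hessian'' contributions on both sides should cancel modulo lower-order commutators, and you correctly suspect this requires pairing the $x$- and $y$-components via \eqref{sym_noise}; but you neither identify the concrete device that makes this work nor verify that the remainder is sign-controlled rather than sign-indefinite, and you explicitly concede this (``checking \dots\ is the delicate point''). The paper's device is to set $\mu_k=\lambda_l$ and group each pair $\psi_l=(\xi_k,0)$, $\psi_{\tilde l}=(0,\xi_k)$: for the correction this gives $-\tfrac12\sum_k\mu_k^2\langle\divv(\xi_k\nabla(w\xi_k)),\Delta w\rangle$, which after a careful integration-by-parts computation yields a lower bound of the form $\|\xi_k\Delta w\|_{L^2}^2-C\|w\|_{H^1}^2$; for the noise the two components combine, via $\|\nabla\partial_1(w\xi_k)\|^2+\|\nabla\partial_2(w\xi_k)\|^2=\|\Delta(w\xi_k)\|^2$, and one then shows $\|\Delta(w\xi_k)\|^2\le\|\xi_k\Delta w\|^2+C\|w\|_{H^1}^2$. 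Adding cancels the $\|\xi_k\Delta w\|^2$ terms \emph{exactly} and leaves a $C_\Lambda\|w\|_{H^1}^2$ remainder. Without the pairing step and the explicit integration-by-parts identities (both in the style of \eqref{aux_eq16}), your ``leading part $\sim\lambda_l(\psi_l\cdot\nabla)\nabla w$'' heuristic does not by itself establish that the leftover has a definite sign, so as written the proposal does not constitute a proof of \eqref{Eq_coerc_2}.
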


\begin{proof}
	By continuity of the involved operators, it suffices to verify \eqref{Eq_coerc_1} and \eqref{Eq_coerc_2} for $w\in C^\infty(\TT)$. We  first observe that
	\begin{align*}
		\left<
		A^\epsilon w,w
		\right>\,=\,&
		-\epsilon \|\nabla w\|_{L^2(\TT)}^2\,-\,\frac{1}{2}\sum_{l=1}^\infty \lambda_l^2 \left<
		\divv (w\psi_l)\psi_l, \nabla w
		\right>\\=\,&
		-\epsilon \|\nabla w\|_{L^2(\TT)}^2\,-\,\frac{1}{2}
		\sum_{l=1}^\infty \lambda_l^2\|\psi_l\cdot \nabla w\|_{L^2(\TT)}^2\,+\,\frac{1}{4}
		\sum_{l=1}^\infty \lambda_l^2\left<
		w^2, \divv(\divv (\psi_l)\psi_l )
		\right>,
	\end{align*}
	where we have used the identity $\frac{1}{2}\nabla w^ 2=w\nabla w$ in the second line.
	Utilizing the same identity again, we obtain  
	\begin{align*}&
		\left\|  B(w)[\psi_l] \right\|_{L^2(\TT)}^2
		\,=\, \lambda_l^2\|\divv( w\psi_l)\|_{L^2(\TT)}^2
		\\=\, &\lambda_l^2\left(\|\psi_l\cdot \nabla w\|_{L^2(\TT)}^2+2\left<w\nabla w, \divv(\psi_l)\psi_l\right> + \left<w^2, \divv(\psi_l)^2\right> \right)\\=\,&\lambda_l^2\left(\|\psi_l\cdot \nabla w\|_{L^2(\TT)}^2- \left<w^2,\psi_l \cdot \nabla  \divv(\psi_l)\right> \right).\end{align*}
	Considering the bound \eqref{boundedness_psil}  we can calculate 
	\begin{align*}&
		2\left<A^\epsilon w,w\right>\,+\, \sum_{l=1}^\infty\left\| B(w)[\psi_l] \right\|_{L^2(\TT)}^2 \\ = \,&-2\epsilon \|\nabla w\|_{L^2(\TT)}^2\,+\, \frac{1}{2}\sum_{l=1}^\infty
		\lambda_l^2\left<
		w^2,  \divv(\divv (\psi_l)\psi_l )-\psi_l \cdot \nabla  \divv(\psi_l)
		\right>\\\le \,&
		C_\Lambda \|w\|^2_{L^2(\TT)}\,-\, 2\epsilon \|\nabla w\|_{L^2(\TT)}^2
	\end{align*}
	for a suitable constant $C_\Lambda<\infty$. Enlarging $C_\Lambda$ by $2$ yields \eqref{Eq_coerc_1}.
	For  \eqref{Eq_coerc_2} we observe  that
	\begin{equation*}
		\left<\nabla
		A^\epsilon w, \nabla w
		\right>\,=\,
		-\frac{1}{2}\sum_{l=1}^\infty\lambda_l^2
		\left< \divv  (\divv (w\psi_l)\psi_l ), \Delta w\right>\,-\,\epsilon \|\Delta w\|_{L^2(\TT)}^2.
	\end{equation*}
	To further analyze the involved series, we set  $\mu_k=\lambda_l$ in the situation of \eqref{sym_noise} and rewrite
	\begin{align*}-\frac{1}{2}\sum_{l=1}^\infty\lambda_l^2
		\left< \divv \cdot (\divv (w\psi_l)\psi_l ), \Delta w\right>\,=\,
		-\frac{1}{2}\sum_{k\in \ZZ^2}\mu_k^2
		\left< \divv  (\xi_k\nabla (w\xi_k) ), \Delta w\right>.
	\end{align*}
Before moving on, we notice that
\begin{equation}\label{squares}
	\tilde{\xi}_{j}^2(x)\,\stackrel{\eqref{Eq42}}{=}\, \begin{cases}
		1+\cos(4\pi {j} x), &{j}<0,\\
		1,& {j}=0,\\
		1-\cos(4\pi {j} x), & {j}>0\\
	\end{cases}
\end{equation}and therefore 
\[
\xi_k^2(x{,y})\,=\, \frac{\tilde{\xi}_{k_1}^2{(x)}\,\tilde{\xi}_{k_2}^2{(y)}}{1+(2\pi|k|)^2+(2\pi|k|)^4}
\] yields the bound
\begin{equation}\label{bounedness_xi2}
	\sup_{|\alpha|\le 4}\,
	\sup_{k\in \ZZ^2}\, \|\partial_\alpha \xi_k^2\|_{L^\infty(\TT)} \,<\, \infty.
\end{equation} 
	Using this estimate, the product rule for $\Delta$, integration by parts, as well as the differential identities
	\begin{equation}\label{aux_eq16}
		\nabla (\nabla f\cdot \nabla g)=Hf\nabla g+Hg \nabla f
		\;\;\text{ and }\;\;
		Hf\nabla f= \frac{1}{2}\nabla |\nabla f|^2
	\end{equation} we calculate
	\begin{align*}&
		\left<
		\divv  (\xi_k\nabla (w\xi_k) ), \Delta w
		\right>\,=\,	\left<\nabla \xi_k\cdot \nabla (w\xi_k)+\xi_k \Delta (w\xi_k), \Delta w
		\right>\\=\,&
		\left<\xi_k^2 \Delta w +\frac{3}{2} \nabla w\cdot \nabla \xi_k^2+\frac{1}{2}w\Delta\xi_k^2, \Delta w
		\right>\\=\,&\|\xi_k\Delta w\|_{L^2(\TT)}^2-\frac{3}{2}\left<
		Hw\nabla \xi_k^2+H\xi_k^2 \nabla w,\nabla w
		\right>+ \frac{1}{2}\left<\Delta \xi_k^2, \frac{1}{2}\Delta w^2-\nabla w\cdot \nabla w \right>
		\\\ge\,&\|\xi_k\Delta w\|_{L^2(\TT)}^2+\frac{3}{4}\left<
		\Delta \xi_k^2,|\nabla w|^2
		\right>+\frac{1}{2}\left<\Delta^2 \xi_k^2, w^2\right>-C\|w\|_{H^1(\TT)}^2
		\\\ge\,&\|\xi_k\Delta w\|_{L^2(\TT)}^2-C\|w\|_{H^1(\TT)}^2.
	\end{align*}
Here, we have enlarged the constant $C<\infty$ from the second last to the last line.
	Concerning the other summand in \eqref{Eq_coerc_2}, we observe that by integration by parts
	\[
	\|\nabla B(w)[(\xi_k,0)]\|_{L^2(\TT, \RR^2)}^2\,=\, \mu_k^2\|\nabla \partial_{1} (w\xi_k)\|_{L^2(\TT, \RR^2)}^2 \,=\,\mu_k^2\left<\Delta (w\xi_k),  \partial_{11} (w\xi_k)\right>.
	\]
	Rewriting the expression $	\|\nabla B(w)[(0,\xi_k)] \|_{L^2(\TT, \RR^2)}^2 $ analogously yields that
	\[
	\|\nabla B(w)[(\xi_k,0)]\|_{L^2(\TT, \RR^2)}^2 +\|\nabla B(w)[(0,\xi_k)]\|_{L^2(\TT, \RR^2)}^2 \,=\,\mu_k^2\|\Delta (w\xi_k) \|_{L^2(\TT)}^2.
	\]
	Using  again the product rule for $\Delta$, the bound \eqref{boundedness_psil}, integration by parts and the formulas from \eqref{aux_eq16}, we can estimate the latter term by
	\begin{align*}&\|\Delta (w\xi_k) \|_{L^2(\TT)}^2\,=\,
		\left\|\xi_k \Delta w+2\nabla w\cdot \nabla \xi_k + w\Delta\xi_k  \right\|_{L^2(\TT)}^2\\=\, &\|\xi_k \Delta w\|_{L^2(\TT)}^2+ \|2\nabla w\cdot \nabla \xi_k+w\Delta\xi_k\|_{L^2(\TT)}^2
		\, +\,2\left<\xi_k \Delta w,2\nabla w\cdot \nabla \xi_k+w\Delta\xi_k
		\right>\\\le\,&
		\|\xi_k \Delta w\|_{L^2(\TT)}^2+C\|w\|_{H^1(\TT)}^2
		-2\left<\nabla w, \nabla \left[\xi_k w\Delta \xi_k+
		2\xi_k \nabla w\cdot \nabla \xi_k
		\right]\right>
		\\=\,&\|\xi_k \Delta w\|_{L^2(\TT)}^2+C\|w\|_{H^1(\TT)}^2-2\left<\nabla w, \xi_k \Delta \xi_k \nabla w+w \nabla [\xi_k \Delta \xi_k]\right>\\&-4\left<\nabla w,  (\nabla \xi_k \otimes \nabla \xi_k)\nabla w+
		\xi_k Hw\nabla \xi_k +\xi_kH\xi_k \nabla w
		\right>\\\le \,&
		\|\xi_k \Delta w\|_{L^2(\TT)}^2+C\|w\|_{H^1(\TT)}^2+\left<w^2, \Delta(\xi_k\Delta\xi_k)\right>+2\left<|\nabla w|^2, \divv(\xi_k \nabla \xi_k)\right>\\\le \,&
		\|\xi_k \Delta w\|_{L^2(\TT)}^2+C\|w\|_{H^1(\TT)}^2.
	\end{align*}
We enlarged again the constant $C<\infty$ from line to line. Moreover, in the last line we have employed that
	\[
	\|\Delta(\xi_k \Delta \xi_k)\|_{L^\infty(\TT)}\,=\, (2\pi|k|)^2\|\Delta \xi_k ^2\|_{L^\infty(\TT)}\,\le \,\frac{ 2(2\pi|k|)^2(4\pi |k|)^2 }{1+(2\pi |k|)^2+(2\pi |k|)^4}\,\le \, 8
	\]
	by $\Delta \xi_k =  -(2\pi |k|^2)\xi_k $ and \eqref{squares}. Combining all the previous estimates we finally obtain that
	\begin{align*}&2\left<\nabla
		A^\epsilon w, \nabla w
		\right>+\sum_{l=1}^\infty\left\|\nabla  B(w)[\psi_l] \right\|_{L^2(\TT, \RR^2)}^2 \\\le \,& -2\epsilon\|\Delta w\|_{L^2(\TT)}^2-\sum_{k\in \ZZ^2} \mu_k^2 \left[\|\xi_k\Delta w\|_{L^2(\TT)}^2-C\| w\|_{H^1(\TT)}^2 \right]\\&\hspace{2.8cm}+\sum_{k\in \ZZ^2} \mu_k^2 \left[\|\xi_k\Delta w\|_{L^2(\TT)}^2+C\| w\|_{H^1(\TT)}^2 \right]\\\le \,&
		C_\Lambda\| w\|_{H^1(\TT)}^2-2\epsilon\|\Delta w\|_{L^2(\TT)}^2.
	\end{align*}
	We arrive at \eqref{Eq_coerc_2} by enlarging $C_\Lambda$ by $2$.
\end{proof}
\begin{proof}[Proof of Theorem \ref{Thm_Sec_Reg_SPDE}]The existence and uniqueness assertion follows, if we verify the assumptions of  \cite[Theorem~4.2.4]{liu2015stochastic} on the couple $(A^\epsilon,B)$ considered on the Gelfand triple \[H^2(\TT)\subset H^1(\TT)\subset L^2(\TT).\] Here we equip $H^2(\TT)$ with the equivalent Bessel potential norm to ensure that the usual norm in $L^2(\TT)$ coincides with the norm of the dual of $H^2(\TT)$ under the pairing in $H^1(\TT)$, for details see appendix \ref{AppendixAB}. Hemicontinuity and boundedness  of $A^\epsilon$ follow from $A^\epsilon\in L(H^2(\TT), L^2(\TT))$. Coercivity is obtained by adding \eqref{Eq_coerc_1} and \eqref{Eq_coerc_2} together. By linearity, coercivity implies weak monotonicity. The proof of \eqref{Eq_est_reg_SPDE} translates verbatim from the one-dimensional case \cite[Proposition A.2]{GessGann2020} and \eqref{Eq_cons_mass_stoch_part} follows from testing \eqref{Eq_reg_SPDE} with $\mathbbm{1}_{\TT}$. The claim regarding non negativity of $w$ is a consequence of the maximum principle for second-order parabolic SPDEs \cite[Theorem 4.3]{krylovIto}, which holds by analogous reasoning also on $\TT$.
\end{proof}

\section{Time discretization scheme with degenerate limit}\label{Sec_time_discr}
In this section we fix $N\in \NN$. The goal of this section is to construct for a given end time $T$ and an initial value $u_0$ a weak martingale solution to the split-up problem 
\begin{equation}\label{Eq_scheme_deg}
	\begin{cases}
		u(t)\,=\, v(2(t-j\delta)+j\delta ),& j\delta \le t< (j+\frac{1}{2})\delta,\\
		u(t)\,=\,w(2(t-(j+\frac{1}{2})\delta)+j\delta ),& (j+\frac{1}{2})\le t< (j+1)\delta,\\
		\partial_t v \,=\, - \divv(v^2 \nabla\Delta v), & \text{on }[j\delta, (j+1)\delta), \\
		d w_t \,=\,  \divv(w_t \circ dW_t), & \text{on }[j\delta, (j+1)\delta), \\
	\end{cases}
\end{equation}
where $\delta = \frac{T}{N+1}$  and $j\in \{0,\dots, N \}$. Starting at the initial value $u_0$ the process $u(t)$ satisfies alternately the deterministic thin-film equation  and the purely stochastic  equation \eqref{Eq38} on time intervals of length $\frac{\delta}{2}$ and yields thus a time splitting scheme for the stochastic thin-film equation \eqref{Eq_STFEq}. 
During the construction we derive  bounds which are uniform in $N$, and will be important in the final section, where we take the time step limit $N\to \infty$  to construct a solution to the original problem. We refer the interested reader for more information on the time-splitting procedure to \cite{gyongy_krylov_2003}.
The main statement of this section is the following.

\begin{thm}	\label{Thm_degenerate_sol}
	Let $T\in (0,\infty)$,  $q\in (2, \infty)$ and $\alpha \in (-1,0)$. We assume that $u_0$ is a non negative random variable in $H^1(\TT)$ and set $R^{(k)}=\{k-1\le \|u_0\|_{H^1(\TT)}< k\}$ and $u_0^{(k)}= \mathbbm{1}_{R^{(k)}} u_0$ for every $k\in \NN$. Then there exists a  probability space $(\tilde{\Omega}, \tilde{\mathfrak{A}}, \tilde{P})$ with a filtration $\tilde{\mathfrak{F}}$ satisfying the usual conditions, a family of independent Brownian motions $(\tilde{\beta}^{(l)})_{l\in \NN}$, random variables $\mathbbm{1}_{\tilde{R}^{(k)}}$, $H^1_w(\TT)$-continuous processes $\tilde{u}^{(k)}$  and $L^2(0, T; L^{q'}(\TT))$-valued random variables $\tilde{J}^{(k)}$ for $k\in \NN$, such that  $\tilde{u}^{(k)}$, $\tilde{J}^{(k)}$ and the processes $\tilde{v}^{(k)}$ and $\tilde{w}^{(k)}$ defined by
	\[
	\begin{cases}
		\tilde{u}^{(k)}(t)\,=\, \tilde{v}^{(k)}(2(t-j\delta)+j\delta ),& j\delta \le t< (j+\frac{1}{2})\delta,\\
		\tilde{u}^{(k)}(t)\,=\,\tilde{w}^{(k)}(2(t-(j+\frac{1}{2})\delta)+j\delta ),& (j+\frac{1}{2})\le t< (j+1)\delta
	\end{cases}
	\]
	satisfy the following.
	\begin{enumerate}[label=(\roman*)]
		\item \label{Item_IV}
		The sequence $(\mathbbm{1}_{\tilde{R}^{(k)}}, \tilde{u}^{(k)}(0))_{k\in \NN}$ has the same distribution as $(\mathbbm{1}_{{R}^{(k)}}, {u}_0^{(k)})_{k\in \NN}$, in particular we have $\sum_{k=1}^\infty \tilde{u}^{(k)}(0)\sim u_0$. Moreover, $ \tilde{u}^{(k)}$ and  $\tilde{J}^{(k)}$ are $\tilde{P}$-almost surely zero outside of the set $\tilde{R}^{(k)}$.
		\item \label{Item_meas} $\tilde{u}^{(k)}(t)$  and $\tilde{J}^{(k)}|_{[0,t]}$ are $\tilde{\mathfrak{F}}_t$-measurable as random variables in $H^1(\TT)$ and $L^2(0,t; L^{q'}(\TT))$  for every $t\in [0,T]$ and  $k\in \NN$.
		 \item \label{Item_sol_Det_deg_case} The tuples $(\tilde{v}^{(k)},\tilde{J}^{(k)})$ are $\tilde{P}$-almost surely solutions to the deterministic thin-film equation on $[j\delta, (j+1)\delta)$ satisfying property (iv) from Theorem \ref{Thm_ex_sol_det} with initial value $\tilde{u}^{(k)} (j\delta)$ for every $j=0,\dots, N$.
		   \item \label{Item_SPDE_deg}For $k\in \NN$,  $\varphi\in H^1(\TT)$ and $t\in [j\delta , (j+1)\delta) $ we have 
	 that 
	 \begin{align*}\left< \tilde{w}^{(k)}(t),
	 	\varphi\right>\,-\,\left< \tilde{w}^{(k)}(j\delta),
	 	\varphi\right>\,=\,&  \frac{1}{2}\sum_{l=1}^{\infty} \lambda_l^2\int_{j\delta }^t \left<  \divv (\divv(\tilde{w}^{(k)}(s)\psi_l) \psi_l ), \varphi \right>\, ds
	 	\\&+\, \sum_{l=1}^\infty \lambda_l \int_{j
	 		\delta}^t\left< \divv(\tilde{w}^{(k)}(s)\psi_l), \varphi \right>\, d\beta^{(l)}_s.
	 \end{align*}
	 \item \label{Item_est_deg} For every $k\in \NN${, $p\in (0,\infty)$} we have  
	 \begin{align*}
	 		\tilde{E}\left[
	 		\sup_{0\le t\le T} \|\tilde{u}^{(k)}\|_{H^1(\TT)}^p
	 		\right]\, &\lesssim_{\Lambda, p, T} \, E\left[\|u^{(k)}_0\|^p_{H^1(\TT)}\right],\\
	 		\tilde{E}\left[ \|\tilde{J}^{(k)}\|_{L^2(0,T; L^{q'}(\TT))}^\frac{p}{2} \right]\, &\lesssim_{\Lambda, p,q , T}\, E\left[\|u^{(k)}_0\|^p_{H^1(\TT)}\right].
	 \end{align*}
 \item \label{Item_est_Hoelder} Moreover, for any $\gamma \in (0, \frac{1}{2})$ and $K\in (1, \infty)$ it holds
 \[
 \tilde{P}\left(\left\{\left[\tilde{u}^{(k)}\right]_{\gamma ,W^{-1, q'}(\TT)} >K \right\}\right)
 \,\lesssim_{\Lambda, q, \gamma, T}\, \frac{1+E\left[\|u^{(k)}_0\|_{H^1(\TT)}^2\right]}{K}.
 \]
	\end{enumerate}
\end{thm}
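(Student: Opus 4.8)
The plan is to build $\tilde{u}^{(k)}$ separately for each fixed index $k$, the decomposition of the initial data into bounded pieces being exactly what makes the construction work. For a fixed $k$ the procedure is: first construct the approximate solutions $u_{\epsilon}^{(k)}$ of the regularized splitting scheme on an original probability space, where on each half-step of the deterministic type one invokes the measurable solution operator $\mathcal{S}_{\alpha,q,\delta/2}$ from Corollary \ref{Cor_mble_sol_map} (shifted in time) and on each stochastic half-step one runs \eqref{Eq_reg_problem}, i.e. Theorem \ref{Thm_Sec_Reg_SPDE}, with the current state as the $\mathfrak{F}_0$-measurable initial datum. Measurability of the whole concatenation follows inductively because $\mathcal{S}_{\alpha,q,\delta/2}$ is Borel and because the solution map of \eqref{Eq_reg_problem} is measurable in its initial value; the factor $\mathbbm{1}_{R^{(k)}}$ in the initial datum propagates through the scheme because both $v\equiv 0$ and $w\equiv 0$ solve the respective subproblems, giving the support statement in \ref{Item_IV}.

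\emph{Uniform estimates.} Next I would collect the estimates that are uniform in $\epsilon$ and $N$. On deterministic half-steps properties (i)--(iv) of Theorem \ref{Thm_ex_sol_det} give conservation of mass, monotone decay of $\|\nabla v(t)\|_{L^2}$, the bound on $\|J\|_{L^2L^{q'}}$, and the $\alpha$-entropy inequality; on stochastic half-steps \eqref{Eq_est_reg_SPDE}, \eqref{Eq_cons_mass_stoch_part} and the maximum principle of Theorem \ref{Thm_Sec_Reg_SPDE} give the $H^1$-moment bound, conservation of mass and nonnegativity. Chaining these across the $N+1$ steps and using that the $H^1$-growth constant in \eqref{Eq_est_reg_SPDE} on a step of length $\delta/2$ is $\lesssim_{\Lambda} e^{C_\Lambda\delta}$, so that the product over $N+1$ steps is $\lesssim_{\Lambda,T} e^{C_\Lambda T}$, yields \ref{Item_est_deg} for $p\ge 2$; the case $p\in(0,2)$ follows from the higher-moment bounds by a standard interpolation/truncation argument as indicated after Theorem \ref{Thm_main}. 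The fractional-in-time bound \ref{Item_est_Hoelder} comes from writing $\tilde u^{(k)}$ via \eqref{Eq_scheme_deg} as a sum of a $W^{-1,q'}$-valued Bochner integral of $J$ (controlled in $H^1(0,T;W^{-1,q'})\subset C^{1/2}$ by $\|J\|_{L^2L^{q'}}$) and an $H^1$-valued stochastic integral whose Kolmogorov-type $\gamma$-H\"older moments are controlled by the $H^2$-bound times $\|\Lambda\|_{\ell^2}$; a Markov/Chebyshev inequality then produces the stated $1/K$ tail. One must be slightly careful with the time-rescaling $t\mapsto 2(t-j\delta)+j\delta$, which doubles speed and hence only changes H\"older constants by a factor depending on $T$.

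\emph{Passage to the limit $\epsilon\searrow 0$.} With the uniform bounds in hand, tightness of the laws of $(u_\epsilon^{(k)}, v_\epsilon^{(k)}, w_\epsilon^{(k)}, J_\epsilon^{(k)}, (\beta^{(l)}))$ on suitable path spaces follows: in $C(0,T;H^1_w(\TT))$ via the $H^1$-bound plus the $W^{-1,q'}$-H\"older bound and an Arzel\`a--Ascoli/Aubin--Lions argument; in the weak topologies of the spaces \eqref{Eq3} for the deterministic pieces via the $\alpha$-entropy bound; in $L^2(0,T;L^{q'})_w$ for $J_\epsilon^{(k)}$. Applying the Skorokhod--Jakubowski theorem on this quasi-Polish space gives a new probability space, new variables with the same joint laws, and almost-sure convergence. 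On the new space one identifies the limit: each deterministic half-step limit is again a weak solution of the thin-film equation with property (iv) by Proposition \ref{Prop_conv_sol_det}, each stochastic half-step limit satisfies \ref{Item_SPDE_deg} by the usual martingale-identification argument (the linear structure of \eqref{Eq_reg_problem} with $\epsilon=0$ makes passing to the limit in the It\^o and correction terms routine, using the uniform $H^2$-bound to pass to the limit in $\divv(\divv(w\psi_l)\psi_l)$ termwise and the $\ell^2$-summability to control tails), and the $\epsilon\Delta w$ term drops out since $\epsilon\|w_\epsilon\|_{L^2H^2}^2$ is bounded. Measurability \ref{Item_meas} is preserved under Skorokhod, and lower semicontinuity of norms transfers \ref{Item_est_deg} to the new variables while \ref{Item_est_Hoelder} transfers since it is an estimate on a law. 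Finally, relabel the limiting objects as $\tilde u^{(k)}$, etc.; since everything was done for fixed $k$ one can take the probability space to be a countable product (or apply Skorokhod jointly in $k$ on the countable product space, using that the $k$-th marginal is tight) so that \ref{Item_IV} holds for the whole sequence.

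\textbf{The main obstacle} I anticipate is the bookkeeping of the concatenated measurability and the identification of the stochastic half-step limits \emph{simultaneously across all $2(N+1)$ sub-intervals} on the single Skorokhod space: one has to ensure that the limiting Brownian motions are the same on every interval, that the filtration generated by $(\tilde u^{(k)}, \tilde J^{(k)}, \tilde\beta^{(l)})$ (after the usual augmentation) makes all the stochastic integrals in \ref{Item_SPDE_deg} well-posed, and that the deterministic solution operator was applied in a genuinely progressively-measurable way at each step — this is where Corollary \ref{Cor_mble_sol_map} is essential and where the argument differs from the purely stochastic, parabolic-regularization route. The analytic content of each individual limit is already supplied by Proposition \ref{Prop_conv_sol_det} and Theorem \ref{Thm_Sec_Reg_SPDE}, so the difficulty is organizational rather than estimative.
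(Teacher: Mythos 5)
Your overall strategy — regularize via the time-splitting scheme, derive estimates uniform in $\epsilon$ and $N$, apply Skorokhod--Jakubowski, and identify the limits — coincides with the paper's, and most of the individual steps you outline (measurable concatenation via Corollary \ref{Cor_mble_sol_map}, propagation of the cut-off through the scheme because $0$ solves both sub-problems, H\"older-in-time via the decomposition into deterministic and stochastic increments, lower semicontinuity to transfer the bounds to the limit) are what the paper actually does. There is, however, one genuine gap: the claim that ``one can take the probability space to be a countable product'' and run the construction for each $k$ separately. That option does not produce item (i). On the original space the $\mathbbm{1}_{R^{(k)}}$ form a partition (indicators of disjoint events covering $\Omega$), the $u_0^{(k)}$ vanish on the complements, and, crucially, the \emph{same} family $(\beta^{(l)})$ drives every stochastic half-step for every $k$. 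If you construct each $\tilde u^{(k)}$ on a separate probability space and take the product measure, the $\tilde R^{(k)}$ become independent events rather than a partition, each factor carries its own noise, and the identity $\sum_k \tilde u^{(k)}(0)\sim u_0$ fails. More seriously, in the final section of the paper the $\tilde u^{(k)}$ must be summed to a single process $\tilde u=\sum_k\tilde u^{(k)}$ satisfying \eqref{Defi_SPDE} with a single family of Brownian motions, which is impossible once the noise has been cloned $k$-wise. The parenthetical alternative you offer — applying Skorokhod--Jakubowski once to the full sequence $\bigl(\mathbbm{1}_{R^{(l)}},\beta^{(l)},u^{(l)}_\epsilon,\ldots\bigr)_{l\in\NN}$ as a random element of the countable product of path spaces, with tightness of the product following from tightness of the coordinates by Tychonoff — is not interchangeable with the product-of-laws route: it is the only one that works, and it is what the paper does in Proposition \ref{Prop_tNess1} and in Lemma \ref{lemma_elem_propI}, where the partition property of $(\tilde R^{(k)})$ is recovered from the joint law by letting $\epsilon\searrow 0$ in $\tilde E[\mathbbm{1}_{\tilde R^{(k_1)}_\epsilon}\mathbbm{1}_{\tilde R^{(k_2)}_\epsilon}]=\delta_{k_1,k_2}P(R^{(k_1)})$.

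A smaller point: ``a standard interpolation/truncation argument'' for the moment bounds with $p\in(0,2)$ is too vague to close the step. The inequality $E[X^p]\le E[X^2]^{p/2}$ runs the wrong way relative to $E[\|u_0\|_{H^1(\TT)}^p]$ unless one first localizes. The paper's device is to note that for any $R\in\mathfrak{F}_0$ the process $\mathbbm{1}_R u_\epsilon$ coincides with the scheme started from $\mathbbm{1}_R u_0$ (again using that $0$ is preserved), which upgrades the $p=2$ bound to a conditional bound $E\bigl[\sup_t\|\nabla w_\epsilon\|^2\mid\mathfrak{F}_0\bigr]\lesssim\|u_0\|^2_{H^1(\TT)}$, and then Jensen on the conditional expectation gives the $p<2$ estimate. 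This conditioning step is precisely where the factorizing property of the measurable solution operator is put to use and should be made explicit.
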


\subsection{Construction and analysis of a regularized scheme}
Let $u_0\in L^\infty(\Omega , H^1(\TT))$ be non negative. Up to extension and completion of the probability space we can assume that there exists a filtration $\mathfrak{F}$ satisfying the usual conditions with a family of independent Brownian motions $(\beta^{(l)})_{l\in \NN}$ such that $u_0$ is $\mathfrak{F}_0$-measurable.
\begin{rem}
	The construction with initial value $u_0$ within this subsection will in subsection \ref{Sec_deg_lim} be applied to each of the cut-off parts $u_0^{(k)}$ from  Theorem \ref{Thm_degenerate_sol}. This justifies the strong assumption $u_0\in L^\infty(\Omega , H^1(\TT))$ here.
\end{rem}
We fix for the rest of this subsection also   $T\in (0,\infty)$,  $q\in (2, \infty)$, $\alpha \in (-1,0)$, $\epsilon \in (0,1)$ and apply the operator $\mathcal{S}_{\alpha, q, \delta}$ from Corollary \ref{Cor_mble_sol_map} to the initial value $u_0$. We define $v_\epsilon|_{[0, \delta )}, J_\epsilon|_{[0, \delta)}$ as the version of the solution which is in $C(0,\delta; L^2(\TT))$ and in particular continuous in $H^1_w(\TT)$, see Remark \ref{Rem_version_det_sol}. Moreover, we define $w_\epsilon|_{[0,\delta)}$ as the solution to \eqref{Eq_reg_SPDE}  with initial value $\lim_{t\nearrow \delta } v_\epsilon(t)$. Notice that since $v_\epsilon|_{{[0,\delta)}}$ fulfills the properties (i) and (ii) of Theorem \ref{Thm_ex_sol_det}, we have 
\[
E\left[\|\lim_{t\nearrow \delta } v_\epsilon\|_{H^1(\TT)}^p\right] \, {\lesssim }\, 
E\left[\|u_0\|_{H^1(\TT)}^p\right] 
\]
for any $p\in [2, \infty)$,
and therefore  Theorem \ref{Thm_Sec_Reg_SPDE} is indeed applicable and yields a non-negative solution $w_\epsilon|_{[0, \delta)}$. In particular, the terminal value $\lim_{t\nearrow \delta } w_\epsilon(t)$ lies again in $L^p(\Omega , H^1(\TT))$. We repeat this and obtain inductively weak solutions $v|_{[j\delta, (j+1)\delta)}$ to \eqref{Eq_det_eq} and variational solutions $w_\epsilon|_{[j\delta, (j+1)\delta)}$ to \eqref{Eq_reg_problem} for $j\in \{1, \dots, N\}$. Finally, we define the $H^1_w(\TT)$-continuous, adapted process 
\[
u_{\epsilon}(t )=\begin{cases}
	v_{\epsilon}(2(t-j\delta)+j\delta ),& j\delta \le t< (j+\frac{1}{2})\delta,\\
	w_{\epsilon}(2(t-(j+\frac{1}{2})\delta)+j\delta ),& (j+\frac{1}{2})\le t< (j+1)\delta.
\end{cases}
\]
for $t\in [0,T)$. We note that we set for the final time $u_\epsilon(T)= \lim_{t\nearrow \delta } w_\epsilon(t)$. The divergence form of \eqref{Eq_det_eq},  \eqref{Eq_reg_problem}, and an application of It\^o's formula yield the following estimates along the whole time-splitting scheme. 

\begin{lemma}\label{Lemma_Est}
	It holds almost surely that
	\begin{equation}\label{Eq_cons_mass_reg_scheme}
		\int_{\TT} u_\epsilon(t)\, dx\, =\, 
		\int_{\TT} u_0\, dx.
	\end{equation} for all $t\in [0,T]$.
Moreover,we have additionally 
\begin{equation}\label{Eq_Energy_est_reg_scheme}
E\left[
\sup_{0\le t\le T} \|u_\epsilon\|_{H^1(\TT)}^p
\right]\, \lesssim_{\Lambda, p, T} \, E\left[\|u_0\|^p_{H^1(\TT)}\right]
\end{equation}
for $p\in (0, \infty)$.
\end{lemma}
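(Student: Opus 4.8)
The plan is to prove the two assertions separately, exploiting the alternating structure of the scheme and the fact that each sub-step preserves mass and controls the $H^1$-norm. For the conservation of mass \eqref{Eq_cons_mass_reg_scheme}, I would argue inductively over the time intervals $[j\delta,(j+1)\delta)$. On each half-interval of deterministic evolution, Theorem \ref{Thm_ex_sol_det}\ref{Item_det_cons_of_mass} gives $\int_\TT v_\epsilon(t)\,dx = \int_\TT v_\epsilon(j\delta)\,dx$, and on each half-interval of stochastic evolution, \eqref{Eq_cons_mass_stoch_part} of Theorem \ref{Thm_Sec_Reg_SPDE} gives $\int_\TT w_\epsilon(t)\,dx = \int_\TT w_\epsilon(\cdot)\,dx$ at the left endpoint; since $u_\epsilon$ is $H^1_w(\TT)$-continuous (hence $\int_\TT u_\epsilon(t)\,dx$ is continuous in $t$) and the terminal values match the initial values of the next sub-step, a telescoping argument over $j=0,\dots,N$ gives $\int_\TT u_\epsilon(t)\,dx = \int_\TT u_0\,dx$ for all $t\in[0,T]$, almost surely.

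For the energy estimate \eqref{Eq_Energy_est_reg_scheme}, I would first establish it for $p\in[2,\infty)$ and then extend to $p\in(0,2)$ at the end. Fix $p\in[2,\infty)$. On the $j$-th deterministic half-interval, Theorem \ref{Thm_ex_sol_det}\ref{Item_det_energy_est} yields the pathwise bound $\sup_{t}\|\nabla v_\epsilon(t)\|_{L^2} \le \|\nabla v_\epsilon(j\delta)\|_{L^2}$, and combined with mass conservation (which controls the mean of $v_\epsilon$, hence via Poincaré the full $H^1$-norm in terms of $\|\nabla v_\epsilon\|_{L^2}$ and $|\int_\TT u_0\,dx|$) this gives a pathwise bound on $\sup_t\|v_\epsilon(t)\|_{H^1}$ over that half-interval by $\|v_\epsilon(j\delta)\|_{H^1}$ up to a constant and an additive term from the conserved mass. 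On the $j$-th stochastic half-interval, \eqref{Eq_est_reg_SPDE} of Theorem \ref{Thm_Sec_Reg_SPDE} gives $E[\sup_t\|w_\epsilon(t)\|_{H^1}^p] \lesssim_{p,\delta} E[\|w_\epsilon(\cdot)\|_{H^1}^p]$ at the left endpoint; crucially the implied constant depends only on $p$ and the interval length $\frac{\delta}{2}\le T$, hence can be taken uniform over $j$ and independent of $N$ (here one uses that $T/2$ is an upper bound for the sub-interval lengths and the constant in \eqref{Eq_est_reg_SPDE} is monotone in the time horizon). Chaining these $2(N+1)$ estimates would naively produce a constant $C^{N+1}$, which is \emph{not} uniform in $N$; the way to avoid this is to instead iterate the expectation estimate and absorb: writing $M_j = E[\sup_{t\in[j\delta,(j+1)\delta)}\|u_\epsilon(t)\|_{H^1}^p]$, I would show $M_j \le C_{\Lambda,p,T}(M_{j-1} + 1)$ is the wrong normalization and instead track $E[\|u_\epsilon(j\delta)\|_{H^1}^p]$ through a Gronwall-type recursion whose multiplicative constant per step is $e^{C_{\Lambda,p}\delta}$ coming from a careful application of It\^o's formula / the moment bound on the full interval rather than step by step. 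Concretely, the cleanest route is to apply It\^o's formula to $\|u_\epsilon(t)\|_{H^1}^2$ (legitimate on each stochastic sub-interval by the variational framework, and trivial on deterministic ones where it only decreases), use the coercivity estimates \eqref{Eq_coerc_1}, \eqref{Eq_coerc_2} which give a drift bounded by $C_\Lambda\|u_\epsilon\|_{H^1}^2$ plus a martingale, and then apply the Burkholder-Davis-Gundy inequality together with Gronwall's lemma globally on $[0,T]$ to get $E[\sup_{[0,T]}\|u_\epsilon\|_{H^1}^p]\lesssim_{\Lambda,p,T} E[\|u_0\|_{H^1}^p]$ with a constant of the form $e^{C_{\Lambda,p}T}$, uniform in $N$ and $\epsilon$.

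The main obstacle is exactly the uniformity in $N$: the estimate \eqref{Eq_est_reg_SPDE} applied naively on each of the $\sim N$ sub-intervals compounds, so one must reorganize the argument into a single global It\^o/Gronwall estimate on $[0,T]$ rather than a product of per-step estimates. This requires checking that It\^o's formula for $\|\cdot\|_{H^1}^2$ can be applied across the whole scheme — on the stochastic sub-intervals via the Gelfand triple $H^2\subset H^1\subset L^2$ and \cite[Theorem 4.2.5]{liu2015stochastic} (or the version used in \cite{GessGann2020}), and on the deterministic sub-intervals where $t\mapsto\|v_\epsilon(t)\|_{H^1}^2$ is merely of bounded variation and non-increasing by Theorem \ref{Thm_ex_sol_det}\ref{Item_det_energy_est} — and that the resulting drift and quadratic-variation terms coming from the noise obey the coercivity bounds \eqref{Eq_coerc_1}--\eqref{Eq_coerc_2} uniformly in $\epsilon$. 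Once the estimate holds for $p\in[2,\infty)$, the extension to $p\in(0,2)$ follows from H\"older's (or Jensen's) inequality: $E[\sup\|u_\epsilon\|_{H^1}^p] \le (E[\sup\|u_\epsilon\|_{H^1}^2])^{p/2} \lesssim (E[\|u_0\|_{H^1}^2])^{p/2}$, and since $u_0\in L^\infty(\Omega,H^1(\TT))$ the right-hand side is comparable to $E[\|u_0\|_{H^1}^p]$ — or, more cleanly and without the boundedness assumption, one applies the $p$-th moment version \eqref{Eq_est_reg_SPDE} directly in the global Gronwall step for every $p\in[2,\infty)$ and then interpolates down, which is the form stated in the lemma.
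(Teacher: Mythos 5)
Your approach for $p\in[2,\infty)$ is essentially the paper's: apply It\^o's formula to the squared (gradient) norm of the stochastic half-steps, use the coercivity bound \eqref{Eq_coerc_2} to control the drift, combine with Theorem~\ref{Thm_ex_sol_det}~\ref{Item_det_energy_est} on the deterministic half-steps (where the gradient norm is non-increasing), telescope into a single global semimartingale decomposition on $[0,T]$ with a single martingale $M_2$, and then close with Burkholder--Davis--Gundy, Young, and Gr\"onwall. You correctly identify that this global reorganization is what avoids the $C^{N+1}$ blow-up that naive per-step chaining of \eqref{Eq_est_reg_SPDE} would produce, and you correctly note that mass conservation plus Poincar\'e recovers the full $H^1$ bound from the gradient bound. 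All of that is right and matches the paper.

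The gap is in your treatment of $p\in(0,2)$. Your proposed step
\[
E\left[\sup_t\|u_\epsilon\|_{H^1}^p\right]\,\le\,\left(E\left[\sup_t\|u_\epsilon\|_{H^1}^2\right]\right)^{p/2}\,\lesssim\,\left(E\left[\|u_0\|_{H^1}^2\right]\right)^{p/2}
\]
does not conclude the lemma: by Jensen one has $E[\|u_0\|_{H^1}^p]\le(E[\|u_0\|_{H^1}^2])^{p/2}$, so the inequality is in the \emph{wrong} direction, and the two sides are not comparable with a constant depending only on $\Lambda,p,T$. (Take $u_0$ equal to a fixed unit-norm function on an event of small probability $q$ and zero otherwise: then $(E[\|u_0\|^2])^{p/2}=q^{p/2}$ while $E[\|u_0\|^p]=q$, and $q^{p/2}/q\to\infty$ as $q\to 0$.) Your fallback that, ``since $u_0\in L^\infty(\Omega,H^1)$, the right-hand side is comparable to $E[\|u_0\|^p]$'' introduces a constant depending on the essential supremum of $\|u_0\|_{H^1}$, which is exactly what the lemma forbids: in the application to $u_0^{(k)}=\mathbbm{1}_{R^{(k)}}u_0$ this constant would grow with $k$ and ruin the later summation over $k$. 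The final ``interpolate down'' alternative is not a concrete argument.

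What the paper does instead, and what you need, is a \emph{conditional} version of the $p=2$ estimate, obtained by localization. The observation is that for any $R\in\mathfrak{F}_0$, the process $\mathbbm{1}_Ru_\epsilon$ coincides pathwise with the splitting scheme started from $\mathbbm{1}_Ru_0$ (this uses that the deterministic solution operator $\mathcal{S}_{\alpha,q,\delta}$ maps $0$ to $0$ and the uniqueness of solutions to \eqref{Eq_reg_SPDE}). Applying the already-proved $p=2$ estimate to $\mathbbm{1}_Ru_0$ and varying $R\in\mathfrak{F}_0$ yields the almost-sure inequality
\[
E\left[\left.\sup_{0\le s\le T}\|\nabla w_\epsilon(s)\|^2_{L^2}\,\right|\,\mathfrak{F}_0\right]\,\lesssim_{\Lambda,T}\,\|u_0\|^2_{H^1}.
\]
Jensen's inequality \emph{for conditional expectation} then gives, for $p\in(0,2)$,
\[
E\left[\left.\sup_{0\le s\le T}\|\nabla w_\epsilon(s)\|^p_{L^2}\,\right|\,\mathfrak{F}_0\right]\,\le\,E\left[\left.\sup_{0\le s\le T}\|\nabla w_\epsilon(s)\|^2_{L^2}\,\right|\,\mathfrak{F}_0\right]^{p/2}\,\lesssim_{\Lambda,T}\,\|u_0\|^p_{H^1},
\]
and taking expectations produces \eqref{Eq_Energy_est_reg_scheme} with a constant depending only on $\Lambda,p,T$. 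This localization-plus-conditional-Jensen step is the missing ingredient; without it the low-moment case is not established.
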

\begin{proof}
	The equality \eqref{Eq_cons_mass_reg_scheme} follows from its respective counterparts from Theorem \ref{Thm_ex_sol_det} (i) and \eqref{Eq_cons_mass_stoch_part}. Next, we apply It\^o's formula to the composition of the functional $\|\nabla \cdot\|_{L^2(\TT,\RR^2)}^2$   with the process $w_\epsilon$, which yields that
		\begin{align}\begin{split}
			\label{Eq_Ito1}
			\|\nabla w_\epsilon(t) \|_{L^2(\TT,\RR^2)}^2\,=\, \|\nabla w_\epsilon(j\delta) \|_{L^2(\TT, \RR^2)}^2\,+2\int_{j\delta}^t\left<\nabla {w_\epsilon}(s),\nabla A^\epsilon ({w_\epsilon}(s))\right>\,ds&\\+\,\sum_{l=1}^\infty \lambda_l\int_{j\delta}^t2
			\left<\nabla \divv(w_\epsilon(s)\psi_l), \nabla w_\epsilon (s) \right>\, d\beta^l_s\,+\, \sum_{l=1}^\infty \lambda_l^2 \int_{j\delta}^t\|\nabla 
			\divv({w_\epsilon}(s)\psi_l)\|_{L^2(\TT, \RR^2)}^2
			\,ds&.
		\end{split}
	\end{align}
 for $t\in [j\delta, (j+1)\delta)$.  A justification of the applicability of It\^o's formula is given in appendix \ref{AppendixB}. As pointed out in \eqref{Eq99}, the martingale given by the series of stochastic integrals, which we denote by $M_{2,j}$, has quadratic variation 
 \[4
 \sum_{l=1}^{\infty}\lambda_l^2\int_{j\delta}^t
 \left<\nabla \divv(w_\epsilon(s)\psi_l), \nabla w_\epsilon(s) \right>^2\, ds{.}
 \]
  Combining \eqref{Eq_Ito1} with \eqref{Eq_coerc_2} we conclude  that 
  \begin{align*}&
 	\|\nabla w_\epsilon(t) \|_{L^2(\TT, \RR^2)}^2\, - \, \|\nabla w_\epsilon(j\delta) \|_{L^2(\TT, \RR)}^2 - M_{2,j}(t) \,\lesssim_{\Lambda} \,
 	\int_{j\delta}^t \|w_\epsilon(s)\|_{H^1(\TT)}^2\, ds
 \end{align*}
and for the  endpoint $t=(j+1)\delta$  \begin{align}\begin{split}
		\label{Eq_aux14}&
		\|\nabla v_\epsilon((j+1)\delta) \|_{L^2(\TT, \RR^2)}^2\, - \, \|\nabla w_\epsilon(j\delta) \|_{L^2(\TT, \RR)}^2 - M_{2,j}((j+1)\delta)) \, \lesssim_{\Lambda} \, 
		\int_{j\delta}^{(j+1)\delta} \|w_\epsilon(s)\|_{H^1(\TT)}^2\, ds.
	\end{split}
\end{align}
 By Theorem \ref{Thm_ex_sol_det} \ref{Item_det_energy_est} we have
 \[
 \|\nabla w_{N,\epsilon}(j\delta)\|_{L^2(\TT,\RR)}^2\, \le \,
 \|\nabla v_{N,\epsilon}(j\delta)\|_{L^2(\TT,\RR)}^2,
 \] 
 such that a telescoping sum argument yields
 \begin{equation}\label{Eq_2_martingale_expansion}
 	\|\nabla w_\epsilon(t) \|_{L^2(\TT, \RR^2)}^2\, - \, \|\nabla u_0 \|_{L^2(\TT, \RR)}^2 - M_{2}(t) \, \lesssim_{\Lambda} \, 
 	\int_0^t \|w_\epsilon(s)\|_{H^1(\TT)}^2\, ds
 \end{equation}
 for $t\in [0, T]$. The appearing process $M_2$ is defined by the sum of orthogonal  martingales
 \begin{equation*}
 	M_2(t)\, =\, \sum_{k=0}^{j-1}
 	M_{2,k}((k+1)\delta) \,+\,	M_{2,j}(t), \;\; t\in [j\delta,(j+1)\delta)
 \end{equation*} and has therefore quadratic variation
 \begin{equation*}
 	4\sum_{l=1}^{\infty}\lambda_l^2\int_{0}^t 
 	\left<\nabla \divv(w_\epsilon(s)\psi_l), \nabla w_\epsilon(s) \right>^2\, ds.
 \end{equation*} For $p\ge 2$ we deduce 
 from \eqref{Eq_2_martingale_expansion} with help of the inequality $(a+b+c)^{\frac{p}{2}}\lesssim_p a^\frac{p}{2}+b^\frac{p}{2}+c^\frac{p}{2}$ and the Burkholder-Davis-Gundy inequality that 
\begin{align}\begin{split}\label{Eq_aux15}
		&
		E\left[\sup_{0\le s\le t}	\|\nabla w_\epsilon(s) \|_{L^2(\TT, \RR^2)}^p\right]\, - \, {C_p}E\left[ \|\nabla u_0 \|_{L^2(\TT, \RR)}^p \right] \\ \lesssim_{\Lambda, p} \, &E \left[
		\left(\int_0^t \|w_\epsilon(s)\|_{H^1(\TT)}^2\, ds \right)^\frac{p}{2}\,+\,\left(\int_{0}^t 
		\left<\nabla \divv(w_\epsilon(s)\psi_l), \nabla w_\epsilon(s) \right>^2\, ds\right)^\frac{p}{4} \right].
	\end{split}
\end{align}
To estimate the latter expression we observe that
\[
\nabla\divv(w\psi_l)\,=\, Hw \psi_l\, +\,D\psi_l \nabla w\,+\,w\nabla \divv\psi_l\, +\,
\divv(\psi_l)\nabla w
\]
and due to \eqref{boundedness_psil} and \eqref{aux_eq16} consequently
\begin{equation}\label{Eq_aux1}
	\left|\left<
	\nabla\divv(w\psi_l), \nabla w
	\right>\right|\,\lesssim\,\|\nabla w\|_{L^2(\TT, \RR)}\|w\|_{H^1(\TT)}
\end{equation}
for $w\in H^2(\TT)$.
We conclude with help of Young's inequality that
\begin{align*}&
	E\left[\left(\int_{0}^t 
	\left<\nabla \divv(w_\epsilon(s)\psi_l), \nabla w_\epsilon(s) \right>^2\, ds\right)^\frac{p}{4} \right]\, {\lesssim } \,
		E\left[	\sup_{0\le s\le t} \|\nabla w_\epsilon(s) \|_{L^2(\TT, \RR^2)}^\frac{p}{2}\left(\int_0^t \|w_\epsilon(s)\|_{H^1(\TT)}^2\, ds\right)^\frac{p}{4} \right]\\\le \, & 
		\frac{\kappa}{2} 	E\left[{\sup_{0\le s\le t} \|\nabla w_\epsilon(s)\|_{L^2(\TT,\RR^2)}^{p}}\right]\,+\, \frac{1}{2 \kappa }
		E\left[ \left(\int_0^t \|w_\epsilon(s)\|_{H^1(\TT)}^2\, ds \right)^\frac{p}{2}\right]
\end{align*}
for any $\kappa >0$. An appropriate choice of $\kappa$ and \eqref{Eq_aux15} yield that
\begin{align*}
		&\frac{1}{2}
		E\left[\sup_{0\le s\le t}	\|\nabla w_\epsilon(s) \|_{L^2(\TT, \RR^2)}^p\right]\, - \, {C_p} E\left[ \|\nabla u_0 \|_{L^2(\TT, \RR)}^p \right] \, \lesssim_{\Lambda, p} \, E \left[
		\left(\int_0^t \|w_\epsilon(s)\|_{H^1(\TT)}^2\, ds \right)^\frac{p}{2} \right]\\\lesssim_{p,T}\,&
		 E \left[
		\int_0^t \|w_\epsilon(s)\|_{H^1(\TT)}^p\, ds  \right]\, \lesssim_p\,
		 E \left[
		\left(\int_0^t  \left(\int_{\TT} u_0\, dx\right)^p\,+\,\|\nabla w_\epsilon(s)\|_{L^2(\TT, \RR^2)}^p\, ds \right) \right]\\\lesssim_{T}\,&  E\left[
		\|u_0\|^p_{L^2(\TT)}
		\right] \, +\, \int_0^t E\left[
		\sup_{0\le \tau \le s}	\|\nabla w_\epsilon(\tau) \|_{L^2(\TT, \RR^2)}^p
		\right]\, ds.
\end{align*}
We additionally  employed Jensen's and the  Poincar\'e inequality here. Since $u_0\in L^p(\Omega, H^1(\TT))$,  the monotone function
\[
t\, \mapsto \, E\left[\sup_{0\le s\le t}	\|\nabla w_\epsilon(s) \|_{L^2(\TT, \RR^2)}^p\right]
\]
takes finite values by \eqref{Eq_est_reg_SPDE}  and Theorem \ref{Thm_ex_sol_det} (i), (ii) and therefore an application of Gr\"onwall's inequality yields
\begin{equation}\label{Eq_aux11}
E\left[\sup_{0\le s\le T}	\|\nabla w_\epsilon(s) \|_{L^2(\TT, \RR^2)}^p\right]
\,\lesssim_{\Lambda, p, T} \, E\left[ \| u_0 \|_{H^1(\TT)}^p \right] 
\end{equation}
In order to obtain the above inequality  also for $p\in (0,2)$ we observe that 
$\mathbbm{1}_Ru_\epsilon$ coincides with the process $u_{\epsilon,R}$ obtained by constructing the splitting scheme with  initial value $\mathbbm{1}_Ru_0$ 
for  $R\in \mathfrak{F}_0$. Indeed, from the properties in Theorem \ref{Thm_ex_sol_det} (i)
we conclude that $
\mathcal{S}_{\alpha, q,\delta}$ maps $0$ to the solution which is  $0$ for all times. Consequently, we have 
\[
v_{\epsilon,R}|_{[0,\delta)}\, =\, \mathbbm{1}_Rv_\epsilon|_{[0,\delta)}\;\;\text{ and }\;\; w_{\epsilon,R}(0 )\,=\, \mathbbm{1}_Rw_\epsilon(0 ).
\]
Therefore $w_{\epsilon,R}|_{[0,\delta)}$ and  $\mathbbm{1}_Rw_\epsilon|_{[0,\delta)}$ are both solutions to \eqref{Eq_reg_SPDE} and have the same initial value such that $w_\epsilon^{(R)}|_{[0,\delta)}\, =\, \mathbbm{1}_Rw_\epsilon|_{[0,\delta)}$. It is left to apply the uniqueness statement from Theorem \ref{Thm_Sec_Reg_SPDE} and repeat these arguments on $[j\delta, (j+1)\delta)$ for $j=1, \dots N$. Hence applying \eqref{Eq_aux11} to $w_{\epsilon, R}$ with exponent $p=2$ yields
\[
E\left[
\mathbbm{1}_R \sup_{0\le s\le T}	\|\nabla w_\epsilon(s) \|_{L^2(\TT, \RR^2)}^2\right]\, \lesssim_{\Lambda, T} \, E\left[ 
\mathbbm{1}_R
\|u_0\|_{H^1(\TT)}^2
\right].
\]
Since $R\in \mathfrak{F}_0$ was arbitrary, it follows that
\[
E\left[\left. \sup_{0\le s\le T}	\|\nabla w_\epsilon(s) \|_{L^2(\TT, \RR^2)}^2\right|\mathfrak{F}_0\right]\, \lesssim_{\Lambda, T} \,
\|u_0\|_{H^1(\TT)}^2.
\]
For $p\in (0,2)$ we can use Jensen's inequality to deduce that
\[
E\left[\left.\sup_{0\le s\le T}	\|\nabla w_\epsilon(s) \|_{L^2(\TT, \RR^2)}^p\right|\mathfrak{F}_0\right]\,\le\,
E\left[\left. \sup_{0\le s\le T}	\|\nabla w_\epsilon(s) \|_{L^2(\TT, \RR^2)}^2\right|\mathfrak{F}_0\right]^\frac{p}{2}
\, \lesssim_{\Lambda, T}\,
\|u_0\|_{H^1(\TT)}^p
\]
and it is left to take the expectation.
Finally, we use \ref{Thm_ex_sol_det} (ii) to obtain \eqref{Eq_aux11} with $w_\epsilon$ replaced by $u_\epsilon$ which  together with \eqref{Eq_cons_mass_reg_scheme} implies \eqref{Eq_Energy_est_reg_scheme}. 
\end{proof}
\begin{lemma}\label{Lemma_J_new} We have 
	\[
	E\left[ \|J_\epsilon\|_{L^2(0,T; L^{q'}(\TT))}^\frac{p}{2} \right]\, \lesssim_{\Lambda, p,q , T}\, E\left[\|u_0\|^p_{H^1(\TT)}\right]\]
	 for $p\in (0, \infty)$.
\end{lemma}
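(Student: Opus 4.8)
The plan is to sum the deterministic flux bound of Theorem \ref{Thm_ex_sol_det} \ref{Item_det_energy_est_J} over the deterministic phases $[j\delta,(j+1)\delta)$, $j=0,\dots,N$, on which $J_\epsilon$ is the flux of $v_\epsilon$, and to absorb the resulting telescoping errors through an It\^o control of the quadratic functional $g(\|\nabla w_\epsilon\|_{L^2}^2)$, where $g(x):=x\big(x+m\big)$ and $m:=\big|\int_\TT u_0\big|^2$, along the stochastic phases. By \eqref{Eq_cons_mass_reg_scheme} and its deterministic and stochastic building blocks, $\int_\TT v_\epsilon(t)=\int_\TT w_\epsilon(t)=\int_\TT u_0$ for all $t$, so the mass appearing in Theorem \ref{Thm_ex_sol_det} \ref{Item_det_energy_est_J} is always $m$. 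Applied on the $j$-th deterministic phase, whose initial value is $v_\epsilon(j\delta)$ ($=u_0$ for $j=0$ and $=w_\epsilon(j\delta^-)$ for $j\ge 1$) and whose terminal value is $v_\epsilon((j+1)\delta^-)=w_\epsilon(j\delta)$, that bound reads $P$-almost surely
\[
\|J_\epsilon\|_{L^2(j\delta,(j+1)\delta;L^{q'})}^2\,+\,C_q\,g\big(\|\nabla w_\epsilon(j\delta)\|_{L^2}^2\big)\,\le\,C_q\,g\big(\|\nabla v_\epsilon(j\delta)\|_{L^2}^2\big).
\]
Summing over $j=0,\dots,N$ and telescoping, with the nonnegative remainder at the final time discarded, gives
\[
\|J_\epsilon\|_{L^2(0,T;L^{q'})}^2\,\le\,C_q\,g\big(\|\nabla u_0\|_{L^2}^2\big)\,+\,C_q\sum_{j=0}^{N}\Big[g\big(\|\nabla w_\epsilon((j+1)\delta^-)\|_{L^2}^2\big)-g\big(\|\nabla w_\epsilon(j\delta)\|_{L^2}^2\big)\Big].
\]

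Each summand on the right is the increment of $g(\|\nabla w_\epsilon(t)\|_{L^2}^2)$ across the stochastic phase $[j\delta,(j+1)\delta)$, and these phases tile $[0,T)$. I would evaluate each increment by applying the scalar It\^o formula to $g$ composed with the real semimartingale $t\mapsto\|\nabla w_\epsilon(t)\|_{L^2}^2$, whose It\^o expansion is \eqref{Eq_Ito1} (justified in appendix \ref{AppendixB}); this yields a drift density $D$ and the increments of a continuous square-integrable martingale $\mathcal N$ with orthogonal increments across phases. The coercivity estimate \eqref{Eq_coerc_2} bounds the drift of $\|\nabla w_\epsilon\|_{L^2}^2$ by $C_\Lambda\|w_\epsilon\|_{H^1}^2$; the estimate \eqref{Eq_aux1} bounds the bracket density $4\sum_l\lambda_l^2\langle\nabla\divv(w_\epsilon\psi_l),\nabla w_\epsilon\rangle^2$ by $C_\Lambda\|\nabla w_\epsilon\|_{L^2}^2\|w_\epsilon\|_{H^1}^2$; and together with $g'(x)=2x+m$, $g''\equiv 2$, and $m\le C\|w_\epsilon(s)\|_{H^1}^2$ (Cauchy-Schwarz plus conservation of mass) this gives $D(s)\le C_\Lambda\|w_\epsilon(s)\|_{H^1}^4$ and, after summation over the phases,
\[
\sum_{j=0}^{N}\Big[g\big(\|\nabla w_\epsilon((j+1)\delta^-)\|_{L^2}^2\big)-g\big(\|\nabla w_\epsilon(j\delta)\|_{L^2}^2\big)\Big]\,\le\,C_\Lambda\int_0^T\|w_\epsilon(s)\|_{H^1}^4\,ds\,+\,\sup_{0\le t\le T}|\mathcal N(t)|,
\]
where $\langle\mathcal N\rangle_T\le C_\Lambda\int_0^T\big(2\|\nabla w_\epsilon\|_{L^2}^2+m\big)^2\|\nabla w_\epsilon\|_{L^2}^2\|w_\epsilon\|_{H^1}^2\,ds\le C_{\Lambda,T}\sup_{0\le t\le T}\|w_\epsilon(t)\|_{H^1}^8$.

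Finally I would raise the displayed bound on $\|J_\epsilon\|_{L^2(0,T;L^{q'})}^2$ to the power $p/4$ and take expectations, using that on each stochastic phase $w_\epsilon$ is an affine reparametrization in time of a piece of $u_\epsilon$, so $\sup_{0\le t\le T}\|w_\epsilon(t)\|_{H^1}\le\sup_{0\le t\le T}\|u_\epsilon(t)\|_{H^1}$. The first term gives $E\big[g(\|\nabla u_0\|_{L^2}^2)^{p/4}\big]\lesssim_p E\big[\|u_0\|_{H^1}^p\big]$ since $g(\|\nabla u_0\|_{L^2}^2)\le C\|u_0\|_{H^1}^4$; the drift term gives $E\big[\big(\int_0^T\|w_\epsilon\|_{H^1}^4\,ds\big)^{p/4}\big]\le T^{p/4}\,E\big[\sup_{0\le t\le T}\|u_\epsilon(t)\|_{H^1}^p\big]\lesssim_{\Lambda,p,T}E\big[\|u_0\|_{H^1}^p\big]$ by \eqref{Eq_Energy_est_reg_scheme}; and the Burkholder-Davis-Gundy inequality (valid for every positive exponent) together with the bound on $\langle\mathcal N\rangle_T$ gives $E\big[\sup_{0\le t\le T}|\mathcal N(t)|^{p/4}\big]\lesssim_p E\big[\langle\mathcal N\rangle_T^{p/8}\big]\lesssim_{\Lambda,T,p}E\big[\sup_{0\le t\le T}\|u_\epsilon(t)\|_{H^1}^p\big]\lesssim_{\Lambda,p,T}E\big[\|u_0\|_{H^1}^p\big]$, again by \eqref{Eq_Energy_est_reg_scheme}; for $p<4$ the convexity inequality for $(\,\cdot\,)^{p/4}$ is replaced by subadditivity. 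Adding the three contributions would yield the claim for all $p\in(0,\infty)$.

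I expect the main difficulty to be uniformity in $N$: the telescoping reduces the problem to the total increase of $g(\|\nabla w_\epsilon\|_{L^2}^2)$ over the stochastic phases, whose drift part collapses to an integral over $[0,T]$ but whose martingale part does not cancel pathwise and must be controlled via its quadratic variation. The exponents $8$ (in the bound for $\langle\mathcal N\rangle_T$), $p/4$ (from $\|J_\epsilon\|^{p/2}=(\|J_\epsilon\|^2)^{p/4}$) and $1/2$ (from Burkholder-Davis-Gundy) have to combine to the exact power $p$ of $\|u_0\|_{H^1}$; the cruder bound $|\langle\nabla\divv(w\psi_l),\nabla w\rangle|\lesssim\|w\|_{H^1}^2$ in place of \eqref{Eq_aux1} would already overshoot the target exponent.
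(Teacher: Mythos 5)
Your proof is correct and follows the paper's argument step for step: the same flux estimate from Theorem \ref{Thm_ex_sol_det} \ref{Item_det_energy_est_J} on each deterministic phase, the same telescoping against the stochastic-phase increments, the same It\^{o} control of those increments using coercivity \eqref{Eq_coerc_2} together with the sharp bound \eqref{Eq_aux1}, and the same closure via the Burkholder--Davis--Gundy inequality and the energy estimate \eqref{Eq_Energy_est_reg_scheme}. The only difference is cosmetic: you bundle the quartic and quadratic gradient terms into the single functional $g(x)=x(x+m)$ and apply the scalar It\^{o} formula once to $g(\|\nabla w_\epsilon\|_{L^2}^2)$, whereas the paper writes \eqref{Eq_aux10} as a four-term sum and runs two separate It\^{o} expansions (one for $\|\nabla w_\epsilon\|_{L^2}^4$ via $(\cdot)^2$ composed with \eqref{Eq_Ito1}, one for $\|\nabla w_\epsilon\|_{L^2}^2$ via \eqref{Eq_aux14}). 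A very minor quibble: after telescoping, the stochastic-increment sum should run over $j=0,\dots,N-1$, not $j=0,\dots,N$, since the last stochastic phase is not followed by a deterministic one; this has no effect on your argument because you pass to $\sup_{0\le t\le T}|\mathcal N(t)|$ and $\int_0^T\|w_\epsilon\|_{H^1}^4\,ds$ anyway.
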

\begin{proof}
	We observe that as a consequence of Theorem \ref{Thm_ex_sol_det} (iii) and \eqref{Eq_cons_mass_reg_scheme}
	\begin{align*}
		\|J_\epsilon\|_{L^2(j\delta, (j+1)\delta; L^{q'}(\TT))}^2\,\lesssim_{q} \,& \|\nabla v_{\epsilon}(j\delta)\|^4_{L^2(\TT,\RR^2)}-\|\nabla w_{\epsilon}(j\delta)\|^4_{L^2(\TT,\RR^2)}\\&+\,\left(\int_{\TT}u_0\, dx\right)^2 \left(\|\nabla v_{\epsilon}(j\delta)\|^2_{L^2(\TT,\RR^2)}-\|\nabla w_{\epsilon}(j\delta)\|^2_{L^2(\TT,\RR^2)}\right).  
	\end{align*}
	Using that 
	\begin{align*}
		\|J_\epsilon\|_{L^2(0,T; L^{q'}(\TT))}^2 \,=\,
		\sum_{j=0}^{N} \|J_\epsilon\|_{L^2(j\delta, (j+1)\delta; L^{q'}(\TT))}^2
	\end{align*}
	we obtain the bound
	\begin{align*}
			\|J_\epsilon\|_{L^2(0,T; L^{q'}(\TT))}^2 \,\lesssim_{q} \,& \sum_{j=0}^{N} \|\nabla v_{\epsilon}(j\delta)\|^4_{L^2(\TT,\RR^2)}-\|\nabla w_{\epsilon}(j\delta)\|^4_{L^2(\TT,\RR^2)}\\&+\,\sum_{j=0}^{N}\left(\int_{\TT}u_0\, dx\right)^2 \left(\|\nabla v_{\epsilon}(j\delta)\|^2_{L^2(\TT,\RR^2)}-\|\nabla w_{\epsilon}(j\delta)\|^2_{L^2(\TT,\RR^2)}\right).
	\end{align*}
	Applying the $\frac{p}{4}$-th power and using that $(a_1+\dots+a_4)^\frac{p}{4}\lesssim_p a_1^\frac{p}{4}+\dots +a_4^\frac{p}{4}$ we conclude 
	\begin{align}\begin{split}\label{Eq_aux10}
			\|J_\epsilon\|_{L^2(0,T; L^{q'}(\TT))}^\frac{p}{2} \,\lesssim_{p,q} \,& \left|0\vee\left(
				\|\nabla u_0\|^4_{L^2(\TT,\RR^2)}-\|\nabla w_{\epsilon}(T-\delta)\|^4_{L^2(\TT,\RR^2)}\right)\right|^\frac{p}{4}
			\\&\,+
	\left|\sum_{j=0}^{N-1} \|\nabla v_{\epsilon}((j+1)\delta)\|^4_{L^2(\TT,\RR^2)}-\|\nabla w_{\epsilon}(j\delta)\|^4_{L^2(\TT,\RR^2)}\right|^\frac{p}{4}\\&+\, \left(\int_{\TT}u_0\, dx\right)^\frac{p}{2} \left|0\vee\left(\|\nabla u_0\|^2_{L^2(\TT,\RR^2)}-\|\nabla w_{\epsilon}(T-\delta)\|^2_{L^2(\TT,\RR^2)}\right)\right|^\frac{p}{4}\\&+\,\left|\left(\int_{\TT}u_0\, dx\right)^2
				\sum_{j=0}^{N-1} \|\nabla v_{\epsilon}((j+1)\delta)\|^2_{L^2(\TT,\RR^2)}-\|\nabla w_{\epsilon}(j\delta)\|^2_{L^2(\TT,\RR^2)}\right|^\frac{p}{4}.
		\end{split}
	\end{align}
	The expectation of the first and the third summand of the right-hand side of \eqref{Eq_aux10} can be each estimated by 
	$E\left[\|u_0\|_{H^1(\TT)}^p\right]$. To control also the second term we apply It\^o's formula, see e.g. \cite[Theorem 15.19]{kallenberg1997foundations}, to the composition of $(\cdot)^2$ with  the real-valued semimartingale \eqref{Eq_Ito1} and obtain that
	\begin{align*}
			\|\nabla w_\epsilon(t) \|_{L^2(\TT, \RR^2)}^4\,=& \, \|\nabla w_\epsilon(j\delta) \|_{L^2(\TT, \RR)}^4\,+\, 2\int_{j\delta}^t
			\|\nabla w_\epsilon(s)\|_{L^2(\TT,\RR^2)}^{2}\, dM_{2,j}(s)\\&+\,4\int_{j\delta}^t \|\nabla w_\epsilon(s) \|_{L^2(\TT,\RR^2)}^{2} \left<\nabla {w_\epsilon}(s),\nabla A^\epsilon ({w_\epsilon}(s))\right>\, ds
			\\&+\,2\int_{j\delta}^t \|\nabla w_\epsilon(s) \|_{L^2(\TT,\RR^2)}^{2} \sum_{l=1}^\infty \lambda_l^2 \int_{j\delta}^t\|\nabla 
			\divv({w_\epsilon}(s)\psi_l)\|_{L^2(\TT, \RR^2)}^2\, ds
			\\&+\,4
			\sum_{l=1}^\infty \lambda_l^2 \int_{j\delta}^t
			\left<\nabla \divv({w_\epsilon}(s)\psi_l), \nabla w_\epsilon(s) \right>^2\, ds
			.
	\end{align*}
Using \eqref{Eq_coerc_2}, \eqref{Eq_aux1} we conclude for the endpoint $t=(j+1)\delta$ that 
\begin{align*}&
	\|\nabla v_\epsilon((j+1)\delta) \|_{L^2(\TT, \RR^2)}^4\,- \, \|\nabla w_\epsilon(j\delta) \|_{L^2(\TT, \RR)}^4\,-\,2\int_{j\delta}^{(j+1)\delta}
	\|\nabla w_\epsilon(s)\|_{L^2(\TT,\RR^2)}^{2}\, dM_{2,j}(s) \\ \lesssim_\Lambda  \, & \int_{j\delta}^{(j+1)\delta}{\| 
{w_\epsilon}(s)\|_{H^1(\TT)}^4}\, ds
\end{align*}
Summing up over $j$, taking the $\frac{p}{4}$-power, applying the BDG-inequalities, \eqref{Eq_aux1} and \eqref{Eq_Energy_est_reg_scheme} yields that
\begin{align*}&
	E\left[\left|\sum_{j=0}^{N-1} \|\nabla v_{\epsilon}((j+1)\delta)\|^4_{L^2(\TT,\RR^2)}-\|\nabla w_{\epsilon}(j\delta)\|^4_{L^2(\TT,\RR^2)}\right|^\frac{p}{4} \right]\\\lesssim_{\Lambda, p}
	\,&E\left[\left(\int_0^{T-\delta}\| w_\epsilon(s)\|_{H^1(\TT)}^{8}\, ds\right)^\frac{p}{8}\,+\,\left(\int_0^{T-\delta} \|
	{w_\epsilon}(s)\|_{H^1(\TT)}^4\, ds\right)^\frac{p}{4}\right]
	\\\lesssim_{p,T}
	\,&E\left[
	\sup_{0\le t\le T} \|w_\epsilon(t)\|^p
	\right]\, \lesssim_{\Lambda, p, T} \, E\left[\|u_0\|^p_{H^1(\TT)}\right].
\end{align*} Similarly, by summing up over $j$
 in \eqref{Eq_aux14} and taking the power $\frac{p}{4}$ we obtain 
\begin{align*}&
	\left|\left(\int_{\TT}u_0\, dx\right)^2
	\sum_{j=0}^{N-1} \|\nabla v_{\epsilon}((j+1)\delta)\|^2_{L^2(\TT,\RR^2)}-\|\nabla w_{\epsilon}(j\delta)\|^2_{L^2(\TT,\RR^2)}\right|^\frac{p}{4}\\\lesssim_{\Lambda, p}\,&
	\left|
	\left(\int_{\TT}u_0\, dx\right)^2 M_2(T-\delta)
	\right|^\frac{p}{4}\,+\, \left(\left(\int_{\TT}u_0\, dx\right)^2
	\int_{0}^{T-\delta} \|w_\epsilon(s)\|_{H^1(\TT)}^2\, ds
	\right)^\frac{p}{4}
\end{align*}
Taking the expectation, using the Burkholder-Davis-Gundy inequality, \eqref{Eq_aux1}and \eqref{Eq_Energy_est_reg_scheme} yields the estimate
\begin{align*}&
	E\left[
	\left|\left(\int_{\TT}u_0\, dx\right)^2
	\sum_{j=0}^{N-1} \|\nabla v_{\epsilon}((j+1)\delta)\|^2_{L^2(\TT,\RR^2)}-\|\nabla w_{\epsilon}(j\delta)\|^2_{L^2(\TT,\RR^2)}\right|^\frac{p}{4}
	\right]\\\lesssim_{ \Lambda, p}\,&
	E\left[\left(
	\left(\int_{\TT}u_0\, dx\right)^4\int_0^{T-\delta} \|w_\epsilon(s)\|^4\, ds\right)^\frac{p}{8}\,+\,
	\left(
	\left(\int_{\TT}u_0\, dx\right)^2\int_0^{T-\delta} \|w_\epsilon(s)\|^2\, ds\right)^\frac{p}{4}
	\right]\\\lesssim_{p,T}\,&
	\sqrt{E\left[\left(\int_{\TT}u_0\, dx\right)^p\right]
E\left[\sup_{0\le t\le T} \|w_\epsilon(t)\|^p_{H^1(\TT)}\right]	
}\,\lesssim_{\Lambda,  p, T}\, E\left[\|u_0\|_{H^1(\TT)}^p\right].
\end{align*}
Finally, taking the expectation of \eqref{Eq_aux10} and using the estimates on the individual summands yields the claim.
\end{proof}
We also show tail estimates of the powers of $v_\epsilon$ in their respective space. We note that the obtained bound depends on $N$ and will therefore be improved to a bound, which is uniform in $N$ after letting $\epsilon\searrow 0$.
\begin{lemma}\label{Lemma_power_bounds_I}
	We have for $K\in (1, \infty)$ the estimate
	\begin{align*}&
	P\left(\left\{
	\left\|v_{\epsilon}^{\frac{\alpha+3}{2}}\right\|^2_{L^2(0,T;H^2(\TT))}+\,
	\left\|v_{ \epsilon}^{\frac{\alpha+3}{4}}\right\|^4_{L^4(0,T;W^{1,4}(\TT))}\,>
	\,K
	\right\}\right)\\ \lesssim_{\Lambda, \alpha, T, N} \,& \frac{E\left[\|u_0\|_{H^1(\TT)}^{\alpha+1}\right] \,+\,
		E\left[\|u_0\|_{H^1(\TT)}^2\right]}{K^\frac{2}{\alpha+3}}.
	\end{align*}
\end{lemma}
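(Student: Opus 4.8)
The plan is to turn the $\alpha$-entropy estimate of Theorem~\ref{Thm_ex_sol_det}~\ref{Item_det_entr_est}, applied on each of the $N+1$ deterministic sub-intervals $[j\delta,(j+1)\delta)$, into a single pathwise bound on $\int_0^T\int_\TT |Hv_\epsilon^{\frac{\alpha+3}{2}}|^2+|\nabla v_\epsilon^{\frac{\alpha+3}{4}}|^4$, then to recover the Sobolev norms of the statement, and finally to convert the result into the tail estimate via Markov's inequality and \eqref{Eq_Energy_est_reg_scheme}. First I would record the consequence of conservation of mass: since both the deterministic thin-film equation and \eqref{Eq_reg_SPDE} conserve $\int_\TT\cdot\,dx$ (Theorem~\ref{Thm_ex_sol_det}~\ref{Item_det_cons_of_mass} and \eqref{Eq_cons_mass_reg_scheme}) and preserve non-negativity (Theorem~\ref{Thm_ex_sol_det}~\ref{Item_det_cons_of_mass} and Theorem~\ref{Thm_Sec_Reg_SPDE}), the mass $m:=\int_\TT u_0\,dx$ equals $\int_\TT v_\epsilon(t)\,dx$ for every $t$, so the affine part $r_\alpha$ in the decomposition \eqref{Eq_alpha_entropy_expl} of $G_\alpha$ contributes the same constant to $\int_\TT G_\alpha(v_\epsilon(t))\,dx$ at the two endpoints of a deterministic interval. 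Since the coefficient $\frac{1}{\alpha(\alpha+1)}$ is negative for $\alpha\in(-1,0)$, Theorem~\ref{Thm_ex_sol_det}~\ref{Item_det_entr_est} on $[j\delta,(j+1)\delta)$ with initial datum $v_\epsilon(j\delta)$ thus reduces, after this cancellation, to
\[
\int_{j\delta}^{(j+1)\delta}\!\!\int_\TT|Hv_\epsilon^{\frac{\alpha+3}{2}}|^2+|\nabla v_\epsilon^{\frac{\alpha+3}{4}}|^4\,dx\,dt \lesssim_\alpha \int_\TT v_\epsilon\big((j+1)\delta^-\big)^{\alpha+1}dx-\int_\TT v_\epsilon(j\delta)^{\alpha+1}dx.
\]

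Next I would sum over $j=0,\dots,N$ and telescope along the scheme \eqref{Eq_scheme_deg}. Writing $p_j^{\flat}=\int_\TT v_\epsilon(j\delta)^{\alpha+1}dx$ and $p_j^{\sharp}=\int_\TT v_\epsilon((j+1)\delta^-)^{\alpha+1}dx$, and recalling that the construction feeds the deterministic terminal value $v_\epsilon((j+1)\delta^-)$ through a stochastic step into the next deterministic initial value $v_\epsilon((j+1)\delta)$, one has
\[
\sum_{j=0}^N\big(p_j^{\sharp}-p_j^{\flat}\big)=p_N^{\sharp}-p_0^{\flat}-\sum_{j=0}^{N-1}\big(p_{j+1}^{\flat}-p_j^{\sharp}\big).
\]
Each term on the right is controlled by $m^{\alpha+1}$ in the direction needed: Jensen's inequality on the unit torus gives $\int_\TT w^{\alpha+1}dx\le(\int_\TT w\,dx)^{\alpha+1}=m^{\alpha+1}$ for non-negative $w$ of mass $m$, hence $p_N^\sharp\le m^{\alpha+1}$, $-p_0^\flat\le 0$, and since $p_{j+1}^\flat\ge0$ also $-(p_{j+1}^\flat-p_j^\sharp)\le p_j^\sharp\le m^{\alpha+1}$. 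Therefore
\[
\int_0^T\!\!\int_\TT|Hv_\epsilon^{\frac{\alpha+3}{2}}|^2+|\nabla v_\epsilon^{\frac{\alpha+3}{4}}|^4\,dx\,dt \lesssim_{\alpha,N} m^{\alpha+1}\le \|u_0\|_{H^1(\TT)}^{\alpha+1}
\]
pathwise.

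To pass to the full norms I would use the interpolation estimates $\|f\|_{H^2(\TT)}^2\lesssim\|f\|_{L^2(\TT)}^2+\|D^2f\|_{L^2(\TT)}^2$ and $\|f\|_{W^{1,4}(\TT)}^4\lesssim\|f\|_{L^4(\TT)}^4+\|\nabla f\|_{L^4(\TT)}^4$, together with $\|v_\epsilon^{\frac{\alpha+3}{2}}\|_{L^2(\TT)}^2=\|v_\epsilon^{\frac{\alpha+3}{4}}\|_{L^4(\TT)}^4=\int_\TT v_\epsilon^{\alpha+3}dx$ and the two-dimensional embedding $H^1(\TT)\hookrightarrow L^{\alpha+3}(\TT)$, which yields $\int_\TT v_\epsilon(t)^{\alpha+3}dx\lesssim_\alpha\|v_\epsilon(t)\|_{H^1(\TT)}^{\alpha+3}\le\sup_{0\le s\le T}\|u_\epsilon(s)\|_{H^1(\TT)}^{\alpha+3}$, the last step because the time rescaling in \eqref{Eq_scheme_deg} identifies the range of $v_\epsilon$ on $[0,T)$ with a subset of the range of $u_\epsilon$ on $[0,T]$. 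Combining with the dissipation bound,
\[
\left\|v_\epsilon^{\frac{\alpha+3}{2}}\right\|^2_{L^2(0,T;H^2(\TT))}+\left\|v_\epsilon^{\frac{\alpha+3}{4}}\right\|^4_{L^4(0,T;W^{1,4}(\TT))} \lesssim_{\alpha,T,N} \sup_{0\le s\le T}\|u_\epsilon(s)\|_{H^1(\TT)}^{\alpha+3}+\|u_0\|_{H^1(\TT)}^{\alpha+1}.
\]
Splitting the event in the claim along these two summands and applying Markov's inequality, the first summand contributes $\lesssim \tilde E[\sup_{0\le s\le T}\|u_\epsilon(s)\|_{H^1(\TT)}^2]/K^{2/(\alpha+3)}\lesssim_{\Lambda,T}E[\|u_0\|_{H^1(\TT)}^2]/K^{2/(\alpha+3)}$ by \eqref{Eq_Energy_est_reg_scheme} with $p=2$, and the second $\lesssim E[\|u_0\|_{H^1(\TT)}^{\alpha+1}]/K\le E[\|u_0\|_{H^1(\TT)}^{\alpha+1}]/K^{2/(\alpha+3)}$, using $K>1$ and $\frac{2}{\alpha+3}<1$; adding the two gives the assertion.

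The hard part will be the telescoping step: one has to keep careful track of the alternation between the deterministic intervals, along which $\int_\TT v^{\alpha+1}dx$ increases and thereby pays for the dissipation, and the stochastic intervals, along which $\int_\TT v^{\alpha+1}dx$ may decrease, and to show that the cumulative decrease over the $N$ stochastic steps is controlled by $Nm^{\alpha+1}$ using only Jensen's inequality and non-negativity, with no finer (in particular no $\epsilon$-uniform) information on the stochastic dynamics; this crude bookkeeping is also what makes the bound blow up with $N$, to be improved only after sending $\epsilon\searrow0$. A minor point to verify is that Theorem~\ref{Thm_ex_sol_det}~\ref{Item_det_entr_est} is indeed applicable with the non-negative, $H^1(\TT)$-valued initial data $v_\epsilon(j\delta)$ handed over by the previous stochastic step, which is immediate from the construction of this subsection.
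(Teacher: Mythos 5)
Your proposal is correct. It reaches the same conclusion as the paper, but via a genuinely different decomposition of the dissipation part, so a comparison is warranted.

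The paper's proof of this lemma bounds each entropy increment $\int_\TT G_\alpha(v_\epsilon(j\delta)) - G_\alpha(w_\epsilon(j\delta))\,dx$ \emph{crudely}, using \eqref{Eq_alpha_entropy_expl} and H\"older's inequality to estimate the absolute value of each term by $\|v_\epsilon(j\delta)\|_{L^2}^{\alpha+1}+\|v_\epsilon(j\delta)\|_{L^2}+\|w_\epsilon(j\delta)\|_{L^2}^{\alpha+1}+\|w_\epsilon(j\delta)\|_{L^2}$, then sums the $N+1$ such bounds (giving the $N$-dependence), takes the expectation and controls it by $E[\sup_t\|u_\epsilon\|_{H^1}^{\alpha+1}+\sup_t\|u_\epsilon\|_{H^1}]$, and finishes with Chebyshev applied separately to the dissipation and zeroth-order parts as you also do. You instead exploit the mass conservation and non-negativity of the scheme to cancel the affine part $r_\alpha$ of $G_\alpha$ exactly, reduce the increment to the pure $t^{\alpha+1}$-contribution, telescope over the whole scheme, and control the $N$ stochastic-step increments one at a time via Jensen's inequality ($\int_\TT w^{\alpha+1}\le m^{\alpha+1}$); this yields a \emph{pathwise} bound $\lesssim_{\alpha,N} m^{\alpha+1}\lesssim\|u_0\|_{H^1}^{\alpha+1}$ for the dissipation part, which is somewhat cleaner than the paper's expectation bound in terms of $\sup_t\|u_\epsilon\|_{H^1}$. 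Both routes produce the same $N$-dependent tail estimate with the same final Markov/Chebyshev step. It is worth noting that your telescoping-plus-cancellation structure is exactly what the paper deploys later in the proof of Lemma~\ref{Bound_powers_2}, where the $N$-dependence is removed by replacing your crude Jensen bound on the stochastic-step increments by an It\^o-formula argument; so you have in effect pulled part of that later argument forward, which is a legitimate and perhaps more instructive presentation. The only superficial blemishes are the stray $\tilde{E}$ (at this stage the probability space still carries plain $E$) and that $m\le\|u_0\|_{H^1}$ holds only up to a harmless multiplicative constant; neither affects the conclusion.
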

\begin{proof}
	 As a consequence of  Theorem \ref{Thm_ex_sol_det} (iv), \eqref{Eq_alpha_entropy_expl} and H\"older's inequality we have
	\begin{align*}&
		\int_{j\delta}^{(j+1)\delta} \int_{\TT} |H v_{\epsilon}^{\frac{\alpha+3}{2}}|^2+|\nabla v_{\epsilon}^{\frac{\alpha+3}{4}}|^4 \, dx\, dt\,\lesssim \, 
		\int_{\TT} G_\alpha( v_{\epsilon}(j\delta)) -  G_\alpha( w_{\epsilon}(j\delta)) \,dx \\\lesssim_\alpha& \, \|v_{\epsilon}(j\delta)\|_{L^2(\TT)}^{\alpha+1}\,+\,\|v_{\epsilon}(j\delta)\|_{L^2(\TT)}\, +\,\|w_{\epsilon}(j\delta)\|_{L^2(\TT)}^{\alpha+1}\, +\,\|w_{\epsilon}(j\delta)\|_{L^2(\TT)}.
	\end{align*}
	Summing up over $j$ and taking the expectation yields that 
	\begin{equation}\label{Eq_aux8} E \left[
	\int_{0}^{T} \int_{\TT} |H v_{\epsilon}^{\frac{\alpha+3}{2}}|^2+|\nabla v_{\epsilon}^{\frac{\alpha+3}{4}}|^4 \, dx\, dt\right]\, \lesssim_{\alpha, N} \, E \left[ \sup_{0\le  t\le T} \|u_\epsilon(t)\|_{H^1(\TT)}^{\alpha+1} + \sup_{0\le  t\le T} \|u_\epsilon(t)\|_{H^1(\TT)} \right].
	\end{equation}
	Moreover, by the Sobolev embedding theorem we have the estimate 
	\begin{equation*}
		\int_0^T 
		\int_{\TT} \left|v_{\epsilon}^{\frac{\alpha+3}{2}}(t)\right|^2\, dx
		\, dt \, =\,  \int_0^T 
		\int_{\TT}  \left|v_{\epsilon}^{\frac{\alpha+3}{4}}(t)\right|^4\, dx
		\, dt\,\lesssim_{\alpha} \, \int_0^T \|v_\epsilon\|_{H^1(\TT)}^{\alpha+3}
		\, dt
	\end{equation*}
	which implies by taking the $\frac{2}{\alpha+3}$-th power and the expectation that
	\begin{equation}\label{Eq_aux9}
		E\left[\left(\int_0^T 
		\int_{\TT} \left|v_{\epsilon}^{\frac{\alpha+3}{2}}(t)\right|^2 +  \left|v_{\epsilon}^{\frac{\alpha+3}{4}}(t)\right|^4\, dx
		\, dt\right)^\frac{2}{\alpha+3}\right]\,\lesssim_{\alpha,T}\,  E \left[ \sup_{0\le  t\le T} \|u_\epsilon(t)\|_{H^1(\TT)}^{2}\right].
	\end{equation}
	Combining \eqref{Eq_aux8} and \eqref{Eq_aux9} with Chebyshev's inequality yields respectively that 
	\begin{align*}&
	P\left(
	\left\{
	\int_{0}^{T} \int_{\TT} |H v_{\epsilon}^{\frac{\alpha+3}{2}}|^2+|\nabla v_{\epsilon}^{\frac{\alpha+3}{4}}|^4 \, dx\, dt
	\,>\, K\right\}
	\right)\\  \lesssim_{\alpha, N} \, &
	\frac{1}{K}E \left[ \sup_{0\le  t\le T} \|u_\epsilon(t)\|_{H^1(\TT)}^{\alpha+1} + \sup_{0\le  t\le T} \|u_\epsilon(t)\|_{H^1(\TT)} \right], \\&
	P\left(
	\left\{\int_0^T 
	\int_{\TT} \left|v_{\epsilon}^{\frac{\alpha+3}{2}}(t)\right|^2 +  \left|v_{\epsilon}^{\frac{\alpha+3}{4}}(t)\right|^4\, dx
	\, dt
	\,>\, K\right\}
	\right)\\
	\lesssim_{\alpha, T  }\,&\frac{1}{K^\frac{2}{\alpha+3}}E\left[\sup_{0\le  t\le T} \|u_\epsilon(t)\|_{H^1(\TT)}^2\right].
	\end{align*}Combining these estimates, the assumption  $K\in (1, \infty)$ and the interpolation inequality
	\begin{equation}\label{Eq_Interp1}\|f\|_{H^2(\TT)}^2\, \lesssim  \, 
	\int_{\TT} |
	f|^2 +|Hf|^2\,dx, \;\; f\in H^2(\TT),
	\end{equation} we obtain that
	\begin{align*}&
		P\left(\left\{ 
		\left\|v_{\epsilon}^{\frac{\alpha+3}{2}}\right\|^2_{L^2(0,T;H^2(\TT))}+\,
		\left\|v_{ \epsilon}^{\frac{\alpha+3}{4}}\right\|^4_{L^4(0,T;W^{1,4}(\TT))}\,>
		\,K\right\}\right)\\\lesssim_{\alpha, T, N}\, &\frac{1}{K^\frac{2}{\alpha+3}}E\left[\sup_{0\le t\le T} \|w_\epsilon(t)\|_{H^1(\TT)}^{\alpha+1}+\sup_{0\le t\le T} \|w_\epsilon(t)\|_{H^1(\TT)}^2\right].
	\end{align*}It is left to apply  \eqref{Eq_Energy_est_reg_scheme} to conclude the claim.
\end{proof}
In the final part of the analysis of the approximate scheme, we show H\"older regularity in time of $u_\epsilon$.
\begin{lemma} Let $\gamma \in (0, \frac{1}{2})$ and $K\in (1, \infty)$, then
	\begin{equation}\label{Eq_aux2}
	P\left(\left\{\left[u_\epsilon\right]_{\gamma ,W^{-1, q'}(\TT)} >K \right\}\right)\,
	\lesssim_{\Lambda, q, \gamma, T}\, \frac{1+E\left[\|u_0\|_{H^1(\TT)}^2\right]}{K}.
	\end{equation}
\end{lemma}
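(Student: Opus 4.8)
The plan is to write $u_\epsilon$ on all of $[0,T]$ as the sum of its initial value, an absolutely continuous drift part, and a continuous martingale, and to bound the $C^\gamma$-seminorm of each separately. Concatenating the legs of the splitting scheme — keeping track of the time-doubling, which only costs universal constants, and using that on each deterministic leg $u_\epsilon$ is $\mathfrak{F}_{j\delta}$-measurable because $\mathcal{S}_{\alpha,q,\delta}$ is Borel (Corollary \ref{Cor_mble_sol_map}) — one writes
\[
u_\epsilon(t)\,=\,u_0\,+\,A_\epsilon(t)\,+\,M_\epsilon(t),\qquad A_\epsilon(t)\,=\,\int_0^t g_\epsilon(s)\,ds,
\]
where $g_\epsilon$ equals a multiple of $-\divv J_\epsilon$ on the deterministic legs and of $A^\epsilon w_\epsilon$ on the stochastic ones, and $M_\epsilon$ is zero on the deterministic legs and accumulates $\sum_l\lambda_l\int\divv(w_\epsilon\psi_l)\,d\beta^{(l)}$ on the stochastic ones (after rescaling the driving Brownian motions on each leg). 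One views $M_\epsilon$ as a continuous square-integrable martingale with values in $W^{-1,q'}(\TT)$, in fact in $H^{-1}(\TT)$; the crucial point is that, thanks to \eqref{boundedness_psil} and $\Lambda\in l^2(\NN)$, the trace of its $H^{-1}(\TT)$-quadratic variation is bounded by $C_\Lambda\int_0^t\|u_\epsilon(s)\|_{H^1(\TT)}^2\,ds$. Working in the weak space $H^{-1}(\TT)$ rather than in $H^1(\TT)$ is essential, since it replaces the uncontrolled $\|w_\epsilon\|_{H^2(\TT)}$ from \eqref{Eq5} by the pathwise-bounded $\|u_\epsilon\|_{H^1(\TT)}$.

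For the drift part I would argue pathwise. Since $q'<2$ one has $W^{-1,2}(\TT)\hookrightarrow W^{-1,q'}(\TT)$, and together with $\epsilon<1$ and \eqref{boundedness_psil} this gives $\|g_\epsilon(s)\|_{W^{-1,q'}(\TT)}\lesssim_{\Lambda,q}\|J_\epsilon(s)\|_{L^{q'}(\TT)}$ on the deterministic legs and $\lesssim_{\Lambda,q}\|u_\epsilon(s)\|_{H^1(\TT)}$ on the stochastic ones, hence $\|g_\epsilon\|_{L^2(0,T;W^{-1,q'}(\TT))}\lesssim_{\Lambda,q,T}\|J_\epsilon\|_{L^2(0,T;L^{q'}(\TT))}+\sup_{[0,T]}\|u_\epsilon\|_{H^1(\TT)}$. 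Then $\|A_\epsilon(t)-A_\epsilon(s)\|_{W^{-1,q'}(\TT)}\le|t-s|^{1/2}\|g_\epsilon\|_{L^2(0,T;W^{-1,q'}(\TT))}$, so $[A_\epsilon]_{\gamma,W^{-1,q'}(\TT)}\lesssim_{\gamma,T}\|g_\epsilon\|_{L^2(0,T;W^{-1,q'}(\TT))}$, and taking expectations and invoking Lemma \ref{Lemma_J_new} with $p=2$ together with \eqref{Eq_Energy_est_reg_scheme} with $p=1$ yields $E\left[[A_\epsilon]_{\gamma,W^{-1,q'}(\TT)}\right]\lesssim_{\Lambda,q,\gamma,T}1+E\left[\|u_0\|_{H^1(\TT)}^2\right]$. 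By Markov's inequality, $P\left([A_\epsilon]_{\gamma,W^{-1,q'}(\TT)}>K/2\right)$ is then bounded by the right-hand side of \eqref{Eq_aux2}.

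The martingale part is the heart of the argument and I would treat it by localization. Let $\tau_R=\inf\{t:\sup_{s<t}\|u_\epsilon(s)\|_{H^1(\TT)}>R\}\wedge T$, which is a stopping time because $s\mapsto\|u_\epsilon(s)\|_{H^1(\TT)}$ is lower semicontinuous and adapted. On $[0,\tau_R]$ the trace of the quadratic variation of $M_\epsilon$ grows at rate at most $C_\Lambda R^2$, hence is Lipschitz with this constant; by the Burkholder–Davis–Gundy inequality in $H^{-1}(\TT)$ this gives $E\left[\|M_\epsilon^{\tau_R}(t)-M_\epsilon^{\tau_R}(s)\|_{H^{-1}(\TT)}^r\right]\lesssim_{\Lambda,r}R^r|t-s|^{r/2}$ for all $r\ge2$. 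Fixing $r=r(\gamma)>2/(1-2\gamma)$ (so $r>2$ and $\gamma<1/2-1/r$), the quantitative Kolmogorov continuity theorem together with $H^{-1}(\TT)\hookrightarrow W^{-1,q'}(\TT)$ gives $E\left[[M_\epsilon^{\tau_R}]_{\gamma,W^{-1,q'}(\TT)}^r\right]\lesssim_{\Lambda,q,\gamma,T}R^r$. Now decompose, using that $M_\epsilon=M_\epsilon^{\tau_{\sqrt K}}$ on $\{\sup_{[0,T]}\|u_\epsilon\|_{H^1(\TT)}\le\sqrt K\}$,
\[
P\left([M_\epsilon]_{\gamma,W^{-1,q'}(\TT)}>K/2\right)\le P\left([M_\epsilon^{\tau_{\sqrt K}}]_{\gamma,W^{-1,q'}(\TT)}>K/2\right)+P\left(\sup_{[0,T]}\|u_\epsilon\|_{H^1(\TT)}>\sqrt K\right).
\]
By Chebyshev with exponent $r$ the first term is $\lesssim_{\Lambda,q,\gamma,T}K^{-r}(\sqrt K)^r=K^{-r/2}\le K^{-1}$ (as $K>1$, $r>2$), while by Chebyshev with exponent $2$ and \eqref{Eq_Energy_est_reg_scheme} the second term is $\lesssim_{\Lambda,T}E\left[\|u_0\|_{H^1(\TT)}^2\right]/K$. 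Adding this to the bound for $A_\epsilon$ yields \eqref{Eq_aux2}.

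The main obstacle is precisely the martingale estimate: one has only $r$-th moment control of its increments, so Kolmogorov's theorem naively produces a bound like $E\left[\|u_0\|_{H^1(\TT)}^r\right]/K^r$, which cannot be compared with $\bigl(1+E[\|u_0\|_{H^1(\TT)}^2]\bigr)/K$. The three ideas that resolve this are: (i) passing to the weak space $W^{-1,q'}(\TT)$ so that the quadratic variation of the noise is controlled by the pathwise quantity $\|u_\epsilon\|_{H^1(\TT)}$; (ii) stopping at the level set $\{\sup\|u_\epsilon\|_{H^1(\TT)}>R\}$ to make that quadratic variation deterministically Lipschitz, so that Kolmogorov applies with the Brownian scaling $|t-s|^{r/2}$ and a bound depending only on $R$; and (iii) optimizing the localization radius via $R=\sqrt K$, which converts the $r$-th moment estimates into the claimed single power of $K$ and the second moment of $u_0$. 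A routine but somewhat delicate point is the initial concatenation of the splitting legs — with the rescaling of the Brownian motions and the measurability of the deterministic legs — into a single semimartingale on $[0,T]$.
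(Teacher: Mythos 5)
Your argument is correct and reaches the same bound, but it replaces the paper's key tool for the stochastic integral---the subGaussian concentration estimate of Ondrej\'at and Veraar \cite[Theorem 3.2]{Ondrejat_Veraar_2020}, applied conditionally on the Hilbert--Schmidt norm $\sup_t\|B(w_\epsilon(t))\|_{L_2(H^2(\TT,\RR^2),L^2(\TT))}\le\sqrt{K}$---with a more elementary combination of localization at the stopping time $\tau_{\sqrt K}$, the Burkholder--Davis--Gundy inequality, and the quantitative Kolmogorov continuity theorem, followed by the optimization $R=\sqrt{K}$. Both routes are driven by the same observation: in a sufficiently weak spatial norm (the paper uses $L^2(\TT)$, you use $H^{-1}(\TT)$, and either suffices since both embed into $W^{-1,q'}(\TT)$ for $q'<2$) the quadratic variation of the noise martingale is controlled by $\|u_\epsilon\|^2_{H^1(\TT)}$ rather than by the uncontrolled $\|w_\epsilon\|^2_{H^2(\TT)}$; and both split the bad event into the piece where $\sup\|u_\epsilon\|_{H^1}\le\sqrt{K}$ (contributing $K^{-r/2}$ in your case, $e^{-cK}$ in the paper's, and in either case $\le K^{-1}$ for $K>1$) and the complementary tail of $\sup\|u_\epsilon\|_{H^1}$, producing the single power $K^{-1}$. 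The drift estimates and the combination step, with the factor-of-two dilation $|t'-s'|,|t''-s''|<2|t-s|$, coincide with the paper's Steps 1 and 3. What you gain is elementariness, avoiding a relatively recent black-box; what you lose is the exponential tail, but since only one power of $K$ is needed this costs nothing here.

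One point in your setup deserves to be made precise. The time-changed process $M_\epsilon(t)=\tilde M_\epsilon(\phi(t))$, where $\phi$ is constant on the deterministic legs and doubles the clock on the stochastic ones, is \emph{not} an $(\mathfrak{F}_t)_t$-martingale, because $\phi(t)<t$ strictly on the interior of the stochastic legs. It \emph{is} a martingale with respect to the coarser time-changed filtration $(\mathfrak{F}_{\phi(t)})_t$, and this is exactly where the $\mathfrak{F}_{j\delta}$-measurability of the deterministic legs (Corollary \ref{Cor_mble_sol_map}) enters: it guarantees that $u_\epsilon$, and hence $\tau_R$, are adapted to this coarser filtration, so that BDG and the stopping argument still apply. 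Alternatively---and more simply, as the paper does---apply BDG and Kolmogorov directly to the genuine $(\mathfrak{F}_s)_s$-martingale $\tilde M_\epsilon(s)=\sum_l\lambda_l\int_0^s\divv(w_\epsilon(\tau)\psi_l)\,d\beta^{(l)}_\tau$ in $w_\epsilon$-time, localize there, and transfer the H\"older bound to $u_\epsilon$-time via $|t''-s''|<2|t-s|$; this sidesteps the filtration issue entirely.
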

\begin{proof} We divide the proof into three steps. 

\textbf{Step 1} (Deterministic integrals).
By H\"older's inequality we have 
	\begin{align}\begin{split}\label{Eq_Hoelder_J_Int}
			\left\|
\int_{s}^{t } \divv J_{\epsilon}(\tau)\,d\tau\right\|_{W^{-1,q'}(\TT)}\,&\le \,
\int_{s}^{t } \left\|\divv J_{\epsilon}(\tau)\right\|_{W^{-1,q'}(\TT)}\,d\tau\\& \lesssim\,
|t-s|^\frac{1}{2}\|  J_{\epsilon}\|_{L^2(0,T,L^{q'}(\TT))}
		\end{split}
\end{align}for any $s,t\in [0,T]$. Analogously, using that $A^\epsilon$ maps $H^1(\TT)$ continuously to $H^{-1}(\TT)$ due to \eqref{boundedness_psil} (with operator norm depending solely on $\Lambda$) we obtain that
\begin{align}\begin{split}\label{Eq_Hoelder_A_Int}
		\left\|
	\int_{s}^{t } A^\epsilon w_\epsilon(\tau) \,d\tau\right\|_{H^{-1}(\TT)}\,&\le \,
	\int_{s}^{t } \left\| A^\epsilon w_\epsilon(\tau)  \right\|_{H^{-1}(\TT)}\,d\tau\\& \lesssim_\Lambda\,
	|t-s|\sup_{0\le \tau\le T} \| w_\epsilon(\tau) \|_{H^1(\TT)}.
\end{split}
\end{align}
From the above inequalities, Lemma \ref{Lemma_J_new}, and \eqref{Eq_Energy_est_reg_scheme}, we deduce that 
\begin{align}\begin{split}
		\label{Eq_aux16}
	P\left(\left\{\left[\int_0^\cdot
	\divv J_{\epsilon}(s)\,ds\right]_{\frac{1}{2}, W^{-1, q'}(\TT)}>K
	\right\}\right)\, &
	\lesssim_{\Lambda,q , T}\, \frac{E\left[\|u_0\|^2_{H^1(\TT)}\right]}{K},\\
	P\left(\left\{\left[\int_0^\cdot
	 A^\epsilon w_{\epsilon}(s)\,ds\right]_{1, H^{-1}(\TT)}>K
	\right\}\right)\, &
	\lesssim_{\Lambda , T}\, \frac{E\left[\|u_0\|^2_{H^1(\TT)}\right]}{K^2}.
	\end{split}
\end{align}
\textbf{Step 2} (Stochastic integral). By \cite[ Theorem 3.2]{Ondrejat_Veraar_2020} we conclude that for 
\[
P\left(\left\{
\left[ 
\sum_{l=1}^\infty\lambda_l \int_0^\cdot \divv (w_\epsilon(s) \psi_l)\, d\beta^{(l)}_s \right]_{\gamma, L^2(\TT)}> K \,\wedge\,  \sup_{0\le t\le T}\|B( w_\epsilon(t))\|_{L_2(H^2(\TT, \RR^2), L^2(\TT))} \le \sqrt{K}
\right\}\right)\] is dominated by $
2e^{-C_{\gamma, T} K}$, where $C_{\gamma, T}\in (0,\infty)$ is a suitable constant.  We observe that due to \eqref{boundedness_psil}, $B$ maps continuously from $H^1(\TT)$  to $L_2(H^2(\TT, \RR^2), L^2(\TT))$   with operator norm only depending on $\Lambda$. Using additionally \eqref{Eq_Energy_est_reg_scheme}, we obtain the estimate 
\begin{align}\begin{split}\label{Eq_aux17}
		&
	P\left(\left\{
	\left[ 
	\sum_{l=1}^\infty\lambda_l \int_0^\cdot \divv (w_\epsilon(s) \psi_l)\, d\beta^{(l)}_s \right]_{\gamma, L^2(\TT)}> K \right\}\right)
	\\\le\, & 
	2e^{-C_{\gamma, T} K} \, +\, \frac{E\left[\sup_{0\le t\le T}\|B(w_\epsilon(t))\|_{H^1(\TT)}^2\right]}{K}
	\, \lesssim_{\Lambda, \gamma,  T}  \frac{1+E\left[\|u_0\|_{H^1(\TT)}^2\right]}{K}.
	\end{split}
\end{align}
\textbf{Step 3} (Combination of the estimates). Let $0\le s<t\le T$.
Splitting the process $u_\epsilon$ in its increments corresponding to $v_\epsilon$ and $w_\epsilon$ we obtain that
\[
u_\epsilon(t)-u_\epsilon(s)\,=\, \int_{s'}^{t'} \divv J_{\epsilon}(\tau)\,d\tau\,+\, 
\int_{s''}^{t''} A^\epsilon(w_{\epsilon}(\tau))\, d\tau\,+\, 
\sum_{l=1}^\infty\lambda_l \int_{s''}^{t''} \divv (w(\tau) \psi_l)\, d\beta^{(l)}_\tau
\]
with an appropriate choice of $s', s'', t', t''\in [0,T]$ satisfying in particular $|t'-s'|, |t''-s''|< 2|t-s|$. Therefore, we can estimate $
\left[u_\epsilon(t)-u_\epsilon(s)\right]_{\gamma ,W^{-1, q'}(\TT)}$ by 
\begin{align*} C_{q,T} \, 
\left(\left[\int_0^\cdot
\divv J_{\epsilon}(s)\,ds\right]_{\frac{1}{2}, W^{-1, q'}(\TT)}\,+\,\left[\int_0^\cdot
A^\epsilon w_{\epsilon}(s)\,ds\right]_{1, H^{-1}(\TT)}\right.&\\+\,\left. \left[ 
\sum_{l=1}^\infty\lambda_l \int_0^\cdot \divv (w_\epsilon(s) \psi_l)\, d\beta^{(l)}_s \right]_{\gamma, L^2(\TT)}\right),&
\end{align*}
where $C_{q,T}<\infty$ is an appropriate constant. Invoking the estimates \eqref{Eq_aux16}, \eqref{Eq_aux17} as well as the assumption $K\in (1,\infty)$ we conclude \eqref{Eq_aux2}.
\end{proof}

\subsection{The vanishing viscosity limit} \label{Sec_deg_lim} 
In this subsection we let  $T,q,\alpha,u_0$  and $R^{(k)}$ as  in Theorem \ref{Thm_degenerate_sol}
and assume, as in the previous subsection, that $u_0$ is an $\mathfrak{F}_0$-measurable random variable on a filtered probability space subject to the usual conditions with a family of independent Brownian motions $(\beta^{(l)})_{l\in \NN}$.
We let $\mathcal{I}$ be a sequence converging to zero and apply for every $k\in \NN$ and $\epsilon\in \mathcal{I}$ the construction from the previous subsection to  the initial value $u^{(k)}_0= 
\mathbbm{1}_{R^{(k)}} u_0$ and obtain a regularized splitting scheme consisting of 
$u^{(k)}_\epsilon, v_\epsilon^{(k)}, w_\epsilon^{(k)}, J_\epsilon^{(k)}$. We consider the sequence 
\begin{equation}\label{Eq_approx_seq}
\left( \left(
\mathbbm{1}_{R^{(l)}}, \beta^{(l)}, u_\epsilon^{(l)}, v_\epsilon^{(l)}, w_\epsilon^{(l)}, J_\epsilon^{(l)}, 
(v_\epsilon^{(l)})^\frac{\alpha+3}{2}, 
(v_\epsilon^{(l)})^\frac{\alpha+3}{4} \right)_{l\in \NN}
\right)_{\epsilon \in \mathcal{I}}
\end{equation}
in the topological product space 
\begin{align}\begin{split}
		\label{Eq_space}
	\prod_{l=1}^{\infty}\;&
	\RR \times C([0,T])\times C(0,T; L^2(\TT)) \times  L_{w*}^\infty(0,T; H^1(\TT))\times  L_{w*}^\infty(0,T; H^1(\TT)) \\&\times  L_{w}^2(0,T; L^{q'}(\TT))\times
	L_w^2(0,T; H^2(\TT))\times L_w^4(0,T; W^{1,4}(\TT)).
	\end{split}
\end{align}
\begin{prop}\label{Prop_tNess1}
	The sequence \eqref{Eq_approx_seq} is tight on \eqref{Eq_space}
\end{prop}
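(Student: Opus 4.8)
The plan is to verify tightness factor by factor in the countable product \eqref{Eq_space}, using that a sequence of laws on a countable product of Polish spaces is tight if and only if each coordinate family is tight. Since the enumeration is over $l\in\NN$, it suffices to show that for each fixed $l$ the eight-component family indexed by $\epsilon\in\mathcal{I}$ is tight, and that is what I would carry out. The deterministic factors $\RR$ (the indicator $\mathbbm{1}_{R^{(l)}}$, which does not depend on $\epsilon$) and $C([0,T])$ (the Brownian motion $\beta^{(l)}$, likewise $\epsilon$-independent) are trivially tight, being constant in $\epsilon$.

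For the remaining six components I would proceed as follows. For $u_\epsilon^{(l)}$ in $C(0,T;L^2(\TT))$: by the compact embedding from Remark~\ref{Rem_version_det_sol} together with the Arzel\`a--Ascoli theorem, a set which is bounded in $L^\infty(0,T;H^1(\TT))$ and equi-H\"older in $W^{-1,q'}(\TT)$ is relatively compact in $C(0,T;L^r(\TT))$ for $r\in[1,\infty)$, in particular for $r=2$; so given $\eta>0$, using the Markov bound \eqref{Eq_Energy_est_reg_scheme} (with $p=2$) to control $\sup_{t}\|u_\epsilon^{(l)}(t)\|_{H^1(\TT)}$ with probability $\ge 1-\eta/2$ and the H\"older tail bound \eqref{Eq_aux2} to control $[u_\epsilon^{(l)}]_{\gamma,W^{-1,q'}(\TT)}$ with probability $\ge 1-\eta/2$ (choosing $K$ large depending on $\eta$ and on $E[\|u_0^{(l)}\|^2_{H^1(\TT)}]\le l^2$), one obtains a compact set in $C(0,T;L^2(\TT))$ carrying mass $\ge 1-\eta$ uniformly in $\epsilon$. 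For the two components $v_\epsilon^{(l)}$, $w_\epsilon^{(l)}$ in $L^\infty_{w*}(0,T;H^1(\TT))$ and the component $J_\epsilon^{(l)}$ in $L^2_w(0,T;L^{q'}(\TT))$, bounded balls in these spaces are metrizable and weak-(*) compact by Banach--Alaoglu together with separability of the (pre)duals; hence tightness reduces to a uniform-in-$\epsilon$ bound on the norm in probability, which for $v_\epsilon^{(l)}$ and $w_\epsilon^{(l)}$ follows again from \eqref{Eq_Energy_est_reg_scheme} (note $v_\epsilon^{(l)}$, $w_\epsilon^{(l)}$ are time-rescaled pieces of $u_\epsilon^{(l)}$ and thus inherit the same $L^\infty H^1$ bound), and for $J_\epsilon^{(l)}$ from Lemma~\ref{Lemma_J_new}. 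Finally, for $(v_\epsilon^{(l)})^{\frac{\alpha+3}{2}}$ in $L^2_w(0,T;H^2(\TT))$ and $(v_\epsilon^{(l)})^{\frac{\alpha+3}{4}}$ in $L^4_w(0,T;W^{1,4}(\TT))$ one argues identically, the required uniform norm-in-probability bound being exactly the tail estimate of Lemma~\ref{Lemma_power_bounds_I} (the $N$-dependence of the constant there is harmless since $N$, i.e.\ $\delta$, is fixed throughout this section).

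Assembling these, for any $\eta>0$ I would pick for each $l$ a compact set $\mathcal{K}_l$ in the $l$-th factor block with $\inf_{\epsilon}P(\cdot\in\mathcal{K}_l)\ge 1-2^{-l}\eta$, and take $\mathcal{K}=\prod_l\mathcal{K}_l$, which is compact in \eqref{Eq_space} by Tychonoff and satisfies $\inf_{\epsilon}P(\text{\eqref{Eq_approx_seq}}\in\mathcal{K})\ge 1-\eta$; this is the asserted tightness. The only mild subtlety — and the step I would be most careful about — is the simultaneous handling of all the tail bounds for a fixed $l$: one must intersect finitely many ``good'' events (bounded $L^\infty H^1$ norm, bounded H\"older seminorm, bounded norms of $J$ and of the powers of $v$) and observe that on this intersection the relevant tuple lies in a fixed product of balls, which is compact in the appropriate weak/weak-* product topology; everything else is a routine invocation of Banach--Alaoglu, the Aubin--Lions/Arzel\`a--Ascoli compactness from Remark~\ref{Rem_version_det_sol}, and the a priori estimates already established in Lemmas~\ref{Lemma_Est}, \ref{Lemma_J_new} and \ref{Lemma_power_bounds_I}.
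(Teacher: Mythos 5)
Your proof is correct and follows essentially the same route as the paper: reduce to component-wise tightness via Tychonoff, handle the $\epsilon$-independent components ($\mathbbm{1}_{R^{(l)}}$, $\beta^{(l)}$) trivially, use Markov/Chebyshev tail bounds from \eqref{Eq_Energy_est_reg_scheme}, Lemma~\ref{Lemma_J_new} and Lemma~\ref{Lemma_power_bounds_I} together with Banach--Alaoglu for the weak/weak-$*$ components, and combine the $L^\infty H^1$ bound with the H\"older estimate \eqref{Eq_aux2} and an Aubin--Lions/Simon compactness criterion for the $C(0,T;L^2(\TT))$ component. The paper invokes \cite[Theorem 5]{Simon1987} where you invoke Remark~\ref{Rem_version_det_sol} plus Arzel\`a--Ascoli, but these are the same compactness principle, so the argument is in substance identical.
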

\begin{proof}
	By Tychonoff's theorem it is sufficient to show tightness of every component of \eqref{Eq_approx_seq} separately, so we fix an $l\in \NN$. The distribution of $\mathbbm{1}_{R^{(l)}}$ and $\beta^{(l)}$ is independent of $\epsilon$ and since the corresponding space is  a Radon space, the sequences $\left(\mathbbm{1}_{R^{(l)}}\right)_{\epsilon \in \mathcal{I}}$,  $\left(\beta^{(l)}\right)_{\epsilon \in \mathcal{I}}$ are tight. Using \eqref{Eq_Energy_est_reg_scheme} we deduce that 
	\[
	P\left(\left\{
	\|v_\epsilon^{(l)}\|_{L^\infty(0,T; H^1(\TT))}>K
	\right\}\right)\, \lesssim_{\Lambda,  T} \, \frac{E\left[\|u_0^{(l)}\|^2_{H^1(\TT)}\right]}{K^2}\, \to \,0
	\] 
	as $K\to \infty$ uniformly in $\epsilon$
	such that tightness is a consequence of the Banach-Alaoglu theorem. The components $w_\epsilon^{(l)}$, $ J_\epsilon^{(l)}$, $(v_\epsilon^{(l)})^\frac{\alpha+3}{2}$, $
	(v_\epsilon^{(l)})^\frac{\alpha+3}{4}$ can be treated analogously using \eqref{Eq_Energy_est_reg_scheme} and  Lemmas \ref{Lemma_J_new} and \ref{Lemma_power_bounds_I}. Lastly, we obtain from \eqref{Eq_Energy_est_reg_scheme} and \eqref{Eq_aux2} that
	\[
	P\left(\left\{\max\left\{
	\|u_\epsilon^{(l)}\|_{L^\infty(0,T; H^1(\TT))}, \left[u_\epsilon^{(l)}\right]_{\gamma ,W^{-1, q'}(\TT)} 
	\right\}>K\right\}\right)\, \lesssim_{\Lambda, q, \gamma, T} \frac{1+E\left[
		\|u_0^{(l)}\|_{H^1(\TT)}^2
		\right]}{K}
	\]
	for $K\in (1, \infty)$
	such that tightness of $u_\epsilon^{(l)}$ follows by \cite[Theorem 5]{Simon1987}.
\end{proof}
An application of \cite[Theorem 2]{jakub98} yields that there exists for a subsequence, which we  index again by $\mathcal{I}$, a complete probability space $(\tilde{\Omega}, \tilde{\mathfrak{A}}, \tilde{P})$ and a sequence of $\mathfrak{B}$-measurable random variables
\begin{equation*}
	\left( \left(
	\mathbbm{1}_{\tilde{R}_\epsilon^{(l)}}, \tilde{\beta}_\epsilon^{(l)}, \tilde{u}_\epsilon^{(l)}, \tilde{v}_\epsilon^{(l)}, \tilde{w}_\epsilon^{(l)}, \tilde{J}_\epsilon^{(l)}, 
	\tilde{f}_\epsilon^{(l)}, 
	\tilde{g}_\epsilon^{(l)} \right)_{l\in \NN}
	\right)_{\epsilon \in \mathcal{I}}
\end{equation*} 
with values in \eqref{Eq_space} such that 
\begin{equation}\label{Eq_elem_of_new_seq_I}
\left(
\mathbbm{1}_{\tilde{R}_\epsilon^{(l)}}, \tilde{\beta}_\epsilon^{(l)}, \tilde{u}_\epsilon^{(l)}, \tilde{v}_\epsilon^{(l)}, \tilde{w}_\epsilon^{(l)}, \tilde{J}_\epsilon^{(l)}, 	\tilde{f}_\epsilon^{(l)}, 
\tilde{g}_\epsilon^{(l)}  \right)_{l\in \NN}
\end{equation} 
has the same distribution as 
\[
\left(
\mathbbm{1}_{R^{(l)}}, \beta^{(l)}, u_\epsilon^{(l)}, v_\epsilon^{(l)}, w_\epsilon^{(l)}, J_\epsilon^{(l)}, 
(v_\epsilon^{(l)})^\frac{\alpha+3}{2}, 
(v_\epsilon^{(l)})^\frac{\alpha+3}{4} \right)_{l\in \NN}
\]
for every $\epsilon \in \mathcal{I}$. Moreover, as $\epsilon \searrow 0$, \eqref{Eq_elem_of_new_seq_I} converges to a $\mathfrak{B}$-measurable random variable
\begin{equation*}
	\left(
	\mathbbm{1}_{\tilde{R}^{(l)}}, \tilde{\beta}^{(l)}, \tilde{u}^{(l)}, \tilde{v}^{(l)}, \tilde{w}^{(l)}, \tilde{J}^{(l)}, 	\tilde{f}^{(l)}, 
	\tilde{g}^{(l)}  \right)_{l\in \NN}
\end{equation*} 
in \eqref{Eq_space}.
\begin{rem}
	In order to apply \cite[Theorem 2]{jakub98} one needs to check that there exists a countable sequence of $[-1,1]$-valued continuous functions on \eqref{Eq_space} which separate the points. Such a sequence is straightforward to construct using point-evaluations for the spaces of continuous functions and separability of the respective (pre-) dual for the spaces equipped with weak (weak-*) topology, see \cite[Proposition 1.2.29; Corollary 1.3.22]{AnainBanachspaces1}.
\end{rem}
\begin{lemma}\label{lemma_elem_propI}The sets $(\tilde{R}^{(k)})_{k\in \NN}$ form up to $\tilde{P}$-null sets a disjoint partition of $\tilde{\Omega}$. Moreover, the following holds $\tilde{P}$-almost surely for every $k\in\NN$.
	\begin{enumerate}[label=(\roman*)]
		\item The random variables 
		\[
		\tilde{u}^{(k)}, \tilde{v}^{(k)}, \tilde{w}^{(k)}, \tilde{J}^{(k)}, 	\tilde{f}^{(k)}\;\; \text{and}\;\; 
		\tilde{g}^{(k)}
		\]
		vanish outside of  of ${\tilde{R}^{(k)}}$.
		\item $\tilde{u}^{(k)}(t)\ge 0$ for all $t\in [0,T]$.
		\item For almost all $t\in [0,T]$ we have
		\begin{equation}\label{Eq_splitting_Limiting_scheme}
			\tilde{u}^{(k)}(t )=\begin{cases}
				\tilde{v}^{(k)}(2(t-j\delta)+j\delta ),& j\delta \le t< (j+\frac{1}{2})\delta,\\
				\tilde{w}^{(k)}(2(t-(j+\frac{1}{2})\delta)+j\delta ),& (j+\frac{1}{2})\le t< (j+1)\delta.
			\end{cases}
		\end{equation}
		\item The tuples $(\tilde{v}^{(k)},\tilde{J}^{(k)})$ are solutions to the deterministic thin-film equation on $[j\delta, (j+1)\delta)$ satisfying property (iv) from Theorem \ref{Thm_ex_sol_det} with initial value $\tilde{u}^{(k)} (j\delta)$ for every $j=0,\dots, N$.
		\item
		$\tilde{f}^{(k)}=(\tilde{v}^{(k)})^\frac{\alpha+3}{2}$ and
		$\tilde{g}^{(k)}=(\tilde{v}^{(k)})^\frac{\alpha+3}{4}$.
	\end{enumerate}
\end{lemma}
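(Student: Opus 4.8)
The plan is to verify every assertion first for the pre-limit objects of the regularized scheme, to transfer it to the copies on $(\tilde{\Omega},\tilde{\mathfrak{A}},\tilde{P})$ by equality of laws, and finally to send $\epsilon\searrow0$ along the $\tilde{P}$-almost surely convergent sequence furnished by \cite[Theorem 2]{jakub98}. The transfer step is admissible because each property amounts to membership of the relevant tuple in a Borel subset of \eqref{Eq_space}: on norm-bounded sets the factors of \eqref{Eq_space} are metrizable by separability of their preduals \cite[Proposition 1.2.29, Corollary 1.3.22]{AnainBanachspaces1}, and \eqref{Eq_space} is a countable union of such bounded sets. The partition claim is $\epsilon$-independent and follows immediately: the set of $(a_l)_l\in\prod_{l}\RR$ with $a_l\in\{0,1\}$ for every $l$ and $\sum_l a_l=1$ is Borel, $(\mathbbm{1}_{R^{(l)}})_l$ lies in it $P$-almost surely, and $(\mathbbm{1}_{\tilde{R}^{(l)}})_l$ has the same law.

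Next I would record that for each fixed $\epsilon\in\mathcal{I}$ the original scheme satisfies the analogues of (i)--(v). By the argument already used in the proof of Lemma \ref{Lemma_Est} one has $u_\epsilon^{(k)}=\mathbbm{1}_{R^{(k)}}u_\epsilon$, and likewise for $v_\epsilon^{(k)},w_\epsilon^{(k)},J_\epsilon^{(k)}$, since $\mathcal{S}_{\alpha,q,\delta}$ maps $0$ to $0$ and \eqref{Eq_reg_SPDE} with vanishing datum has the trivial solution by Theorem \ref{Thm_Sec_Reg_SPDE}; hence all of these objects vanish off $R^{(k)}$. Non-negativity $u_\epsilon^{(k)}(t)\ge0$ for every $t$ is Theorem \ref{Thm_ex_sol_det} (i) combined with the maximum principle in Theorem \ref{Thm_Sec_Reg_SPDE}; the splitting identity \eqref{Eq_splitting_Limiting_scheme} holds by the definition of $u_\epsilon^{(k)}$; and, by Corollary \ref{Cor_mble_sol_map}, $(v_\epsilon^{(k)},J_\epsilon^{(k)})$ restricted to $[j\delta,(j+1)\delta)$ is a non-negative weak solution of the deterministic thin-film equation with property (iv) of Theorem \ref{Thm_ex_sol_det}, whose associated powers $(v_\epsilon^{(k)})^{\frac{\alpha+3}{2}}$, $(v_\epsilon^{(k)})^{\frac{\alpha+3}{4}}$ are the seventh and eighth components of \eqref{Eq_approx_seq}. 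Each of these I would phrase as membership in a Borel subset of \eqref{Eq_space}: non-negativity is a closed condition in $C(0,T;L^2(\TT))$; \eqref{Eq_splitting_Limiting_scheme} becomes a closed affine condition after composition with the weak-$*$ continuous time-rescalings; the set of quadruples $(v,J,v^{\frac{\alpha+3}{2}},v^{\frac{\alpha+3}{4}})$ coming from a non-negative weak solution with property (iv) is closed on every ball of the spaces \eqref{Eq3} by Proposition \ref{Prop_conv_sol_det}, hence Borel, and this set already forces the power identities; and vanishing off $R^{(k)}$ is a Borel condition on the pair $(\mathbbm{1}_{R^{(k)}},\cdot)$. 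Equality of laws then yields all of (i)--(v) for the $\epsilon$-indexed tilde-objects.

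It remains to let $\epsilon\searrow0$. Non-negativity survives for $\tilde{u}^{(k)}$ since the non-negative cone is closed in $C(0,T;L^2(\TT))$ and $\tilde{u}^{(k)}_\epsilon\to\tilde{u}^{(k)}$ there uniformly in $t$; vanishing off $\tilde{R}^{(k)}$ survives because $\mathbbm{1}_{\tilde{R}_\epsilon^{(k)}}\to\mathbbm{1}_{\tilde{R}^{(k)}}$ and the products converge in \eqref{Eq_space}; the splitting identity \eqref{Eq_splitting_Limiting_scheme} survives by continuity of the time-rescalings together with uniqueness of limits in the Hausdorff topology of \eqref{Eq_space}. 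For (iv) and (v), the sequence $(\tilde{v}_\epsilon^{(k)},\tilde{J}_\epsilon^{(k)},\tilde{f}_\epsilon^{(k)},\tilde{g}_\epsilon^{(k)})_\epsilon$ restricted to $[j\delta,(j+1)\delta)$ is $\tilde{P}$-almost surely bounded in the spaces \eqref{Eq3}, since weakly and weak-$*$ convergent sequences are bounded, so Proposition \ref{Prop_conv_sol_det} applies pathwise and, by uniqueness of limits, identifies $(\tilde{v}^{(k)},\tilde{J}^{(k)})$ as a non-negative weak solution with $\tilde{f}^{(k)}=(\tilde{v}^{(k)})^{\frac{\alpha+3}{2}}$ and $\tilde{g}^{(k)}=(\tilde{v}^{(k)})^{\frac{\alpha+3}{4}}$, property (iv) being retained by lower semicontinuity of the norms exactly as in the proof of Corollary \ref{Cor_mble_sol_map}. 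The correct initial datum follows from the Aubin--Lions lemma: the uniform-in-$\epsilon$ bounds force $\tilde{v}^{(k)}_\epsilon\to\tilde{v}^{(k)}$ and $\tilde{u}^{(k)}_\epsilon\to\tilde{u}^{(k)}$ in $C(0,T;L^2(\TT))$, whence $\tilde{v}^{(k)}(j\delta)=\lim_\epsilon\tilde{v}^{(k)}_\epsilon(j\delta)=\lim_\epsilon\tilde{u}^{(k)}_\epsilon(j\delta)=\tilde{u}^{(k)}(j\delta)$. The main obstacle is precisely this last identification: \cite[Theorem 2]{jakub98} delivers only weak-$*$ convergence, so one must separately recover strong $C(0,T;L^2(\TT))$ compactness from $\tilde{P}$-almost sure uniform bounds via Aubin--Lions in order to rerun the compactness scheme of \cite{Passo98ona} (encoded in Proposition \ref{Prop_conv_sol_det}) pathwise, pin down the nonlinear power limits, and match the initial values; keeping careful track of Borel measurability throughout the equality-of-laws step is the other point that needs attention.
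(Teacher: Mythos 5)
Your proposal follows the same overall route as the paper: establish the properties at the $\epsilon$-level, transfer them to the Skorokhod copies by equality of laws (the paper does this somewhat more compactly via the measurable solution map $\mathcal{S}_{\alpha,q,\delta}$ and identity \eqref{Eq_aux3} rather than via closedness of the solution set on balls, and establishes the partition claim by computing $\tilde E[\mathbbm{1}_{\tilde R^{(k_1)}_\epsilon}\mathbbm{1}_{\tilde R^{(k_2)}_\epsilon}]$ instead of your Borel-set argument, but these are interchangeable bookkeeping choices), and pass to the limit $\epsilon\searrow 0$ using Proposition \ref{Prop_conv_sol_det}. The one place where you misdiagnose the situation is the concluding paragraph: the $u$-component of \eqref{Eq_space} is $C(0,T;L^2(\TT))$, a Polish space, and the tightness argument of Proposition \ref{Prop_tNess1} was run in precisely that topology via \cite[Theorem 5]{Simon1987}, so \cite[Theorem 2]{jakub98} already delivers $\tilde u^{(k)}_\epsilon\to\tilde u^{(k)}$ strongly and uniformly in time on the new probability space; no separate Aubin--Lions recovery step is needed for the $u$-component, and the matching of the initial datum $\tilde v^{(k)}(j\delta)=\tilde u^{(k)}(j\delta)$ is immediate from this together with the splitting identity established in (iii). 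The Aubin--Lions compactness for the $v$-component does appear, but it is already packaged inside Proposition \ref{Prop_conv_sol_det} as part of re-running the deterministic limit scheme of \cite{Passo98ona}, which you invoke anyway; so the content is right even though the framing of this as the ``main obstacle'' overstates it.
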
\begin{proof}For every $\epsilon\in \mathcal{I}$ we have 
\[\tilde{E}\left[
\mathbbm{1}_{\tilde{R}^{(k_1)}_\epsilon}
\mathbbm{1}_{\tilde{R}^{(k_2)}_\epsilon}\right] \,=\, \delta_{k_1, k_2} P \left(
{R}^{(k_1)}\right),
\] 
such that by letting $\epsilon \searrow 0$ we conclude the first part of the claim. Part (i) follows by letting $\epsilon \searrow 0$ in $\mathbbm{1}_{\tilde{R}^{(k)}_\epsilon} \|\tilde{u}^{(k)}_\epsilon\|_{C(0,T; L^2(\TT))}\,=\, 0$ and the same argument for the other random variables. Part (ii) is a consequence of $\tilde{u}_\epsilon^{(k)}(t)\ge 0$ together with conservation of this property under limits in $C(0,T; L^2(\TT))$. Analogously, we deduce (iii) from the respective property in of $\tilde{u}^{(k)}_\epsilon$, $\tilde{v}^{(k)}_\epsilon$ and $\tilde{w}^{(k)}_\epsilon$. For (iv) and (v) we observe first that by measurability of $
\mathcal{S}_{\alpha, q,\delta}$ we have 
\begin{equation}\label{Eq_aux3}
\mathcal{S}_{\alpha, q,\delta} \tilde{u}^{(k)}_\epsilon (j\delta)\, =\, 
\left(\tilde{v}^{(l)}_\epsilon|_{[j\delta, (j+1)\delta)},\tilde{J}^{(k)}_\epsilon|_{[j\delta, (j+1)\delta)}, \tilde{f}^{(k)}_\epsilon|_{[j\delta, (j+1)\delta)}, \tilde{g}^{(k)}_\epsilon|_{[j\delta, (j+1)\delta)}\right).
\end{equation}
In particular, we have $\tilde{f}^{(k)}_\epsilon = (\tilde{f}^{(k)}_\epsilon)^\frac{\alpha+3}{2}$, $\tilde{g}^{(k)}_\epsilon = (\tilde{f}^{(k)}_\epsilon)^\frac{\alpha+3}{4}$ and the right hand-side of  \eqref{Eq_aux3} fulfills the properties stated in Theorem \ref{Thm_ex_sol_det}. If we let $\epsilon \searrow 0$ we deduce that the  limit 
 \[
 \left(\tilde{v}^{(k)}|_{[j\delta, (j+1)\delta)},\tilde{J}^{(k)}|_{[j\delta, (j+1)\delta)}\right)\]
is  a solution to the thin-film equation  and  that (v) holds true by Proposition \ref{Prop_conv_sol_det}. In light of \ref{Rem_version_det_sol} the initial value of \eqref{Eq_aux3} is indeed $\tilde{u}^{(k)}_\epsilon(j\delta)$. It is left to observe that property (iv) of Theorem \ref{Thm_ex_sol_det} is preserved due to lower semi-continuity of the norm with respect to weak convergence.
\end{proof}

By \eqref{Eq_splitting_Limiting_scheme} we deduce that $\tilde{u}^{(k)}_\epsilon$ converges to $\tilde{u}^{(k)}$ also in $   L_{w*}^\infty(0,T; H^1(\TT)) $ and that $\tilde{u}^{(k)}$ is weakly continuous in $H^1(\TT)$ again. Moreover, we identify in the following $\tilde{v}^{(k)}$ and $\tilde{w}^{(k)}$ with their versions such that \eqref{Eq_splitting_Limiting_scheme} holds for all $t\in [0,T]$. We define $\tilde{\mathfrak{F}}$ as the augmentation of the filtration $\tilde{\mathfrak{G}}$ given by 
\[
\tilde{\mathfrak{G}}_t\,=\, \sigma \left(\left\{\left.
\mathbbm{1}_{\tilde{R}^{(l)}}, \tilde{J}^{(l)}|_{[0,t]} 
\right| \,l\in \NN
\right\}\cup 
\left\{\left. \tilde{u}^{(l)}(s), \tilde{\beta}^{(l)}(s)\right|
0\le s\le t, \,l\in \NN
\right\}\right),
\]
where we consider $\tilde{J}^{(l)}|_{[0,t]}$ as a $\mathfrak{B}$-random variable in $L^2(0,t; L^{q'}(\TT))$. 
\begin{lemma}\label{IS_SOL_1}
	The processes $(\tilde{\beta}^{(l)})_{l\in \NN}$ are a family of independent $\tilde{\mathfrak{F}}$-Brownian motions. Moreover, we have for every $k\in \NN$, $j\in \{0,\dots, N\}$ and $\varphi\in H^1(\TT)$ that $\tilde{P}$-almost surely
	\begin{align*}\left< \tilde{w}^{(k)}(t),
		\varphi\right>\,-\,\left< \tilde{w}^{(k)}(j\delta),
		\varphi\right>\,=\,&  \frac{1}{2}\sum_{l=1}^{\infty} \lambda_l^2\int_{j\delta }^t \left<  \divv (\divv(\tilde{w}^{(k)}(s)\psi_l) \psi_l ), \varphi\right>\, ds
		\\&+\, \sum_{l=1}^\infty \lambda_l \int_{j
			\delta}^t\left< \divv(\tilde{w}^{(k)}(s)\psi_l), \varphi \right>\, d\beta^{(l)}_s
	\end{align*}
	for all $t\in [j\delta, (j+1)\delta)$.
\end{lemma}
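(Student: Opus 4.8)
The plan is to identify the limit via a martingale problem carried out on each time step $[j\delta,(j+1)\delta)$, working with the natural (augmented) filtration $\tilde{\mathfrak{F}}^{j}$ generated by $\tilde{\mathfrak{F}}_{j\delta}$, the increments of the $\tilde\beta^{(l)}$ after $j\delta$, and $\tilde w^{(k)}$ restricted to $[j\delta,\cdot]$, with respect to which $\tilde w^{(k)}$ is tautologically adapted. Fix $k\in\NN$, $j\in\{0,\dots,N\}$ and $\varphi\in H^1(\TT)$. For the regularized objects, Theorem \ref{Thm_Sec_Reg_SPDE} shows that on $[j\delta,(j+1)\delta)$ the process
\[
\mathcal{M}^{(k)}_\epsilon(t):=\left<w^{(k)}_\epsilon(t),\varphi\right>-\left<w^{(k)}_\epsilon(j\delta),\varphi\right>-\int_{j\delta}^t\left<\epsilon\Delta w^{(k)}_\epsilon(s),\varphi\right>+\tfrac12\sum_{l=1}^\infty\lambda_l^2\left<\divv(\divv(w^{(k)}_\epsilon(s)\psi_l)\psi_l),\varphi\right>ds
\]
equals $\sum_{l=1}^\infty\lambda_l\int_{j\delta}^t\left<\divv(w^{(k)}_\epsilon(s)\psi_l),\varphi\right>d\beta^{(l)}_s$, hence is a continuous $L^2$-martingale with quadratic variation $\sum_l\lambda_l^2\int_{j\delta}^t\left<\divv(w^{(k)}_\epsilon(s)\psi_l),\varphi\right>^2ds$ and covariation $\lambda_l\int_{j\delta}^t\left<\divv(w^{(k)}_\epsilon(s)\psi_l),\varphi\right>ds$ with $\beta^{(l)}$; moreover $w^{(k)}_\epsilon|_{[j\delta,(j+1)\delta)}$ is adapted to the corresponding filtration for which the $\beta^{(l)}$ are independent Brownian motions, by the construction and the uniqueness part of Theorem \ref{Thm_Sec_Reg_SPDE}. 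Writing these properties as vanishing of expectations of the form $E[(\cdots)\,\Phi]=0$ with $\Phi$ ranging over bounded continuous cylindrical functionals of the relevant data up to time $s\le t$ (for the martingale property of $\mathcal{M}^{(k)}_\epsilon$, of $(\mathcal{M}^{(k)}_\epsilon)^2-\langle\mathcal{M}^{(k)}_\epsilon\rangle$, of $\mathcal{M}^{(k)}_\epsilon\beta^{(l)}-\langle\mathcal{M}^{(k)}_\epsilon,\beta^{(l)}\rangle$, and of $\beta^{(l)}\beta^{(l')}-\langle\beta^{(l)},\beta^{(l')}\rangle$), all of them are preserved by the equidistribution from \cite[Theorem 2]{jakub98} and hence hold for $\tilde w^{(k)}_\epsilon,\tilde\beta^{(l)}_\epsilon,\dots$ as well.

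Next I would let $\epsilon\searrow0$. The viscosity term is harmless because $|\int_{j\delta}^t\left<\epsilon\Delta\tilde w^{(k)}_\epsilon(s),\varphi\right>ds|\le\epsilon T\|\varphi\|_{H^1(\TT)}\|\tilde w^{(k)}_\epsilon\|_{L^\infty(0,T;H^1(\TT))}$ and the last factor is bounded in probability uniformly in $\epsilon$ by \eqref{Eq_Energy_est_reg_scheme} (applied to the bounded initial value $u^{(k)}_0$), so this contribution tends to $0$ along a subsequence $\tilde P$-a.s. For the boundary terms and the (co)variation integrands I would use that, by \eqref{Eq_splitting_Limiting_scheme} and the $C(0,T;L^2(\TT))$-convergence $\tilde u^{(k)}_\epsilon\to\tilde u^{(k)}$, one has $\tilde w^{(k)}_\epsilon(\sigma)\to\tilde w^{(k)}(\sigma)$ in $L^2(\TT)$ uniformly in $\sigma$; since $w\mapsto\left<\divv(w\psi_l),\varphi\right>=-\left<w,\psi_l\cdot\nabla\varphi\right>$ is $L^2$-continuous, the quadratic and cross variation terms converge $\tilde P$-a.s., the series being controlled uniformly in $\epsilon$ via $\Lambda\in l^2(\NN)$, \eqref{boundedness_psil} and the a.s. boundedness of the weak-$*$ convergent sequence $(\tilde w^{(k)}_\epsilon)$ in $L^\infty(0,T;H^1(\TT))$. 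For the drift term one cannot remove the gradient of $w$, so I would instead use $\tilde w^{(k)}_\epsilon\rightharpoonup^*\tilde w^{(k)}$ in $L^\infty(0,T;H^1(\TT))$ together with the fact that, for each $l$, the map $w\mapsto\int_{j\delta}^t\left<\divv(\divv(w(s)\psi_l)\psi_l),\varphi\right>ds$ is a bounded linear functional on $L^\infty(j\delta,(j+1)\delta;H^1(\TT))$ represented by an element of $L^1(j\delta,(j+1)\delta;H^{-1}(\TT))$, hence weak-$*$ continuous, with the $l$-sum again handled by $\Lambda\in l^2(\NN)$ and \eqref{boundedness_psil}. Upgrading these a.s. convergences to convergence of the corresponding expectations requires uniform integrability, which follows from uniform-in-$\epsilon$ $L^2$-bounds on $\tilde{\mathcal{M}}^{(k)}_\epsilon(t)$ and on the variation terms, a consequence of It\^o's isometry, \eqref{boundedness_psil} and \eqref{Eq_Energy_est_reg_scheme}.

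In the limit one obtains that
\[
\tilde{\mathcal{M}}^{(k)}(t):=\left<\tilde w^{(k)}(t),\varphi\right>-\left<\tilde w^{(k)}(j\delta),\varphi\right>-\tfrac12\sum_{l=1}^\infty\lambda_l^2\int_{j\delta}^t\left<\divv(\divv(\tilde w^{(k)}(s)\psi_l)\psi_l),\varphi\right>ds
\]
is a continuous martingale on $[j\delta,(j+1)\delta)$ with $\langle\tilde{\mathcal{M}}^{(k)}\rangle_t=\sum_l\lambda_l^2\int_{j\delta}^t\left<\divv(\tilde w^{(k)}(s)\psi_l),\varphi\right>^2ds$ and $\langle\tilde{\mathcal{M}}^{(k)},\tilde\beta^{(l)}\rangle_t=\lambda_l\int_{j\delta}^t\left<\divv(\tilde w^{(k)}(s)\psi_l),\varphi\right>ds$, and that the $\tilde\beta^{(l)}$ are independent $\tilde{\mathfrak{F}}$-Brownian motions (for this last point one transfers the functional relation expressing each $\beta^{(l)}$ as an $\mathfrak{F}$-Brownian motion with $\mathfrak{F}$ dominating the data filtration, and notes that Brownianity survives the augmentation defining $\tilde{\mathfrak{F}}$). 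It then remains to compute the quadratic variation of
\[
N^{(k)}(t):=\tilde{\mathcal{M}}^{(k)}(t)-\sum_{l=1}^\infty\lambda_l\int_{j\delta}^t\left<\divv(\tilde w^{(k)}(s)\psi_l),\varphi\right>d\tilde\beta^{(l)}_s,
\]
which is a well-defined continuous local martingale since $\tilde w^{(k)}$ is adapted and $\tilde E\int_0^T\sum_l\lambda_l^2\left<\divv(\tilde w^{(k)}(s)\psi_l),\varphi\right>^2ds\lesssim E[\|u^{(k)}_0\|_{H^1(\TT)}^2]<\infty$; using the independence of the $\tilde\beta^{(l)}$, i.e. $\langle\tilde\beta^{(l)},\tilde\beta^{(l')}\rangle=0$ for $l\ne l'$, together with the two identities above, one finds $\langle N^{(k)}\rangle\equiv0$, so $N^{(k)}\equiv N^{(k)}(j\delta)=0$, which is exactly the asserted equation. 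I expect the main obstacle to be the $\epsilon\searrow0$ passage in the martingale identities — reconciling the different modes of convergence available for the various terms (strong $L^2$-in-space, uniform in time, for the boundary and variation terms versus weak-$*$ in $H^1$ for the drift term), exchanging limit and infinite sum uniformly in $\epsilon$, and supplying the uniform integrability — together with the bookkeeping forced by the time rescaling, namely choosing on each interval $[j\delta,(j+1)\delta)$ the correct filtration with respect to which $\tilde w^{(k)}$ is adapted and the $\tilde\beta^{(l)}$ are Brownian.
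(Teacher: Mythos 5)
Your proposal is correct and follows essentially the same route as the paper, which omits the proof and refers to it as a simpler version of the proof of Theorem \ref{Thm_final_SPDE}. In that template the identification of the limiting martingale as the stochastic integral is done by citing \cite[Proposition A.1]{hofmanova2013}; you instead carry out this step directly by computing the quadratic variation of $N^{(k)}$ via the cross-variation identities, which is exactly what that proposition encapsulates, so the conclusion is the same. The remaining difference is in bookkeeping: the template argument of Theorem \ref{Thm_final_SPDE} tests the martingale identities against cylindrical functionals of the full data $\left(\mathbbm{1}_{\tilde{R}^{(l)}},\tilde{\beta}^{(l)},\tilde{u}^{(l)},\tilde{J}^{(l)}\right)_{l\in\NN}$ up to time $s$ (cf.\ Remark \ref{Rem_meas2}), which yields the $\tilde{\mathfrak{G}}_s$-martingale property directly, and then uses a $\kappa'>0$ continuity argument to upgrade to the augmented filtration $\tilde{\mathfrak{F}}$; your choice of an interval-specific filtration $\tilde{\mathfrak{F}}^j$ is a simplification that works, but then the Brownianity of the $\tilde{\beta}^{(l)}$ with respect to the full $\tilde{\mathfrak{F}}$ has to be established separately, as you note at the end. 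Also, for the drift term you do not actually need weak-$*$ continuity in $L^\infty(0,T;H^1(\TT))$: after two integrations by parts (as in the proof of Lemma \ref{Lemma_conv_prop}) the test function $\psi_l\cdot\nabla(\psi_l\cdot\nabla\varphi)$ lands in $H^{-1}(\TT)$ by \eqref{boundedness_psil}, so the uniform $L^2(\TT)$-in-space convergence you already have for the boundary and variation terms handles the drift as well. Both variants are correct.
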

The proof of the lemma above is a simpler version of the proof of Theorem \ref{Thm_final_SPDE} and is therefore omitted. Finally, we observe that many of the estimates from the previous subsection carry over to their limit.

\begin{prop}\label{Prop_Energy_Est_in_limit}For every $k\in \NN$ and  $p\in (0, \infty)$ we have 
	\begin{align}\begin{split}
			\label{Eq_aux4}
		\tilde{E}\left[
		\sup_{0\le t\le T} \|\tilde{u}^{(k)}\|_{H^1(\TT)}^p
		\right]\, &\lesssim_{\Lambda, p, T} \, E\left[\|u_0^{(k)}\|^p_{H^1(\TT)}\right],\\
	\tilde{E}\left[ \|\tilde{J}^{(k)}\|_{L^2(0,T; L^{q'}(\TT))}^\frac{p}{2} \right]\, &\lesssim_{\Lambda, p,q , T}\, E\left[\|u_0^{(k)}\|^p_{H^1(\TT)}\right].
		\end{split}
\end{align}
Moreover, for any $\gamma \in (0, \frac{1}{2})$ and $K\in (1, \infty)$ it holds
\[
\tilde{P}\left(\left\{\left[\tilde{u}^{(k)}\right]_{\gamma ,W^{-1, q'}(\TT)} >K \right\}\right)
\,\lesssim_{\Lambda, q, \gamma, T}\, \frac{1+E\left[\|{u}^{(k)}_0\|_{H^1(\TT)}^2\right]}{K}.
\]
\end{prop}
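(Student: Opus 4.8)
The plan is to first transfer the $\epsilon$-uniform estimates \eqref{Eq_Energy_est_reg_scheme}, Lemma~\ref{Lemma_J_new} and \eqref{Eq_aux2} from the original objects to the tilded ones using equality in distribution on the product space \eqref{Eq_space}, and then pass to the limit $\epsilon\searrow 0$ using that the Skorokhod--Jakubowski convergence takes place in topologies for which the relevant norms and seminorms are lower semicontinuous, combined with Fatou's lemma. Since for each fixed $\epsilon\in\mathcal I$ the tuple \eqref{Eq_elem_of_new_seq_I} has the same law as its untilded counterpart, and since the functionals $\|\cdot\|_{L^\infty(0,T;H^1(\TT))}$ on $L^\infty_{w*}(0,T;H^1(\TT))$, $\|\cdot\|_{L^2(0,T;L^{q'}(\TT))}$ on $L^2_w(0,T;L^{q'}(\TT))$, and $[\cdot]_{\gamma,W^{-1,q'}(\TT)}$ on $C(0,T;L^2(\TT))$ (via the continuous embedding $C(0,T;L^2(\TT))\hookrightarrow C(0,T;W^{-1,q'}(\TT))$, valid since $q'<2$) are all Borel measurable --- being lower semicontinuous for the respective topologies --- we obtain uniformly in $\epsilon$ that $\tilde E[\|\tilde v^{(k)}_\epsilon\|^p_{L^\infty(0,T;H^1(\TT))}]$, $\tilde E[\|\tilde w^{(k)}_\epsilon\|^p_{L^\infty(0,T;H^1(\TT))}]\lesssim_{\Lambda,p,T} E[\|u^{(k)}_0\|^p_{H^1(\TT)}]$, that $\tilde E[\|\tilde J^{(k)}_\epsilon\|^{p/2}_{L^2(0,T;L^{q'}(\TT))}]\lesssim_{\Lambda,p,q,T} E[\|u^{(k)}_0\|^p_{H^1(\TT)}]$, and that $\tilde P([\tilde u^{(k)}_\epsilon]_{\gamma,W^{-1,q'}(\TT)}>K)\lesssim_{\Lambda,q,\gamma,T}(1+E[\|u^{(k)}_0\|^2_{H^1(\TT)}])/K$; here I route the $H^1$-bound through $\tilde v^{(k)}_\epsilon,\tilde w^{(k)}_\epsilon$, which live in the $L^\infty_{w*}$-factors of \eqref{Eq_space}, using that the energy decay of Theorem~\ref{Thm_ex_sol_det}\ref{Item_det_energy_est} bounds $\|v^{(k)}_\epsilon\|_{L^\infty(0,T;H^1(\TT))}$ by $\sup_{t}\|u^{(k)}_\epsilon(t)\|_{H^1(\TT)}$.

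Next I would pass to the limit in the moment bounds. From $\tilde v^{(k)}_\epsilon\rightharpoonup^*\tilde v^{(k)}$ and $\tilde w^{(k)}_\epsilon\rightharpoonup^*\tilde w^{(k)}$ in $L^\infty_{w*}(0,T;H^1(\TT))$ and $\tilde J^{(k)}_\epsilon\rightharpoonup\tilde J^{(k)}$ in $L^2_w(0,T;L^{q'}(\TT))$, weak-$*$ resp.\ weak lower semicontinuity of the norm gives, $\tilde P$-almost surely, $\|\tilde v^{(k)}\|_{L^\infty(0,T;H^1(\TT))}\le\liminf_\epsilon\|\tilde v^{(k)}_\epsilon\|_{L^\infty(0,T;H^1(\TT))}$, and analogously for $\tilde w^{(k)}$ and $\tilde J^{(k)}$; Fatou's lemma applied to the uniform bounds above then yields the corresponding estimates for $\tilde v^{(k)}$, $\tilde w^{(k)}$ and $\tilde J^{(k)}$. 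Since by \eqref{Eq_splitting_Limiting_scheme} the process $\tilde u^{(k)}$ is, up to the affine time rescalings, pieced together from $\tilde v^{(k)}$ and $\tilde w^{(k)}$, and since $\tilde u^{(k)}$ is weakly continuous in $H^1(\TT)$ so that $\sup_{0\le t\le T}\|\tilde u^{(k)}(t)\|_{H^1(\TT)}$ equals the essential supremum, one gets $\sup_{0\le t\le T}\|\tilde u^{(k)}(t)\|_{H^1(\TT)}\le\max\{\|\tilde v^{(k)}\|_{L^\infty(0,T;H^1(\TT))},\|\tilde w^{(k)}\|_{L^\infty(0,T;H^1(\TT))}\}$ $\tilde P$-a.s., which gives the first inequality in \eqref{Eq_aux4}; the second is the bound on $\tilde J^{(k)}$ just obtained.

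For the Hölder tail bound: $\tilde u^{(k)}_\epsilon\to\tilde u^{(k)}$ in $C(0,T;L^2(\TT))$, hence in $C(0,T;W^{-1,q'}(\TT))$, so $[\tilde u^{(k)}]_{\gamma,W^{-1,q'}(\TT)}\le\liminf_\epsilon[\tilde u^{(k)}_\epsilon]_{\gamma,W^{-1,q'}(\TT)}$ $\tilde P$-a.s.; consequently $\{[\tilde u^{(k)}]_{\gamma,W^{-1,q'}(\TT)}>K\}$ is contained, up to a $\tilde P$-null set, in $\liminf_\epsilon\{[\tilde u^{(k)}_\epsilon]_{\gamma,W^{-1,q'}(\TT)}>K\}$, and Fatou's lemma applied to the indicator functions $\mathbbm{1}_{\{[\tilde u^{(k)}_\epsilon]_{\gamma,W^{-1,q'}(\TT)}>K\}}$ gives $\tilde P([\tilde u^{(k)}]_{\gamma,W^{-1,q'}(\TT)}>K)\le\liminf_\epsilon\tilde P([\tilde u^{(k)}_\epsilon]_{\gamma,W^{-1,q'}(\TT)}>K)\lesssim_{\Lambda,q,\gamma,T}(1+E[\|u^{(k)}_0\|^2_{H^1(\TT)}])/K$. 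The only genuinely delicate point throughout is the topology/measurability bookkeeping --- checking that each functional is Borel on the appropriate factor of \eqref{Eq_space}, so that equality in law transfers the bound, and that it is lower semicontinuous for the mode of convergence produced by \cite[Theorem~2]{jakub98} --- after which the argument reduces to Fatou's lemma. No separate treatment of $p<2$ is needed, since \eqref{Eq_Energy_est_reg_scheme} and Lemma~\ref{Lemma_J_new} already cover that range.
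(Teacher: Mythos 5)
Your proof is correct and follows essentially the same route as the paper's (terse) proof: transfer the $\epsilon$-uniform bounds \eqref{Eq_Energy_est_reg_scheme}, Lemma~\ref{Lemma_J_new} and \eqref{Eq_aux2} to the tilded objects via equality in law, pass to the limit using lower semicontinuity of the relevant (semi)norms together with Fatou's lemma, and treat the H\"older tail via convergence in $C(0,T;L^2(\TT))$ and the set-theoretic Fatou estimate. Routing the $H^1$-bound through $\tilde{v}^{(k)}_\epsilon,\tilde{w}^{(k)}_\epsilon$, which sit in the $L^\infty_{w*}$-factors of \eqref{Eq_space}, is a harmless and arguably cleaner variation on the paper's use of the weak-$*$ convergence of $\tilde{u}^{(k)}_\epsilon$ established just before the proposition; the only slight inaccuracy is that the pointwise bound $\|v^{(k)}_\epsilon\|_{L^\infty(0,T;H^1(\TT))}\le\sup_{t}\|u^{(k)}_\epsilon(t)\|_{H^1(\TT)}$ is immediate from the time-rescaling in the splitting scheme and does not actually require invoking the energy decay of Theorem~\ref{Thm_ex_sol_det}~\ref{Item_det_energy_est}.
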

\begin{proof}
	The estimates \eqref{Eq_aux4} follow from lower semi-continuity of the norm with respect to weak (weak-*) convergence, \eqref{Eq_Energy_est_reg_scheme}, Lemma \ref{Lemma_J_new} and Fatou's lemma. Moreover, we observe that
	\[
	[\tilde{u}^{(l)}]_{\gamma, W^{-1,q'}(\TT)}\,\le \,\liminf_{\epsilon \searrow 0}
	[\tilde{u}_{\epsilon}^{(l)}]_{\gamma, W^{-1,q'}(\TT)}
	\]
	by convergence in $C(0,T; L^2(\TT))$. Therefore, we have
	\begin{align*}&
	P\left(
	\left\{
	[\tilde{u}^{(l)}]_{\gamma, W^{-1,q'}(\TT)} >K
	\right\}
	\right)\, \le \, 
		P\left(
	\left\{\liminf_{\epsilon \searrow 0}
	[\tilde{u}_{\epsilon}^{(l)}]_{\gamma, W^{-1,q'}(\TT)} >K
	\right\}
	\right)\\\le \,&	P\left(\liminf_{\epsilon \searrow 0}
	\left\{
	[\tilde{u}_{\epsilon}^{(l)}]_{\gamma, W^{-1,q'}(\TT)} >K
	\right\}
	\right)\, \le \,\liminf_{\epsilon \searrow 0}
		P\left(
	\left\{
	[\tilde{u}_{\epsilon}^{(l)}]_{\gamma, W^{-1,q'}(\TT)} >K
	\right\}\right)
	\end{align*}
and it is left to apply  \eqref{Eq_aux2}.
\end{proof}
\begin{proof}[Proof of Theorem \ref{Thm_degenerate_sol}]
	The limiting random variables 
	$(\beta^{(l)})_{l\in \NN}$, $\mathbbm{1}_{\tilde{R}^{(k)}}$, $\tilde{u}^{(k)}$ and $\tilde{J}^{(k)}$ have the desired properties. Indeed, \ref{Item_IV} is a consequence of
	\[
	(\mathbbm{1}_{\tilde{R}_\epsilon^{(k)}}, \tilde{u}^{(k)}_\epsilon(0))_{k\in \NN}\, \sim\, 
	(\mathbbm{1}_{R^{(k)}}, u^{(k)}_0)_{k\in \NN},\;\; \epsilon\in \mathcal{I},
	\]
	the convergence 
	\[
	(\mathbbm{1}_{\tilde{R}_\epsilon^{(k)}}, \tilde{u}^{(k)}_\epsilon(0))_{k\in \NN} \, \to \, 
	(\mathbbm{1}_{\tilde{R}^{(k)}}, \tilde{u}^{(k)}(0))_{k\in \NN}
	\]
	in $(\RR\times L^2(\TT))^\infty$ and Lemma \ref{lemma_elem_propI} (i). Part \ref{Item_meas} follows by the choice of $\tilde{\mathfrak{F}}$. Parts \ref{Item_sol_Det_deg_case}, \ref{Item_SPDE_deg}, \ref{Item_est_deg} and \ref{Item_est_Hoelder} are the content of  Lemma \ref{lemma_elem_propI} (iv), Lemma \ref{IS_SOL_1} and Proposition \ref{Prop_Energy_Est_in_limit}.
\end{proof}
\section{Construction of solutions}\label{Sec_time_limit}
Let finally $\mu,T, q,\alpha$ as in  Theorem \ref{Thm_main}. We  apply Theorem \ref{Thm_degenerate_sol} for every $N\in \NN$ to a random variable which is distributed according to $\mu$ and obtain processes $u^{(k)}_N$, families of Brownian motions $\beta_N^{(l)}$ and random variables $\mathbbm{1}_{R_N^{(k)}}$, $J^{(k)}_N$ for $l,k \in \NN$ satisfying the stated properties. We assume that these random variables are defined on the same probability space $(\Omega, \mathfrak{A}, P)$ with filtration $\mathfrak{F}$ and moreover,  that $\beta^{(l)}=\beta^{(l)}_N$ is independent of $N$. This does not influence the mathematical analysis since we analyze the solutions for each $N$ separately and serves only notational convenience. 
\begin{rem}Alternatively, one can also apply the limiting procedure from subsection \ref{Sec_deg_lim} for all step numbers $N\in N$ simultaneously to end up in the assumed situation. This, however, would lead to a notational mess in the previous section.
\end{rem} 
We remark also that we  have dropped the $\sim$-notation since we want to pass to another probability space once more.
For $k,N\in \NN$ and we define ${v}_N^{(k)}$ and ${w}_N^{(k)}$ by
	\begin{equation}\label{Eq52}
\begin{cases}
	{u}_N^{(k)}(t)\,=\, {v}_N^{(k)}(2(t-j\delta)+j\delta ),& j\delta \le t< (j+\frac{1}{2})\delta,\\
	{u}_N^{(k)}(t)\,=\,{w}_N^{(k)}(2(t-(j+\frac{1}{2})\delta)+j\delta ),& (j+\frac{1}{2})\le t< (j+1)\delta,
\end{cases}
\end{equation} where again $\delta=\delta(N)=\frac{T}{N+1}$.
\subsection{Additional tightness properties}\label{Sec_add_Tness}
The approximate solutions $u^{(k)}_N$, $J^{(k)}_N$ satisfy the bounds from  Theorem \ref{Thm_degenerate_sol} \ref{Item_est_deg}, \ref{Item_est_Hoelder}, which can as in Proposition \ref{Prop_tNess1} be used to derive tightness in suitable spaces.
\begin{rem}
In light of Theorem \ref{Thm_degenerate_sol} \ref{Item_IV},  the right-hand sides of the aforementioned bounds can be expressed in terms of the cut-off moments
\begin{equation}\label{Eq43}\nu_{k,p} \,=\,
\int \mathbbm{1}_{\left\{
	k-1\le \|\cdot \|_{H^1(\TT)}<k
	\right\}} \|\cdot \|_{H^1(\TT)}^p\, d\mu
\end{equation}
of the initial distribution $\mu$. We remark that the notation \eqref{Eq43} will be used during the remainder of this section.
\end{rem}
 In this subsection we provide an additional tightness property, which can be seen as the adaption of the compactness statement \cite[Lemma 2.5]{Passo98ona} to our setting. Its proof relies on deriving a version of \eqref{Lemma_power_bounds_I} with a uniform estimate in $N$ and a simplified proof of  \cite[Lemma 2.5]{Passo98ona}. The former is based on a control of the entropy production along the stochastic dynamics. We point out that the simplification of the compactness proof is only possible due to the assumption $\alpha \in (0,1)$, which is less general than the situation in \cite[Lemma 2.5]{Passo98ona}. 
\begin{lemma}\label{Bound_powers_2}It holds for every $k, N\in \NN$ that
	\[E\left[
	\left\|\left(v_{N}^{(k)}\right)^{\frac{\alpha+3}{2}}\right\|^2_{L^2(0,T;H^2(\TT))}+\,
	\left\|\left(v_{N}^{(k)}\right)^{\frac{\alpha+3}{4}}\right\|^4_{L^4(0,T;W^{1,4}(\TT))}
	\right]\, \lesssim_{ \Lambda, \alpha, T}\,1\,+\, \nu_{k, \alpha+3}.
	\]
\end{lemma}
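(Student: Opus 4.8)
The idea is to mimic the proof of Lemma \ref{Lemma_power_bounds_I}, but replace the crude $N$-dependent estimate \eqref{Eq_aux8} with an $N$-uniform bound obtained by tracking the $\alpha$-entropy $\int_{\TT} G_\alpha(u_N^{(k)}(t))\,dx$ along the \emph{whole} splitting scheme, not just the deterministic steps. On a deterministic sub-interval $[j\delta,(j+1)\delta)$ the entropy estimate of Theorem \ref{Thm_ex_sol_det}~\ref{Item_det_entr_est} gives
\begin{align*}
\int_{\TT} G_\alpha\big(v_N^{(k)}((j+1)\delta)\big)\,dx \,+\, \frac{1}{C_\alpha}\int_{j\delta}^{(j+1)\delta}\!\!\int_{\TT} \Big|H\big(v_N^{(k)}\big)^{\frac{\alpha+3}{2}}\Big|^2 + \Big|\nabla\big(v_N^{(k)}\big)^{\frac{\alpha+3}{4}}\Big|^4 dx\,dt \,\le\, \int_{\TT} G_\alpha\big(u_N^{(k)}(j\delta)\big)\,dx,
\end{align*}
so the only thing missing for a telescoping argument is to control, in expectation, the \emph{increase} of the entropy along each stochastic step $w_N^{(k)}$ on $[j\delta,(j+1)\delta)$.

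First I would derive the entropy balance for the regularized linear Stratonovich equation \eqref{Eq_reg_problem}. Applying It\^o's formula (justified as in Appendix \ref{AppendixB}) to $w\mapsto\int_{\TT}G_\alpha(w)\,dx$ along $w_\epsilon^{(k)}$ — or, more robustly, to a smooth truncation $G_\alpha^{(\rho)}$ with $G_\alpha^{(\rho)}{}'' \ge 0$ and then passing $\rho\to 0$ using $w_\epsilon^{(k)}\ge 0$ and monotone convergence — one gets, for $t\in[j\delta,(j+1)\delta)$,
\begin{align*}
\int_{\TT} G_\alpha\big(w_\epsilon^{(k)}(t)\big)\,dx \,=\, \int_{\TT} G_\alpha\big(w_\epsilon^{(k)}(j\delta)\big)\,dx \,+\, \int_{j\delta}^{t}\!\!\int_{\TT} G_\alpha'\big(w_\epsilon^{(k)}\big)\,A^\epsilon w_\epsilon^{(k)}\,dx\,ds \,+\, \tfrac12\sum_{l=1}^\infty\lambda_l^2\!\int_{j\delta}^{t}\!\!\int_{\TT} G_\alpha''\big(w_\epsilon^{(k)}\big)\big(\divv(w_\epsilon^{(k)}\psi_l)\big)^2 dx\,ds \,+\, M(t),
\end{align*}
with $M$ a martingale. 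The point is that the $\epsilon\Delta$ contribution is $-\epsilon\int G_\alpha''|\nabla w_\epsilon^{(k)}|^2\le 0$, and the remaining drift terms should combine to something bounded by $C_\Lambda\int_{\TT}\big(G_\alpha(w_\epsilon^{(k)}) + |w_\epsilon^{(k)}|^{\alpha+1} + |w_\epsilon^{(k)}|\big)\,dx$ after several integrations by parts, using the bounds \eqref{boundedness_psil}, \eqref{bounedness_xi2}, $G_\alpha'(\tau)\tau^j$ and $G_\alpha''(\tau)\tau^j$ being controlled for $j\ge1$ (this is where $\alpha\in(-1,0)$ and the explicit form \eqref{Eq_alpha_entropy_expl} enter), and crucially the isotropy condition \eqref{sym_noise} to symmetrize the noise drift just as in Lemma \ref{Lemma_coercive}. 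Taking expectations kills $M$, and Gr\"onwall over $[0,T]$ then bounds $\tilde E\big[\int_{\TT}G_\alpha(w_\epsilon^{(k)}(t))\,dx\big]$ by $\tilde E\big[\int_{\TT}G_\alpha(u_N^{(k)}(j\delta))\,dx\big]$ plus lower-order terms, uniformly in $\epsilon,\delta$; since $G_\alpha(\tau)\lesssim_\alpha 1 + \tau^{\alpha+1}\lesssim_\alpha 1+\tau^2$ and $\|u_N^{(k)}(t)\|_{L^2(\TT)}$ is controlled by \eqref{Eq_Energy_est_reg_scheme}, all these terms are $\lesssim_{\Lambda,\alpha,T} 1 + E[\|u_0^{(k)}\|_{H^1(\TT)}^2] = 1+\nu_{k,2}\lesssim 1+\nu_{k,\alpha+3}$ (absorbing the cut-off moment, recalling $\nu_{k,2}\le k\nu_{k,1}$ etc., or just keeping $\nu_{k,\alpha+3}$ since $\|\cdot\|\ge k-1$ on $R^{(k)}$).

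Next I would telescope: summing the deterministic entropy inequality over $j=0,\dots,N$ and inserting, at each junction, the stochastic entropy bound, the dissipation terms accumulate to give
\begin{align*}
E\!\left[\int_0^T\!\!\int_{\TT}\Big|H\big(v_N^{(k)}\big)^{\frac{\alpha+3}{2}}\Big|^2 + \Big|\nabla\big(v_N^{(k)}\big)^{\frac{\alpha+3}{4}}\Big|^4 dx\,dt\right] \,\lesssim_{\Lambda,\alpha,T}\, 1 + \nu_{k,\alpha+3},
\end{align*}
which is the analogue of \eqref{Eq_aux8} but now \emph{uniform in $N$}. Finally, to upgrade the dissipation bound to a full Sobolev bound I would add the low-order term: by Sobolev embedding $\int_0^T\int_{\TT}|v_N^{(k)}|^{\frac{\alpha+3}{2}\cdot2}+|v_N^{(k)}|^{\frac{\alpha+3}{4}\cdot4}\,dx\,dt\lesssim_\alpha\int_0^T\|v_N^{(k)}\|_{H^1(\TT)}^{\alpha+3}\,dt\lesssim_{\alpha,T}\sup_t\|u_N^{(k)}\|_{H^1(\TT)}^{\alpha+3}$, whose expectation is $\lesssim_{\Lambda,\alpha,T}\nu_{k,\alpha+3}$ by Theorem \ref{Thm_degenerate_sol}~\ref{Item_est_deg} with $p=\alpha+3$. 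Combining via the interpolation inequality \eqref{Eq_Interp1} (and the trivial $\|\nabla f\|_{L^4}^4 \le \|f\|_{W^{1,4}}^4$, bounding $\|f\|_{L^4}^4$ by the same Sobolev term) yields the claimed estimate.

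The main obstacle is the rigorous derivation of the stochastic entropy identity and the sign/bound analysis of its drift terms: $G_\alpha'$ and $G_\alpha''$ are singular at $0$ (since $\alpha<0$), so It\^o's formula cannot be applied directly and one must regularize $G_\alpha$ and argue that the limit survives because $w_\epsilon^{(k)}\ge0$ and the dissipation is nonnegative — and then one must check that the many integration-by-parts terms arising from $\divv(\divv(w\psi_l)\psi_l)$ tested against $G_\alpha'(w)$ genuinely reduce, after using \eqref{sym_noise}, to a quantity controlled by $\int_{\TT}(1+|w|^{\alpha+1})\,dx$ rather than something involving $\int|\nabla w|^2 G_\alpha''(w)$ with a bad sign; getting that cancellation is exactly the higher-dimensional counterpart of the one-dimensional computation and is the crux of the lemma.
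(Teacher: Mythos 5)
Your overall strategy is essentially the paper's: telescope the $\alpha$-entropy along the splitting scheme, bound the dissipation on the deterministic steps by Theorem~\ref{Thm_ex_sol_det}~\ref{Item_det_entr_est}, and control the entropy increment along each stochastic step by It\^o's formula applied to a smoothly truncated entropy functional, getting a per-step bound of order $\delta$ that sums to an $N$-independent quantity; the low-order Sobolev term and the interpolation via \eqref{Eq_Interp1} are also as in the paper. (A cosmetic difference: the paper applies It\^o directly to $w_N^{(k)}$, which already satisfies the degenerate equation from Theorem~\ref{Thm_degenerate_sol}~\ref{Item_SPDE_deg}, rather than to $w_\epsilon^{(k)}$ with a subsequent $\epsilon\to0$ passage; and it uses a multiplicative cutoff $G_{\alpha,\kappa}=G_\alpha\,\eta_\kappa$ vanishing near $0$ rather than a convex shift of $G_\alpha$, which turns out to be the more tractable truncation.)

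However, you stop precisely at what you correctly call the crux, and your guess about where the difficulty lies is off. The cancellation does not use the isotropy condition \eqref{sym_noise}, and no Gr\"onwall argument is needed. After testing the drift $\frac12\sum_l\lambda_l^2\,\divv(\divv(w\psi_l)\psi_l)$ against $G_{\alpha,\kappa}'(w)$ (one integration by parts, producing $-\frac12\sum_l\lambda_l^2\int G_{\alpha,\kappa}''(w)\,[\divv(w\psi_l)][\psi_l\cdot\nabla w]\,dx$) and adding the It\^o correction $\frac12\sum_l\lambda_l^2\int G_{\alpha,\kappa}''(w)\,[\divv(w\psi_l)]^2\,dx$, the two terms combine via the elementary identity $\divv(w\psi_l)-\psi_l\cdot\nabla w = w\,\divv\psi_l$ to give $\frac12\sum_l\lambda_l^2\int G_{\alpha,\kappa}''(w)\,[\divv(w\psi_l)]\,[w\,\divv\psi_l]\,dx$, in which the dangerous $|\nabla w|^2\,G_{\alpha,\kappa}''(w)$ contribution has cancelled completely. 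Expanding once more, what survives is $\frac12\sum_l\lambda_l^2\int(\theta_\kappa(w)(\divv\psi_l)^2 + G_{\alpha,\kappa}''(w)\,w\nabla w\cdot\psi_l\divv\psi_l)\,dx$ with $\theta_\kappa(x)=x^2G_{\alpha,\kappa}''(x)$, and the second summand integrates by parts to $-\int\zeta_\kappa(w)\,\divv[\psi_l\divv\psi_l]\,dx$ with $\zeta_\kappa'(x)=xG_{\alpha,\kappa}''(x)$. Both $|\theta_\kappa|$ and $|\zeta_\kappa|$ are $\lesssim_\alpha 1+|x|$ uniformly in $\kappa$ (because $\eta'$, $\eta''$ are supported in $[1,2]$ and $G_\alpha''(x)=x^{\alpha-1}$ with $\alpha-1\in(-2,-1)$), so each stochastic step contributes an expected entropy increase $\lesssim_{\Lambda,\alpha}\delta(1+\sup_t\|w_N^{(k)}(t)\|_{L^2(\TT)})$, which after summation and an application of Theorem~\ref{Thm_degenerate_sol}~\ref{Item_est_deg} gives the $N$-uniform bound directly. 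Until you produce this algebraic cancellation explicitly, your plan has a genuine gap at exactly the point you flag; and the invocation of \eqref{sym_noise} is a red herring — that condition is needed for the $H^1$-coercivity of the noise (Lemma~\ref{Lemma_coercive}), not for the entropy estimate, where the reduction is purely algebraic and $l$-by-$l$.
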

\begin{proof}
	An application of Theorem \ref{Thm_ex_sol_det} \ref{Item_det_entr_est} yields the estimate
	\begin{align}\begin{split}\label{Eq7}
			&
	\int_0^T \int_{\TT} \left|H \left(v^{(k)}_N\right)^{\frac{\alpha+3}{2}}\right|^2+\left|\nabla \left(v^{(k)}_N\right)^{\frac{\alpha+3}{4}}\right|^4 \, dx\, dt\, \lesssim_{\alpha} \, \sum_{j=0}^N \int_{\TT} G_\alpha(v_N^{(k)}(j\delta))- G_\alpha(w^{(k)}_N(j\delta))\, dx
	\\
=\,&
\int_{\TT} G_\alpha(v^{(k)}_N(0))-G_\alpha(w_N^{(k)}(N\delta))\, dx\, +\, \sum_{j=0}^{N-1} \int_{\TT} G_\alpha(v^{(k)}_N((j+1)\delta))-G_\alpha(w_N^{(k)}(j\delta))\, dx
		\end{split}
\end{align}
The first summand can be estimated directly using  the expression \eqref{Eq_alpha_entropy_expl} by
\begin{align*}&
C_\alpha\left(\|v^{(k)}_N(0)\|_{L^2(\TT)}^{\alpha+1}\,+\,\|v^{(k)}_N(0)\|_{L^2(\TT)}\,+\,\|w^{(k)}_N(0)\|_{L^2(\TT)}^{\alpha+1}\,+\,\|w^{(k)}_N(0)\|_{L^2(\TT)}\right)\\ \lesssim_\alpha\,&
 \sup_{0\le t\le T}\|u^{(k)}_N(t)\|_{L^2(\TT)}\,+\,1.
\end{align*}
Taking the expectation and employing Theorem \ref{Thm_degenerate_sol} \ref{Item_est_deg} we obtain the estimate
\begin{equation}\label{Eq14}
E\left[
\int_{\TT} G_\alpha(v^{(k)}_N(0))-G_\alpha(w_N^{(k)}(N\delta))\, dx
\right]\, \lesssim_{\Lambda, \alpha, T}\, 1\,+\,\nu_{k,1}.
\end{equation}
To estimate the second summand of the right-hand side of \eqref{Eq7}, we fix a function  $\eta\in C^\infty(\mathbb{R})$ such that $0\le\eta\le 1$, $\eta=1$ on $ [2,\infty)$ and $\eta= 0$ on $(-\infty,1]$. We define smooth functions
\[ \eta_\kappa(x)\,=\,\eta\left(\frac{x}{\kappa}\right),\;\;
G_{\alpha,\kappa} (x)\,=\, G_\alpha(x)\eta_\kappa(x).
\]	for $\kappa>0$. Correspondingly, we define the regularized functional \begin{equation}\label{Eq44}
\phi_\kappa\colon \, L^2(\mathbb{\TT})\to \mathbb{R}, \, w\mapsto \int_{\TT} G_{\alpha,\kappa}(w)\, dx\end{equation}
We observe that there is a constant $C_{\alpha,\kappa}<\infty$ such that
\begin{equation}\label{Eq9}|G_{\alpha,\kappa}(x)|\le C_{\alpha,\kappa}|x|^2, \;\;|G_{\alpha,\kappa}'(x)|\le C_{\alpha,\kappa}|x|\;\; \text{ and } \;\;|G_{\alpha,\kappa}''(x)|\le C_{\alpha,\kappa}
\end{equation}
for each $x\in \RR$. 
An application of It\^o's formula to the composition of the functional $\phi_\kappa$ with the process  $w_N^{(k)}$ satisfying the SPDE from Theorem \ref{Thm_degenerate_sol} \ref{Item_SPDE_deg} yields that
	\begin{align}\begin{split}\label{Eq10}
	\phi_\kappa(w^{(k)}_N(t))\,=\,&\phi_\kappa({w}^{(k)}_N(j\delta))\,+\, \sum_{l=1}^\infty \lambda_l \int_{j\delta}^t  \int_{\TT} G_{\alpha,\kappa}'({w}^{(k)}_N(s)) \divv(w^{(k)}_N(s) \psi_l)\, dx\, d{\beta}^{(l)}(s)\\&{-\, \frac{1}{2}	\sum_{l=1}^\infty \int_{j\delta}^t \int_{\TT} 
	\lambda_l^2 G_{\alpha,\kappa}''({w}^{(k)}_N(s)) [\divv ({w}^{(k)}_N(s)\psi_l)] [ \psi_l\cdot \nabla {w}^{(k)}_N(s)]\, dx\, ds}\\&+\,\frac{1}{2}\sum_{l=1}^\infty 
	\lambda_l^2
	\int_{j\delta}^t  \int_{\TT}G_{\alpha,\kappa}''(w^{(k)}_N(s))\left[\divv ({w}^{(k)}_N(s) \psi_l) \right]^ 2\,dx\, ds
	\end{split}
\end{align}
 for $t\in [j\delta, (j+1)\delta)$. For a justification of the applicability of It\^o's formula see appendix \ref{AppendixB}. We note that as pointed out there, the above formula is also valid for the end-point $t=(j+1)\delta$, but then the term on the left-hand side has to be replaced by $\phi_\kappa(v^{(k)}_N((j+1)\delta))$. 
 Due to the smoothness of $\psi_l$ it holds
 \begin{align*}
 	[
 	\divv (w\psi_l)] [\psi_l\cdot \nabla w]
 	\,&=\,[
 	\divv (w\psi_l)]^2- [
 	\divv (w\psi_l)][ w\divv\psi_l ]\\&=\, [
 	\divv (w\psi_l)]^2-\left[
 	w\divv \psi_l
 	\right]^2-w\nabla w \cdot \left[
 	\psi_l\divv \psi_l
 	\right].
 \end{align*} 
The derivative of $ 
\zeta_\kappa(x)\,=\, \int_0^x y G_{\alpha,\kappa}''(y)\,dy$ is bounded such that an application of \cite[Proposition 9.5]{brezis2010functional} yields
	\begin{align*}
	\int_{\TT}G_{\alpha,\kappa}''(w)
	w\nabla w \cdot \left[
	\psi_l\divv \psi_l
	\right]\, dx\,=\, -
	\int_{\TT}\zeta_\kappa(w)
	\divv \left[
	\psi_l\divv\psi_l
	\right]\, dx,
\end{align*}
for $w\in H^1(\TT)$. We also introduce the function  $\theta_\kappa(x)= x^2 G_{\alpha,\kappa}''(x)$ and rewrite \eqref{Eq10} using the previous identities as
	\begin{align}\begin{split}\label{aux_eq4}
	\phi_\kappa(w^{(k)}_N(t))\,=\,\phi_\kappa({w}^{(k)}_N(j\delta))\,&+\, \sum_{l=1}^\infty \lambda_l \int_{j\delta}^t  \int_{\TT} G_{\alpha,\kappa}'({w}^{(k)}_N(s)) \divv(w^{(k)}_N(s) \psi_l)\, dx\, d{\beta}^{(l)}(s)\\&+\frac{1}{2}\sum_{l=1}^\infty \lambda_l^2
		\int_{j\delta}^t \int_{\TT} \theta_\kappa({w}^{(k)}_N(s))(
		\divv\psi_l)^2\, dx\, ds\\&-\,\frac{1}{2}\sum_{l=1}^\infty 
		\lambda_l^2
		\int_{j\delta}^t \int_{\TT}\zeta_\kappa({w}^{(k)}_N(s))
		\divv \left[
		\psi_l\divv\psi_l
		\right]\, dx\, ds.\end{split}
\end{align}
Using that
	\begin{equation}\label{eq15}
	G_{\alpha,\kappa}''(x)= \frac{1}{\kappa^2}\eta''\left(\frac{x}{\kappa}\right) G_\alpha(x)+
	\frac{2}{\kappa}\eta'\left(\frac{x}{\kappa}\right) \left[\frac{x^\alpha}{\alpha}+ r_\alpha'(x)\right]+\eta \left(
	\frac{x}{\kappa}
	\right)x^{\alpha-1}
\end{equation}
and that $\eta'$ and $\eta''$ vanish outside of $[1,2]$ we deduce that
$|\theta_\kappa(x)|\le C_\alpha (1+|x|)$. The same argument yields that $|\zeta_\kappa(x)|\le C_\alpha (1+|x|)$, indeed we can estimate for example
\begin{align*}
	\left|\int_0^x  \frac{y}{\kappa^2}\eta''\left(\frac{y}{\kappa}\right) G_\alpha(y) \, dy\right|\, \lesssim \, 
	\frac{1}{\kappa} \int_0^{|x|\wedge 2\kappa}  \left| G_\alpha(y)\right| \, dy\, \lesssim_\alpha\,
	\frac{|x|\wedge 2\kappa+(|x|\wedge 2\kappa)^2}{\kappa} \,\lesssim\, 1+|x|.
\end{align*}
Using \eqref{boundedness_psil} we obtain the estimates
\begin{align*}\left|
\frac{1}{2}\sum_{l=1}^\infty \lambda_l^2
\int_{j\delta}^{(j+1)\delta} \int_{\TT} \theta_\kappa({w}^{(k)}_N(s))(
\divv\psi_l)^2\, dx\, ds\right|\,&\lesssim_{\Lambda ,\alpha} \, \delta \left(1+\esssup_{0\le t\le T}\|w^{(k)}_N(t)\|_{L^2(\TT)}\right),\\\left|\frac{1}{2}\sum_{l=1}^\infty 
\lambda_l^2
\int_{j\delta}^{(j+1)\delta} \int_{\TT}\zeta_\kappa({w}^{(k)}_N(s))
\divv \left[
\psi_l\divv\psi_l
\right]\, dx\, ds\right|\,&\lesssim_{\Lambda, \alpha} \, \delta \left(1+\esssup_{0\le t\le T}\|w^{(k)}_N(t)\|_{L^2(\TT)}\right).
\end{align*}
For the series of stochastic integrals in \eqref{aux_eq4} we observe that 
\begin{align*}&E\left[
	\sum_{l=1}^\infty \lambda_l^2
	\int_{j\delta}^{(j+1)\delta}\left(\int_{\TT} G_{\alpha,\kappa}'({w}^{(k)}_N(s)) \divv(w^{(k)}_N(s) \psi_l)\, dx\right)^2\, ds\right]\\ \lesssim_{\alpha, \kappa} \, & E\left[\sum_{l=1}^\infty \lambda_l^2
	\int_{j\delta}^{(j+1)\delta}\left(\|{w}^{(k)}_N(s)\|_{H^1(\TT)}\right)^2\, ds\right]\, <\infty
\end{align*}
We  used \eqref{boundedness_psil} and that the function $G_{\alpha,\kappa}'$ is bounded in the first inequality and Theorem \ref{Thm_degenerate_sol} \ref{Item_est_deg} for the second one. Hence, the series of stochastic integrals has integrable quadratic variation and is therefore a martingale. Therefore, taking the expectation of \eqref{aux_eq4} with $t=(j+1)\delta$, using the previous estimates, as well as Theorem \ref{Thm_degenerate_sol} \ref{Item_est_deg} once more, yields that
\begin{align*}E\left[\phi_\kappa(v^{(k)}_N((j+1)\delta))\,-\,\phi_\kappa({w}^{(k)}_N(j\delta))\right]\,
	&\lesssim_{\Lambda, \alpha} \, \delta  \left(1+ E\left[\esssup_{0\le t\le T}\|w^{(k)}_N(t)\|_{L^2(\TT)}\right]\right)\\& \lesssim_{\Lambda,  T} \,\delta \left(1\,+\,\nu_{k,1}\right)
\end{align*}
Finally, taking the expectation of \eqref{Eq7} and using additionally \eqref{Eq14}, we end up with 
\begin{align*}E\left[
		\int_0^T \int_{\TT} \left|H \left({v}_N^{(k)}\right)^{\frac{\alpha+3}{2}}\right|^2+\left|\nabla \left({v}_N^{(k)}\right)^{\frac{\alpha+3}{4}}\right|^4 \, dx\, dt
	\right]\lesssim_{ \Lambda, \alpha, T}  1\,+\,\nu_{k,1}.
\end{align*}
As in the proof of Lemma \ref{Lemma_power_bounds_I}, we use that
	\begin{align*}
	E\left[\int_0^T 
	\int_{\TT} \left|\left(v_{N}^{(k)}(t)\right)^{\frac{\alpha+3}{2}}\right|^2 +  \left|\left(v_{N}^{(k)}(t)\right)^{\frac{\alpha+3}{4}}\right|^4\, dx
	\, dt\right]\,& \lesssim_{\alpha,T}\, E \left[ \sup_{0\le  t\le T} \|{u}_N^{(k)}(t)\|_{H^1(\TT)}^{\alpha+3}\right]\\&
	\lesssim_{\Lambda, \alpha, T} \, \nu_{k, \alpha+3}
\end{align*}
as a consequence of Theorem \ref{Thm_degenerate_sol} \ref{Item_est_deg} and therefore 
\[E\left[
\left\|\left(v_{N}^{(k)}\right)^{\frac{\alpha+3}{2}}\right\|^2_{L^2(0,T;H^2(\TT))}+\,
\left\|\left(v_{N}^{(k)}\right)^{\frac{\alpha+3}{4}}\right\|^4_{L^4(0,T;W^{1,4}(\TT))}
\right]\, \lesssim_{ \Lambda, \alpha, T}\,1\,+\, \nu_{k, \alpha+3}
\]
by \eqref{Eq_Interp1}.
\end{proof}

\begin{lemma}\label{Lemma_add_Tness}For every $k\in\NN$, the laws of $\left(\left(v_{N}^{(k)}\right)^{\frac{\alpha+3}{2}}\right)_{N\in \NN}$ are tight on $L^2(0,T; H^1(\TT))$.
\end{lemma}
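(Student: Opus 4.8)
The plan is to combine the uniform (in $N$) bound on $\left(v_N^{(k)}\right)^{\frac{\alpha+3}{2}}$ in $L^2(0,T;H^2(\TT))$ from Lemma \ref{Bound_powers_2} with a uniform bound on the time regularity of this quantity, and then invoke an Aubin--Lions--Simon type compactness criterion together with the Markov inequality to deduce tightness on $L^2(0,T;H^1(\TT))$, exploiting the compact embedding $H^2(\TT)\hookrightarrow\hookrightarrow H^1(\TT)$. Concretely, I would set $z_N \coloneq \left(v_N^{(k)}\right)^{\frac{\alpha+3}{2}}$ and recall from Lemma \ref{Bound_powers_2} that $E\big[\|z_N\|_{L^2(0,T;H^2(\TT))}^2\big]\lesssim_{\Lambda,\alpha,T} 1+\nu_{k,\alpha+3}$, so that by the Markov inequality the balls $\{\|z_N\|_{L^2(0,T;H^2(\TT))}\le K\}$ carry mass arbitrarily close to $1$ uniformly in $N$.

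First I would establish the time-regularity estimate, which is the substitute for a standard $H^1(0,T;X)$ bound and is where the splitting structure enters. Since $z_N$ is built on the deterministic pieces only, on each interval $[j\delta,(j+1)\delta)$ we have (after the time rescaling in \eqref{Eq52}) that $v_N^{(k)}$ solves the deterministic thin-film equation, so $\partial_t v_N^{(k)} = -\divv J_N^{(k)} \in L^2(0,\delta;W^{-1,q'}(\TT))$ there, and using the differential identity $\partial_t z_N = \frac{\alpha+3}{2}\big(v_N^{(k)}\big)^{\frac{\alpha+1}{2}}\partial_t v_N^{(k)}$ together with the $L^\infty(0,T;H^1(\TT))$ bound from Theorem \ref{Thm_degenerate_sol}\,\ref{Item_est_deg}, the Sobolev embedding $H^1(\TT)\hookrightarrow L^r(\TT)$ for all finite $r$, and the $\alpha$-entropy bound of Lemma \ref{Bound_powers_2}, one obtains a bound on $z_N$ in a space of the form $W^{\sigma,p}(0,T;X)$ for some negative-order $X$ (e.g. $X = W^{-1,p}(\TT)$) and some $\sigma\in(0,1)$, $p>1$, uniform in $N$ up to a constant times $1+\nu_{k,\alpha+3}$. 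Alternatively, and perhaps more cleanly, I would track the modulus of continuity of $t\mapsto z_N(t)$ in a weak norm directly from the PDE on each subinterval and sum the contributions in a telescoping fashion, which works because $\delta=\delta(N)\to 0$ only helps.

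Then I would conclude: by the Aubin--Lions--Simon lemma \cite[Corollary 5]{Simon1987} (or \cite[Theorem 5]{Simon1987}), the set
\[
\mathcal{K}_{K} \coloneq \big\{ z\in L^2(0,T;H^2(\TT)) \cap W^{\sigma,p}(0,T;X) \,:\, \|z\|_{L^2(0,T;H^2(\TT))}+\|z\|_{W^{\sigma,p}(0,T;X)}\le K \big\}
\]
is relatively compact in $L^2(0,T;H^1(\TT))$, using $H^2(\TT)\hookrightarrow\hookrightarrow H^1(\TT)\hookrightarrow X$. Since by the Markov inequality and the two uniform bounds we have $\tilde P\big(z_N\notin\mathcal{K}_K\big)\lesssim (1+\nu_{k,\alpha+3})/K \to 0$ as $K\to\infty$, uniformly in $N$, tightness of $(z_N)_{N\in\NN}$ on $L^2(0,T;H^1(\TT))$ follows.

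The main obstacle I anticipate is the time-regularity step: $z_N$ is a nonlinear power of $v_N^{(k)}$, which is only weakly continuous in $H^1(\TT)$ with no uniform temporal modulus in a strong norm, so one cannot differentiate it naively and must instead localize to the deterministic subintervals, use the weak formulation $\partial_t v_N^{(k)}=-\divv J_N^{(k)}$ valid there, and carefully interpolate the resulting fractional-in-time bound against the $L^\infty(0,T;H^1(\TT))$ bound — making sure all constants depend on $N$ only through quantities that are in fact $N$-uniform by Theorem \ref{Thm_degenerate_sol} and Lemma \ref{Bound_powers_2}. The bookkeeping across the $j$-intervals, ensuring the rescaling factor of $2$ and the vanishing $\delta$ do not spoil uniformity, is the delicate point.
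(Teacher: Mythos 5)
Your overall architecture (uniform $L^2(0,T;H^2)$ bound from Lemma~4.3 plus a uniform time-regularity bound, Aubin--Lions--Simon, Markov inequality) is the same as the paper's, and the final Markov step is identical. But the specific time-regularity step you propose has a genuine gap that the paper resolves in a different way.

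The issue is that $v_N^{(k)}$ (and hence $z_N=(v_N^{(k)})^{(\alpha+3)/2}$) is \emph{not} continuous in time: it is defined by time-dilating $u_N^{(k)}$ on each deterministic subinterval (see \eqref{Eq16}), so it has jumps at every $(j+1)\delta$, and the jump sizes are exactly the stochastic increments of $w_N^{(k)}$, which do \emph{not} go to zero when summed over $j$. Localizing to the deterministic subintervals and using $\partial_t v_N^{(k)}=-\divv J_N^{(k)}$ there gives no control on these $N+1$ jumps, so a bound on $\|z_N\|_{W^{\sigma,p}(0,T;X)}$ that is uniform in $N$ does not follow from the pieces alone; your remark that ``$\delta\to 0$ only helps'' is not correct pointwise, because smaller $\delta$ means more jumps. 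Moreover, the chain-rule identity $\partial_t z_N=\tfrac{\alpha+3}{2}(v_N^{(k)})^{(\alpha+1)/2}\partial_t v_N^{(k)}$ is not justified for the weak solutions of Definition~\ref{Defi_sol_det}.

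The paper avoids all this. It never differentiates the power and never works on subintervals. Instead (Step 1) it establishes a uniform H\"older modulus for the \emph{full, undilated, continuous} scheme $u_N^{(k)}$ in $L^2(\TT)$ by interpolating $L^\infty(0,T;H^1)$ against the $W^{-1,q'}$-H\"older bound from Theorem~\ref{Thm_degenerate_sol}~\ref{Item_est_Hoelder}. Then (Step 2) it converts this into an $L^4(0,T-h;L^2(\TT))$ translation estimate on $v_N^{(k)}$ by a measure-theoretic argument: it counts discretization points in $[t,t+h]$ via $\phi_{N,h}$, splits $[0,T-h]$ into the sets $C_{N,h}$, $D_{N,h}$, and optimizes a one-parameter function to show that $|D_{N,h}|\le (N+1)h$ balances against $\delta^{(1-\theta)}$ to give a modulus $g_{\theta,T}(h)\to 0$ uniformly in $N$. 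Only this translation-increment (Nikolskii-type) version of Simon's criterion can absorb the jumps; a $W^{\sigma,p}$ bound cannot. Step 3, transferring the translation estimate from $v_N^{(k)}$ to its power, is done with the elementary pointwise inequality $|a^{(\alpha+3)/2}-b^{(\alpha+3)/2}|\lesssim |a-b|(a^{(\alpha+1)/2}+b^{(\alpha+1)/2})$ and H\"older, not with the chain rule. Your plan would need to be reorganized along these lines to close.
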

\begin{proof}We divide the proof into three steps.

	\textbf{Step 1} (H\"older regularity of $u_N^{(k)}$ in $L^2(\TT)$).  First, we observe that the paths of ${u}^{(k)}_N$ are weakly continuous in $H^1(\TT)$ and in particular
	$
	\|u^{(k)}_N(t)\|_{H^1(\TT)} \le  \|u^{(k)}_N\|_{L^\infty(0,T ; H^1(\TT))}
	$
	for every $t\in [0,T]$. The Sobolev embedding theorem, see \cite[Section 3.5.5]{schmeisser1987topics}, states that $W^{-1, q'}(\TT)\hookrightarrow H^{\frac{-2}{q'}}(\TT)$ and therefore we can estimate the H\"older seminorm
	$[u^{(k)}_N]_{\gamma,H^{\frac{-2}{q'}}(\TT) }$  by $C_q[u^{(k)}_N]_{\gamma,W^{-1, q'(\TT)}(\TT)} $ for $\gamma \in (0,1)$.  The interpolation inequality
	\[
	\|f\|_{L^2(\TT)}\, \le \|f\|_{H^1(\TT)}^{\theta}
	\|f\|_{ H^{\frac{-2}{q'}}(\TT)}^{1-\theta}, \;\; f\in H^1(\TT),
	\]
	with $\theta= \frac{\frac{2}{q'} }{1+\frac{2}{q'}}$ can be  derived using Fourier methods. We obtain the estimate
	\begin{align*}
		\|u^{(k)}_N(t)-u^{(k)}_N(s)\|_{L^2(\TT)}\, &\le\, \|u^{(k)}_N(t)-u^{(k)}_N(s)\|^{\theta}_{H^{1}(\TT)} 	\|u^{(k)}_N(t)-u^{(k)}_N(s)\|_{ H^{\frac{-2}{q'}}(\TT)}^{1-\theta}
		\\&\lesssim_q \, [u^{(k)}_N]_{\gamma,W^{-1, q'(\TT)}(\TT)} ^{1-\theta}\|u^{(k)}_N\|_{L^\infty(0,T;H^1(\TT))}^\theta|t-s|^{(1-\theta)\gamma}
	\end{align*}
on the increments, which yields 
	 that 
	\begin{equation}\label{Eq11}
	[u^{(k)}_N]_{(1-\theta)\gamma , L^2(\TT)}\,\lesssim_q \, [u^{(k)}_N]_{\gamma,W^{-1, q'(\TT)}(\TT)} ^{1-\theta}\|u^{(k)}_N\|_{L^\infty(0,T;H^1(\TT))}^\theta.
	\end{equation}
	
	\textbf{Step 2} (integral  estimate on the increments of $v_N^{(k)}$). In this part, we deduce from the first step an estimate on 
		\[
	\|\tau_hv_N^{(k)}- v_N^{(k)}\|_{L^4(0, T-h; L^2(\TT))},
	\]
	in similar terms, where $\tau_h$ denotes the translation operator by $h>0$ in the time variable. To quantify the jumps in the paths of of $v_N^{(k)}$ we introduce the function
	\[
	\phi_{N,h}\colon [0, T] \to \mathbb{N}, \, N\mapsto   \lfloor t+h \rfloor_\delta -  \lfloor t \rfloor_\delta,
	\]
	which counts how many discretization points lie between $t$ and $t+h$. The function $\lfloor \cdot \rfloor_\delta$ denotes here the  biggest integer multiple of $\delta$ which is less or equal to its input value. Then we have
	\begin{equation}\label{Eq8}\|
	v^{(k)}_N (t+h)- 
	v^{(k)}_N (t)\|_{L^2(\TT)}\, \le \, [u^{(k)}_N]_{(1-\theta)\gamma , L^2(\TT)}\left(h + \phi_{N,h} (t)\delta\right)^{(1-\theta)\gamma}
	\end{equation}
	for $t\in [0, T-h]$.
	We introduce the sets
	\[
	C_{N,h} \,=\,
	\left\{
	t\in [0,T-h]| \phi_{N,h}(t)\ne 1
	\right\} \; \text{ and }\;
	D_{N,h} \,=\,
	\left\{
	t\in [0,T-h]| \phi_{N,h}(t)=1
	\right\},
	\]
	distinguishing between points $t$, where one can estimate the right-hand side of \eqref{Eq8} in terms of $h$ or not. 
	Indeed, if $t\in C_{N,h}$ it holds either $\phi_{N,h}(t)=0$ or $\phi_{N,h}(t)\ge 2$ such that in any case  ${\phi_{N,h}(t)}\delta \le 2h$  and therefore 
	\[\left(h + \phi_{N,h} (t)\delta\right)^{(1-\theta)\gamma}\, \le \left(3h\right)^{(1-\theta)\gamma}.
	\]
	We deduce that
	\begin{align}
		\begin{split}
			\label{eq51}	
			&\|\tau_hv^{(k)}_N - v^{(k)}_N \|_{L^4(0, T-h; L^2(\TT))}^4
			\, = \,\int_0^{T-h} \|v^{(k)}_N (t+h)- v^{(k)}_N (t)\|_{L^2(\TT)}^4\, dt \\\le \,&
			[u^{(k)}_N]_{(1-\theta)\gamma , L^2(\TT)}^4\left(  (3h)^{4(1-\theta)\gamma} |C_{N,h}| +  \left(h+\delta\right)^{4(1-\theta)\gamma}|D_{N,h}|\right).
		\end{split}
	\end{align}
	If $h\ge \delta$, we use the trivial estimate $|C_{N,h}|+|D_{N,h}|\le T$ to conclude
	\begin{equation}\label{Eq51}(3h)^{4(1-\theta)\gamma} |C_{N,h}|\, +\,  \left(h+\delta\right)^{4(1-\theta)\gamma}|D_{N,h}|\, \le \,(3h)^{4(1-\theta)\gamma}T.
	\end{equation}
	 For $h< \delta$ we use instead that 
	\[
	t\in [j\delta, (j+1)\delta-h) \;\; \Rightarrow\;\;  \phi_{N,h}(t)\,=\,0 \;\; \Rightarrow\;\;  t\in C_{N,h}
	\] 
	and consequently $ |D_{N,h}|\le (N+1)h$. We define the function
	 \[
	 f_h(x)\,=\, \left(h+\frac{T}{x+1}\right)^{4(1-\theta)\gamma} (x+1)h , \;\; x\in \left[1, \frac{T}{h}-1\right]
	 \]
	  such that 
	 \[
	 \left(h+\delta\right)^{4(1-\theta)\gamma}|D_{N,h}|\,\le \, 
	 f_h(N)
	 \] and
	 it suffices  to estimate the maximum of $f_h$. Its derivative is given by
	 \[
	 f_h'(x)\,=\, h\left(h+\frac{T}{x+1}\right)^{4(1-\theta)\gamma} \,-\, \frac{4(1-\theta)\gamma T\left(h+\frac{T}{x+1}\right)^{4(1-\theta)\gamma-1} h(x+1)}{(x+1)^2},
	 \]
	 which can vanish only if
	 \[
		\frac{4(1-\theta)\gamma T }{(x+1)\left(h+\frac{T}{x+1}\right)}	\,=\,1	\; \;\Rightarrow\;\;h(x+1)\,=\,
		(4(1-\theta)\gamma-1)T.
	 \]
	 We choose for the rest of the proof that $\gamma =\frac{1}{4}$ such that the above is not feasible for $x\in \left[1, \frac{T}{h}-1\right] $. Hence $f_h$ can attain its maximum only at the boundary points $1$ and $\frac{T}{h}-1$. Evaluating $f_h$ gives
	 \[
	 f_h(1)\,=\,\left(h+\frac{T}{2}\right)^{1-\theta}2h\, \le \,2T^{1-\theta}h,\;\; f_h\left(\frac{T}{h}-1\right)\,=\, (2h)^{1-\theta}T.
	 \] 
	 We end up with the estimate
	 \[
	 (3h)^{1-\theta} |C_{N,h}|\, +\,  \left(h+\delta\right)^{1-\theta}|D_{N,h}|\,\le \,
	 2T(3h)^{1-\theta}\,+\,2T^{1-\theta}h.
	 \]
	 We define the right-hand side as $g_{\theta,T}(h)$ and obtain from \eqref{Eq11} and \eqref{eq51} that 
	 \begin{equation}\label{Eq12}
	 \|\tau_hv^{(k)}_N - v^{(k)}_N \|_{L^4(0, T-h; L^2(\TT))}\, \lesssim_q \, 
	 [u^{(k)}_N]_{\frac{1}{4},W^{-1, q'(\TT)}(\TT)} ^{1-\theta}\|u^{(k)}_N\|_{L^\infty(0,T;H^1(\TT))}^\theta (g_{\theta,T}(h))^\frac{1}{4}.
	 \end{equation}
	 By \eqref{Eq51}, this holds also if $h\ge \delta$.
	 
	 \textbf{Step 3} (proof of tightness). 
	 Due to Theorem \ref{Thm_degenerate_sol} \ref{Item_est_deg}, \ref{Item_est_Hoelder} and Lemma \ref{Bound_powers_2} we have
	 \begin{align*}	P\left(\left\{
	 	\sup_{0\le t\le T} \|u_N^{(k)}\|_{H^1(\TT)} > K
	 	\right\}\right)\, &\lesssim_{\Lambda,  T} \, \frac{\nu_{k,2}}{K^2},
	 	\\ \tilde{P}\left(\left\{\left[u_N^{(k)}\right]_{\frac{1}{4} ,W^{-1, q'}(\TT)} >K \right\}\right)
	 	\,&\lesssim_{\Lambda, q, T}\, \frac{1\,+\,\nu_{k,2}}{K},
	 	\\P\left\{
	 	\left\|\left(v_{N}^{(k)}\right)^{\frac{\alpha+3}{2}}\right\|_{L^2(0,T;H^2(\TT))}>K
	 	\right\}\,  &\lesssim_{ \Lambda, \alpha, T}\,\frac{1\,+\, \nu_{k,\alpha+3}}{K^2}
	 \end{align*}
 	for $K\in (1, \infty)$. In particular,
 	\begin{equation}\label{Eq15}
 	P\left(\left(F^{(k)}_{N,K}\right)^c \right) \lesssim_{\Lambda, \alpha, q, T} 
 	\frac{1\,+\, \nu_{k, \alpha+3}}{K},
 	\end{equation}
 	where we define 
 	\[F^{(k)}_{N,K}\, =\, \left\{
 	\max\left\{
 	\sup_{0\le t\le T} \|u_N^{(k)}\|_{H^1(\TT)},\left[u_N^{(k)}\right]_{\frac{1}{4} ,W^{-1, q'}(\TT)}, 
 	\left\|\left(v_{N}^{(k)}\right)^{\frac{\alpha+3}{2}}\right\|_{L^2(0,T;H^2(\TT))}
 	\right\}\le K
 	\right\}.
 	\]
 	Moreover, using that for $a,b \ge 0$ we have 
 	\[
 	|a^\frac{\alpha+3}{2}-b^\frac{\alpha+3}{2}|\, \le \,  \frac{\alpha+3}{2}|a-b|\max(a,b)^{\frac{\alpha+1}{2}}
 	\, \lesssim \, |a-b|\left[a^{\frac{\alpha+1}{2}} + b^{\frac{\alpha+1}{2}} \right]
 	\]
 	as a consequence of the fundamental theorem of calculus, we deduce that
 	\begin{align*}&
 		\left\|\tau_h \left(v_{N}^{(k)}\right)^{\frac{\alpha+3}{2}}- \left(v_{N}^{(k)}\right)^{\frac{\alpha+3}{2}} \right\|_{L^2\left(0,T-h;L^{\frac{4}{\alpha+3}}(\TT)\right)} \\
 		\lesssim \,&
 		\left\|\left(\tau_h v_{N}^{(k)}- v_{N}^{(k)}\right) \left(\tau_h \left(v_{N}^{(k)}\right)^{\frac{\alpha+1}{2}}+ \left(v_{N}^{(k)}\right)^{\frac{\alpha+1}{2}}\right) 
 		\right\|_{L^2\left(0,T-h;L^{\frac{4}{\alpha+3}}(\TT)\right)}
 		\\\lesssim\,& 
 		 \|\tau_h v_{N}^{(k)}- \tilde{v}_N\|_{L^4(0,T-h; L^2(\TT))} \left\| \left(v_{N}^{(k)}\right)^{\frac{\alpha+1}{2}}\right\|_{L^4\left(0,T, L^\frac{4}{\alpha+1}(\TT)\right)}
 	\end{align*}
 	from H\"older's inequality. We estimate the latter term by 
 	\begin{align*}
 		 \left\| \left(v_{N}^{(k)}\right)^{\frac{\alpha+1}{2}}\right\|_{L^4\left(0,T, L^\frac{4}{\alpha+1}(\TT)\right)} \,& =\,
 		\left(\int_0^T \left( \int_{\TT} \left(v_{N}^{(k)}\right)^2\,dx\right)^{\alpha+1} dt\right)^\frac{1}{4}
 		\, \lesssim_T \,	\sup_{0\le t\le T} \|v_N^{(k)}\|_{L^2(\TT)}^\frac{\alpha+1}{2}.
 	\end{align*} We conclude by \eqref{Eq12} that
 \begin{align*}&
 	\left\|\tau_h \left(v_{N}^{(k)}\right)^{\frac{\alpha+3}{2}}- \left(v_{N}^{(k)}\right)^{\frac{\alpha+3}{2}} \right\|_{L^2\left(0,T-h;L^{\frac{4}{\alpha+3}}(\TT)\right)}\\\lesssim_{q,T}\,&
 	\, 
 	[u^{(k)}_N]_{\frac{1}{4},W^{-1, q'(\TT)}(\TT)} ^{1-\theta}\|u^{(k)}_N\|_{L^\infty(0,T;H^1(\TT))}^{\theta+\frac{\alpha+1}{2}} (g_{\theta,T}(h))^\frac{1}{4}.
 \end{align*}
Hence,  for $\omega\in F^{(k)}_{N,K}$ we have that
 \begin{align*}
	\left\|\tau_h \left(v_{N}^{(k)}(\omega)\right)^{\frac{\alpha+3}{2}}- \left(v_{N}^{(k)}(\omega)\right)^{\frac{\alpha+3}{2}} \right\|_{L^2\left(0,T-h;L^{\frac{4}{\alpha+3}}(\TT)\right)}\,\lesssim_{q,T}\,
	\, K^{\frac{\alpha+3}{2}} (g_{\theta,T}(h))^\frac{1}{4}.
\end{align*}
and 
\[
\left\|\left(v_{N}^{(k)}(\omega)\right)^{\frac{\alpha+3}{2}}\right\|_{L^2(0,T;H^2(\TT))}\, \le \, K
\]
and therefore $v_{N}^{(k)}(\omega)$ lies in a  compact subset of $L^2(0, T; H^1(\TT))$ by \cite[Theorem 5, p.84]{Simon1987}, which we denote by $\chi_{q, \alpha,T,K}$. From \eqref{Eq15} we deduce that 
\[
P\left(\left\{v_{N}^{(k)}\notin \chi_{q, \alpha,T,K}\right\} \right)\,  \lesssim_{\Lambda, \alpha, q, T} 
\frac{1+ \nu_{k,\alpha+3}}{K}.
\]
The tightness assertion follows since the right hand side goes uniformly in $N$ to $0$ as $K\to \infty$.
\end{proof}
\subsection{The time-step limit}

In this last subsection, we finally let $N\to \infty$ and show that the limit satisfies the assertions of Theorem \ref{Thm_main}. This time we consider the sequence 
\begin{equation}\label{Eq_seq_2}
\left( \left(
\mathbbm{1}_{R^{(l)}_N}, \beta^{(l)}, u_N^{(l)}, v_N^{(l)}, w_N^{(l)}, J_N^{(l)}, 
(v_N^{(l)})^\frac{\alpha+3}{2},
(v_N^{(l)})^\frac{\alpha+3}{4} , (v_N^{(l)})^\frac{\alpha+3}{2}\right)_{l\in \NN}
\right)_{N \in \mathbb{N}}\end{equation}
in the topological space 
\begin{align}\begin{split}
		\label{Eq_space_2}
		\prod_{l=1}^{\infty}\;&
		\RR \times C([0,T])\times C(0,T; L^2(\TT)) \times  L_{w*}^\infty(0,T; H^1(\TT))\times  L_{w*}^\infty(0,T; H^1(\TT)) \\&\times  L_{w}^2(0,T; L^{q'}(\TT))\times
		L_w^2(0,T; H^2(\TT))\times L_w^4(0,T; W^{1,4}(\TT))\times  L^2(0,T; H^1(\TT)).
	\end{split}
\end{align}
Notice that this differs from \eqref{Eq_space} by the additional appearance of the space $ L^2(0,T; H^1(\TT))$.
\begin{cor}
	The sequence \eqref{Eq_seq_2} is tight on \eqref{Eq_space_2}.
\end{cor}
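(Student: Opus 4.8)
The plan is to reduce the statement to tightness of each individual coordinate and to the single-component results already established. By Tychonoff's theorem it suffices to show, for each fixed $l\in\NN$, that the law of the $l$-th coordinate of \eqref{Eq_seq_2} is tight in the corresponding factor of \eqref{Eq_space_2}. This $l$-th coordinate consists of exactly the same objects as in Proposition \ref{Prop_tNess1}, with the regularization parameter $\epsilon$ replaced by the step number $N$, together with one extra copy of $(v_N^{(l)})^{\frac{\alpha+3}{2}}$, now regarded as an element of $L^2(0,T;H^1(\TT))$ equipped with its strong topology.

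For every coordinate except the last, I would repeat the argument of Proposition \ref{Prop_tNess1} verbatim, using the $N$-uniform estimates of Theorem \ref{Thm_degenerate_sol} in place of the $\epsilon$-uniform ones of the previous section. Concretely: the laws of $\mathbbm{1}_{R_N^{(l)}}$ and $\beta^{(l)}$ do not depend on $N$ and live in Radon spaces, hence are tight; tightness of $u_N^{(l)}$ in $C(0,T;L^2(\TT))$ follows from Theorem \ref{Thm_degenerate_sol} \ref{Item_est_deg}, \ref{Item_est_Hoelder} together with \cite[Theorem 5]{Simon1987}; and tightness of $u_N^{(l)}$, $v_N^{(l)}$, $w_N^{(l)}$ in $L_{w*}^\infty(0,T;H^1(\TT))$, of $J_N^{(l)}$ in $L_w^2(0,T;L^{q'}(\TT))$, and of $(v_N^{(l)})^{\frac{\alpha+3}{2}}$, $(v_N^{(l)})^{\frac{\alpha+3}{4}}$ in $L_w^2(0,T;H^2(\TT))$ and $L_w^4(0,T;W^{1,4}(\TT))$ respectively, follows by applying Chebyshev's inequality to the bounds of Theorem \ref{Thm_degenerate_sol} \ref{Item_est_deg} and of Lemma \ref{Bound_powers_2}, combined with the Banach-Alaoglu theorem, each relevant ball being metrizable by separability of the (pre-)dual of the space in question.

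For the last coordinate, tightness of $(v_N^{(l)})^{\frac{\alpha+3}{2}}$ in the strong space $L^2(0,T;H^1(\TT))$ is precisely the content of Lemma \ref{Lemma_add_Tness}. Combining this with the previous paragraph and Tychonoff's theorem gives the claim. There is no genuine obstacle left at this stage: the substantive input --- the $N$-uniform $\alpha$-entropy bound of Lemma \ref{Bound_powers_2}, obtained by controlling the entropy production along the stochastic dynamics via It\^{o}'s formula, and the compactness argument of Lemma \ref{Lemma_add_Tness} adapting \cite[Lemma 2.5]{Passo98ona} --- has already been carried out, so the corollary is a bookkeeping consequence of those results.
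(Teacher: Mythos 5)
Your proposal is correct and follows essentially the same route as the paper: the paper's proof is a one-line appeal to the argument of Proposition \ref{Prop_tNess1}, carried out with the $N$-uniform bounds from Theorem \ref{Thm_degenerate_sol} \ref{Item_est_deg}, \ref{Item_est_Hoelder} and Lemma \ref{Bound_powers_2}, plus Lemma \ref{Lemma_add_Tness} for the additional strong $L^2(0,T;H^1(\TT))$ component. You have merely unpacked that sentence into its constituent steps (Tychonoff, Chebyshev plus Banach-Alaoglu and metrizability of balls for the weak components, Simon's theorem for the $C(0,T;L^2(\TT))$ component), all of which match the paper's intent.
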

\begin{proof}
	This can be shown analogously to Proposition \ref{Prop_tNess1}, using Theorem \ref{Thm_degenerate_sol} \ref{Item_est_deg} and \ref{Item_est_Hoelder} and invoking additionally the findings from Lemma \ref{Bound_powers_2} and  Lemma \ref{Lemma_add_Tness}.
\end{proof}
As in subsection \ref{Sec_deg_lim}, we obtain that for a subsequence indexed by $\mathcal{N}\subset \mathbb{N}$, there exists a sequence of $\mathfrak{B}$-measurable random variables
\begin{equation*}
	\left( 
	\left(
	\mathbbm{1}_{\tilde{R}_N^{(l)}}, \tilde{\beta}_N^{(l)}, \tilde{u}_N^{(l)}, \tilde{v}_N^{(l)}, \tilde{w}_N^{(l)}, \tilde{J}_N^{(l)}, 	\tilde{f}_N^{(l)}, 
	\tilde{g}_N^{(l)} , 	\tilde{h}_N^{(l)} \right)_{l\in \NN}
	\right)_{N \in \mathcal{N}}
\end{equation*} 
defined on a complete probability space $(\tilde{\Omega}, \tilde{\mathfrak{A}}, \tilde{P})$, such that 
\begin{equation}\label{Eq_elem_of_new_seq_II}
	\left(
	\mathbbm{1}_{\tilde{R}_N^{(l)}}, \tilde{\beta}_N^{(l)}, \tilde{u}_N^{(l)}, \tilde{v}_N^{(l)}, \tilde{w}_N^{(l)}, \tilde{J}_N^{(l)}, 	\tilde{f}_N^{(l)}, 
	\tilde{g}_N^{(l)} , 	\tilde{h}_N^{(l)} \right)_{l\in \NN}
\end{equation} 
has the same distribution on \eqref{Eq_space_2} as 
\begin{equation}\label{Eq_elem_of_old_seq_II}
\left(
\mathbbm{1}_{R^{(l)}_N}, \beta^{(l)}, u_N^{(l)}, v_N^{(l)}, w_N^{(l)}, J_N^{(l)}, 
(v_N^{(l)})^\frac{\alpha+3}{2},
(v_N^{(l)})^\frac{\alpha+3}{4} , (v_N^{(l)})^\frac{\alpha+3}{2}\right)_{l\in \NN}
\end{equation}
for every $N \in \mathcal{N}$. Moreover, as $N \to \infty$, \eqref{Eq_elem_of_new_seq_II} converges to a $\mathfrak{B}$-measurable random variable
\begin{equation}\label{Eq1000}
	\left(
	\mathbbm{1}_{\tilde{R}^{(l)}}, \tilde{\beta}^{(l)}, \tilde{u}^{(l)}, \tilde{v}^{(l)}, \tilde{w}^{(l)}, \tilde{J}^{(l)}, 	\tilde{f}^{(l)}, 
	\tilde{g}^{(l)} , 	\tilde{h}^{(l)} \right)_{l\in \NN}
\end{equation} 
in \eqref{Eq_space_2}. The following properties are inherited from the approximating sequence.
\begin{lemma}\label{Lemma_Basic_prop2}
	The sets $(\tilde{R}^{(k)})_{k\in \NN}$ form, up to $\tilde{P}$-null sets, a disjoint partition of $\tilde{\Omega}$. Moreover, the following holds $\tilde{P}$-almost surely for every $k\in\NN$.
		\begin{enumerate}[label=(\roman*)]
		\item \label{Item_1} The random variables 
		\[
		\tilde{u}^{(k)}, \tilde{v}^{(k)}, \tilde{w}^{(k)}, \tilde{J}^{(k)}, 	\tilde{f}^{(k)}, 	\tilde{g}^{(k)}\;\; \text{and}\;\; 
		\tilde{h}^{(k)}
		\]
		vanish outside of  of ${\tilde{R}^{(k)}}$.
		\item \label{Item_2} $\tilde{u}^{(k)}(t)\ge 0$ for all $t\in [0,T]$.
		\item \label{Item_3}	 $\tilde{u}^{(k)}=\tilde{v}^{(k)}=\tilde{w}^{(k)}$.
		\item \label{Item_4}
		$\tilde{f}^{(k)}=\tilde{h}^{(k)}=(\tilde{u}^{(k)})^\frac{\alpha+3}{2}$ and
		$\tilde{g}^{(k)}=(\tilde{u}^{(k)})^\frac{\alpha+3}{4}$.
	\end{enumerate}
\end{lemma}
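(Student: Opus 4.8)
The plan is to transfer each of the four stated properties from the approximating sequence to the limit, exploiting the Skorokhod-Jakubowski construction, which guarantees that \eqref{Eq_elem_of_new_seq_II} has the same law on \eqref{Eq_space_2} as \eqref{Eq_elem_of_old_seq_II} and converges $\tilde P$-almost surely to \eqref{Eq1000}. The strategy mirrors the proof of Lemma \ref{lemma_elem_propI}, only now the splitting of the time interval degenerates as $N\to\infty$, which is precisely why the components $\tilde v^{(k)}$, $\tilde w^{(k)}$, $\tilde u^{(k)}$ all collapse onto each other. The disjointness of $(\tilde R^{(k)})_{k\in\NN}$ up to null sets follows exactly as before: for every $N\in\mathcal N$ one has $\tilde E[\mathbbm 1_{\tilde R^{(k_1)}_N}\mathbbm 1_{\tilde R^{(k_2)}_N}] = \delta_{k_1,k_2}P(R^{(k_1)})$ by equality of laws, and passing $N\to\infty$ along the almost sure convergence (and the fact that convergence in $\RR$ of indicator functions forces them to remain indicator functions of a partition up to null sets) yields the claim.

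For part \ref{Item_1} I would argue as in Lemma \ref{lemma_elem_propI}(i): for each $N$, Theorem \ref{Thm_degenerate_sol}\ref{Item_IV} gives that $\tilde u^{(k)}_N$, $\tilde J^{(k)}_N$ vanish $\tilde P$-a.s.\ outside $\tilde R^{(k)}_N$, and the same holds for $\tilde v^{(k)}_N,\tilde w^{(k)}_N,\tilde f^{(k)}_N,\tilde g^{(k)}_N,\tilde h^{(k)}_N$ since they are deterministic functions of $\tilde v^{(k)}_N$ (via $\mathcal S_{\alpha,q,\delta}$) or of $\tilde u^{(k)}_N$. Concretely, $\mathbbm 1_{\tilde R^{(k)}_N}$ times the norm of any of these random variables in its respective space equals $0$; letting $N\to\infty$ along the almost sure convergence in \eqref{Eq_space_2} (noting that the norms are lower semicontinuous with respect to the weak/weak-* topologies, and continuous on $C(0,T;L^2(\TT))$ and $\RR$) we get that $\mathbbm 1_{\tilde R^{(k)}}$ times the corresponding norm of the limit vanishes, which is the assertion. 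Part \ref{Item_2} follows because $\tilde u^{(k)}_N(t)\ge 0$ for all $t$ — inherited from Theorem \ref{Thm_degenerate_sol} via the construction — and nonnegativity is preserved under limits in $C(0,T;L^2(\TT))$, hence pointwise in $t$ after choosing the continuous version.

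The genuinely new point, and the main obstacle, is part \ref{Item_3}, the collapse $\tilde u^{(k)}=\tilde v^{(k)}=\tilde w^{(k)}$. Here one uses that the splitting identity \eqref{Eq_splitting_Limiting_scheme} holds for $\tilde u^{(k)}_N$, $\tilde v^{(k)}_N$, $\tilde w^{(k)}_N$ with step size $\delta=\delta(N)=T/(N+1)\to 0$, together with the uniform H\"older bound on $\tilde u^{(k)}_N$ in $W^{-1,q'}(\TT)$ from Theorem \ref{Thm_degenerate_sol}\ref{Item_est_Hoelder} and the uniform $L^\infty(0,T;H^1(\TT))$ bound from \ref{Item_est_deg}. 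Interpolating as in Step 1 of the proof of Lemma \ref{Lemma_add_Tness}, one sees $\tilde u^{(k)}_N$ is uniformly H\"older in $L^2(\TT)$, so $\|\tilde u^{(k)}_N(2(t-j\delta)+j\delta) - \tilde u^{(k)}_N(t)\|_{L^2(\TT)}\lesssim [\tilde u^{(k)}_N]_{\gamma',L^2}\,\delta^{\gamma'}\to 0$ uniformly in $t$; since $\tilde v^{(k)}_N(2(t-j\delta)+j\delta)=\tilde u^{(k)}_N(t)$ on the first half-intervals, the processes $\tilde v^{(k)}_N$ and $\tilde u^{(k)}_N$ become uniformly $L^2(\TT)$-close as $N\to\infty$, and likewise for $\tilde w^{(k)}_N$. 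Passing to the almost sure limit in $C(0,T;L^2(\TT))$ gives $\tilde u^{(k)}=\tilde v^{(k)}=\tilde w^{(k)}$ in $C(0,T;L^2(\TT))$; since $\tilde u^{(k)}$ is weakly continuous in $H^1(\TT)$ and bounded there almost surely, this identification also holds as $L^\infty(0,T;H^1(\TT))$-valued objects. Finally, part \ref{Item_4} follows from \ref{Item_3}: for each $N$, measurability of $\mathcal S_{\alpha,q,\delta}$ gives $\tilde f^{(k)}_N=(\tilde v^{(k)}_N)^{(\alpha+3)/2}$, $\tilde g^{(k)}_N=(\tilde v^{(k)}_N)^{(\alpha+3)/4}$, and $\tilde h^{(k)}_N=(\tilde v^{(k)}_N)^{(\alpha+3)/2}$ by the choice of the last component in \eqref{Eq_elem_of_old_seq_II}; letting $N\to\infty$ and invoking the convergence $\tilde v^{(k)}_N\to\tilde v^{(k)}=\tilde u^{(k)}$ in $C(0,T;L^2(\TT))$, together with continuity of $x\mapsto x^{(\alpha+3)/2}$ and $x\mapsto x^{(\alpha+3)/4}$ on $[0,\infty)$ and the a.s.\ uniform $H^1$-bounds to upgrade convergence of the powers to the relevant weak topologies, identifies $\tilde f^{(k)}=\tilde h^{(k)}=(\tilde u^{(k)})^{(\alpha+3)/2}$ and $\tilde g^{(k)}=(\tilde u^{(k)})^{(\alpha+3)/4}$. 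The only care needed throughout is that the weak/weak-* limits and the strong $C(0,T;L^2)$-limit are consistent, which is standard since strong convergence implies weak convergence and limits are unique.
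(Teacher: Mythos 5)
Your treatment of the disjoint-partition claim and of parts (i)--(ii) matches the paper, which simply points back to the proof of Lemma~\ref{lemma_elem_propI}. Part (iv) is also roughly the route the paper takes: from $\tilde v^{(k)}_N\to\tilde u^{(k)}$ in $C(0,T;L^2(\TT))$ one extracts a subsequence converging a.e.\ on $[0,T]\times\TT$, combines this with the uniform $L^r$-bounds coming from the $L^\infty(0,T;H^1(\TT))$-bounds and Sobolev embedding, and applies Vitali's convergence theorem to conclude $(\tilde v^{(k)}_N)^{(\alpha+3)/2}\to(\tilde u^{(k)})^{(\alpha+3)/2}$ strongly, which is then matched against the weak limit $\tilde f^{(k)}$. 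You gesture at this (``continuity of $x\mapsto x^{(\alpha+3)/2}$ plus uniform $H^1$-bounds''), but a precise argument should name Vitali, or equivalently dominated convergence after establishing uniform integrability; continuity of the power map alone does not suffice.

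The genuine gap is in your argument for part (iii). You want to bound $\|\tilde v^{(k)}_N(t)-\tilde u^{(k)}_N(t)\|_{L^2(\TT)}\lesssim[\tilde u^{(k)}_N]_{\gamma',L^2(\TT)}\,\delta^{\gamma'}$ and send $\delta\to0$, but the only control on $[\tilde u^{(k)}_N]_{\gamma',L^2(\TT)}$ available — obtained by interpolating between the $L^\infty(0,T;H^1)$-bound and the H\"older bound in $W^{-1,q'}(\TT)$ from Theorem~\ref{Thm_degenerate_sol}\ref{Item_est_Hoelder} — is a tail estimate, hence only boundedness in probability, uniformly in $N$. That is not enough to conclude $[\tilde u^{(k)}_N]_{\gamma',L^2(\TT)}\,\delta^{\gamma'}\to0$ $\tilde P$-almost surely; without a Borel--Cantelli-type refinement or a subsequence extraction this step does not close. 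The paper avoids the issue entirely: it uses the identity $\tilde v^{(k)}_N(t)=\tilde u^{(k)}_N\bigl(j\delta+\tfrac{t-j\delta}{2}\bigr)$ and then estimates, by the triangle inequality through the limit,
\[
\bigl\|\tilde v^{(k)}_N(t)-\tilde u^{(k)}(t)\bigr\|_{L^2(\TT)}
\le\bigl\|\tilde u^{(k)}_N-\tilde u^{(k)}\bigr\|_{C(0,T;L^2(\TT))}
+\bigl\|\tilde u^{(k)}\bigl(j\delta+\tfrac{t-j\delta}{2}\bigr)-\tilde u^{(k)}(t)\bigr\|_{L^2(\TT)}.
\]
The first term vanishes by the almost sure Skorokhod--Jakubowski convergence in $C(0,T;L^2(\TT))$, and the second vanishes because the \emph{limit} $\tilde u^{(k)}$ is continuous on the compact $[0,T]$, hence uniformly continuous, and the time shift is at most $\delta/2\to0$. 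No quantitative modulus of continuity of the approximants is needed. You should replace your H\"older argument by this, or, if you insist on quantitative bounds, extract along a subsequence $\delta_N$ such that the event $\{[\tilde u^{(k)}_N]_{\gamma',L^2(\TT)}>\delta_N^{-\gamma'/2}\}$ has summable probability and invoke Borel--Cantelli.
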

\begin{proof}
	The claim regarding the sets $\tilde{R}^{(k)}$, as well as part \ref{Item_1} and \ref{Item_2} follow as in the proof of Lemma \ref{lemma_elem_propI}. For part \ref{Item_3} 
	we conclude first form \eqref{Eq52} that $\tilde{P}$-almost surely
	\begin{equation}\label{Eq16}
	\tilde{v}^{(k)}_N(t)= \tilde{u}^{(k)}_N\left(
	j\delta + \tfrac{t-j\delta}{2}
	\right), \;\; t\in [j\delta, (j+1)\delta).
	\end{equation}
	for almost all $t\in [0,T]$. Fixing such $t$ that \eqref{Eq16} holds for all $N\in \mathcal{N}$ and  using that $\hat{u}_N$ converges uniformly to an $L^2(\TT)$-continuous function we conclude that 
	\begin{equation}\label{Eq25}
		\|	\tilde{v}^{(k)}_N(t) -\tilde{u}^{(k)}(t) \|_{L^2(\TT)}<\epsilon
	\end{equation}
	for sufficiently large $N$. It follows that $\tilde{v}^{(k)}_N\to \tilde{u}^{(k)}$ in $L^\infty(0,T; L^2(\TT))$ and therefore the limit has to coincide with $\tilde{v}^{(k)}$. The proof of $\tilde{u}^{(k)}=\tilde{w}^{(k)}$ works analogously. Since in contrast to the proof of Lemma \ref{lemma_elem_propI} we cannot just rely on Proposition \ref{Prop_conv_sol_det} for the identification of  powers in \ref{Item_4}, we carry out the argument  by hand. Since \eqref{Eq_elem_of_new_seq_II} and \eqref{Eq_elem_of_old_seq_II} have the same distribution, it holds
	\begin{equation}\label{Eq18}
	\tilde{f}^{(k)}_N=(\tilde{v}^{(k)}_N)^\frac{\alpha+3}{2}
	\end{equation}
	for every $N\in \mathcal{N}$. Due to the previously verified convergence  $\tilde{v}^{(k)}_N\to \tilde{u}^{(k)}$ in $L^\infty(0,T; L^2(\TT))$ it follows that the same convergence holds $[{0,T}]\times \TT$-almost everywhere up to taking a subsequence. Moreover, since $\tilde{v}^{(k)}_N$ is also weakly convergent in $L^\infty(0,T; H^1(\TT))$, we conclude that it is uniformly in $N$ bounded in $L^r([{0,T}]\times \TT)$ for every $r>0$ by the Sobolev embedding theorem. Vitali's convergence theorem yields that
	\begin{equation}\label{Eq17}
	(\tilde{v}^{(k)}_N)^\frac{\alpha+3}{2} \,\to\, (\tilde{u}^{(k)})^\frac{\alpha+3}{2}
	\end{equation}
	in $L^r([{0,T}]\times \TT)$ for every $r>0$. Invoking \eqref{Eq18} and that $\tilde{f}^{(k)}_N\rightharpoonup f^{(k)}$ in $L^2(0,T; H^2(\TT))$ the identification $ \tilde{f}^{(k)}=(\tilde{u}^{(k)})^\frac{\alpha+3}{2}$ follows. The remaining part of \ref{Item_4} can be shown analogously.
\end{proof}
\begin{prop}\label{Prop_PDE_part}
	 For all $\eta \in L^\infty(0,T; W^{2, \infty}(\TT))$ it holds
	\begin{align*}\begin{split}\int_0^T\int_{\TT} \tilde{J}^{(k)}\cdot \eta \, dx\,dt\,=\,&
			\int_0^T\int_{\{\tilde{u}^{(k)}(s)>0\}} |\nabla \tilde{u}^{(k)}|^2 \nabla \tilde{u}^{(k)} \cdot \eta \, dx \,ds\\&\,+\,
			\int_0^T\int_{\{\tilde{u}^{(k)}(s)>0\}} \tilde{u}^{(k)} |\nabla \tilde{u}^{(k)}|^2 \divv \eta\, dx \,ds\\&\,+\,
			2\int_0^T\int_{\{\tilde{u}^{(k)}(s)>0\}} \tilde{u}^{(k)}\,\nabla^T \tilde{u}^{(k)} D\eta \nabla \tilde{u}^{(k)}\, dx \,ds\\&\,+\,
			\int_0^T\int_{\TT} {(\tilde{u}^{(k)})}^2\nabla \tilde{u}^{(k)}\cdot \nabla \divv\eta\, dx \,ds
		\end{split}
	\end{align*}
$\tilde{P}$-almost surely.
\end{prop}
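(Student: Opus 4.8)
The plan is to transport the weak flux identity of Definition \ref{Defi_sol_det} from the approximating sequence to the limit and then pass $N\to\infty$. Fix $k\in\NN$. Since \eqref{Eq_elem_of_new_seq_II} and \eqref{Eq_elem_of_old_seq_II} are equally distributed and, by Theorem \ref{Thm_degenerate_sol} \ref{Item_sol_Det_deg_case}, the tuple $(v_N^{(k)},J_N^{(k)})$ restricted to each interval $[j\delta,(j+1)\delta)$ is $P$-almost surely a weak solution of the deterministic thin-film equation, the same holds $\tilde P$-almost surely for $(\tilde v_N^{(k)},\tilde J_N^{(k)})$; one uses here that the event that \eqref{Eq_weak_form_J} holds on $[j\delta,(j+1)\delta)$ for every $\eta$ in a fixed countable dense subset of $L^\infty(0,T;W^{2,\infty}(\TT,\RR^2))$ is measurable. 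As \eqref{Eq_weak_form_J} is additive in the time interval and $[0,T)=\bigcup_{j=0}^N[j\delta,(j+1)\delta)$, summing over $j$ shows that $(\tilde v_N^{(k)},\tilde J_N^{(k)})$ satisfies the full identity of Definition \ref{Defi_sol_det} on $[0,T]$, $\tilde P$-almost surely. It then remains to let $N\to\infty$. I would first collect, from Lemma \ref{Lemma_Basic_prop2} and the construction of Section \ref{Sec_time_limit}, the convergences $\tilde v_N^{(k)}\to\tilde u^{(k)}$ in $C(0,T;L^2(\TT))$ and in $L^\infty_{w*}(0,T;H^1(\TT))$, $\tilde J_N^{(k)}\rightharpoonup\tilde J^{(k)}$ in $L^2(0,T;L^{q'}(\TT))$, $(\tilde v_N^{(k)})^{\frac{\alpha+3}{2}}\to(\tilde u^{(k)})^{\frac{\alpha+3}{2}}$ strongly in $L^2(0,T;H^1(\TT))$ and weakly in $L^2(0,T;H^2(\TT))$, and $(\tilde v_N^{(k)})^{\frac{\alpha+3}{4}}\rightharpoonup(\tilde u^{(k)})^{\frac{\alpha+3}{4}}$ in $L^4(0,T;W^{1,4}(\TT))$. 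Passing if necessary to a further subsequence, the strong $L^2(0,T;H^1(\TT))$-convergence gives $\tilde v_N^{(k)}\to\tilde u^{(k)}$ and $\nabla(\tilde v_N^{(k)})^{\frac{\alpha+3}{2}}\to\nabla(\tilde u^{(k)})^{\frac{\alpha+3}{2}}$ almost everywhere on $[0,T]\times\TT$, and together with the uniform $L^\infty(0,T;H^1(\TT))$-bound, the embedding $H^1(\TT)\hookrightarrow L^r(\TT)$ for all $r<\infty$ and Vitali's theorem, one gets $(\tilde v_N^{(k)})^{\theta}\to(\tilde u^{(k)})^{\theta}$ in $L^r([0,T]\times\TT)$ for every $\theta>0$ and $r<\infty$.

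Next I would pass to the limit term by term in the four contributions on the right-hand side of \eqref{Eq_weak_form_J}, after rewriting the first three by the differential identity $\nabla v=\tfrac{2}{\alpha+3}v^{-\frac{\alpha+1}{2}}\nabla v^{\frac{\alpha+3}{2}}$, valid almost everywhere on $\{v>0\}$, together with $\nabla v=0$, $\nabla v^{\frac{\alpha+3}{2}}=0$ almost everywhere on $\{v=0\}$. The left-hand side converges since $\tilde J_N^{(k)}\rightharpoonup\tilde J^{(k)}$ in $L^2(0,T;L^{q'})$ and $\eta\in L^2(0,T;L^{q}(\TT,\RR^2))=(L^2(0,T;L^{q'}(\TT,\RR^2)))'$. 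For the last term, $(\tilde v_N^{(k)})^2\nabla\tilde v_N^{(k)}=\tfrac13\nabla(\tilde v_N^{(k)})^3$, so after integration by parts it equals $-\tfrac13\int_0^T\int_\TT(\tilde v_N^{(k)})^3\,\Delta\divv\eta\,dx\,dt$, which converges because $(\tilde v_N^{(k)})^3\to(\tilde u^{(k)})^3$ in $L^1([0,T]\times\TT)$ and $\Delta\divv\eta\in L^\infty$. For the second and third terms, using $2\cdot\frac{\alpha+3}{2}-2-\alpha=1$, one rewrites on $\{v>0\}$ the quantities $v|\nabla v|^2=\tfrac{4}{(\alpha+3)^2}v^{-\alpha}|\nabla v^{\frac{\alpha+3}{2}}|^2$ and $v\,\nabla^T v\,D\eta\,\nabla v=\tfrac{4}{(\alpha+3)^2}v^{-\alpha}(\nabla v^{\frac{\alpha+3}{2}})^T D\eta\,\nabla v^{\frac{\alpha+3}{2}}$; since $-\alpha\in(0,1)$ the weights $(\tilde v_N^{(k)})^{-\alpha}\to(\tilde u^{(k)})^{-\alpha}$ in $L^r([0,T]\times\TT)$ for all $r<\infty$, the quadratic forms in $\nabla(\tilde v_N^{(k)})^{\frac{\alpha+3}{2}}$ converge in $L^1([0,T]\times\TT)$ by the strong $L^2$-convergence of the gradients, and $\divv\eta,D\eta\in L^\infty$, so these terms pass to the limit.

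The main obstacle is the genuinely cubic first term $\int_0^T\int_{\{v>0\}}|\nabla v|^2\nabla v\cdot\eta$. With $\gamma=\frac{\alpha+3}{4}$ and $3(1-\gamma)=\frac{3(1-\alpha)}{4}$, it can be recast on $\{v>0\}$ as $(\tfrac{4}{\alpha+3})^3\int_0^T\int_{\{v>0\}}v^{\frac{3(1-\alpha)}{4}}|\nabla v^{\frac{\alpha+3}{4}}|^2\nabla v^{\frac{\alpha+3}{4}}\cdot\eta\,dx\,dt$, where the power $\frac{3(1-\alpha)}{4}$ is now positive; the weight $(\tilde v_N^{(k)})^{\frac{3(1-\alpha)}{4}}$ still converges in every $L^r$, but the factor $|\nabla(\tilde v_N^{(k)})^{\frac{\alpha+3}{4}}|^2\nabla(\tilde v_N^{(k)})^{\frac{\alpha+3}{4}}$ is only bounded in $L^{4/3}([0,T]\times\TT)$ and weak convergence of $\nabla(\tilde v_N^{(k)})^{\frac{\alpha+3}{4}}$ does not survive the cubic nonlinearity. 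Here I would adapt the argument of \cite[Lemma 2.5]{Passo98ona}: fix $\rho>0$ and split the integral over $\{0<\tilde v_N^{(k)}<\rho\}$ and over $\{\tilde v_N^{(k)}\ge\rho\}$. On the first set the integrand is bounded in absolute value by $\rho^{\frac{3(1-\alpha)}{4}}\|\eta\|_{L^\infty}|\nabla(\tilde v_N^{(k)})^{\frac{\alpha+3}{4}}|^3$, whose $L^1$-norm is, by H\"older's inequality, at most $C_T\,\rho^{\frac{3(1-\alpha)}{4}}\|\eta\|_{L^\infty}\|(\tilde v_N^{(k)})^{\frac{\alpha+3}{4}}\|_{L^4(0,T;W^{1,4})}^3$, hence small uniformly in $N$ as $\rho\to0$ (the supremum over the chosen subsequence is $\tilde P$-almost surely finite, since a weakly convergent sequence is bounded), and the same bound applies to the limit; on the second set one writes $\nabla(\tilde v_N^{(k)})^{\frac{\alpha+3}{4}}=\frac{1}{2(\tilde v_N^{(k)})^{\frac{\alpha+3}{4}}}\nabla(\tilde v_N^{(k)})^{\frac{\alpha+3}{2}}$ with denominator bounded away from zero, so the strong $L^2$-convergence of $\nabla(\tilde v_N^{(k)})^{\frac{\alpha+3}{2}}$ and the almost-everywhere convergence of $\tilde v_N^{(k)}$ give $\nabla(\tilde v_N^{(k)})^{\frac{\alpha+3}{4}}\to\nabla(\tilde u^{(k)})^{\frac{\alpha+3}{4}}$ almost everywhere there, which the uniform $L^4$-bound upgrades, via Vitali, to convergence of the cubic expression in $L^r$ for $r<\frac43$; letting first $N\to\infty$ (for $\rho$ such that $\{\tilde u^{(k)}=\rho\}$ is a null set) and then $\rho\to0$ identifies the limit of the first term. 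Collecting the four limits yields the claimed identity; a minor additional point is that all rewritten integrands are legitimately restricted to the measurable, time-dependent sets $\{\tilde u^{(k)}(s)>0\}$, which is consistent since $\nabla\tilde u^{(k)}$ and $\nabla(\tilde u^{(k)})^{\frac{\alpha+3}{2}}$ vanish almost everywhere on the complement.
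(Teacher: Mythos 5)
Your strategy---transfer \eqref{Eq_weak_form_J} from the equidistributed approximating tuples to the new probability space, exploit its additivity over the sub-intervals to sum over $j$, and pass to the limit $N\to\infty$ term by term as in Dal Passo, Garcke and Gr\"un---agrees with the paper, which simply delegates the limit step to \cite[Corollary 2.7, Theorem 3.2]{Passo98ona}. Your split of the cubic term over $\{0<\tilde v_N^{(k)}<\rho\}$ and $\{\tilde v_N^{(k)}\ge\rho\}$ is the right adaptation, and it hinges, as it must, on the strong $L^2(0,T;H^1(\TT))$-convergence of $(\tilde v_N^{(k)})^{(\alpha+3)/2}$ furnished by Lemma \ref{Lemma_add_Tness} and identified in Lemma \ref{Lemma_Basic_prop2}.

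Two steps, however, are wrong as written. First, for the fourth term you integrate by parts and obtain $\Delta\divv\eta$, which is a third-order derivative of $\eta$ and is therefore not available for $\eta\in L^\infty(0,T;W^{2,\infty}(\TT,\RR^2))$; the claim that $\Delta\divv\eta\in L^\infty$ is false in general. No integration by parts is needed: $(\tilde v_N^{(k)})^2\nabla\divv\eta\to(\tilde u^{(k)})^2\nabla\divv\eta$ strongly in $L^2([0,T]\times\TT,\RR^2)$ (Vitali plus $\nabla\divv\eta\in L^\infty$), while $\nabla\tilde v_N^{(k)}\rightharpoonup\nabla\tilde u^{(k)}$ weakly in $L^2([0,T]\times\TT,\RR^2)$ from the weak-$*$ convergence in $L^\infty(0,T;H^1(\TT))$, so the pairing converges. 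Second, $L^\infty(0,T;W^{2,\infty}(\TT,\RR^2))$ is not separable, so it contains no countable dense subset, and your measurability argument for the transfer step is unavailable; one should instead test \eqref{Eq_weak_form_J} against a countable family of smooth tensor-product fields $\varphi\otimes\psi$, check measurability of those events, and extend to general $\eta$ by density, using that both sides of \eqref{Eq_weak_form_J} are, pathwise, continuous linear functionals of $\eta$ with respect to a separable norm. Finally, a smaller gloss: for the second and third terms, $L^1$-convergence of $|\nabla(\tilde v_N^{(k)})^{(\alpha+3)/2}|^2$ together with $L^r$-convergence of the weight $(\tilde v_N^{(k)})^{-\alpha}$ for finite $r$ does not by itself give $L^1$-convergence of the product; you also need the uniform $L^s$-bound, $s>2$, on $\nabla(\tilde v_N^{(k)})^{(\alpha+3)/2}$ coming from the $L^4(0,T;W^{1,4}(\TT))$-bound on $(\tilde v_N^{(k)})^{(\alpha+3)/4}$, which by interpolation upgrades the quadratic factor's convergence to $L^{s'}$ with $s'>1$ and lets H\"older close the argument.
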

\begin{proof}
	Since \eqref{Eq_elem_of_new_seq_II} and \eqref{Eq_elem_of_old_seq_II} have the same distribution, we conclude that
	\begin{align*}\begin{split}\int_{j\delta}^{(j+1)\delta}\int_{\TT}\tilde{J}^{(k)}_N\cdot \eta \, dx\,dt\,=\,&
			\int_{j\delta}^{(j+1)\delta}\int_{\{\tilde{v}^{(k)}_N(s)>0\}} |\nabla \tilde{v}^{(k)}_N|^2 \nabla \tilde{v}^{(k)}_N \cdot \eta \, dx \,ds\\&\,+\,
			\int_{j\delta}^{(j+1)\delta}\int_{\{\tilde{v}^{(k)}_N(s)>0\}} \tilde{v}^{(k)}_N |\nabla \tilde{v}^{(k)}_N|^2 \divv \eta\, dx \,ds\\&\,+\,
			2\int_{j\delta}^{(j+1)\delta}\int_{\{\tilde{v}^{(k)}_N(s)>0\}} \tilde{v}^{(k)}_N\,\nabla^T \tilde{v}^{(k)}_N D\eta \nabla \tilde{v}^{(k)}_N\, dx \,ds\\&\,+\,
			\int_{j\delta}^{(j+1)\delta}\int_{\TT} (\tilde{v}^{(k)}_N)^2\nabla \tilde{v}^{(k)}_N\cdot \nabla \divv\eta\, dx \,ds
		\end{split}
	\end{align*}by Theorem \ref{Thm_degenerate_sol} \ref{Item_sol_Det_deg_case}
	for every $N\in \mathcal{N}$ and $j\in \{0, \dots , N\}$. Summing up over $j$ yields that
	\begin{align*}\begin{split}\int_0^T\int_{\TT} \tilde{J}^{(k)}_N\cdot \eta \, dx\,dt\,=\,&
			\int_0^T\int_{\{\tilde{v}^{(k)}_N(s)>0\}} |\nabla \tilde{v}^{(k)}_N|^2 \nabla \tilde{v}^{(k)}_N \cdot \eta \, dx \,ds\\&\,+\,
			\int_0^T\int_{\{\tilde{v}^{(k)}_N(s)>0\}} \tilde{v}^{(k)}_N |\nabla \tilde{v}^{(k)}_N|^2 \divv \eta\, dx \,ds\\&\,+\,
			2\int_0^T\int_{\{\tilde{v}^{(k)}_N(s)>0\}} \tilde{v}^{(k)}_N\,\nabla^T \tilde{v}^{(k)}_N D\eta \nabla \tilde{v}^{(k)}_N\, dx \,ds\\&\,+\,
			\int_0^T\int_{\TT} \tilde{v}^{(k)}_N\nabla \tilde{v}^{(k)}_N\cdot \nabla \divv\eta\, dx \,ds.
		\end{split}
	\end{align*}
	It is left to take the limit $N\to \infty$ in the above equality, which works exactly as in the deterministic case \cite[Corollary 2.7, Theorem 3.2]{Passo98ona}. 
\end{proof}
\begin{rem}
	We stress  the importance of the additional convergence  $ (\tilde{v}^{(k)}_N)^{\frac{\alpha+3}{2}} \to (\tilde{u}^{(k)})^\frac{\alpha+3}{2}$ in $L^2(0,T; H^1(\TT))$ for the limiting argument \cite[Corollary 2.7, Theorem 3.2]{Passo98ona}. 
\end{rem}
The previous statement shows that the weak formulation of $\tilde{J}^{(k)}=(\tilde{u}^{(k)})^2\nabla \Delta(\tilde{u}^{(k)})$ as in \eqref{Defi_J_is_nonlin_part} is satisfied. 
We gather some more convergence and integrability results before we recover  \eqref{Defi_SPDE} as well. We note that we use again the convention to identify $\tilde{v}^{(k)}_N\in L^\infty(0,T; H^1(\TT))$ with its version defined by \eqref{Eq16} as well as the rounding function $\lfloor \cdot \rfloor_\delta$ from the proof of Lemma \ref{Lemma_add_Tness}.
\begin{lemma}\label{Lemma_conv_prop}
	For every $\varphi\in W^{1,q}(\TT)$, $k\in \NN$ and $t\in [0,T]$ it  holds 
	\begin{align}&\label{Eq21}
		\left<\tilde{v}_N^{(k)}(t), \varphi\right>\, \to \, 
		\left<\tilde{u}^{(k)}(t), \varphi\right>,\\&\label{Eq22}
		\int_0^t \left<\divv(\tilde{J}_N^{(k)}), \varphi\right> \, ds\, \to \, \int_0^t \left<\divv(\tilde{J}^{(k)}_N), \varphi\right> \, ds,\\&\label{Eq23}
		\sum_{l=1}^\infty\int_0^{\lfloor t\rfloor_\delta} \lambda_l^2\left<\divv (\divv(\tilde{w}^{(k)}_N(s)\psi_l)\psi_l), \varphi\right>\, ds \, \to \,
		\sum_{l=1}^\infty\int_0^t \lambda_l^2\left<\divv (\divv(\tilde{u}^{(k)}(s)\psi_l)\psi_l), \varphi\right>\, ds,\\
		&\label{Eq24}\int_{{0}}^{{\lfloor t\rfloor_\delta}}
		\sum_{l=1}^\infty \lambda_l^2 \left<
		\divv(\psi_l \tilde{w}^{(k)}_N), \varphi
		\right>^2\, ds \, \to \,\int_{0}^{{t}}
		\sum_{l=1}^\infty \lambda_l^2 \left<
		\divv(\psi_l \tilde{u}^{(k)}), \varphi
		\right>^2\, ds,\\
		&\label{Eq26}\int_{0}^{{\lfloor t\rfloor_\delta}}  \lambda_l \left<
		\divv(\psi_l \tilde{w}^{(k)}_N), \varphi
		\right> d\tau \, \to \, \int_{{ 0 }}^{{ t}}  \lambda_l \left<
		\divv(\psi_l \tilde{u}^{(k)}), \varphi
		\right> d\tau, \\&
		\label{Eq27}
		\tilde{\beta}_N^{(l)}({{\lfloor t\rfloor_\delta}})\, \to \, 
		\tilde{\beta}^{(l)}({t})
	\end{align}
$\tilde{P}$-almost surely as $N\to \infty$.
\end{lemma}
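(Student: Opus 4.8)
The plan is to prove each of the six convergences \eqref{Eq21}--\eqref{Eq27} separately, exploiting the Skorokhod-type convergences of the components of \eqref{Eq_elem_of_new_seq_II} towards \eqref{Eq1000} together with the uniform bounds inherited from Theorem~\ref{Thm_degenerate_sol}. The key preliminary observation is that, by Lemma~\ref{Lemma_Basic_prop2}~\ref{Item_3}, the limits $\tilde v^{(k)}$ and $\tilde w^{(k)}$ both coincide with $\tilde u^{(k)}$, so all six statements can be phrased purely in terms of the approximants $\tilde v^{(k)}_N$, $\tilde w^{(k)}_N$, $\tilde J^{(k)}_N$, $\tilde\beta^{(l)}_N$ converging to $\tilde u^{(k)}$, $\tilde J^{(k)}$, $\tilde\beta^{(l)}$. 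I would also record at the outset that $\lfloor t\rfloor_\delta\to t$ as $N\to\infty$ since $\delta=\delta(N)=T/(N+1)\to 0$.

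\textbf{The pointwise-in-time convergence \eqref{Eq21} and \eqref{Eq27}.} For \eqref{Eq27} the convergence $\tilde\beta^{(l)}_N\to\tilde\beta^{(l)}$ in $C([0,T])$ is uniform, hence $\tilde\beta^{(l)}_N(\lfloor t\rfloor_\delta)-\tilde\beta^{(l)}(t) = [\tilde\beta^{(l)}_N(\lfloor t\rfloor_\delta)-\tilde\beta^{(l)}(\lfloor t\rfloor_\delta)] + [\tilde\beta^{(l)}(\lfloor t\rfloor_\delta)-\tilde\beta^{(l)}(t)]$ and both brackets go to $0$, the first by uniform convergence and the second by continuity of $\tilde\beta^{(l)}$. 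For \eqref{Eq21} the subtlety is that $\tilde v^{(k)}_N\to\tilde u^{(k)}$ only holds in $L^\infty_{w*}(0,T;H^1(\TT))$, which does not give convergence at a fixed $t$ directly. However, as shown in the proof of Lemma~\ref{Lemma_Basic_prop2}~\ref{Item_3} (see \eqref{Eq16}--\eqref{Eq25}), $\tilde v^{(k)}_N\to\tilde u^{(k)}$ in fact in $L^\infty(0,T;L^2(\TT))$ by the uniform continuity of the paths of $\tilde u^{(k)}_N$; since $W^{1,q}(\TT)\hookrightarrow L^2(\TT)$, testing against $\varphi\in W^{1,q}(\TT)$ and using this uniform convergence gives \eqref{Eq21} for every $t\in[0,T]$.

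\textbf{The time-integral convergences \eqref{Eq22}--\eqref{Eq26}.} For \eqref{Eq22}, rewrite $\langle\divv(\tilde J^{(k)}_N),\varphi\rangle = -\langle \tilde J^{(k)}_N,\nabla\varphi\rangle$ and integrate in time; since $\tilde J^{(k)}_N\rightharpoonup\tilde J^{(k)}$ in $L^2(0,T;L^{q'}(\TT))$ and $\mathbbm 1_{[0,t]}\nabla\varphi\in L^2(0,T;L^{q}(\TT))$, the claim follows from the definition of weak convergence. For \eqref{Eq26}, integration by parts turns $\langle\divv(\psi_l\tilde w^{(k)}_N),\varphi\rangle$ into $-\langle \tilde w^{(k)}_N\psi_l,\nabla\varphi\rangle$, which is a bounded linear functional of $\tilde w^{(k)}_N\in L^2(0,T;L^2(\TT))$; combining the weak-$*$ (hence weak, after testing) convergence in $L^2$ with $\lfloor t\rfloor_\delta\to t$ and the uniform $L^\infty(0,T;H^1(\TT))$-bound (to control the difference between integrating up to $\lfloor t\rfloor_\delta$ versus $t$) gives \eqref{Eq26}. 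The series identities \eqref{Eq23}, \eqref{Eq24} and \eqref{Eq26} additionally require uniform control of the tail in $l$: using \eqref{boundedness_psil} and $\Lambda\in\ell^2(\NN)$ one bounds each summand by $C\lambda_l^2\|\tilde w^{(k)}_N\|_{H^1(\TT)}^2$ (respectively by $C\lambda_l^2\|\tilde w^{(k)}_N\|_{H^1(\TT)}^2\|\nabla\varphi\|^2$ for the quadratic terms), so the uniform $L^\infty(0,T;H^1(\TT))$-bound lets one truncate the series at a level $L$ uniformly in $N$ and then pass to the limit term by term as in the single-term cases. For \eqref{Eq23} one integrates by parts twice to write $\langle\divv(\divv(\tilde w^{(k)}_N\psi_l)\psi_l),\varphi\rangle = \langle \tilde w^{(k)}_N\psi_l,\nabla\divv(\psi_l\varphi)\rangle$ up to boundary-free rearrangement, a bounded functional of $\tilde w^{(k)}_N\in L^2(0,T;L^2(\TT))$, and proceeds as before. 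The only genuinely nonlinear statement is \eqref{Eq24}: here one first shows, for fixed $l$, that $s\mapsto\langle\divv(\psi_l\tilde w^{(k)}_N(s)),\varphi\rangle$ converges to $s\mapsto\langle\divv(\psi_l\tilde u^{(k)}(s)),\varphi\rangle$ in $L^2(0,T)$ — using that after integration by parts these are linear functionals of $\tilde w^{(k)}_N\rightharpoonup\tilde u^{(k)}$ in $L^2(0,T;L^2(\TT))$, upgraded to strong $L^2(0,T)$ convergence of the pairings via the Hölder-in-time compactness already established for $v^{(k)}_N$ (Step~2 of Lemma~\ref{Lemma_add_Tness}) or, more simply, by noting the pairings are uniformly bounded and converge weakly, and squaring a weakly-in-$L^2(0,T)$-but-strongly-enough convergent sequence — and then the map $f\mapsto\int_0^{\lfloor t\rfloor_\delta}f^2$ is continuous along such sequences.

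\textbf{Main obstacle.} The only step that needs care is \eqref{Eq24}: passing to the limit inside the square requires \emph{strong} convergence of the scalar processes $s\mapsto\langle\divv(\psi_l\tilde w^{(k)}_N(s)),\varphi\rangle$ in $L^2(0,T)$, which is not immediate from weak-$*$ convergence in $L^\infty(0,T;H^1(\TT))$ alone. I would obtain it by exploiting that, after integration by parts, these are bounded functionals of $\tilde w^{(k)}_N$, combined with the temporal equicontinuity in $W^{-1,q'}(\TT)$ (Theorem~\ref{Thm_degenerate_sol}~\ref{Item_est_Hoelder}) and interpolation as in the proof of Lemma~\ref{Lemma_add_Tness}, which yields strong convergence in $C(0,T;W^{-1,q'}(\TT))$-type norms and hence of the pairings; alternatively one invokes the $L^2(0,T;L^2(\TT))$ convergence of $\tilde w^{(k)}_N=\tilde u^{(k)}_N$ already available from Lemma~\ref{Lemma_Basic_prop2}~\ref{Item_3}. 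Everything else is a routine combination of weak convergence, the uniform a priori bounds, $\Lambda\in\ell^2$, and $\lfloor t\rfloor_\delta\to t$.
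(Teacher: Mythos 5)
Your proof follows essentially the same approach as the paper: the pointwise statements \eqref{Eq21} and \eqref{Eq27} from the strong $L^\infty(0,T;L^2(\TT))$ and $C([0,T])$ convergences, \eqref{Eq22} from weak convergence of $\tilde J^{(k)}_N$, \eqref{Eq23}/\eqref{Eq26} via integration by parts to expose a bounded linear functional of $\tilde w^{(k)}_N$ plus weak-$*$ convergence and a discretization-error bound, and \eqref{Eq24} by splitting the square as $a^2-b^2\lesssim|a-b|(|a|+|b|)$ and using strong convergence of the scalar pairings. Two small points: in \eqref{Eq23} the correct double integration by parts gives $\langle \tilde w^{(k)}_N,\psi_l\cdot\nabla(\psi_l\cdot\nabla\varphi)\rangle$ rather than $\langle\tilde w^{(k)}_N\psi_l,\nabla\divv(\psi_l\varphi)\rangle$ (these differ by terms in $\divv\psi_l$), and for \eqref{Eq24} the clean input is the strong convergence $\tilde w^{(k)}_N\to\tilde u^{(k)}$ in $L^\infty(0,T;L^2(\TT))$ established in the proof of Lemma~\ref{Lemma_Basic_prop2}~\ref{Item_3} — note $\tilde w^{(k)}_N\neq\tilde u^{(k)}_N$, so the detour through the temporal equicontinuity of Lemma~\ref{Lemma_add_Tness} is unnecessary.
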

\begin{proof}The convergence \eqref{Eq21}
	follows by \eqref{Eq25}. Part \eqref{Eq22} is a direct consequence of $\tilde{J}_N^{(k)}\rightharpoonup \tilde{J}^{(k)}$ in $L^2(0,T; L^{q'}(\TT))$. Next, we observe that
	\[\|
	\psi_l\cdot \nabla (\psi_l\cdot \nabla \varphi) \|_{H^{-1}(\TT)} \, \lesssim \,\|\varphi\|_{H^1(\TT)} 
	\]
	due to \eqref{boundedness_psil}. Using $\tilde{w}^{(k)}_N\rightharpoonup^* \tilde{u}^{(k)} $ in $L^\infty(0,T; H^1(\TT))$ we deduce that
		\begin{equation}\label{Eq30}
	\sum_{l=1}^\infty\int_0^{t} \lambda_l^2\left<\tilde{w}^{(k)}_N(s), \psi_l\cdot \nabla (\psi_l\cdot \nabla \varphi)\right>\, ds \, \to \,
	\sum_{l=1}^\infty\int_0^t \lambda_l^2\left<\tilde{u}^{(k)}(s), \psi_l\cdot \nabla (\psi_l\cdot \nabla \varphi)\right>\, ds.
	\end{equation}
	Since weak-* convergent sequences are norm bounded, we obtain  \eqref{Eq23} by combining 
	\[
	\left|\sum_{l=1}\lambda_l^2
	\int_{\lfloor t\rfloor_\delta}^{t}  \left<\tilde{w}^{(k)}_N(s), \psi_l\cdot \nabla (\psi_l\cdot \nabla \varphi)\right> ds
	\right|\, \le \,\delta\sum_{l=1}^\infty \lambda_l^2  \|\tilde{w}^{(k)}_N\|_{L^\infty(0,T; H^1(\TT))} \|\psi_l\cdot \nabla (\psi_l\cdot \nabla \varphi)\|_{H^{-1}}\]
	with	\eqref{Eq30}. 
	For \eqref{Eq24}, we estimate 
	\begin{align*}&\left|
		\int_{{0}}^{{ t}}
		\sum_{l=1}^\infty \lambda_l^2 \left<
		\divv(\psi_l \tilde{w}^{(k)}_N), \varphi
		\right>^2\, ds \, - \,\int_{0}^{{t}}
		\sum_{l=1}^\infty \lambda_l^2 \left<
		\divv(\psi_l \tilde{u}^{(k)}), \varphi
		\right>^2\, ds\right|\\\le\,&
		 \sum_{l=1}^\infty \lambda_l^2 	\int_{{0}}^{{T}}
		 \left|\left<
		  \tilde{w}^{(k)}_N, \psi_l\cdot \nabla\varphi
		 \right>^2-\left<
		  \tilde{u}^{(k)}, \psi_l\cdot \nabla \varphi
		 \right>^2\right|\, ds
		 \\\lesssim\,&
		  \sum_{l=1}^\infty \lambda_l^2 	\int_{{0}}^{{T}}
		 \left|\left<
		 \tilde{w}^{(k)}_N, \psi_l\cdot \nabla\varphi
		 \right>-\left<
		 \tilde{u}^{(k)}, \psi_l\cdot \nabla \varphi
		 \right>\right| \left( \left|\left<
		 \tilde{w}^{(k)}_N, \psi_l\cdot \nabla\varphi
		 \right>\right|\,+\,\left|\left<
		 \tilde{u}^{(k)}, \psi_l\cdot \nabla \varphi
		 \right>\right|\right)\, ds
		 ,
	\end{align*}
where we employed that 
\[
a^2-b^2\,\le \, 2|a-b|\max (|a|, |b|)\,\le\, 2|a-b| (|a|+|b|)
\]
for $a,b\in \RR$. Since $\| \psi_l\cdot \nabla \varphi\|_{L^2(\TT)}$ is uniformly in $l$ bounded, we obtain further that
\begin{align}\begin{split}\label{Eq36}
		&\left|
		\int_{{0}}^{{ t}}
	\sum_{l=1}^\infty \lambda_l^2 \left<
	\divv(\psi_l \tilde{w}^{(k)}_N), \varphi
	\right>^2\, ds \, - \,\int_{0}^{{t}}
	\sum_{l=1}^\infty \lambda_l^2 \left<
	\divv(\psi_l \tilde{u}^{(k)}), \varphi
	\right>^2\, ds\right|\\\lesssim_{\Lambda, \varphi}\,&\int_0^T\|\tilde{w}^{(k)}_N- \tilde{u}^{(k)}\|_{L^2(\TT)}\left(
	\|\tilde{u}^{(k)}\|_{L^2(\TT)}\,+\, \|\tilde{w}_N^{(k)}\|_{L^2(\TT)}
	\right)\,dt
	\\\lesssim_{T}\,&\|\tilde{w}^{(k)}_N- \tilde{u}^{(k)}\|_{L^\infty(0,T;L^2(\TT))}\left(\| \tilde{u}^{(k)}\|_{L^\infty(0,T;L^2(\TT))}\,+\,\sup_{N\in \mathcal{N}} \|\tilde{w}_N^{(k)}\|_{L^\infty(0,T;L^2(\TT))}\right)
	\end{split}
\end{align}
As a consequence of the proof of Lemma \ref{Lemma_Basic_prop2} \ref{Item_3} we have $\tilde{w}^{(k)}_N\to \tilde{u}^{(k)}$ in $L^\infty(0,T;L^2(\TT))$ as $N\to \infty$ and therefore the right-hand side of \eqref{Eq36} tends to $0$. Using that by the same arguments
\[
\left|\int_{{\lfloor t\rfloor_\delta}}^t
\sum_{l=1}^\infty \lambda_l^2 \left<
\divv(\psi_l \tilde{w}^{(k)}_N), \varphi
\right>^2\, ds  \right|\,\lesssim_{\Lambda, \varphi} \,\delta \|\tilde{w}^{(k)}_N\|_{L^\infty(0,T; L^2(\TT))}^2
\]
we obtain indeed \eqref{Eq24}.
	The convergence \eqref{Eq26} can be derived analogously to \eqref{Eq23}.
	The last assertion \eqref{Eq27} is a consequence of $\tilde{\beta}_N^{(l)}\to \tilde{\beta}^{(l)}$ in $C([0,T])$.
\end{proof}
\begin{lemma}\label{Lemma_bounds_final}It holds for every $k\in \NN$ that 
	\begin{align*}
		\tilde{E}\left[
		\sup_{0\le t\le T} \|\tilde{u}^{(k)}\|_{H^1(\TT)}^p
		\right]\, &\lesssim_{\Lambda, p, T} \, \nu_{k,p},\\
		\tilde{E}\left[ \|\tilde{J}^{(k)}\|_{L^2(0,T; L^{q'}(\TT))}^\frac{p}{2} \right]\, &\lesssim_{\Lambda, p,q , T}\,\nu_{k,p},
			\\ \tilde{E}\left[
		\left\|\left(\tilde{u}^{(k)}\right)^{\frac{\alpha+3}{2}}\right\|^2_{L^2(0,T;H^2(\TT))}+\,
		\left\|\left(\tilde{u}^{(k)}\right)^{\frac{\alpha+3}{4}}\right\|^4_{L^4(0,T;W^{1,4}(\TT))}
		\right]\, &\lesssim_{ \Lambda, \alpha, T}\,1+\nu_{k,\alpha+3}.
	\end{align*}
\end{lemma}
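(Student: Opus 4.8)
The plan is to pass the $N$-uniform estimates from Theorem \ref{Thm_degenerate_sol} \ref{Item_est_deg} and Lemma \ref{Bound_powers_2} to the limit $N\to\infty$ by weak lower semicontinuity and Fatou's lemma, exactly as in the proof of Proposition \ref{Prop_Energy_Est_in_limit}. First I would recall that, by equality of laws between \eqref{Eq_elem_of_new_seq_II} and \eqref{Eq_elem_of_old_seq_II}, the tilde-variables $(\tilde u^{(k)}_N, \tilde J^{(k)}_N, \tilde v^{(k)}_N)$ satisfy the same distributional bounds as the original $(u^{(k)}_N, J^{(k)}_N, v^{(k)}_N)$; in particular, using Theorem \ref{Thm_degenerate_sol} \ref{Item_est_deg}, \ref{Item_IV} and the notation \eqref{Eq43},
\[
\tilde E\left[\sup_{0\le t\le T}\|\tilde u^{(k)}_N\|_{H^1(\TT)}^p\right]\lesssim_{\Lambda,p,T}\nu_{k,p},\qquad
\tilde E\left[\|\tilde J^{(k)}_N\|_{L^2(0,T;L^{q'}(\TT))}^{p/2}\right]\lesssim_{\Lambda,p,q,T}\nu_{k,p},
\]
and, by Lemma \ref{Bound_powers_2} together with Lemma \ref{Lemma_Basic_prop2} \ref{Item_3}, the analogous bound $\lesssim_{\Lambda,\alpha,T}1+\nu_{k,\alpha+3}$ for the powers of $\tilde v^{(k)}_N=\tilde u^{(k)}_N$.

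Next I would carry out the limit passage termwise. For the first bound: $\tilde u^{(k)}_N\rightharpoonup^*\tilde u^{(k)}$ in $L^\infty(0,T;H^1(\TT))$ (this is part of the Skorokhod convergence giving \eqref{Eq1000}, sharpened via Lemma \ref{Lemma_Basic_prop2} \ref{Item_3} as in subsection \ref{Sec_deg_lim}), so by weak-* lower semicontinuity of the norm $\sup_{0\le t\le T}\|\tilde u^{(k)}\|_{H^1(\TT)}^p\le\liminf_{N}\sup_{0\le t\le T}\|\tilde u^{(k)}_N\|_{H^1(\TT)}^p$ $\tilde P$-almost surely, and Fatou's lemma gives the first claim. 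For the second bound: $\tilde J^{(k)}_N\rightharpoonup\tilde J^{(k)}$ in $L^2(0,T;L^{q'}(\TT))$, so $\|\tilde J^{(k)}\|_{L^2(0,T;L^{q'}(\TT))}^{p/2}\le\liminf_N\|\tilde J^{(k)}_N\|_{L^2(0,T;L^{q'}(\TT))}^{p/2}$ a.s., and Fatou's lemma again concludes. For the third bound: using Lemma \ref{Lemma_Basic_prop2} \ref{Item_4}, $\tilde f^{(k)}_N\rightharpoonup\tilde f^{(k)}=(\tilde u^{(k)})^{(\alpha+3)/2}$ in $L^2(0,T;H^2(\TT))$ and $\tilde g^{(k)}_N\rightharpoonup(\tilde u^{(k)})^{(\alpha+3)/4}$ in $L^4(0,T;W^{1,4}(\TT))$, so lower semicontinuity of the respective norms plus Fatou's lemma yields the estimate with right-hand side $1+\nu_{k,\alpha+3}$.

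There is no serious obstacle here; the only point requiring a little care is the same one flagged in Proposition \ref{Prop_Energy_Est_in_limit}, namely justifying that the relevant weak or weak-* convergences hold $\tilde P$-almost surely along the full subsequence $\mathcal N$ (not merely in law), which is guaranteed by the Skorokhod-Jakubowski representation underlying \eqref{Eq1000} together with the identifications in Lemma \ref{Lemma_Basic_prop2}. One also notes that the $N$-uniform right-hand sides $\nu_{k,p}$ and $1+\nu_{k,\alpha+3}$ are finite whenever the corresponding moments of $\mu$ are finite, and that if they are infinite the asserted estimates hold trivially, consistent with the convention adopted after Theorem \ref{Thm_main}. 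This completes the proof.
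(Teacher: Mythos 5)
Your proposal is correct and takes essentially the same route as the paper: equality of laws between \eqref{Eq_elem_of_new_seq_II} and \eqref{Eq_elem_of_old_seq_II}, lower semicontinuity of the norm under weak and weak-* convergence, Fatou's lemma, and the $N$-uniform bounds from Theorem \ref{Thm_degenerate_sol} \ref{Item_est_deg} and Lemma \ref{Bound_powers_2}. One phrasing slip to watch: $\tilde v^{(k)}_N\neq\tilde u^{(k)}_N$ for finite $N$ (they agree only in the limit by Lemma \ref{Lemma_Basic_prop2} \ref{Item_3}); what you actually need and have is the bound on powers of $\tilde v^{(k)}_N$ directly from Lemma \ref{Bound_powers_2}, together with the limit identification $\tilde f^{(k)}=(\tilde u^{(k)})^{(\alpha+3)/2}$, $\tilde g^{(k)}=(\tilde u^{(k)})^{(\alpha+3)/4}$ from Lemma \ref{Lemma_Basic_prop2} \ref{Item_4}, which you do use correctly in the limit passage.
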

\begin{proof}
	This follows since \eqref{Eq_elem_of_new_seq_II} and \eqref{Eq_elem_of_old_seq_II} have the same distribution, lower semi-continuity of the norm with respect to weak and weak-* convergence, as well as the bounds from Theorem \ref{Thm_degenerate_sol} \ref{Item_est_deg} and Lemma \ref{Bound_powers_2}.
\end{proof}
Finally, we define, as in subsection \ref{Sec_deg_lim}, $\tilde{\mathfrak{F}}$ as the augmentation of the filtration $\tilde{\mathfrak{G}}$ given by 
\[
\tilde{\mathfrak{G}}_t\,=\, \sigma \left(\left\{\left.
\mathbbm{1}_{\tilde{R}^{(l)}}, \tilde{J}^{(l)}|_{[0,t]} 
\right| \,l\in \NN
\right\}\cup 
\left\{\left. \tilde{u}^{(l)}(s), \tilde{\beta}^{(l)}(s)\right|
0\le s\le t, \,l\in \NN
\right\}\right),
\]
where we consider $J|_{[0,t]}$ again as a $\mathfrak{B}$-random variable in $L^2(0,t; L^{q'}(\TT))$.
\begin{rem}\label{Rem_meas2}
	The smallest $\sigma$-field $\tilde{\mathfrak{H}}_t$ on $\tilde{\Omega}$ such that $\phi(X)$ with
	\[X=	\left(
	\mathbbm{1}_{\tilde{R}^{(l)}}, \tilde{\beta}^{(l)}|_{[0,t]}, \tilde{u}^{(l)}|_{[0,t]}, \tilde{J}^{(l)}|_{[0,t]} \right)_{l\in \NN}
	\]
	is measurable for every bounded and continuous function 
	\begin{equation}\label{Eq31}
	\phi\colon\, \prod_{l=1}^{\infty}
	\RR \times C([0,t])\times C(0,t; L^2(\TT, \RR^2))\times L_w^2(0,t; L^{q'}(\TT)) \, \to \, \RR
	\end{equation}
	coincides with $\tilde{\mathfrak{G}}_t$. Indeed, the inclusion $\tilde{\mathfrak{G}}_t\subset \tilde{\mathfrak{H}}_t$ follows since one can choose $\phi$ as a function depending only on one of the components of
	\begin{equation*}
		\prod_{l=1}^{\infty}
		\RR \times C([0,t])\times C(0,t; L^2(\TT, \RR^2))\times L_w^2(0,t; L^{q'}(\TT)).
	\end{equation*}
	For the reverse inclusion $\tilde{\mathfrak{H}}_t\subset \tilde{\mathfrak{G}}_t$, we assume that $\phi$ as in \eqref{Eq31} is bounded and continuous, such that it suffices to show that $\phi(X)$ is measurable with respect to $\tilde{\mathfrak{G}}_t$.
	In particular, $\phi$ is continuous as mapping from 
		\begin{equation}\label{Eq32}
		\prod_{l=1}^{\infty}
		\RR \times C([0,t])\times C(0,t; L^2(\TT, \RR^2))\times L^2(0,t; L^{q'}(\TT))
	\end{equation}
into $\RR$. But \eqref{Eq32} is a complete separable metric space such that $\tilde{\mathfrak{G}}_t$-$\mathfrak{B}$ measurability of the \eqref{Eq32}-valued random variable $X$ can be checked using a suitable family of functions separating the points  by \cite[Theorem 6.8.9]{bogachev2007measure}.
\end{rem}
\begin{thm}\label{Thm_final_SPDE}The processes $(\tilde{\beta}^{(l)})_{l\in \NN}$ are a family of independent $\tilde{\mathfrak{F}}$-Brownian motions. Moreover, we have for every $k\in \NN$, $\varphi\in W^{1, q'}(\TT)$ and $t\in [0,T]$ 
	\begin{align*}
		\left<
		\tilde{u}^{(k)}(t), \varphi
		\right>\, - \,\left<
		\tilde{u}^{(k)}(0), \varphi
		\right>\,=\,&
		\int_0^t -\left<\divv(\tilde{J}^{(k)}), \varphi\right> \, ds
		\, +\,\sum_{l=1}^\infty\int_0^t \lambda_l^2\left<\divv (\divv(\tilde{u}^{(k)}(s)\psi_l)\psi_l), \varphi\right>\, ds\\&+
		\sum_{l=1}^\infty \lambda_l \int_{0}^t\left< \divv(\tilde{u}^{(k)}(s)\psi_l), \varphi \right>\, d\tilde{\beta}^l_s
	\end{align*}
$\tilde{P}$-almost surely.
\end{thm}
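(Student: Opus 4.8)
The plan is to carry out the martingale identification argument of the stochastic compactness method, in the form used in \cite[Section~5.2]{GessGann2020}. I would first dispose of the statement about the Brownian motions: each $\tilde\beta^{(l)}_N$ is, by the equality of laws, a standard Brownian motion and the coordinates are jointly independent, so by convergence in $C([0,T])$ the limit family $(\tilde\beta^{(l)})_{l\in\NN}$ is again a family of independent Brownian motions. To see that they are Brownian motions with respect to $\tilde{\mathfrak F}$, it suffices to verify that for $0\le s<t$ the increments $\tilde\beta^{(l)}(t)-\tilde\beta^{(l)}(s)$ are independent, each Gaussian with variance $t-s$, and jointly independent of $\tilde{\mathfrak G}_s$; by Remark~\ref{Rem_meas2} this can be tested against bounded continuous functionals of the trajectories of $(\mathbbm 1_{\tilde R^{(l)}},\tilde\beta^{(l)},\tilde u^{(l)},\tilde J^{(l)})_{l\in\NN}$ up to time $s$, whereupon the identity is transferred to the approximating sequence via equality of laws and there follows from the fact that the $\beta^{(l)}$ are independent Brownian motions with respect to a filtration for which the approximating data are adapted (Theorem~\ref{Thm_degenerate_sol}~\ref{Item_meas}).

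For the weak SPDE, fix $k\in\NN$ and the test function $\varphi$. I would introduce the defect process $\tilde M^{(k)}$ given by the left-hand side of the asserted identity minus its two drift terms, and the quantity $\tilde M^{(k)}_N$ defined analogously from the $N$-th splitting scheme, where the flux integral of $\tilde J^{(k)}_N$ and the $\psi_l$-drift through $\tilde w^{(k)}_N$ are both taken up to the grid point $\lfloor t\rfloor_\delta$. Telescoping the deterministic half-steps of Theorem~\ref{Thm_degenerate_sol}~\ref{Item_sol_Det_deg_case} (which contribute the divergence-of-flux term along an absolutely continuous path, since a weak solution of the thin-film equation lies in $H^1$ in time with values in $W^{-1,q'}(\TT)$) against the stochastic half-steps of Theorem~\ref{Thm_degenerate_sol}~\ref{Item_SPDE_deg} shows that $t\mapsto\tilde M^{(k)}_N(\lfloor t\rfloor_\delta)$ equals the It\^o integral $\sum_l\lambda_l\int_0^{\lfloor t\rfloor_\delta}\langle\divv(\tilde w^{(k)}_N(s)\psi_l),\varphi\rangle\,d\tilde\beta^{(l)}_s$; in particular it is a continuous square-integrable $\tilde{\mathfrak F}$-martingale with quadratic variation $\sum_l\lambda_l^2\int_0^{\lfloor t\rfloor_\delta}\langle\divv(\tilde w^{(k)}_N(s)\psi_l),\varphi\rangle^2\,ds$ and cross-variation with $\tilde\beta^{(l)}$ equal to $\lambda_l\int_0^{\lfloor t\rfloor_\delta}\langle\divv(\tilde w^{(k)}_N(s)\psi_l),\varphi\rangle\,ds$.

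Then I would pass to the limit $N\to\infty$. Using the convergences of Lemma~\ref{Lemma_conv_prop}, the identifications $\tilde u^{(k)}=\tilde v^{(k)}=\tilde w^{(k)}$ and $\tilde f^{(k)}=(\tilde u^{(k)})^{\frac{\alpha+3}{2}}$ from Lemma~\ref{Lemma_Basic_prop2}, and the uniform moment bounds of Lemma~\ref{Lemma_bounds_final} (for the uniform integrability that permits interchanging limit and expectation), one transfers to $\tilde M^{(k)}$ the martingale property, the value of its quadratic variation and the value of its cross-variation with each $\tilde\beta^{(l)}$: for $0\le s<t$ and $\Phi$ a bounded continuous functional of the trajectory up to time $s$, one takes $N\to\infty$ in the identity $\tilde E[(\tilde M^{(k)}_N(\lfloor t\rfloor_\delta)-\tilde M^{(k)}_N(\lfloor s\rfloor_\delta))\,\Phi]=0$ and in the analogous identities obtained by subtracting from $(\tilde M^{(k)}_N)^2$ and from $\tilde M^{(k)}_N\tilde\beta^{(l)}$ their respective compensators, each of which holds on the original probability space by the previous paragraph and equality of laws. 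This yields that $\tilde M^{(k)}$ is a continuous $\tilde{\mathfrak F}$-martingale with $\langle\tilde M^{(k)}\rangle_t=\sum_l\lambda_l^2\int_0^t\langle\divv(\tilde u^{(k)}(s)\psi_l),\varphi\rangle^2\,ds$ and $\langle\tilde M^{(k)},\tilde\beta^{(l)}\rangle_t=\lambda_l\int_0^t\langle\divv(\tilde u^{(k)}(s)\psi_l),\varphi\rangle\,ds$. Forming $\tilde N^{(k)}:=\tilde M^{(k)}-\sum_l\lambda_l\int_0^\cdot\langle\divv(\tilde u^{(k)}(s)\psi_l),\varphi\rangle\,d\tilde\beta^{(l)}_s$ and computing $\langle\tilde N^{(k)}\rangle\equiv0$ from these two brackets forces $\tilde N^{(k)}\equiv\tilde N^{(k)}(0)=0$, which is exactly the asserted identity.

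The main obstacle I anticipate is the bookkeeping imposed by the time rescaling in \eqref{Eq52}: the scheme identity is available only at the grid points $\lfloor t\rfloor_\delta$ and mixes three distinct processes --- $\tilde v^{(k)}_N$ on the deterministic clock, $\tilde w^{(k)}_N$ on the stochastic clock, and $\tilde u^{(k)}_N$ in real time --- so one must use Lemma~\ref{Lemma_Basic_prop2} to collapse all three limits onto $\tilde u^{(k)}$ and simultaneously use Lemma~\ref{Lemma_conv_prop} to absorb the $\lfloor t\rfloor_\delta\to t$ discrepancies, keeping track of the combinatorial factors produced by the splitting. A secondary but genuine point is justifying the interchange of limit and expectation in the martingale and bracket relations, for which the $L^p$-estimates of Lemma~\ref{Lemma_bounds_final} with $p$ slightly above $2$ (respectively $4$) are precisely what is designed to suffice.
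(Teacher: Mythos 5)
Your proposal is correct and takes essentially the same approach as the paper: define the defect processes $M$ and $M_N$, derive the martingale and bracket identities from Theorem \ref{Thm_degenerate_sol} \ref{Item_sol_Det_deg_case}, \ref{Item_SPDE_deg} on the approximating level by testing against bounded continuous functionals of the trajectories up to time $s$ (Remark \ref{Rem_meas2}), pass to the limit via Lemma \ref{Lemma_conv_prop}, Lemma \ref{Lemma_Basic_prop2} and Vitali's theorem with the $L^p$-bounds of Lemma \ref{Lemma_bounds_final}, and then identify $M$ as the required stochastic integral. The paper closes the argument by invoking \cite[Proposition A.1]{hofmanova2013} rather than forming $\tilde N^{(k)}$ and computing its null quadratic variation by hand, but these are the same argument. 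One technical detail that the paper handles explicitly and you pass over is the promotion from $\tilde{\mathfrak{G}}_s$-measurable functionals to $\tilde{\mathfrak{F}}_{s'}$-measurable ones: because $\tilde{\mathfrak{F}}$ is the usual-conditions augmentation, the paper proves \eqref{Eq92} first against $\tilde{\mathfrak{G}}_s$-measurable $\rho$ and then argues with a time shift $s=s'+\kappa'/2$, $t=t'+\kappa'$ followed by $\kappa'\searrow 0$ (using continuity of $M$ and the moment bounds); this step is needed because an $\tilde{\mathfrak{F}}_{s'}$-measurable random variable only has a $\tilde{\mathfrak{G}}_s$-measurable version for $s>s'$.
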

\begin{proof}For the claim regarding the family $(\tilde{\beta}^{(l)})_{l\in \NN}$ we refer to \cite{fischer_gruen_2018}
	For the remainder of the proof we fix $k\in \NN$ and $\varphi\in W^{1, q}(\TT)$ and define the $\tilde{\mathfrak{F}}$-adapted process
	\begin{align*}
	M(t) \,=\, &	\left<
	\tilde{u}^{(k)}(t), \varphi
	\right>\, - \,\left<
	\tilde{u}^{(k)}(0), \varphi
	\right>\,+\,
	\int_0^t \left<\divv(\tilde{J}^{(k)}), \varphi\right> \, ds
	\\& -\,\sum_{l=1}^\infty\int_0^t \lambda_l^2\left<\divv (\divv(\tilde{u}^{(k)}(s)\psi_l)\psi_l), \varphi\right>\, ds
	\end{align*}
	and the approximating processes
	\begin{align*}
		M_N(t) \,=\, &	\left<
		\tilde{v}^{(k)}_N(t), \varphi
		\right>\, - \,\left<
		\tilde{u}_N^{(k)}(0), \varphi
		\right>\,+\,
		\int_0^t \left<\divv(\tilde{J}^{(k)}), \varphi\right> \, ds
		\\& -\,\sum_{l=1}^\infty\int_0^{\lfloor t\rfloor_\delta} \lambda_l^2\left<\divv (\divv(\tilde{u}^{(k)}(s)\psi_l)\psi_l), \varphi\right>\, ds
	\end{align*}
As a consequence of  \eqref{Eq21}, \eqref{Eq22} and \eqref{Eq23}, $M_N(t)$ converges to $M(t)$ for every $t\in [0,T]$ as $N\to \infty$. Moreover, we let
\begin{equation}\label{Eq91}
	\phi\colon\, \prod_{l=1}^{\infty}
	\RR \times C([0,s])\times C(0,s; L^2(\TT, \RR^2))\times L_w^2(0,s; L^{q'}(\TT))\,  \to \, \RR
\end{equation}
be bounded and continuous and consider the  random variables
\begin{align*}&
\rho \,=\, \phi\left(	\left(
\mathbbm{1}_{\tilde{R}^{(l)}}, \tilde{\beta}^{(l)}|_{[0,s]}, \tilde{u}^{(l)}|_{[0,s]}, \tilde{J}^{(l)}|_{[0,s]} \right)_{l\in \NN}\right),\\
\rho_N \,&=\, \phi\left(	\left(
\mathbbm{1}_{\tilde{R}_N^{(l)}}, \tilde{\beta}_N^{(l)}|_{[0,s]}, \tilde{u}_N^{(l)}|_{[0,s]}, \tilde{J}_N^{(l)}|_{[0,s]} \right)_{l\in \NN}\right).
\end{align*}
As consequence of the convergence of \eqref{Eq_elem_of_new_seq_II} to \eqref{Eq1000} in \eqref{Eq_space_2} we have $\rho_N\to \rho$ as $N\to \infty$.
Using that as a consequence of Theorem \ref{Thm_degenerate_sol} \ref{Item_sol_Det_deg_case} and \ref{Item_SPDE_deg}
	\begin{align*}&
	\left<
	{v}_{N}^{(k)}(t), \varphi
	\right>\, - \,\left<
	u^{(k)}_N(0), \varphi
	\right>\,+\,
	\int_0^t \left<\divv(J^{(k)}_N), \varphi\right> \, ds
	\\& -\,\sum_{l=1}^\infty\int_0^{\lfloor t\rfloor_\delta} 
	\lambda_l^2\left<\divv (\divv(w^{(k)}_N(s)\psi_l)\psi_l), \varphi\right>\, ds\\=\,& \sum_{l=1}^\infty\lambda_l\int_0^{\lfloor t\rfloor_\delta}\left<\divv(w^{(k)}_N\psi_l),\varphi\right>\, d{\beta}^{l}_s,
\end{align*}
 we conclude by additionally invoking Theorem \ref{Thm_degenerate_sol} \ref{Item_meas} that
 \begin{align}\begin{split}
 		\label{Eq19}&
 	\tilde{E}\left[(
 	M_N(t)-M_N(s+\kappa))\rho_N
 	\right]\,=\,0,\\&
 	\tilde{E}\left[\left(M_{N}^2(t)-M_{N}^2(s+\kappa)-\int_{{\lfloor{s+\kappa}\rfloor_\delta}}^{{\lfloor t\rfloor_\delta}}
 	\sum_{l=1}^\infty \lambda_l^2 \left<
 	\divv(\psi_l \tilde{w}^{(k)}_N), \varphi
 	\right>^2\, d\tau \right)\rho_N\right]\,=\, 0,\\&
 	\tilde{E}\left[\left(\tilde{\beta}_N^{(l)}({{\lfloor t\rfloor_\delta}})M_{ N}(t)-\tilde{\beta}^{(l)}_N({{\lfloor s+\kappa\rfloor_\delta}})M_{N}(s+\kappa)-\int_{{\lfloor s+\kappa \rfloor_\delta}}^{{\lfloor t\rfloor_\delta}}  \lambda_l \left<
 	\divv(\psi_l \tilde{w}^{(k)}_N), \varphi
 	\right> d\tau\right)\rho_N\right]\,=\, 0.
 	\end{split}
 \end{align}for $s,t\in [0,T]$, $\kappa>0$ such that $s+\kappa \le t$, and 
$N$  large enough so that $\lfloor s+\kappa\rfloor_\delta\ge s$. Due to
Theorem \ref{Thm_degenerate_sol} \ref{Item_est_deg}
and the Burkholder-Davis-Gundy inequality we have 
\begin{equation}
\sup_{N\in \NN} \tilde{E}\left[ \|\tilde{w}^{(k)}_N\|_{L^\infty(0,T; H^1(\TT))}^p\,+\,\sup_{ \tau\in [0,T]}|M_N(\tau)|^p\,+\, |\tilde{\beta}_N^{(l)}(\tau)|^p \right]\,<\, \infty
\end{equation} for every $p\in (0,\infty)$
such that  Vitali's convergence theorem and \eqref{Eq24}, \eqref{Eq26}, \eqref{Eq27} yield   
 \begin{align}\begin{split}\label{Eq92}
 		&
	\tilde{E}\left[(
	M(t)-M(s+\kappa))\rho
	\right]\,=\,0,\\&
	\tilde{E}\left[\left(M^2(t)-M^2(s+\kappa)-\int_{{s+\kappa}}^{{t}}
	\sum_{l=1}^\infty \lambda_l^2 \left<
	\divv(\psi_l \tilde{u}^{(k)}), \varphi
	\right>^2\, d\tau \right)\rho\right]\,=\, 0,\\&
	\tilde{E}\left[\left(\tilde{\beta}^{(l)}({t})M(t)-\tilde{\beta}^{(l)}({ s+\kappa})M(s+\kappa)-\int_{{ s+\kappa }}^{{ t}}  \lambda_l \left<
	\divv(\psi_l \tilde{u}^{(k)}), \varphi
	\right> d\tau\right)\rho\right]\,=\, 0.
 	\end{split}
\end{align}
by letting $N\to \infty$ in \eqref{Eq19}. Using that
\[
\left\{
\left\{\phi\left(	\left(\left.
\mathbbm{1}_{\tilde{R}_N^{(l)}}, \tilde{\beta}_N^{(l)}|_{[0,s]}, \tilde{u}_N^{(l)}|_{[0,s]}, \tilde{J}_N^{(l)}|_{[0,s]} \right)_{l\in \NN}\right)\in B\right\}\right|\phi \text{ as in \eqref{Eq91} cont. bdd.},\; B\in \mathfrak{B}(\RR)
\right\}
\]
is an intersection stable generator of $\tilde{\mathfrak{G}}_t$ by Remark \ref{Rem_meas2}, we conclude that \eqref{Eq92} holds for every $\tilde{\mathfrak{G}}_t$-measurable and bounded random variable $\rho$. Finally, we let $0\le s'\le t'\le T$ and $\rho$ be a $\tilde{\mathfrak{F}}_{s'}$-measurable and bounded random variable.
If we can show that
		\begin{align}\begin{split}
				\label{Eq29}
		&\tilde{E}\left[\left(M_{}(t')-M_{}(s')\right)\rho\right]\,=\, 0,\\&\tilde{E}\left[\left(M_{}^2(t')-M_{}^2(s')-\int_{s'}^{t'}
		\sum_{l=1}^\infty \lambda_l^2 \left<
		\divv(\psi_l \tilde{u}^{(k)}), \varphi
		\right>^2\, d\tau \right)\rho\right]\,=\, 0,\\ &\tilde{E}\left[\left(\tilde{\beta}^{(l)}(t)M(t')-\tilde{\beta}^{(l)}(s')M(s')-\int_{s'}^{t'} \lambda_l \left<
		\divv(\psi_l \tilde{u}^{(k)}), \varphi
		\right> d\tau\right)\rho\right]\,=\, 0.
			\end{split}
	\end{align}
the claim follows  by \cite[Proposition A.1]{hofmanova2013}, because $M$ is $\tilde{\mathfrak{F}}$-adapted and square-integrable due to Lemma \ref{Lemma_bounds_final}. To this end, we let  $\kappa'>0$ and define $\kappa=\frac{\kappa'}{2}$, $s=s'+\kappa$ and $t=t'+2\kappa$. By definition of the augmented filtration, there exists a $\tilde{P}$-version of $\rho$ which is $\tilde{\mathfrak{G}_s}$-measurable and moreover we have $s+\kappa\le t$. Therefore, we can apply \eqref{Eq92} and rephrase in terms of $s', t', \kappa'$ to deduce that
 \begin{align*}\begin{split}
		&
		\tilde{E}\left[(
		M(t'+\kappa')-M(s'+\kappa'))\rho
		\right]\,=\,0,\\&
		\tilde{E}\left[\left(M^2(t'+\kappa')-M^2(s'+\kappa')-\int_{{s'+\kappa'}}^{{t'+\kappa'}}
		\sum_{l=1}^\infty \lambda_l^2 \left<
		\divv(\psi_l \tilde{u}^{(k)}), \varphi
		\right>^2\, d\tau \right)\rho\right]\,=\, 0,\\&
		\tilde{E}\left[\left(\tilde{\beta}^{(l)}({t'+\kappa'})M(t'+\kappa')-\tilde{\beta}^{(l)}({ s'+\kappa'})M(s'+\kappa')-\int_{{ s'+\kappa' }}^{{ t'+\kappa'}}  \lambda_l \left<
		\divv(\psi_l \tilde{u}^{(k)}), \varphi
		\right> d\tau\right)\rho\right]\,=\, 0.
	\end{split}
\end{align*}
Since $\kappa'>0$ was arbitrary, we can use continuity of $\tilde{\beta}^{(l)}$ and $M$, Vitalis's theorem and the consequence
\[
E\left[\sup_{\tau\in [0,T]}
\|\tilde{u}^{(k)}(\tau)\|^p_{H^1(\TT)}\,+\,|M(\tau)|^p\,+\,|\tilde{\beta}^{(l)}(\tau)|^p\right]\,<\,\infty
\]
of Lemma \ref{Lemma_bounds_final} to let $\kappa'\searrow 0$ and obtain \eqref{Eq29}.
\end{proof}

Finally, we put
\begin{equation}\label{Eq28}
\tilde{u}\,=\, \sum_{k=0}^\infty \tilde{u}^{(k)}, \;\; \tilde{J}\,=\, \sum_{k=0}^\infty \tilde{J}^{(k)},
\end{equation}
which is in light of Lemma \ref{Lemma_Basic_prop2} equivalent to require
\begin{equation}\label{Eq33}
\tilde{u}\,=\, \tilde{u}^{(k)}\;\; \text{ and }
\;\; \tilde{J}\,=\, \tilde{J}^{(k)} \;\; \text{ on }\tilde{R}^{(k)}. \end{equation}
\begin{proof}[Proof of Theorem \ref{Thm_main}.]
	We first show that $(\tilde{\Omega}, \tilde{\mathfrak{A}}, \tilde{P})$, $\tilde{\mathfrak{F}}$, $(\tilde{\beta}^{(l)})_{l\in \NN}$, $\tilde{u}$ together with $\tilde{J}$ constitute a solution to the stochastic thin-film equation with $q'$-regular non linearity in the sense of Definition \ref{Defi_main}. By definition, $\tilde{\mathfrak{F}}$ fulfills the usual conditions and $(\tilde{\beta}^{(l)})_{l\in \NN}$ is a family of independent Brownian motions by Theorem \ref{Thm_final_SPDE}. Furthermore,  $\tilde{u}$ and $\tilde{J}$ are, as each of their summands, an $H^1_w(\TT)$-continuous, $\tilde{\mathfrak{F}}$-adapted process and a random variable in $L^2(0,T; L^{q'}(\TT))$, respectively. Moreover,  $\tilde{J}|_{[0,t]}$ is $\tilde{\mathfrak{F}}_t$-measurable by definition of $\tilde{\mathfrak{F}}$ and  we have $\sup_{0\le  t\le T} \|\tilde{u}(t)\|_{H^1(\TT)}<\infty$  because of Lemma \ref{Lemma_bounds_final}.  Proposition \ref{Prop_PDE_part} together with \eqref{Eq28} yield that \eqref{Defi_J_is_nonlin_part}
	is indeed fulfilled. Similarly, we obtain \eqref{Defi_SPDE} from Theorem \ref{Thm_final_SPDE}, \eqref{Eq28} and the fact that one can pull the $\tilde{\mathfrak{F}}_0$-measurable random variable $\mathbbm{1}_{\tilde{R}^{(k)}}$ outside of the stochastic integrals in \eqref{Defi_SPDE}. 
	For the initial condition, we observe that 
	\[
	(\mathbbm{1}_{\tilde{R}_N^{(k)}}, \tilde{u}^{(k)}_N(0))_{k\in \NN} \,\sim\, 
	(\mathbbm{1}_{R^{(k)}_N}, u^{(k)}_N(0))_{k\in \NN},\;\; N\in \mathcal{N},
	\]
	and
	\[
	(\mathbbm{1}_{\tilde{R}_N^{(k)}}, \tilde{u}^{(k)}_N(0))_{k\in \NN} \, \to \, 
	(\mathbbm{1}_{\tilde{R}^{(k)}}, \tilde{u}^{(k)}(0))_{k\in \NN}
	\]
	as $N\to \infty$ in $(\RR\times L^2(\TT))^\infty$. Hence, we have
	\[
		(\mathbbm{1}_{\tilde{R}^{(k)}}, \tilde{u}^{(k)}(0))_{k\in \NN}
	\,\sim\,
	(\mathbbm{1}_{{R}_N^{(k)}}, {u}^{(k)}_N(0))_{k\in \NN} 
	\]
	and therefore 
		\[\tilde{u}(0)\, =\, \sum_{k=1}^{\infty} \tilde{u}^{(k)}(0)\, \sim \,\sum_{k=1}^{\infty} {u}^{(k)}_N(0)\, \sim \,
	\mu
	\]
	by
Theorem \ref{Thm_degenerate_sol} \ref{Item_IV}.	
	The non-negativity of $\tilde{u}(t)$ follows from Lemma \ref{Lemma_Basic_prop2} \ref{Item_2}. From Lemma \ref{Lemma_bounds_final} together with the monotone convergence theorem we deduce the energy estimates \eqref{Eq34} and \eqref{Eq35}. Finally due to Lemma \ref{Lemma_Basic_prop2} \ref{Item_4} we conclude that the additional spatial regularity property \eqref{Eq13} is by construction fulfilled. 
\end{proof}

\appendix
\section{Properties of solutions to the deterministic thin-film equation}\label{AppendixA}

	\begin{proof} [Proof of Theorem \ref{Thm_ex_sol_det}] 
	Since $\alpha\in (-1,0)$ and $v\in H^1(\TT)$ we have due to  \eqref{Eq_alpha_entropy_expl} that
	\[
	\int_{\TT}G_\alpha(v_0)\,dx\,<\,\infty.
	\]
	Therefore, \cite[Theorem 3.2]{Passo98ona}
	applies and yields that a weak solution $(v,J)$ with $q'$-regular non-linearity and initial value $v_0$ exists. The first part of \ref{Item_det_cons_of_mass}  follows by testing the equation $\partial_t u=- \divv J$ with $\varphi\otimes\mathbbm{1}_{\TT} $ for arbitrary $\varphi\in C_c^\infty((0,T))$.
	For the other properties we consider the approximation procedure in \cite{Passo98ona}, which takes place in two  steps. First problems of the form
	\begin{align}\label{aux_2}\begin{split}
			\begin{cases}\partial_t
				(v_{\delta\epsilon})\,+\,\divv (J_{\delta\epsilon})\,=\, 0
				& \text{in }L^2(0,T; H^{-1}(\TT)), \\
				J_{\delta\epsilon}\,=\,m_{\delta\epsilon}(v_{\delta\epsilon})\nabla \Delta v_{\delta\epsilon}& \text{weakly},\\
				\esslim_{t\to 0} v_{\delta\epsilon}(t,\cdot)=v_0+\delta+\epsilon^{\theta} &\text{in }H^1(\TT),
			\end{cases}
		\end{split}\tag{$P_{\delta\epsilon}$}
	\end{align}
	are solved by \cite[Theorem 1.1]{Grn1995DegeneratePD}. Letting $\epsilon \searrow 0$ yields solutions to
	\begin{align}\label{aux_1}\begin{split}
			\begin{cases}\partial_t
				(u_\delta)\,+\,\divv (J_{\delta\epsilon})\,=\, 0 \\
				J_{\delta}\,=\,m_{\delta}(v_{\delta})\nabla \Delta v_{\delta} \\
				v_{\delta}(0,\cdot)=v_0+\delta 
			\end{cases}
		\end{split}\tag{$P_\delta$}
	\end{align}
	in the sense of \cite[Definition 3.1]{Passo98ona} which again are used to construct $v$. The functions $m_\delta$ and $m_{\delta\epsilon}$ are auxiliary mobilities, which take the form $m_\delta(\tau)= \frac{\tau^2}{1+\delta \tau^2}$ and $m_{\delta\epsilon}=\frac{\tau^sm_\delta(\tau)}{\epsilon m_\delta(\tau)+\tau^s}$ for some  $s>4$, see \cite[p.323, p.331]{Passo98ona}, so we can choose for example $s=5$. The number $\theta$ from  \eqref{aux_2} is a sufficiently small constant.  By \cite[Lemma 2.1]{Passo98ona} it follows that
	\begin{equation}\label{est_for_regregProb}
		\|\nabla v_{\delta\epsilon} \|_{L^\infty(0,T;L^2(\TT,\RR^2))}\,\le \, \|\nabla v_0\|_{L^2(\TT,\RR^2)}.
	\end{equation}
	Since $v_{\delta\epsilon}\rightharpoonup^* v_\delta$ and 	$v_{\delta}\rightharpoonup^* v$ in $L^\infty(0,T;H^1(\TT))$, see \cite[Proposition 2.6]{Passo98ona}, we conclude under additional consideration of Remark \ref{Rem_version_det_sol} that part \ref{Item_det_energy_est} holds true. Moreover, since also $ v_{\delta\epsilon}\rightharpoonup v_\delta$ and 
	$v_{\delta}\rightharpoonup v$ in $H^1(0,T; W^{-1,q'}(\TT))$, it follows that strong convergence takes place in $C(0,T; L^2(\TT))$ by Remark \ref{Rem_version_det_sol}. Hence non negativity is preserved and the second part of \ref{Item_det_cons_of_mass} follows by the non negativity  of $v_{\delta\epsilon}$, see \cite[Lemma 2.1]{Passo98ona}. Furthermore, in \cite[Equation (2.26)]{Passo98ona} one finds the estimate \ref{Item_det_entr_est}.
	Finally, to convince ourselves also of part \ref{Item_det_energy_est_J}, we conclude that as consequence of \cite[Lemma 2.1]{Passo98ona} it holds
	\begin{equation}\label{Eq1}
	\esssup_{T-\rho\le t\le T}
	\|\nabla v_{\delta\epsilon}(t)\|_{L^2(\TT,\RR^2)}^2
	\,+\,2\int_0^{T-\rho}\|\sqrt{m_{\delta\epsilon}(v_{\delta\epsilon}(t))} \nabla \Delta v_{\delta\epsilon}(t)\|_{L^2(\TT, \RR^2)}^2\,dt\,\le \, \|\nabla v_0\|_{L^2(\TT, \RR^2)}^2
	\end{equation}
	for any $\rho>0$. Since by definition
	\[
	m_{\delta\epsilon}(\tau)\,\le\,
	m_{\delta}(\tau)\,\le \,\tau^2 
	\]
	we obtain by Sobolev's inequality, see \cite[Theorem 4.51]{adams2003sobolev}, the periodic Poincar\'e inequality  and \eqref{est_for_regregProb} that
	\begin{align*}
		\|\sqrt{m_{\delta\epsilon}(v_{\delta\epsilon}(t))}\|_{L^r(\TT)}^2\,&\le\,\|v_{\delta\epsilon}(t)\|_{L^r(\TT)}^2\,\lesssim_r \, \|v_{\delta\epsilon}(t)\|_{H^1(\TT)}^2
	 {	\,\lesssim \, \|\nabla v_{\delta\epsilon}(t) \|_{L^2(\TT, \RR^2)}^2 \,+\, \left|
		\int_{\TT} v_{\delta \epsilon}(t)\, dx\right|^2 }
		\\&\le\, \|\nabla v_{0}\|_{L^2(\TT,\RR^2)}^2+\left|
		\int_{\TT} v_0+\delta+\epsilon^{\theta}\, dx\right|^2
	\end{align*}
	for any $0\le t\le T$ and $r\in[1,\infty)$. Because we have 
	$J_{\delta\epsilon}(t)\,=\,m_{\delta\epsilon}(v_{\delta\epsilon}(t))\nabla \Delta v_{\delta\epsilon}(t) $ for almost all $0\le t\le T$, see \cite[Lemma 2.1]{Passo98ona}, we obtain by \eqref{est_for_regregProb}, \eqref{Eq1}, H\"older's inequality and the choice $\frac{1}{2}+\frac{1}{r}= \frac{1}{q'}$ that
	\begin{align*}&
		C_q \esssup_{T-\rho\le t\le T}\left[
		\|\nabla v_{\delta\epsilon}(t)\|_{L^2(\TT,\RR^2)}^2 \left(\|\nabla v_{\delta\epsilon}(t)\|_{L^2(\TT,\RR^2)}^2+\left|
		\int_{\TT} v_0+\delta+\epsilon^{\theta}\, dx\right|^2
		\right)\right]
		\\&+\,\int_0^{T-\rho}\|J_{\delta\epsilon}(t)\|_{L^{q'}(\TT, \RR^2)}^2\,dt\,\le \, C_q\|\nabla v_0\|_{L^2(\TT, \RR^2)}^2 \left(\|\nabla v_0\|_{L^2(\TT,\RR^2)}^2+\left|
		\int_{\TT} v_0+\delta+\epsilon^{\theta}\, dx\right|^2
		\right).
	\end{align*}
	Using that $J_{\delta\epsilon}\rightharpoonup J_{\delta}$ and $J_{\delta}\rightharpoonup J$
	in $L^2(0,T;L^{q'}(\TT,\RR^2))$ as well as that $\nabla v_{\delta\epsilon}\rightharpoonup^* \nabla v_\delta $ and	$\nabla v_\delta\rightharpoonup^* \nabla v $	in $L^\infty(0,T;L^2(\TT,\RR^2))$, see \cite[Proposition 2.6]{Passo98ona}, we infer that
	\begin{align*}&
		C_q \esssup_{T-\rho\le t\le T}\left[
		\|\nabla v(t)\|_{L^2(\TT,\RR^2)}^2 \left(\|\nabla v(t)\|_{L^2(\TT,\RR^2)}^2+\left|
		\int_{\TT} v_0\, dx\right|^2
		\right)\right]
		\\&+\,\int_0^{T-\rho}\|J(t)\|_{L^{q'}(\TT, \RR^2)}^2\,dt\,\le \, C_q\|\nabla v_0\|_{L^2(\TT, \RR^2)}^2 \left(\|\nabla v_0\|_{L^2(\TT,\RR^2)}^2+\left|
		\int_{\TT} v_0\, dx\right|^2
		\right)
	\end{align*}
	The claimed estimate follows by letting $\rho\searrow 0$ together with the weak continuity of $v$  in $H^1(\TT)$.
\end{proof}
\section{Gelfand triple of Bessel potential spaces}\label{AppendixAB}
The purpose of this section is to verify that $H^2(\TT)\subset H^1(\TT)\subset L^2(\TT) $ is a Gelfand triple, when equipping $H^2(\TT)$ with the Bessel potential norm, as claimed in the proof of Theorem \ref{Thm_Sec_Reg_SPDE}.
We recall that the Bessel potential norm on $H^2(\TT)$  is defined by
\[
\vertiii{f}_{H^2(\TT)}^2\,=\, \sum_{k\in \ZZ^2} (1+|2\pi k|^2)^2|\hat{f}(k)|^2,
\] 
where
\begin{equation}\label{Eq97}
\hat{f}(k)\,=\,\int_{\TT}
f(x) e^{-2\pi i k\cdot x}
\, dx,\;\; k\in \ZZ^2
\end{equation}
is the $k$-th Fourier coefficient of a function $f\in L^2(\TT)$. Moreover, by definition of the Bessel potential spaces  
\[ H^2(\TT)\,=\,\left\{
f\in L^2(\TT)\,|\,
\vertiii{f}_{H^2(\TT)}<\infty
\right\}.
\] The pairing of two functions $f\in H^1(\TT),g\in H^2(\TT)$ in $H^1(\TT)$ can be rewritten by Parseval's relation \cite[Proposition 3.2.7]{grafakos2014classical} as
\begin{equation}\label{Eq93}
(f,g)_{H^1(\TT)}\,=\, (f,g)_{L^2(\TT)}\,+\,  (\nabla f,\nabla g)_{L^2(\TT, \RR^2)}\,=\, 
\sum_{k\in \ZZ^2} (1+|2\pi k|^2) \hat{f}(k) \overline{\hat{g}(k)}
\end{equation}
and therefore
\[ |(f,g)_{H^1(\TT)}|\,\le \, 
\left(\sum_{k\in \ZZ^2} |\hat{f}(k)|^2\right)
\left(\sum_{k\in \ZZ^2} (1+|2\pi k|^2)^2|\hat{g}(k)|^2\right)
\,=\, \|f\|_{L^2(\TT)}\vertiii{g}_{H^2(\TT)}
\]
by the Cauchy-Schwarz inequality. Hence,
\begin{equation}\label{Eq96}
\|
(f,\cdot)_{H^1(\TT)}
\|_{(H^2(\TT))'}\,\le \, \|f\|_{L^2(\TT)}.
\end{equation}
Moreover, since the  coefficients are square summable, the series
\[ \sum_{k\in\ZZ} \frac{\hat{f}(k)}{1+|2\pi k|^2} e^{2\pi i k\cdot x}
\]
converges to an element  $g_f\in L^2(\TT)$. Since 
\begin{equation}\label{Eq95}
\vertiii{g_f}_{H^2(\TT)}^2\,=\, \sum_{k\in \ZZ^2} (1+|2\pi k|^2)^2\left(\frac{|\hat{f}(k)|}{1+|2\pi k|^2}\right)^2\,=\, \|f\|_{L^2(\TT)}^2\,<\,\infty,
\end{equation}
it satisfies  $g_f\in H^2(\TT)$. Using \eqref{Eq93}, we obtain that
\[
(f,g_f)_{H^1(\TT)}\,=\, \sum_{k\in \ZZ^2} (1+|2\pi k|^2) \hat{f}(k) \frac{\overline{\hat{f}(k)}}{1+|2\pi k|^2}\,=\, \|f\|_{L^2(\TT)}^2,
\]
such that
\[
\|
(f,\cdot)_{H^1(\TT)}
\|_{(H^2(\TT))'}\,\ge \, \|f\|_{L^2(\TT)}
\]
by \eqref{Eq95}. Due to \eqref{Eq96},  the above inequality is  an equality. Consequently, identifying $f\in H^1(\TT)$ with $(f,\cdot)_{H^1(\TT)}$ and taking the completion of these functions with respect to 
\[\|
(f,\cdot)_{H^1(\TT)}
\|_{(H^2(\TT))'}\] yields the space $L^2(\TT)$.
With this identification, the dual pairing between a function $g\in H^2(\TT)$ and a function $f\in L^2(\TT)$ is given by
\[\left<\left<f,g \right>\right>_{H^1(\TT)}\,=\,
\left<
f,g
\right>\,+\, 
\left<
\nabla f,\nabla g
\right>,
\]
where we recall that $\left<\cdot, \cdot \right>$ is the dual pairing in $L^2(\TT)$. Indeed, for $f\in H^1(\TT)$ this follows since it was identified with $
(f,\cdot)_{H^1(\TT)}$. For $f\in L^2(\TT)$ we take a sequence $(f_n)_{n\in \NN}$ from $H^1(\TT)$ converging to $f$  in $L^2(\TT)$. Then also $\nabla f_n \to \nabla f$ in $H^{-1}(\TT,\RR^2)$ and hence
\[
\left<\left<f,g \right>\right>_{H^1(\TT)}\,\leftarrow\,\left<\left<f_n,g \right>\right>_{H^1(\TT)}\,=\,
\left<
f_n,g
\right>\,+\, 
\left<
\nabla f_n,\nabla g
\right>\,\rightarrow \, \left<
f,g
\right>\,+\, 
\left<
\nabla f,\nabla g
\right>
\]
for all $g\in H^2(\TT)$.

\section{Justifications of It\^o's formula}\label{AppendixB}
	First, we justify the use of It\^o's formula in the proof of Lemma \ref{Lemma_Est}.
To this end, we introduce the equivalence relation 
\[
f\,\sim\, g \;\; \iff\;\; \exists c\in \RR:\;f=g+c
\]
for $f,g\in H^s(\TT)$, $s\ge 0$ and write $\dot{f}$ for the respective equivalence class in $H^s(\TT)$. We recall that the  Bessel potential space is given by
\[
H^s(\TT)\,=\,\left\{
f\in L^2(\TT)\,|\,
\vertiii{f}_{H^s(\TT)}<\infty
\right\},
\] where the appearing Bessel potential norm is defined as 
\[
\vertiii{f}_{H^s(\TT)}^2\,=\, \sum_{k\in \ZZ^2} (1+|2\pi k|^2)^s|\hat{f}(k)|^2
\] 
with $\hat{f}(k)$ being the $k$-th Fourier coefficient \eqref{Eq97} of a function $f\in L^2(\TT)$.
Under this  norm, the quotient space $H^s(\TT)/ \sim$ is equipped with
\begin{align*}
\vertiii{\dot{f}}_{H^s(\TT)/\sim}^2\,=\, \inf_{g\in \dot{f}} \sum_{k\in \ZZ^2} (1+|2\pi k|^2)^s|\hat{g}(k)|^2\,=\,&\inf_{g\in \dot{f}} |\hat{g}(0)|^2\,+
\sum_{k\in \ZZ^2 \setminus \{(0,0)\}} (1+|2\pi k|^2)^s|\hat{f}(k)|^2
\\=&
\sum_{k\in \ZZ^2 \setminus \{(0,0)\}} (1+|2\pi k|^2)^s|\hat{f}(k)|^2.
\end{align*}
Here, we used in the second equality that
\[
\int_{\TT}
 e^{-2\pi ki\cdot x}
\, dx\,=\, 0
\]
for $k\in \ZZ^2 \setminus \{(0,0)\}$
and therefore 
\[
\hat{f}(k)\,=\, \hat{g}(k)
\]
for all $g\in \dot{f}$.
In the following, we write $\dot{H}^s(\TT)$ for  $H^s(\TT)/ \sim$ and equip it with the equivalent norm
\[
\|\dot{f}\|_{\dot{H}^s(\TT)}^2\,=\, \sum_{k\in \ZZ^2\setminus \{(0,0)\}} |2\pi k|^{2s} |\hat{f}(k)|^{2}.
\]
Analogously to appendix \ref{AppendixAB}, one verifies that $ \dot{H}^0(\TT)$ can be identified with the dual of $\dot{H}^2(\TT)$ under the pairing in $ \dot{H}^1(\TT)$. Moreover, the dual pairing is given by
\begin{equation}\label{Eq103}
\left<\left<
\dot{f},\dot{g}
\right>\right>_{\dot{H}^1(\TT)}\,=\, \sum_{k\in \ZZ^2 \setminus \{(0,0)\}}
|2\pi k|^{2} \hat{f}(k)\overline{\hat{g}(k)}\,=\, \left<\nabla f,\nabla g\right>
\end{equation}
for $\dot{f}\in \dot{H}^1(\TT)$ and $\dot{g}\in \dot{H}^2(\TT)$ by  Parseval's relation \cite[Proposition 3.2.7]{grafakos2014classical}. For general $\dot{f}\in \dot{H}^0(\TT)$ the equality \eqref{Eq103} holds  by an approximation argument as in appendix \ref{AppendixAB}. 

Next, we denote by $P_{hom}$ the operator mapping a function $f\in H^s(\TT)$ to its equivalence class in $\dot{H}^s(\TT)$, i.e. $ P_{hom}f=\dot{f} $.
Then \[P_{hom}\in L(L^2(\TT), \dot{H}^0(\TT)) \cap L(H^1(\TT), \dot{H}^1(\TT))
\]
and applying $P_{hom}$ to equation \eqref{Eq_reg_SPDE} satisfied by $w_\epsilon$ on $[j\delta, (j+1)\delta)$ yields that
\begin{equation}\label{SPDE_proj}
P_{hom} w_\epsilon(t)\,=\,  P_{hom}  w_\epsilon(j\delta)\,+\, 
\int_{j\delta}^t P_{hom}  A^\epsilon (w_\epsilon(s))\, ds
\,+\,  \int_{j\delta}^t P_{hom}B(w_\epsilon(s))\, dV_t,\;\; t\in [j\delta, (j+1)\delta)
\end{equation}
where $V$ is the cylindrical Wiener process in $H^2(\TT, \RR^2)$  given by
\begin{equation}
	\label{Eq_vt}
V_t \,=\, \sum_{l=1}^\infty  \beta^{(l)}_t\psi_l.
\end{equation}
Because of $P_{hom}\in L(H^2(\TT), \dot{H}^2(\TT))$, $w_\epsilon \in  L^2([j\delta, (j+1)\delta]\times \Omega, {H}^2(\TT)) $ by Theorem \ref{Thm_Sec_Reg_SPDE}, and the boundedness of the operators $A^\epsilon$ and $B$, it holds 
\begin{align}&
	P_{hom} w_\epsilon\in  L^2([j\delta, (j+1)\delta]\times \Omega, \dot{H}^2(\TT)),\\&
	P_{hom}  A^\epsilon (w_\epsilon) \in  L^2([j\delta, (j+1)\delta]\times \Omega,\label{Eq101} \dot{H}^0(\TT)),\\&
	P_{hom}  B(w_\epsilon) \in  L^2([j\delta, (j+1)\delta]\times \Omega, L_2 ( H^2(\TT, \RR^2) ,\dot{H}^0(\TT))).\label{Eq102}
\end{align}
Moreover, because of right-continuity in $H^1(\TT)$, $w_\epsilon$ admits a progressively measurable, $H^2(\TT)$-valued $dt\otimes P$-version by \cite[Exercise 4.2.3]{liu2015stochastic}. Since later in the proof of Lemma \ref{Lemma_Est} we integrate in time and take the expectation anyways, we denote this progressive version again by $w_\epsilon$  to ease the notation. By continuity of the involved operators, also the processes \eqref{Eq101} and \eqref{Eq102} are progressive when choosing this $dt\otimes P$-version of $w_\epsilon$,
such that It\^o's formula for the squared norm in $\dot{H}^1(\TT)$ from \cite[Theorem 4.2.5]{liu2015stochastic} is applicable to \eqref{SPDE_proj}. Noting that by Parseval's relation, the norm in $\dot{H}^1(\TT)$ can be written as
\[
\|\dot{f}\|_{\dot{H}^1(\TT)}^2\,=\, \|\nabla f\|_{L^2(\TT,\RR^2)}^2,
\] we obtain that
	\begin{align*}
		\|\nabla w_\epsilon(t) \|_{L^2(\TT,\RR^2)}^2\,=\, \|\nabla w_\epsilon(j\delta) \|_{L^2(\TT, \RR^2)}^2\,+2\int_{j\delta}^t\left<\nabla {w_\epsilon}(s),\nabla A^\epsilon ({w_\epsilon}(s))\right>\,ds&\\+\,2\int_{j\delta}^t
		\left<  \nabla w_\epsilon (s),\nabla B(w_\epsilon(s))\, dV_s \right>\,+\, \sum_{l=1}^\infty \lambda_l^2 \int_{j\delta}^t\|\nabla 
		\divv({w_\epsilon}(s)\psi_l)\|_{L^2(\TT, \RR^2)}^2
		\,ds&
\end{align*}
for $t\in [j\delta, (j+1)\delta)$.
Writing the stochastic integral with respect to $V$ as its series representation results in  \eqref{Eq_Ito1}. Moreover, its quadratic variation is given by 
\begin{equation}\label{Eq99}
4 \int_{j\delta}^t
\left\|\left<  \nabla w_\epsilon (s),\nabla B(w_\epsilon(s))\, \cdot  \right>\right\|_{L_2(H^2(\TT, \RR^2), \RR )}^2\, ds\,=\, 4
\sum_{l=1}^{\infty}\lambda_l^2\int_{j\delta}^t
\left<\nabla \divv(w_\epsilon(s)\psi_l), \nabla w_\epsilon(s) \right>^2\, ds.
\end{equation}

Secondly, we justify the use of It\^o's formula in the proof of Lemma \ref{Bound_powers_2}, where  we use instead \cite[Proposition A.1]{DHV_16}. Choosing $\psi= \mathbbm{1}_{\TT}$, $\varphi= G_{\alpha,\kappa}$ in the notation of this proposition, we see that the functional \eqref{Eq44} is of the required form. Next, we observe that as a consequence of Theorem \ref{Thm_degenerate_sol} \ref{Item_SPDE_deg}, the process $w_N^{(k)}$ satisfies 
\[
d w_N^{(k)} \,=\, \divv ( G (t))\, dt\,+\, H(t)\,dV_t
\]
on $[j\delta, (j+1)\delta)$,
where 
\begin{align*}&
G(t)\,=\, \frac{1}{2} \sum_{l=1}^{\infty} \lambda_l^2  \divv({w}_N^{(k)}(t)\psi_l) \psi_l, \\&
H(t)[v]\,=\, \sum_{l=1}^\infty\lambda_l (v,\psi_l)_{H^2(\TT{ ,\RR^2 })} \divv ({w}_N^{(k)}(t) \psi_l),\;\;v\in H^2(\TT, \RR^2)
\end{align*}
and $V_t$ as in \eqref{Eq_vt}. By Theorem \ref{Thm_degenerate_sol} \ref{Item_est_deg}, we have 
\begin{equation}\label{Eq_B1}
w_N^{(k)} \,\in \, L^2(\Omega, L^2(j\delta, (j+1)\delta;H^1(\TT) ))
\end{equation}
and 
\[
w_N^{(k)} \,\in \, L^2(\Omega, C(j\delta, (j+1)\delta; L^2(\TT) )),
\]
if we replace its terminal value $w_N^{(k)}( (j+1)\delta)$ by $ v_N^{(k)}( (j+1)\delta)$.
By \eqref{boundedness_psil} and $\Lambda\in l^2(\NN)$ we have that 
\begin{align*}
	\|G(t)\|_{L^2(\TT,\RR^2)}\,\le& \, \frac{1}{2}\sum_{l=1}^\infty \lambda_l^2 
	\|\divv ( w_N^{(k)} (t) \psi_l ) \psi_l \|_{L^2(\TT,\RR^2)}
	\\\lesssim &\, \sum_{l=1}^\infty \lambda_l^2 
	\| ( w_N^{(k)} (t) \|_{H^1(\TT)} 	\,\lesssim_\Lambda \, \| ( w_N^{(k)} (t) \|_{H^1(\TT)}
\end{align*}
and consequently 
\eqref{Eq_B1} implies that
\[
G\,\in\, L^2(\Omega, L^2(j\delta, (j+1)\delta ;L^2(\TT, \RR^2) )).
\]
Similarly, we estimate
\begin{align*}
	\|H(t)\|_{L_2(H^2(\TT, \RR^2), L^2(\TT))}^2 \,=&\,
	\sum_{l=1}^\infty \lambda_l^2 \|\divv ( w_N^{(k)} (t) \psi_l ) \|_{L^2(\TT)}^2\\
	\lesssim &\, 
	\sum_{l=1}^\infty \lambda_l^2 \| w_N^{(k)} (t)  \|_{H^1(\TT)}^2\,\lesssim_\Lambda\, \| w_N^{(k)} (t)  \|_{H^1(\TT)}^2,
\end{align*}
such that
\[
H\,\in \, 
L^2(\Omega, L^2(j\delta, (j+1)\delta;L_2(H^2(\TT, \RR^2), L^2(\TT)) )).
\]
Hence, all the assumptions of \cite[Proposition A.1]{DHV_16} are satisfied, which results in \eqref{Eq10}.

\section*{Acknowledgments}
The author thanks his doctoral advisor Manuel Gnann for many insightful discussions on this subject as well as the careful reading of this document. He thanks Mark Veraar for pointing out the possibility to relax integrability assumptions on the initial value.  He also thanks the anonymous referees for the careful reading of this document and their valuable suggestions.

\end{document}